\numberwithin{equation}{section}
    \newcommand*{\qrr@gobblenexttocentry}[5]{}
    \newcommand*{\qrr@gobblenexttocentry}[4]{}
\newcommand*{\addsubsection}{%
    \addtocontents{toc}{\protect\qrr@gobblenexttocentry}%
    \subsection}
\declaretheorem[name=Theorem, numberwithin=section]{theorem}
\declaretheorem[name=Theorem, numbered=no]{theorem*}
\declaretheorem[name=Lemma, sibling=theorem]{lemma}
\declaretheorem[name=Proposition, sibling=theorem]{proposition}
\declaretheorem[name=Corollary, sibling=theorem]{corollary}
\declaretheorem[style=definition, name=Definition, sibling=theorem]{definition}
\declaretheorem[style=definition, name=Remark, sibling=theorem]{remark}
\declaretheorem[style=definition, name=Example, sibling=theorem]{example}
\newcommand{\customlabel}[2]{%
   \protected@write \@auxout {}{\string \newlabel {#1}{{#2}{\thepage}{#2}{#1}{}} }%
   \hypertarget{#1}{}%
}
\title{{\bf Univalent Foundations of Constructive Algebraic Geometry}}
\author{Max Zeuner}
\date{{\footnotesize \sc Department of Mathematics, Stockholm University} \\ {\footnotesize \href{mailto:zeuner@math.su.se}{\texttt{zeuner@math.su.se}}} \\[1.5em] \today}
\newcommand{\N}{\mathbb{N}}
\newcommand{\Z}{\mathbb{Z}}
\newcommand{\ZL}{\ensuremath{\mathcal{L}_R}}
\newcommand{\NatTrans}[2]{{#1}\Rightarrow{#2}}
\newcommand{\coBrackets}[1]{\llbracket\,{#1}\,\rrbracket}
\newcommand{\catHom}[3]{#1\big(#2,#3\big)}
\newcommand{\locEl}[2]{#1[\nicefrac{1}{#2}]}
\newcommand{\ZarLat}[1]{\mathcal{L}_{#1}}
\newcommand{\Support}[2]{\mathcal{D}_{#1}(#2)}
\newcommand{\rest}[2]{#1\!\!\restriction_{#2}}
\newcommand{\Aone}{\mathbb{A}^1}
\newcommand{\Ofun}{\mathcal{O}}
\newcommand{\ZFunctor}{\mathbb{Z}\text{-}\mathsf{Fun}}
\newcommand{\fpAlg}[1]{#1\text{-}\mathsf{Alg}_{fp}}
\newcommand{\vcong}{\mathbin{\text{\rotatebox[origin=c]{90}{$\simeq$}}}}
\newcommand{\vvcong}{\mathbin{\text{\rotatebox[origin=c]{-90}{$\simeq$}}}}
\DeclareMathOperator{\Spec}{\mathsf{Spec}}
\newcommand{\tySigmaNoParen}[3]{{\{\,#1:#2~\vert~#3\,\}}}
\newcommand{\tyPath}[2]{{#1}={#2}}
\newcommand{\pbsign}[2][2]{\ar[#2,phantom,pos=0,"\scalebox{#1}{\tiny{$\lrcorner$}}"]}
\DeclarePairedDelimiter\abs{\lvert}{\rvert}
\DeclarePairedDelimiterX{\norm}[1]{\lVert}{\rVert}{#1}
\begin{document}
\pagenumbering{arabic}
\maketitle

\begin{abstract}
\noindent
We investigate two constructive approaches to defining quasi-compact
and quasi-separated schemes (qcqs-schemes), namely qcqs-schemes as
locally ringed lattices and as functors from rings to sets. We work in
Homotopy Type Theory and Univalent Foundations, but reason informally.
The main result is a constructive and univalent proof that the two
definitions coincide, giving an equivalence between the respective
categories of qcqs-schemes.
\end{abstract}

\paragraph*{Acknowledgments} The author would like to thank Thierry Coquand
for suggesting this project and for his invaluable input and feedback.
Further thanks go to Anders Mörtberg for his supervision and to
Felix Cherubini and Matthias Hutzler for being an integral part of the project.

\section{Introduction}

Since its inception by Grothendieck and his school in the latter half
of the 20th century, the notion of scheme has found important
applications in various fields of mathematics ranging from geometry to
number theory.  Arguably, it has led to some of the most staggering
mathematical achievements in recent history. In spite of being a rather
abstract notion that relies heavily on modern set-theoretic machinery,
the motivations for studying schemes coming from classic algebraic
geometry have a rather hands-on computational character: studying the
solutions to finite systems of polynomial equations.  Giving a
satisfying constructive definition of schemes and determining the
computational content of scheme theory, is thus an important problem
for the program of constructive mathematics.

In this task, the constructive mathematician is confronted with two
options: Constructivizing the topological notion of scheme as a
locally ringed space or the categorical notion of scheme as a
``functor of points''.  At first glance, categorical/functorial
schemes seem more appropriate for constructive study, as topological
spaces are an inherently problematic notion from a constructive point
of view.  The categorical approach was actually the preferred one of Grothendieck
himself \cite{GrothendieckBuffalo}. Furthermore, all of the basic notions of the
functorial approach, as e.g.\ defined in the textbook
``Introduction to algebraic geometry and algebraic groups''
By Demazure and Gabriel \cite{DemazureGabriel}, are constructively valid.  Moreover,
the functorial schemes embed into a sheaf topos, the so-called
\emph{big Zariski topos} and in the internal language of this topos, which is
constructive, one can develop algebraic geometry synthetically. This
line of work was pioneered by Kock \cite{Kock76} and is studied
extensively in the thesis of Blechschmidt \cite{BlechschmidtPhD}.
The synthetic approach has also been studied using Homotopy Type Theory
and Univalent Foundations by Cherubini, Coquand and Hutzler \cite{SAG}.

As its name suggests, however, the big Zariski topos cannot be defined
rigorously without introducing size issues, which arise
already when working classically. Demazure and Gabriel
assume \emph{two} Grothendieck universes for precisely this reason
\cite{DemazureGabriel}. More modern sources usually tend to ignore
size issues or leave ``the appropriate modifications'' to
resolve these issues ``to the interested reader'' \cite{Jantzen1987}.
Of course, the size issues become even more pressing in a constructive,
predicative setting.  Blechschmidt's thesis contains an insightful
discussion \cite[Sec.\ 16.5]{BlechschmidtPhD}, highlighting that it is
possible to work in an entirely constructive and predicative set-up
for synthetic algebraic geometry, by restricting oneself to
\emph{finitely presented} algebras over some base ring $R$. One can
then define \emph{schemes of finite presentation over}
$\mathsf{Spec}(R)$ in a purely functorial way that corresponds to a
well-behaved internal definition of ``finitely presented synthetic schemes''
\cite[Def.\ 19.49]{BlechschmidtPhD}.

In this context it is noteworthy that all \emph{quasi-compact and
quasi separated} schemes (qcqs-schemes) can be defined constructively
and predicatively. These qcqs-schemes contain all schemes of finite
presentation and need not be defined over some affine base scheme.
This is achieved through a careful point-free reformulation of the
topological approach. Initiated in a series of papers by Schuster
\cite{SchusterHabil,SchusterPositivity,SchusterZariski} using formal
topology, this approach was subsequently refined by Coquand, Lombardi
and Schuster \cite{ProjSpec,ConstrSchemes} using lattices.  The main
observation is that, since qcqs-schemes are
always a coherent or spectral space, it is sufficient to consider the
distributive lattice of quasi-compact opens and the structure sheaf
restricted to this lattice. Instead of locally ringed spaces,
qcqs-schemes are defined as ringed lattices in \cite{ConstrSchemes}.

In this paper we want to establish a common constructive framework
for the functorial and the lattice-theoretic approach.
Classically, to each locally ringed space $X$ one can associate a sheaf
in the big Zariski topos called the functor of points of $X$.
This is the construction that most textbooks use, when developing
the functorial approach from the standard classical approach.\footnote{
  See e.g.\ \cite{EisenbudHarris}.}
Modulo size issues,
this functor of points construction has a left adjoint
and when restricted schemes this adjunction becomes an
equivalence of categories. This result is called the
\emph{comparison theorem} in \cite[Ch.\ I, \S 1, no 4.4]{DemazureGabriel}
and the main result of this paper is a constructive
version of this result for qcqs-schemes and schemes of finite presentation.

The main obstacle are not the size issues, but the
circumstance that qcqs-schemes in \cite{ConstrSchemes} are defined as
ringed lattices, while the functor of points construction requires a
\emph{locally} ringed space.  What is needed are notions of
\emph{locally ringed lattices} and their \emph{morphisms}, such that a
morphism of qcqs-schemes (in the sense of \cite{ConstrSchemes}) is just
a morphism of locally ringed lattices.  In \cite{ConstrSchemes}, the
study of such notions is omitted on purpose.
The authors argue that the paper shows that
\begin{quote}
``\emph{up to a certain point one can get by on without any
talk of locally ringed lattices and their morphisms, concepts which seem relatively
involved if compared with the ones given in this paper.}''
\end{quote}
Yet, they do conclude that the study of locally ringed lattices is
central to a fully constructive foundation of algebraic geometry.  In
this paper we want to fill this gap and provide an in-depth discussion
of locally ringed lattices and their morphisms.  This is not just a
means to the end of obtaining a constructive comparison theorem. We
want to show that qcqs-schemes as locally ringed lattices actually admit a rather neat
presentation.

The basics of the constructive approach to functorial qcqs-schemes
that we use in this paper have actually been formalized in the
\texttt{Cubical Agda} \cite{ZeunerHutzler24}. The
size issues are tackled by using enough type-theoretic universes. As
\texttt{Cubical Agda} is an extension of  the \texttt{Agda} proof assistant supporting
Homotopy Type Theory and Univalent Foundations (HoTT/UF), these
universes are univalent.  We use HoTT/UF as the foundation for this
work as well and make crucial use of univalence.  This choice
of foundation for (constructive) algebraic geometry is
further motivated by a \texttt{Cubical Agda} formalization
of the structure sheaf of an affine scheme \cite{ZeunerMortberg23},
suggesting that the structuralist approach to
mathematics of the Grothendieck school can be emulated more faithfully using
univalence.  Generally, HoTT/UF is not only a foundation for
``higher'' mathematics and synthetic homotopy theory. The internal
notions of sets and propositions provide a natural framework for
set-level constructive mathematics with convenient features like
quotients. In the particular case of the constructive comparison
theorem, univalent type-theoretic universes let us circumvent size issues
and actually strengthen the main result.

This paper tries to strike a delicate balance between conceptual
clarity and formal rigor. On the one hand, many of the insights
presented in the paper do not actually depend on the choice of
foundation and we do not want to bury them under technicalities. On
the other hand, with important parts of constructive scheme theory
already formalized in the univalent \texttt{Cubical Agda} proof
assistant \cite{ZeunerMortberg23,ZeunerHutzler24}, this paper aims to
serve as a sort of blue-print for formalized univalent and
constructive algebraic geometry in the tradition of Voevodsky's
Foundations library \cite{VoevodskyFoundationsLib}.
The paper is structured as follows:

\begin{itemize}
\item In \cref{sec:Background} we give some of the necessary background.
  We comment on our use of HoTT/UF and provide some guidance for
  readers not familiar with type theory or univalence. We also
  introduce the so-called Zariski lattice of a commutative ring. This construction
  has been studied extensively in point-free topology and constructive
  algebra and will play an important role throughout this paper.
\item In \cref{sec:LRDL} we introduce the notion of a locally ringed lattice,
  building on an idea of Coquand.
  We show that the Zariski lattice of a commutative ring $R$ together with
  its structure sheaf is a locally ringed lattice, which can be regarded
  as the constructive spectrum of $R$. We give a simple proof that the
  thus induced spectrum functor is adjoint to the global sections functor.
  We define qcqs-schemes as locally ringed lattices and show that
  scheme morphisms as defined in \cite{ConstrSchemes} are just
  morphisms of locally ringed lattices. We finish the section discussing
  why schemes defined as locally ringed lattices coincide with
  classical qcqs-schemes.
\item In \cref{sec:fopApproach} we describe the functor of points approach
  to qcqs-schemes. We build on the \texttt{Cubical Agda} formalization
  described in \cite{ZeunerHutzler24}, but provide strengthened results.
  We define $\Z$-functors, i.e.\ functors from rings to sets, and the
  Zariski coverage on $\Z$-functors. We introduce compact open subfunctors
  as subobjects of $\Z$-functors that are classified by the Zariski lattice and
  use these to define functorial qcqs-schemes. We prove that compact open
  subfunctors of functorial qcqs-schemes are qcqs-schemes.
\item In \cref{sec:ComparisonThm} we prove a ``comparison theorem''
  for the two notions of qcqs-schemes given in this paper. The proof
  can be seen as an adaption of the comparison theorem of Demazure and
  Gabriel to HoTT/UF,
  making some of the steps more explicit and precise.
  We define the functor of points of a locally ringed lattice and
  the realization of a $\Z$-functor as a locally ringed lattice.
  We show that these functors are adjoint modulo size issues and that
  on (functorial) qcqs-schemes both functors become fully faithful.
  With the help of univalence, we can ignore size issues and obtain
  an equivalence of the respective categories of qcqs-schemes.
\item In \cref{sec:SchemesOfFP} we give an outline of how to adapt the above results to
  schemes of finite presentation over some base ring $R$, without
  having to deal with size issues.  The classical definition is
  adapted to locally ringed lattices and for functorial schemes of finite presentation
  we use Blechschmidt's parsimonious site of
  finitely presented $R$-functors \cite[Sec.\ 16.5]{BlechschmidtPhD}.
  We indicate why in this setting
  the proof of the comparison theorem not only still goes through,
  but can actually be simplified.
\end{itemize}

\section{Background}\label{sec:Background}

This paper aims to give a self-contained introduction to the
constructive theory of schemes from the univalent point of
view. Familiarity with classical algebraic geometry and schemes as
e.g.\ introduced in Hartshorne's classic ``Algebraic Geometry''
\cite{Hartshorne} is not strictly required, although certainly helpful
to gain an intuition for the notions introduced in this paper and for
understanding the motivations to study these notions in the first
place.  An exception is \cref{subsec:classicalChar}, where we compare
our definition using locally ringed lattices to the classical one.
What is assumed as a prerequisite, however, is some commutative
algebra including the basic theory of ideals
and localizations of commutative rings, i.e.\ rings of fractions.\footnote{
  These prerequisites are covered e.g.\ in the first three chapters
  of ``Introduction to Commutative Algebra'' by Atiyah and MacDonald \cite{AtiyahMacDonald}.}
On top of that we will also assume knowledge of category theory,
in particular familiarity with (co-) limits, adjunctions, presheaves,
the Yoneda lemma and Kan extensions.\footnote{
  This is covered in e.g.\ chapters I-V and X of Mac Lane's
  ``Categories for the working mathematician'' \cite{MacLaneCategories}
  or Riehl's ``Category Theory in Context'' \cite{RiehlCatTheory}.}

\subsection{Univalent foundations} 

We work in HoTT/UF and all definitions and results are
written in the informal (or rather semi-formal) style of the ``HoTT-book''
\cite{HoTTBook}. The paper aims to present the key
definitions and ideas with only a modicum of type-theoretic notation
that we quickly want to recapitulate here. For understanding the
details of this paper and the relation to the
\texttt{Cubical Agda} formalizations \cite{ZeunerMortberg23,ZeunerHutzler24},
acquaintance with HoTT/UF is required.\footnote{
  This roughly amounts to the material covered in first three chapters
  of the HoTT-book together with some additional topics like set-quotients,
  truncations and of course the consequences of univalence.}
The notation largely follows the conventions of the HoTT-book
\cite{HoTTBook}, with some exceptions inspired by the syntax of the
\texttt{Agda} proof assistant. We are first and foremost concerned
with types and their terms rather than sets and their elements and thus
write $x:A$ to denote that $x$ is of type $A$, instead of $x\in A$.
Note that this is a stipulation, like introducing a new variable,
rather than a provable statement.
In particular, there is no general type-theoretic equivalent to $x\notin A$.

HoTT/UF is most commonly described as an extension of Martin-Löf's
\emph{dependent type theory} (MLTT) \cite{MartinLof75itt}.
For our purposes, we actually only need rather few additions
to MLTT namely, the univalence axiom and set-quotients,
which can then be used to define propositional truncations.
For a type $A$, a dependent type $B$ over $A$
can be seen as a family of types $B(a)$ for $a:A$.
The type of \emph{dependent functions} $(a:A)\to B(a)$ is the type of
functions $f$ that take an input $a:A$ and return an output $f(a):B(a)$,
i.e.\ functions that vary in their codomain depending on the input.
Note that we are using \texttt{Agda}-syntax for dependent functions
rather than $\Pi$-types in order to avoid confusion with products.
The $\Sigma$-type or type of \emph{dependent pairs} $\Sigma_{a:A}B(a)$ is
the type of pairs $(a,b)$ such that $a:A$ and $b:B(a)$, i.e.\
pairs where the type of the second component depends on the first component.
These are incredible useful to define complex structures. However,
we will mostly introduce new structures informally and avoid
writing $\Sigma$-types in order to increase readability.

Martin-Löf type theory has two notions of equality. Definitional
equality, which we will denote by $\equiv$, is a stipulated equality
that cannot be (dis-) proven.  We follow the HoTT-book and write
$:\equiv$ for definitional equalities that introduce new objects or
notation.  On top of that there are \emph{identity types}. For
$x,y:A$, we may think of the type $x=y$ as the type of proofs that $x$
equals $y$.  Identity types can be rather complex objects in HoTT/UF,
but they let us define the notions of set and proposition internally,
meaning that we can define inside HoTT/UF, which types are
propositions, sets or neither. Propositions are subsingleton types or
types with at most one element. In other words, a proposition is a
type $P$ such that for all $x,y:P$ we have $x=y$. The intuition for
this is that giving an element of $P$ amounts to proving $P$.
A proposition can be given a proof, but no further structure.
There is nothing more to a proof than
establishing the truth of the corresponding proposition,
as any two proofs are regarded as equal.

A type $S$ is called a set if for any $x,y:S$, the type of proofs of equality
$x=y$ is a proposition. The intuition for this definition is rooted in
the discrete nature of sets, where elements can be equal or not,
but do not share any other relation. A subset of $S$ is a function
$S\to\mathsf{Prop}$ from $S$ into the type of propositions. This
should be taken with a grain of salt, as we need to introduce
universes to talk about things like the type of all propositions
without running into paradoxes, see the discussion below. Only for a
subset $X$ of $S$, will we us the notation $x\in X$, denoting the type
of proofs that $x:S$ is in $X$, i.e.\ the proposition that $X$ (seen
as a function) maps $x$ to.  For the $\Sigma$-type of $x:S$
with a proof that $x\in X$, we will use set-theory inspired notation writing
$\{\,x:S\,\vert\,x\in X\,\}$ as in the HoTT-book
\cite[3.5.2]{HoTTBook}.  As the proof of $x\in X$ can often be
ignored, we will sometimes write $x:X$ for introducing and element of
$S$ that belongs to $X$.  Strictly speaking, this is an abuse
notation, identifying $X$ as a function with the above $\Sigma$-type,
but it reduces verbosity.  The univalence axiom actually implies that
certain types are not sets.  For sets $X$ and $Y$, the type $X=Y$ is
not a proposition, but a set and it is in bijection with the set of
bijections from $X$ to $Y$.  This means that the category of sets is a
univalent category and we will discuss below, why this is actually a
helpful feature for our purposes.

In order to describe existential quantification as a proposition,
we need so-called \emph{propositional truncation}, an operation that turns
any type into a proposition, by ``truncating'' or ``squashing'' any structure
on objects. Following the HoTT-book we say that there \emph{merely exists} an $a:A$
such that $P(a)$ if we have an object/proof of the truncated
$\Sigma$-type, which we will write as $\exists_{a:A}P(a)$.
Note that the truncation prevents us from extracting the ``witness'' $a$.
This way choice principles when stated in terms of mere existence
become unprovable in HoTT/UF. This is of course in line with
constructive mathematics. Reasoning constructively about sets
using propositions and truncations in HoTT/UF requires some getting used
to, especially when concepts like set-quotients are involved.\footnote{
  Here, we will not get into the details of how set-quotients
  are defined, but obersve that for sets they work very much as expected.}
Ultimately, it is a rather rewarding approach
that can provide some useful intuition for constructivity issues.

A point in which we depart from the HoTT-book in more than just
notation is the treatment of universes. We do a assume a (potentially
infinite) hierarchy of univalent universes, closed under the usual type
formers, indexed by their \emph{level}: $\mathsf{Type}_0,\mathsf{Type}_1,$ etc.
However, we do not take these universes to be cumulative.
Instead, we assume explicit embeddings
$\mathsf{lift}:\mathsf{Type}_\ell\hookrightarrow\mathsf{Type}_{\ell+1}$
commuting with all the type formers. The reason for this is that we
want our work to be in line with the results of
\cite{ZeunerMortberg23,ZeunerHutzler24} that are already formalized in
\texttt{Cubical Agda}, which does not have cumulative universes.
To remind the reader that we work with non-cumulative universes, we use
\texttt{Agda}-like notation, but we omit levels whenever we are working within a
single universe. For example, we write $\mathsf{Set}$ for the type or category
of sets in a universe $\mathsf{Type}$ at an unspecified level. This type itself
lives in the successor universe, but this can often be ignored.
If size issues do matter, we will always be careful to annotate universe levels.
Note that we are not assuming any form of impredicativity for our universes,
such as Voevodsky's resizing axioms \cite{VoevodskyResizing}. This means that the
type $\mathsf{Prop}_\ell$ of propositions in $\mathsf{Type}_\ell$  lives
in $\mathsf{Type}_{\ell+1}$. Similarly, for a set $S:\mathsf{Type}_{\ell}$ the
type of subsets $S\to\mathsf{Prop}_\ell$ lives in $\mathsf{Type}_{\ell+1}$ as well.

Apart from the basics, we use concepts and results from category
theory in HoTT/UF, as described in chapter 9 of the HoTT-book \cite{HoTTBook}. An
important difference in nomenclature is that by a \emph{category} we
denote what the HoTT-book would call a pre-category
\cite[Def.\ 9.1.1]{HoTTBook}, i.e.\ a structure consisting of a
\emph{type of objects} and \emph{sets of arrows} between any two
objects with identities and composition, satisfying the usual laws.
For a category $\mathcal{C}$, we denote (with some abuse of notation)
its type of objects by $\mathcal{C}$ as well and the type of arrows as
$\catHom{\mathcal{C}}{x}{y}$ for $x,y:\mathcal{C}$.
For example $\mathsf{Set}$ is both the type of sets and
the corresponding category (with functions as arrows) in a given universe.
The basic theory of functors, natural transformations, (co-) limits
and the Yoneda lemma can be developed in a very familiar,
straightforward fashion in HoTT/UF for this notion of category.  We
write functors as if they were functions $F:\mathcal{C}\to\mathcal{D}$
and omit projections for the action on objects or arrows.  In
particular, we have $F(x):\mathcal{D}$ for $x:\mathcal{C}$ and
$F(f):\catHom{\mathcal{D}}{F(x)}{F(y)}$ for
$f:\catHom{\mathcal{C}}{x}{y}$.  Natural transformations between
functors $F$ and $G$ are denoted by $\NatTrans{F}{G}$ and
the category of functors from
$\mathcal{C}$ to $\mathcal{D}$ is denoted by
$\catHom{\mathsf{Fun}}{\mathcal{C}}{\mathcal{D}}$.

Of special interest are so-called \emph{univalent categories}. Those
are categories for which the identity types on objects are canonically
equivalent to types of isomorphisms (defined through inverse arrows).
Arguably, they are the central notion of univalent category theory
and are thus simply called categories in the HoTT-book \cite[Def.\ 9.1.6]{HoTTBook}.
They are, however, a truly novel concept that has no counterpart outside of HoTT/UF.
Certain things that are usually not provable in constructive category theory
do hold for univalent categories in HoTT/UF. If $F:\mathcal{C}\to\mathcal{D}$
is a functor between univalent categories that is fully-faithful and essentially surjective,
we can actually construct an adjoint inverse to $F$
\cite[Lemma 9.4.5 \& 9.4.7]{HoTTBook}.
This will be used in the main \cref{thm:comparisonThm}.

Proving that a category is univalent usually requires the univalence
axiom, or rather the so-called ``structure identity principle'' in combination
with univalence \cite[Sec.\ 9.8]{HoTTBook}. This way, one can
prove that not only the category of sets $\mathsf{Set}$, but also
categories of algebraic structures like commutative rings $\mathsf{CommRing}$ and
distributive lattices $\mathsf{DistLattice}$ are univalent. There are several
versions of the structure identity principle in the literature,\footnote{See \cite{IsoIsEq,HoTTBook,Escardo22,DisplayedCats,DURGs,POPLPaper}.}
so we do not want to go into details here. However, we will
use it to prove that locally ringed lattices form a univalent category in
\cref{prop:isUnivalentLRDL}. Readers not familiar with univalent category
theory can regard this fact together with the aforementioned fact about
functors between univalent categories as a black box,
only required in the last step of proving the comparison theorem.

When working with univalent categories we can only give definitions and proofs
that are invariant under isomorphisms. As mentioned in the introduction,
this seems to fit the approach of the Grothendieck school. It is then not too
surprising that virtually all of the categories appearing in this paper are univalent.
As we saw for sets, however,
objects of a univalent category do not necessarily form a set.
This is not an issue of size but of identity types. The type of objects of a univalent
category is in general a so-called ``homotopy groupoid''. Even though the aim of this paper is
to develop set-level constructive mathematics in HoTT/UF, non-sets will thus
play an important role. The only place in the paper, where this is
actually an issue is \cref{ex:affSchLRDL}, where we define the
structure sheaf of an affine scheme. Here, we do not want to get into
any details and refer the interested reader to \cite{ZeunerMortberg23}.
Especially in view of the results of \cite{ZeunerMortberg23} and this paper,
we would like to argue that
one should regard the occurrence of non-sets
as a feature rather than a bug of univalent foundations of algebraic geometry.

\subsection{The Zariski lattice} 

Both definitions of qcqs-schemes that we will give in this paper
crucially rely on an important notion from constructive algebra, called
the Zariski lattice of a ring. It can be seen as the constructive
counterpart to the Zariski spectrum of a ring. A common starting point for
algebraic geometry is the observation that for a
any ring $R$, the set of prime ideals $\mathfrak{p}\subseteq R$ can
be equipped with a topology, the so-called Zariski topology. One can then
define affine schemes and schemes using this ``spectrum''
$\mathsf{Spec}(R)$.  The Zariski open sets are generated by basic
opens. For $f:R$, the corresponding basic open is given as
\begin{align*}
  D(f)=\big\{\mathfrak{p}~\vert~f\notin\mathfrak{p}~\big\}
\end{align*}
From a constructive point of view this definition is not really workable.
Point-set topology is already in itself inherently non-constructive and in the
case of the Zariski spectrum of some general $R$, it is impossible to prove that there are any points
(prime ideals) without invoking Zorn's lemma or some other choice principle.
One can, however, restrict to (quasi-) compact open subsets, which form a distributive lattice
with finite set-theoretic union and intersection, the so-called Zariski lattice
$\ZL$.
Joyal observed that $\mathcal{L}_R$ is described by a universal property,
which is stated in terms of so-called \emph{supports} \cite{JoyalZarLat}.
Note that in this paper we will always work with bounded distributive lattices,
i.e.\ distributive lattices with a top and bottom element,
and commutative rings with $1$.
\begin{definition}
  Let $R$ be a commutative ring and $L$ be a distributive lattice.
  A map $d:R\to L$ is called a \emph{support} of $L$ if
  \begin{enumerate}
  \item $d(0)=0$ and $d(1)=1$
  \item $d(xy)=d(x)\wedge d(y)$ for all $x,y: R$
  \item $d(x+y)\leq d(x) \vee d(y)$ for all $x,y: R$
  \end{enumerate}
\end{definition}

Basic opens are always compact and the map $D:R\to\ZL$ is a support.
In fact, it is a universal support in the following sense:
For any support $d:R\to L$ there is a unique lattice homomorphism
such that the following diagram commutes
\[
\begin{tikzcd}
  & R \arrow[dl,"D"']\arrow[dr,"d"] & \\
  \mathcal{L}_R \arrow[rr,dashed, "\exists!"'] && L
\end{tikzcd}
\]
This means that we can define $\ZL$ as the lattice generated by formal elements $D(f)$,
modulo the equations given by the support conditions.
There is also a more direct but still point-free way to construct the Zariski lattice.
This is the approach taken by Espa\~{n}ol \cite{Espanol83}
and Coquand and Lombardi \cite{CoquandLombardiKrullDim}.\footnote{The term Zariski lattice
  was coined  in the paper by Coquand and Lombardi.}
It allows one to prove another important property of $\ZL$.
The formalization of the Zariski lattice described in
\cite{ZeunerMortberg23} adopts this approach to the setting of \texttt{Cubical Agda},
i.e.\ to a dependent type theory with set-quotients but without impredicativity
in the form of propositional resizing.

Classically, the Zariski open sets of $\mathsf{Spec}(R)$ are in bijection with the
radical ideals of $R$. Recall that an ideal $I\subseteq R$ is called
a radical ideal if $I=\sqrt{I}$, where
$\sqrt{I}:\equiv\{\, x: R \;\vert\; \exists_{n:\N}~x^n\in I\,\}$.
Predicatively, the type of ideals of $R:\mathsf{CommRing}_\ell$
lives in $\mathsf{Type}_{\ell+1}$. So, even though this gives us a point-free
description of Zariski opens, it also introduces size issues or requires us
to assume resizing. Fortunately, the situation is better behaved for (quasi-) compact opens.
These are in bijection with radicals of finitely generated ideals, i.e.\
ideals of the form $\sqrt{\langle f_1,...,f_n\rangle}$, where
$f_1,...,f_n:R$. Moreover, union and intersection of (quasi-) compact opens,
i.e.\ join and meet of $\ZL$, correspond to addition and multiplication
of finitely generated ideals. In other words, we can define $\ZL$ as follows.

\begin{definition}
  The \emph{Zariski lattice} $\ZL$ of a ring $R$ has as elements
  lists of generators $f_1,...,f_n:R$
  modulo taking the radical of the ideal generated by the $f_i$.
  This means that lists of generators $f_1,...,f_n:R$ and $g_1,...,g_m:R$
  are in the same equivalence class of $\ZL$, if
  $\tyPath{\sqrt{\langle f_1,\dots,f_n\rangle}}{\sqrt{\langle g_1,\dots,g_m\rangle}}$.\footnote{
    Technically, the identity type $I=J$ of two (radical) ideals lives in $\mathsf{Type}_{\ell+1}$,
    but in this particular instance we can replace it with a small proposition, as
    $\tyPath{\sqrt{\langle f_1,\dots,f_n\rangle}}{\sqrt{\langle g_1,\dots,g_m\rangle}}$
    is equivalent to all the $f_i$ being in ${\sqrt{\langle g_1,\dots,g_m\rangle}}$ and vice versa.}
  The equivalence class of the generators
  $f_1,...,f_n:R$ is denoted by $D(f_1,...,f_n):\ZL$. Join and meet are given by
  \begin{align*}
    D(f_1,...,f_n)\vee D(g_1,...,g_m)&:\equiv D(f_1,...,f_n,g_1,...,g_m) \\
    D(f_1,...,f_n)\wedge D(g_1,...,g_m)&:\equiv D(f_1g_1,...,f_ig_j,...,f_ng_m)
  \end{align*}
\end{definition}

The map $D:R\to\ZL$, sending $f:R$ to $D(f)$, the equivalence class of the radical
of the principal ideal generated by $f$, $\sqrt{\langle f\rangle}$, is a support.
It is straightforward to show that $\ZL$ and $D$ thus defined, satisfy the universal
property of the Zariski lattice. However, we can also prove the following characterization
of the induced order on $\ZL$:
\begin{align*}
  D(f_1,...,f_n)\leq D(g_1,...,g_m) ~&\Leftrightarrow~ {\sqrt{\langle f_1,\dots,f_n\rangle}}\subseteq{\sqrt{\langle g_1,\dots,g_m\rangle}} \\
  &\Leftrightarrow~ f_i\in{\sqrt{\langle g_1,\dots,g_m\rangle}}~\text{for}~i=1,...,n
\end{align*}
This fact\footnote{This fact is called the ``formal  Hilbert Nullstellensatz''
  in \cite{ConstrSchemes}.}
and the universal property let us prove all the relevant properties of the
Zariski lattice without having to appeal to prime ideals.

If $\varphi:\mathsf{Hom}(A,B)$ is a morphism of rings, $D\circ\varphi:A\to\ZarLat{B}$ is a
support and we get an induced morphism between Zariski lattices
\[
\begin{tikzcd}
  & A \arrow[dl,"D"']\arrow[dr,"D\circ\varphi"] & \\
  \ZarLat{A} \arrow[rr,dashed, "\exists!"'] && \ZarLat{B}
\end{tikzcd}
\]
which we will denote by $\varphi^{\mathcal{L}}$. Classically, this lattice morphism
takes pre-images of (quasi-) compact opens under the continuous map
$\mathsf{Spec}(B)\to\mathsf{Spec}(A)$ induced by $\varphi:\mathsf{Hom}(A,B)$.
For $f_1,...,f_n:R$, we have
$\varphi^{\mathcal{L}}\big(D(f_1,\dots,f_n)\big)=D(\varphi(f_1),\dots,\varphi(f_n))$.

An important fact in classical algebraic geometry is that $D(f)$ (as
a subspace of $\mathsf{Spec}(R)$) is homeomorphic to $\mathsf{Spec}(\locEl{R}{f})$.
The result carries over to the Zariski lattice,
giving a bijection between (quasi-) compact opens of the two spaces.
We want to finish this section on the Zariski lattice by
giving a point-free proof of this fact that only uses the universal property.
The result will be of importance later. For the remainder of this section
we fix an element $f:R$. Recall that the ring $\locEl{R}{f}$ is a special
case of a localization of $R$ \cite[Ch.\ 3]{AtiyahMacDonald}.

\begin{definition}
  The \emph{localization of} $R$ \emph{away from} $f$ is the ring
  $\locEl{R}{f}$ of fractions $\nicefrac{r}{f^n}$ where
  $r:R$ and the denominator is a power of $f$.
  Equality of two fractions is given by: $\tyPath{\nicefrac{r}{f^n}}{\nicefrac{r'}{f^m}}$
  if and only if there merely exists a $k\geq 0$ such that $\tyPath{rf^{k+m}}{r'f^{k+n}}$.
  The universal property of the localization away from $f$ can be stated as:
  For any ring $A$ with a homomorphism $\varphi:\mathsf{Hom}(R,A)$ such that
  $\varphi(f)\in A^\times$ (i.e.\ $\varphi(f)$ is a unit/invertible),
  there is a unique $\psi:\mathsf{Hom}(\locEl{R}{f},A)$ making
  the following diagram commute
  \[
  \begin{tikzcd}
    &R\arrow[ld,"\nicefrac{\_}{1}"']\arrow[rd,"\varphi"]&\\
    \locEl{R}{f}\arrow[rr,dashed,"\exists!~\psi",swap]&&A
  \end{tikzcd}
  \]
  where $\nicefrac{\_}{1}:\mathsf{Hom}(R,\locEl{R}{f})$ is the canonical morphism
  mapping $r:R$ to the fraction $\nicefrac{r}{1}$. In other words, $\locEl{R}{f}$
  is the initial $R$-algebra where $f$ becomes invertible.
\end{definition}

\begin{lemma}\label{lem:isoLocDownSet}
  For $f:R$ we have an isomorphism of lattices
  $\psi_f:\ZarLat{\locEl{R}{f}} ~\cong~ \downarrow\! D(f)$
  between the Zariski lattice of the localization $\locEl{R}{f}$
  and the elements of $\ZL$ below $D(f)$, i.e\
  the down-set $\downarrow\! D(f):\equiv~\tySigmaNoParen{u}{\ZarLat{R}}{u\leq D(f)}$.
\end{lemma}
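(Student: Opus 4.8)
The plan is to exhibit mutually inverse lattice homomorphisms
$\psi_f\colon\ZarLat{\locEl{R}{f}}\to\;\downarrow\! D(f)$ and
$\varphi_f\colon\;\downarrow\! D(f)\to\ZarLat{\locEl{R}{f}}$, using only the universal
properties of the Zariski lattice and of $\locEl{R}{f}$ together with the order
characterization of $\ZarLat{R}$. First I would observe that
$\downarrow\! D(f):\equiv\tySigmaNoParen{u}{\ZarLat{R}}{u\leq D(f)}$ is itself a
bounded distributive lattice: it is closed under the meet and join of $\ZarLat{R}$
(if $u,v\leq D(f)$ then $u\vee v\leq D(f)$), has bottom $0$ and top $D(f)$, and
inherits distributivity — hence a legitimate target for a support.

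To construct $\psi_f$ I would define $d\colon\locEl{R}{f}\to\;\downarrow\! D(f)$ by
$d(\nicefrac{r}{f^n}):\equiv D(r)\wedge D(f)$ (equivalently $D(rf)$), which indeed lies
below $D(f)$, and check that $d$ is a support. Well-definedness is the first point: if
$\nicefrac{r}{f^n}=\nicefrac{r'}{f^m}$ then $rf^{k+m}=r'f^{k+n}$ for some $k$, so
meeting $D(rf^{k+m})=D(r'f^{k+n})$ with $D(f)$ and using
$D(f^{j})\wedge D(f)=D(f^{j+1})=D(f)$ gives $D(r)\wedge D(f)=D(r')\wedge D(f)$. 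The
support axioms are then short: axiom (1) is immediate (the top of $\downarrow\! D(f)$ is
$D(f)$); axiom (2) uses $D(rs)=D(r)\wedge D(s)$ and idempotency of meet; and axiom (3)
follows from $D(rf^m+sf^n)\leq D(rf^m)\vee D(sf^n)$ together with distributivity in
$\ZarLat{R}$. By the universal property of $\ZarLat{\locEl{R}{f}}$ this yields a unique
lattice homomorphism $\psi_f$ with $\psi_f\big(D(\nicefrac{r}{f^n})\big)=D(r)\wedge D(f)$.

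For $\varphi_f$ I would take the restriction to the sublattice $\downarrow\! D(f)$ of the
induced morphism $\iota^{\mathcal{L}}\colon\ZarLat{R}\to\ZarLat{\locEl{R}{f}}$, where
$\iota:\equiv\nicefrac{\_}{1}\colon\mathsf{Hom}(R,\locEl{R}{f})$ is the canonical map.
This preserves meets, joins and $0$ for free, and it sends the top element $D(f)$ to
$D(\nicefrac{f}{1})=1$ because $\nicefrac{f}{1}$ is a unit in $\locEl{R}{f}$ and $D$ maps
units to the top element (if $uu^{-1}=1$ then $D(u)\wedge D(u^{-1})=D(1)=1$, so $D(u)=1$).
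Hence $\varphi_f\colon\;\downarrow\! D(f)\to\ZarLat{\locEl{R}{f}}$ is a homomorphism of
bounded distributive lattices.

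The last step is to verify that the two composites are identities. For
$\varphi_f\circ\psi_f=\id{\ZarLat{\locEl{R}{f}}}$ I would note that both sides are lattice
endomorphisms of $\ZarLat{\locEl{R}{f}}$, so by the universal property it is enough that
they agree after precomposition with $D\colon\locEl{R}{f}\to\ZarLat{\locEl{R}{f}}$; and
one computes $(\varphi_f\circ\psi_f)\big(D(\nicefrac{r}{f^n})\big)=\varphi_f\big(D(r)\wedge D(f)\big)=D(\nicefrac{r}{1})$
(since $\nicefrac{f}{1}$ is a unit), while also $D(\nicefrac{r}{f^n})=D(\nicefrac{r}{1})$
(since $\nicefrac{1}{f^n}$ is a unit), so the two agree on every $D(\nicefrac{r}{f^n})$. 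For
$\psi_f\circ\varphi_f$ on $\downarrow\! D(f)$, every $u:\;\downarrow\! D(f)$ is a finite join
of elements $D(g)\wedge D(f)$ with $g:R$ (write $u=D(g_1,\dots,g_n)\leq D(f)$, so
$u=u\wedge D(f)=\bigvee_i\big(D(g_i)\wedge D(f)\big)$), so since both composites preserve
joins it suffices to check
$\psi_f\big(\varphi_f(D(g)\wedge D(f))\big)=\psi_f\big(D(\nicefrac{g}{1})\big)=D(g)\wedge D(f)$.
I expect the main obstacle to be purely bookkeeping around fractions and radicals — the
well-definedness of $d$ and the repeated correct use of ``$D$ sends units to the top'' —
rather than anything conceptually hard; everything reduces to the universal property and the
order characterization of $\ZarLat{R}$. (A more conceptual alternative would chain natural
bijections: supports $\locEl{R}{f}\to L$ $\leftrightarrow$ supports $R\to L$ with $f\mapsto1$
$\leftrightarrow$ lattice maps $\ZarLat{R}\to L$ with $D(f)\mapsto1$ $\leftrightarrow$ lattice
maps $\downarrow\! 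D(f)\to L$, all natural in $L$, and conclude by Yoneda.)
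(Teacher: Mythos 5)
Your proposal is correct and follows essentially the same route as the paper's proof: the same support $d(\nicefrac{r}{f^n}):\equiv D(r)\wedge D(f)$ induces $\psi_f$ via the universal property, the inverse is the restriction of $(\nicefrac{\_}{1})^{\mathcal{L}}$ to $\downarrow\! D(f)$ (a bounded lattice map because $\nicefrac{f}{1}$ is a unit), and both composites are identified using the universal properties. The only cosmetic difference is in the direction $\psi_f\circ\varphi_f=\mathrm{id}$: the paper factors $\_\wedge D(f)$ through $\psi_f$ by the uniqueness clause of the universal property of $\ZarLat{R}$ and then evaluates at $u\leq D(f)$, whereas you check agreement on the join-generators $D(g)\wedge D(f)$ of $\downarrow\! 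D(f)$ directly.
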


\begin{proof}
  Consider the map $d:\locEl{R}{f}\to\,{\downarrow\!{D(f)}}$ given by
  $d(\nicefrac{r}{f^n}):\equiv~ D(r)\wedge D(f)$. This defines a support and thus induces
  $\psi_f:\ZarLat{\locEl{R}{f}}\to\,{\downarrow\! D(f)}$. For the inverse direction
  take the support $D(\nicefrac{\_}{1}):R\to\ZarLat{\locEl{R}{f}}$
  and the induced map $(\nicefrac{\_}{1})^{\mathcal{L}}:\ZL\to\ZarLat{\locEl{R}{f}}$.
  Restricting this to $\downarrow\! D(f)$,
  gives us the inverse $\psi_f^{-1}$.
  Note that this is indeed a lattice morphism, since
  the top element of $\downarrow\! D(f)$ is $D(f)$
  and $\psi_f^{-1}(D(f))=D(\nicefrac{f}{1})=D(\nicefrac{1}{1})$
  as $\nicefrac{f}{1}$ is a unit in $\locEl{R}{f}$.

  In order to prove that the two maps are inverse to each other,
  we claim that the map $\_\wedge D(f):\ZL\to\,{\downarrow\! D(f)}$
  factors through $\psi_f$. In particular, we claim that the following diagram commutes
  \[\begin{tikzcd}
          R && {\locEl{R}{f}} \\
          \ZL && {\ZarLat{\locEl{R}{f}}} && {\downarrow D(f)}
          \arrow["{d}", from=1-3, to=2-5]
          \arrow["(\nicefrac{\_}{1})^{\mathcal{L}}", from=2-1, to=2-3]
          \arrow["{\psi_f}", from=2-3, to=2-5]
          \arrow["{\_\wedge D(f)}"', curve={height=18pt}, from=2-1, to=2-5]
          \arrow["D"', from=1-3, to=2-3]
          \arrow["{\nicefrac{\_}{1}}", from=1-1, to=1-3]
          \arrow["D"', from=1-1, to=2-1]
  \end{tikzcd}\]
  This follows from the universal property of $\ZL$,
  which gives a unique $\phi:\ZL\to\,\downarrow\! D(f)$,
  such that ${\phi\circ D}={d(\nicefrac{\_}{1})}$.
  But both $\_\wedge D(f)$ and $\psi_f\circ(\nicefrac{\_}{1})^{\mathcal{L}}$
  satisfy the same commutativity condition as $\phi$, which implies
  that they have to be the same lattice morphism.
  For $u\leq D(f)$ we therefore get
  $\big(\psi_f\circ\psi_f^{-1}\big)(u) = u\wedge D(f) =u$.

  For the converse direction, observe that
  $\psi_f^{-1}\circ d=\psi_f^{-1}\circ\psi_f\circ D$ is a support.
  Consequently, there is a unique morphism such that the following commutes:
  \[
  \begin{tikzcd}
    & \locEl{R}{f} \arrow[dl,"D"']\arrow[dr,"\psi_f^{-1}\circ d~=~\psi_f^{-1}\circ\psi_f\circ D"] & \\
    \ZarLat{\locEl{R}{f}} \arrow[rr,dashed, "\exists!"'] && \ZarLat{\locEl{R}{f}}
  \end{tikzcd}
  \]
  Obviously, $\psi_f^{-1}\circ\psi_f$ satisfies this, but also the identity morphism as
  \begin{align*}
    \big(\psi_f\circ\psi_f^{-1}\big)\big(D(\nicefrac{r}{f^n})\big) = (\nicefrac{\_}{1})^{\mathcal{L}}\big(D(rf)\big) = D(\nicefrac{rf}{1}) = D(\nicefrac{r}{f^n})
  \end{align*}
  since $\nicefrac{f}{1}$ is a unit and $D$ is a support.
  This finishes the proof that $\psi_f$ and $\psi_f^{-1}$ are inverse.
\end{proof}

\section{Spectral schemes as locally ringed lattices}\label{sec:LRDL}
In the previous section we claimed that the Zariski lattice should
be used as the point-free, constructive spectrum of a ring.
Much like in the classical case, a ring is not determined by its
Zariski lattice. Indeed, for any ring $R$ we have
$\ZL\cong\ZarLat{\nicefrac{R}{\mathfrak{N}}}$,
where $\mathfrak{N}=\sqrt{\langle 0\rangle}$
is the nilradical of $R$. In order to recover the original ring, one
needs to equip the Zariski spectrum or lattice with its
\emph{structure sheaf}.  This construction was used as the point of
departure by Coquand, Lombardi and Schuster to develop
``Spectral Schemes as Ringed Lattices'' \cite{ConstrSchemes}.
If one wants to recover ring morphisms from their action on Zariski
lattice and structure sheaf, one needs to introduce some additional
structure making the constructive spectrum a locally ringed
lattice. In this section we want to redevelop key results of
\cite{ConstrSchemes} for ``spectral schemes'' as \emph{locally} ringed lattices.
This gives a rather neat and point-free account of these spectral schemes.
From a classical perspective these are really just qcqs-schemes,
as we will see at the end of this section.
In particular, this lets us relate the lattice theoretic approach to schemes
with the functor of points approach, as we will show in this paper.

\subsection{From ringed lattices to locally ringed lattices}\label{subsec:RDLtoLRDL}

Let us begin by repeating the definition of a ringed lattice
as given by Coquand, Lombardi and Schuster \cite{ConstrSchemes}.

\begin{definition}\label{def:RDL}
  Let $L$ be a distributive lattice and $\mathcal{F}:L^{op}\to\mathsf{CommRing}$
  a ring-valued presheaf on $L$ (seen as a poset category).
  We say that $\mathcal{F}$ is a sheaf if
  for all $x,u_1,\dots,u_n: L$ with $x=\textstyle\bigvee_{i=1}^n u_i$
  we get a canonical equalizer diagram
  \begin{align*}
    \mathcal{F}(x) \to \prod_{i=1}^n\mathcal{F}(u_i)\rightrightarrows\prod_{i,j}\mathcal{F}(u_i \wedge u_j)
  \end{align*}
  A \emph{ringed lattice lattice} is a distributive lattice with a sheaf of rings.
  Given ringed lattices $(L,\mathcal{F})$ and $(M,\mathcal{G})$,
  a morphism of ringed lattices $\pi:(L,\mathcal{F})\to (M,\mathcal{G})$
  is a pair $\pi:\equiv(\pi^*,\pi^\sharp)$ consisting of a lattice homomorphism
  $\pi^*:L\to M$ and a natural transformation
  $\pi^\sharp:\NatTrans{\mathcal{F}}{\pi_*\mathcal{G}}$,
  where $\pi_*\mathcal{G}:L^{op} \to \mathsf{CommRing}$ is the
  sheaf defined by $\pi_*\mathcal{G}(u):\equiv \mathcal{G}(\pi^*(u))$.
  The category of ringed lattices and their morphisms is denoted $\mathsf{RDL}$.
\end{definition}

The motivation for this definition is based on the classical fact that distributive lattices
correspond to the compact open subsets of so-called coherent or spectral spaces.
An in-depth discussion is included in \cref{subsec:classicalChar}.
Building on the definition of ringed lattice, and with the correspondence
to ringed spectral spaces in mind, the question arises whether
one can define locally ringed lattices corresponding to locally ringed spectral
spaces without appealing to point-set topology or the notion of stalk of a sheaf.

In the context of formal topology,
it was observed by Schuster \cite{SchusterZariski} that
sheaves of local rings can be characterized in a point-free way
by using supports and a second notion, which we will call
invertibility suprema or invertibility opens.
The notion of invertibility opens was used by Hakim
\cite[Sec.\ III.2]{Hakim1972} to define locally ringed \emph{topoi}.
We want to apply these notions to ringed lattices,
following a suggestion by Thierry Coquand.
Coquand observed that one can always define an ``invertibility support''
on a spectral scheme and use it to characterize local morphisms.
We will discuss why this works classically in \cref{subsec:classicalChar}.

\begin{definition}\label{def:InvSup}
  Let $P$ be a poset and $\mathcal{F}:P^{op}\to \mathsf{CommRing}$ a ring-valued presheaf.
  For an object $u: P$ and a section $s:\mathcal{F}(u)$, an element $u_s\leq u$ is
  called an \emph{invertibility supremum} of $s$, if for all $w\leq u$ we have that
  \begin{align*}
    w\leq u_s \quad\Leftrightarrow\quad \rest{s}{w}~\in~\mathcal{F}(w)^\times
  \end{align*}
  In other words, $u_s$ is the largest element smaller than $u$ where the
  restriction of $s$ becomes invertible.

  A dependent function  $\mathcal{D}:(u:P)\to \mathcal{F}(u)\to ~\downarrow\! u$,
  is called an \emph{invertibility map} if $\mathcal{D}_u(s)$ is an invertibility
  supremum of $s$ for all $u: P$ and $s:\mathcal{F}(u)$.
\end{definition}

\begin{remark}\label{rem:isPropInvSup}
  If $u_s$ is an invertibility supremum of $s:\mathcal{F}(u)$, for  $u: P$,
  then clearly $\rest{s}{u_s}\in\mathcal{F}(u_s)^\times$.
  In particular, by anti-symmetry, invertibility suprema are uniquely determined if
  they exist and we will henceforth speak of \emph{the} invertibility supremum of $s$.
  In univalent terms, the type of invertibility suprema of $u$ and $s$ is a proposition.
  It follows that for a poset $P$ and presheaf $\mathcal{F}$, the type of invertibility
  maps is a proposition as well.
\end{remark}

\begin{definition}\label{def:LRDL}
  A \emph{locally ringed lattice} is a distributive lattice $L$ with a sheaf of rings
  $\mathcal{F}$ and an invertibility map
  $\mathcal{D}:(u:L)\to \mathcal{F}(u)\to ~\downarrow\! u$,
  such that $\mathcal{D}_u$ is a support for all $u : L$.
  We call $\mathcal{D}$ the \emph{invertibility support} on $(L,\mathcal{F})$.

  A morphism of locally ringed lattices
  is a morphism of ringed lattices
  $\pi:\equiv(\pi^*,\pi^\sharp):(L,\mathcal{F})\to(M,\mathcal{G})$
  such that for all $u: L$ and $s:\mathcal{F}(u)$:
  \begin{align*}
    \pi^*\big(\Support{u}{s}\big)\;=\;\Support{\pi^*(u)}{\pi^\sharp(s)}
  \end{align*}
  The category of locally ringed distributive lattices will be denoted by \textsf{LRDL}.
\end{definition}

\begin{remark}\label{rem:dropSupportLRDL}
  Although invertibility supports are given as additional structure on ringed lattices,
  we will write locally ringed lattices as pairs $(L,\mathcal{F})$ and always denote the
  implicit invertibility support by $\mathcal{D}$. It will always be clear
  from context which locally ringed lattice an invertibility support corresponds to.
  This notation is motivated by the fact that invertibility supports
  are uniquely determined (if they exist), as explained in \cref{rem:isPropInvSup}.
\end{remark}

\begin{remark}
  If $\pi:(L,\mathcal{F})\to(M,\mathcal{G})$ is a morphism of
  ringed lattices, then for $u: L$ and $s:\mathcal{F}(u)$ one always
  has $\pi^*\big(\Support{u}{s}\big)\leq\Support{\pi^*(u)}{\pi^\sharp(s)}$,
  as by the naturality of  $\pi^\sharp$:
  \begin{align*}
    \rest{\pi^\sharp(s)}{\pi^*(\Support{u}{s})} =\pi^\sharp\big(\rest{s}{\Support{u}{s}}\big)~\in~\mathcal{G}\big(\pi^*(\Support{u}{s})\big)^\times
  \end{align*}
  $\pi$ is thus a morphism of locally ringed lattices if and only if the converse
  inequality holds for all $u$ and $s$.
\end{remark}

Nothing in the definition of locally ringed lattices depends on univalent features
and the notion can and should be studied
in other settings than HoTT/UF, such as Bishop-style constructive mathematics.
What is, however, important for our purposes, is that it is a structurally
well-behaved notion in the sense that for locally ringed lattices
``isomorphisms are equalities''. This will actually be a key fact in the proof
of our main comparison theorem.

\begin{proposition}\label{prop:isUnivalentLRDL}
  $\mathsf{LRDL}$ is a univalent category.
\end{proposition}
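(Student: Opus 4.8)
The plan is to use the structure identity principle (SIP) for the category $\mathsf{LRDL}$, following the general recipe outlined in the Background section. The key observation is that the data of a locally ringed lattice $(L,\mathcal{F})$ decomposes into layers, each of which is already known to be univalent or is a proposition, so that univalence propagates through the construction. First I would recall that the category of distributive lattices $\mathsf{DistLattice}$ is univalent, i.e.\ for distributive lattices $L, M$ the type $L = M$ is equivalent to the type of lattice isomorphisms $L \cong M$. This is the base case, established via the SIP for algebraic structures exactly as for $\mathsf{CommRing}$.

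Next I would handle the sheaf of rings. A ring-valued presheaf on $L$ (as a poset category) is a functor $\mathcal{F}:L^{op}\to\mathsf{CommRing}$, and since $\mathsf{CommRing}$ is a univalent category, the functor category $\catHom{\mathsf{Fun}}{L^{op}}{\mathsf{CommRing}}$ is univalent as well (functor categories into a univalent category are univalent, \cite[Thm.\ 9.2.5]{HoTTBook}). The sheaf condition is a proposition: for each $x$ and each finite cover $x = \bigvee_i u_i$, ``the canonical cone is an equalizer'' is a property, not structure (equalizers are unique up to unique isomorphism, and being an equalizer is detected by a universal property which, for sets, is propositional). Therefore passing from presheaves to sheaves does not disturb univalence — it amounts to taking a full subcategory cut out by a proposition. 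Combining this with the base case, the category of ringed lattices $\mathsf{RDL}$ is univalent: an object is a pair $(L,\mathcal{F})$ with $L$ a distributive lattice and $\mathcal{F}$ a sheaf over it, which is a displayed-category/$\Sigma$-type situation where both the base and the fibers are univalent, and a morphism $(\pi^*,\pi^\sharp)$ consists of a lattice map together with a natural transformation, matching the displayed-category morphism data. The SIP for this kind of dependent sum of univalent pieces then gives univalence of $\mathsf{RDL}$; concretely, an isomorphism $(L,\mathcal{F})\cong(M,\mathcal{G})$ is a lattice isomorphism $\pi^*$ together with a natural isomorphism $\mathcal{F}\cong \pi_*\mathcal{G}$, and univalence of the constituent categories turns this into an identity.

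Finally I would add the invertibility support. By \cref{rem:isPropInvSup}, for a fixed poset $P$ and presheaf $\mathcal{F}$ the type of invertibility maps is a \emph{proposition}; likewise ``$\mathcal{D}_u$ is a support for all $u$'' is a proposition. Hence a locally ringed lattice is a ringed lattice equipped with \emph{property}, not extra structure, so $\mathsf{LRDL}$ is the full subcategory of $\mathsf{RDL}$ on those objects admitting an invertibility support that is a support — and a full subcategory of a univalent category cut out by a predicate on objects is again univalent. The only subtlety is the morphism condition in \cref{def:LRDL}: a morphism of locally ringed lattices is a morphism of ringed lattices satisfying $\pi^*(\Support{u}{s}) = \Support{\pi^*(u)}{\pi^\sharp(s)}$ for all $u,s$. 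But this too is a proposition (an equality in a distributive lattice, which is a set, universally quantified), so $\catHom{\mathsf{LRDL}}{(L,\mathcal{F})}{(M,\mathcal{G})}$ is a subtype of $\catHom{\mathsf{RDL}}{(L,\mathcal{F})}{(M,\mathcal{G})}$; in particular an $\mathsf{LRDL}$-isomorphism has underlying $\mathsf{RDL}$-isomorphism, and conversely any $\mathsf{RDL}$-isomorphism between locally ringed lattices automatically satisfies the morphism condition in both directions (one inequality is automatic by the Remark following \cref{def:LRDL}, and applying it to the inverse gives the other). Thus $\mathsf{LRDL}\hookrightarrow\mathsf{RDL}$ is fully faithful and isomorphism-reflecting on a univalent category, which yields univalence of $\mathsf{LRDL}$.

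The step I expect to require the most care is assembling the SIP for $\mathsf{RDL}$ cleanly — i.e.\ verifying that ``ringed lattice'' is genuinely a displayed category over $\mathsf{DistLattice}$ whose fibers are (equivalent to) the univalent sheaf categories, with morphism data matching up, so that one of the standard displayed-category SIP lemmas applies verbatim. Everything after that is bookkeeping with propositions. One mild point worth spelling out is that the sheaf condition, the invertibility-map condition, and the locality condition on morphisms all land in $\mathsf{hProp}$ because distributive lattices and commutative rings are sets; this is what lets us treat the extra conditions as subtype/full-subcategory operations rather than as new structure.
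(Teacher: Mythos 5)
Your proof follows essentially the same route as the paper: a layered (displayed) SIP argument in which distributive lattices are univalent, sheaves over them form a univalent layer (functor category into $\mathsf{CommRing}$ plus the propositional sheaf condition), and the invertibility support is propositional data whose canonical morphism condition is the locality equation. One correction of wording: $\mathsf{LRDL}$ is \emph{not} a full subcategory of $\mathsf{RDL}$, and the forgetful functor is faithful but not full, since a ringed-lattice morphism between locally ringed lattices need not be local (cf.\ \cref{lem:locAffMorEquiv}); fortunately your argument only needs fullness on \emph{isomorphisms}, which you establish correctly by applying the automatic inequality to the inverse, and that, together with the propositionality of the invertibility-support data, is exactly what transfers univalence from $\mathsf{RDL}$ to $\mathsf{LRDL}$ (and is what the paper records afterwards as the corollary that the forgetful functor reflects isomorphisms).
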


\begin{proof}
  Instead of proving that locally ringed lattices and their isomorphisms
  are a ``standard notion of structure'' \cite[Def.\ 9.8.1]{HoTTBook} directly,
  it is easier to use a ``displayed'' approach \cite{DisplayedCats,DURGs}.\footnote{
    The \texttt{agda/cubical}-library uses the approach of \cite{DURGs} and supports
    some automation for proving that structures are univalent.
    An earlier version described in \cite{POPLPaper} was based on
    \cite{Escardo22} that also has a modular approach to building univalent structures,
    albeit not a displayed one. Formalizing locally ringed lattices and proving them univalent
    could figure as an interesting test case for the proof automation tools,
    as it involves two layers of displayedness.}
  It is not hard to show that bounded distributive lattices with
  lattice morphisms form a univalent category and lattice sheaves with
  natural transformations form a univalent displayed category on
  lattices. Finally, invertibility supports, with their canonical
  notion of (displayed) morphism being exactly that of a morphism of
  locally ringed lattices, form a univalent displayed category on
  ringed lattices.
\end{proof}

\begin{corollary}
  The forgetful functor $\mathcal{U}$ from locally ringed lattices to ringed lattices
  reflects isomorphisms.
\end{corollary}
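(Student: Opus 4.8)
The plan is to give a direct, purely $1$-categorical argument; despite its placement, this corollary needs no univalent input beyond what is used to state \cref{def:LRDL}. Two preliminary observations organize the proof. First, by \cref{def:LRDL} a morphism of $\mathsf{LRDL}$ is a morphism of $\mathsf{RDL}$ satisfying the extra condition $\pi^*\big(\Support{u}{s}\big)=\Support{\pi^*(u)}{\pi^\sharp(s)}$, and this condition is a proposition: it is a family, indexed over $u$ and $s$, of equalities in the set of elements of a distributive lattice. Hence on each hom-set $\mathcal{U}$ restricts to an embedding $\catHom{\mathsf{LRDL}}{X}{Y}\hookrightarrow\catHom{\mathsf{RDL}}{\mathcal{U}X}{\mathcal{U}Y}$; in particular $\mathcal{U}$ is faithful. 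Second, by \cref{rem:isPropInvSup} the invertibility support of a ringed lattice is unique if it exists, so forgetting it keeps only a mere property of the underlying ringed lattice. With these in hand, reflecting isomorphisms reduces to one lifting problem: given $\pi\colon(L,\mathcal{F})\to(M,\mathcal{G})$ that underlies a morphism of $\mathsf{LRDL}$ and is an isomorphism in $\mathsf{RDL}$, with inverse $\rho=(\rho^*,\rho^\sharp)$, I would show that $\rho$ again satisfies the condition of \cref{def:LRDL}. Granting that, $\rho$ lifts to a unique morphism $\rho'$ of $\mathsf{LRDL}$ with $\mathcal{U}(\rho')=\rho$, and since $\mathcal{U}(\rho'\circ\pi)=\mathrm{id}=\mathcal{U}(\pi\circ\rho')$, faithfulness of $\mathcal{U}$ yields $\rho'\circ\pi=\mathrm{id}$ and $\pi\circ\rho'=\mathrm{id}$, so $\pi$ is an isomorphism in $\mathsf{LRDL}$.

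For the lifting step — the only part that carries content — I would first unfold what it means for $\rho$ to be inverse to $\mathcal{U}(\pi)$ in $\mathsf{RDL}$: on the underlying lattices, $\pi^*\circ\rho^*=\mathrm{id}_M$ and $\rho^*\circ\pi^*=\mathrm{id}_L$; on sections, $\pi^\sharp_{\rho^*(u)}\circ\rho^\sharp_u=\mathrm{id}_{\mathcal{G}(u)}$ for every $u\colon M$. Now fix $u\colon M$ and $s\colon\mathcal{G}(u)$ and put $v:\equiv\rho^*(u)$, $t:\equiv\rho^\sharp_u(s)\colon\mathcal{F}(v)$. Instantiating the condition of \cref{def:LRDL} for $\pi$ at $v$ and $t$ and simplifying $\pi^*(v)=u$, $\pi^\sharp_v(t)=s$ with the identities just noted gives $\pi^*\big(\Support{v}{t}\big)=\Support{u}{s}$; applying the lattice homomorphism $\rho^*$ and using $\rho^*\circ\pi^*=\mathrm{id}_L$ then gives $\Support{\rho^*(u)}{\rho^\sharp(s)}=\Support{v}{t}=\rho^*\big(\Support{u}{s}\big)$, which is precisely the condition $\rho$ must satisfy. (Alternatively, the remark preceding \cref{prop:isUnivalentLRDL} already supplies the inequality $\rho^*\big(\Support{u}{s}\big)\leq\Support{\rho^*(u)}{\rho^\sharp(s)}$ for free, so it would suffice to extract just the reverse inequality from the same computation.)

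The hard part will be essentially nothing: the compatibility check for $\rho$ is a two-line manipulation and everything else is formal. In effect the corollary records a structural feature of \cref{def:LRDL} — that $\mathcal{U}$ is the projection of a displayed category (as in the proof of \cref{prop:isUnivalentLRDL}) whose displayed objects and displayed hom-types are propositional and in which a displayed morphism over an isomorphism admits a displayed morphism over the inverse isomorphism. The only thing to be careful about is bookkeeping: keeping straight the variances of $\pi^*$ and $\pi^\sharp$, the composition law for morphisms of ringed lattices, and the fact that, because the defining condition of $\mathsf{LRDL}$-morphisms is a proposition, the lift $\rho'$ is automatically unique and compatible with composition and identities.
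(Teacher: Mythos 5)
Your proof is correct, but it takes a genuinely different route from the paper. The paper's proof is a one-liner built on the univalent machinery of \cref{prop:isUnivalentLRDL}: since the type of invertibility supports over a ringed lattice is a proposition, the structure identity principle makes both canonical maps $X=Y\to X\cong_{\mathsf{LRDL}}Y$ and $X=Y\to\mathcal{U}(X)\cong_{\mathsf{RDL}}\mathcal{U}(Y)$ into equivalences, and reflection of isomorphisms falls out of that identification of isomorphism types with identity types. You instead give a direct $1$-categorical lifting argument: the locality condition of \cref{def:LRDL} is propositional (so $\mathcal{U}$ is faithful), and the inverse $\rho$ of $\mathcal{U}(\pi)$ in $\mathsf{RDL}$ again satisfies the locality condition --- your two-line computation instantiating the condition for $\pi$ at $\rho^*(u)$ and $\rho^\sharp(s)$ and then applying $\rho^*$ is exactly right, and the shortcut via the inequality recorded in the remark before \cref{prop:isUnivalentLRDL} (together with \cref{rem:isPropInvSup} for uniqueness of supports) works too. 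What each buys: your argument is foundation-agnostic --- it uses no univalence and would go through verbatim in Bishop-style constructive mathematics or for non-univalent (pre-)categories, which is in the spirit of the paper's own remark that locally ringed lattices do not depend on univalent features; the paper's argument is shorter given that \cref{prop:isUnivalentLRDL} has already been paid for, delivers slightly more (a correspondence of isomorphism types, not just reflection), and fits the displayed-category formalization strategy the paper advertises for \texttt{Cubical Agda}. Your closing observation that the corollary is really a structural feature of a displayed category with propositional displayed data is essentially the abstract content of the paper's proof, so the two arguments meet at that level.
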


\begin{proof}
  Since the type of invertibility supports over a ringed lattice is a proposition,
  the structure identity principle implies that both of the canonical maps
  $X=Y\to X\cong_{\mathsf{LRDL}}Y$ and
  $X=Y\to \mathcal{U}(X)\cong_{\mathsf{RDL}}\mathcal{U}(Y)$
  are equivalences for $X,Y:\mathsf{LRDL}$.
\end{proof}

\begin{example}\label{ex:affSchLRDL}
  The main example of a locally ringed lattice will of course
  be the ``constructive spectrum'' of a ring.
  For $R:\mathsf{CommRing}$ we can equip the Zariski lattice $\ZL$
  with its structure sheaf $\mathcal{O}_R$. The structure
  sheaf is defined on basic opens and then lifted to the entire Zariski lattice.
  The basic opens as a subset
  $\mathcal{B}_R:\equiv\{u:\ZL~\vert~ \exists_{f:R}~ u=D(f)\}\subseteq\ZL$
  form a basis of the Zariski lattice in the sense that for every $u:\ZL$
  there merely exist $b_1,...,b_n:\mathcal{B}_R$ such that $u=\bigvee_{i=1}^n b_i$.\footnote{
    As $\ZL$ is defined as a quotient, we know that merely $u=D(f_1,...,f_n)=\bigvee_{i=1}^n D(f_i)$.}
  By the \emph{comparison lemma for distributive lattices} \cite[Lemma 1]{ConstrSchemes}\footnote{
    This is a special case of the general comparison lemma for sites
    \cite[Cor.\ 3, p.\ 590]{SheavesInGeometryAndLogic}. The lattice case is formalized
    in \texttt{Cubical Agda} and can be found in:
    \url{https://github.com/agda/cubical/blob/master/Cubical/Categories/DistLatticeSheaf/ComparisonLemma.agda}.}
  there is an equivalence of categories between sheaves on $\ZL$ and sheaves on the basis
  $\mathcal{B}_R$. Here, sheaves on a basis of a distributive lattice
  are defined completely analogously to \cref{def:RDL} and morphisms
  of sheaves are just natural transformations.
  Given a sheaf $\mathcal{F}$ on $\mathcal{B}_R$, the equivalence of
  the comparison lemma maps this to a sheaf on $\ZL$ by taking the
  \emph{right Kan extension} $\mathsf{Ran}(\mathcal{F})$ along the
  basis inclusion.

  As was shown in the \texttt{Cubical Agda} formalization of the structure sheaf
  described in \cite{ZeunerMortberg23},
  we can use univalence to define the structure sheaf on
  basic opens by mapping $D(f)\mapsto\locEl{R}{f}$.\footnote{
    In \cite[Sec.\ 2.3]{ConstrSchemes}, the structure
    sheaf on basic opens is defined using localization at the saturation
    $S_f=\{g\,\vert\, D(f)\leq D(g)\,\}$.}
  In this paper we do not want to get into the details of how the
  structure sheaf is defined and just work with a sheaf
  $\mathcal{O}_R:(\ZL)^{op}\to\mathsf{CommRing}$, such that
  $\mathcal{O}_R(D(f))\cong\locEl{R}{f}$ canonically.  This means that
  these isomorphisms satisfy all of the expected functoriality and
  naturality conditions. Note however, that definitions and proofs
  appealing to the canonical isomorphism need to be rephrased to
  accommodate for the formal definition of the structure sheaf, before
  becoming formalizable themselves.

  Observe that in any locally ringed lattice one has
  $w\wedge\Support{u}{s}~=~\Support{w}{\rest{s}{w}}$, for $w\leq u$
  and thus for joins we must have
  \begin{align*}
    \Support{u_1\vee\dots\vee u_n}{s}~=~\textstyle\bigvee_{i=1}^n\Support{u_i}{\rest{s}{u_i}}
  \end{align*}
  In order to define an invertibility support on $(\ZL,\mathcal{O}_R)$,
  it is thus sufficient to define $\mathcal{D}$ on the basic opens.
  Modulo the canonical isomorphism we can set:
  \begin{align*}
    \Support{D(f)}{\nicefrac{r}{f^n}}~:\equiv~D(fr) \quad(=D(f)\wedge D(r))
  \end{align*}
  Indeed, this gives us the invertibility supremum, since
  $\locEl{R}{fr}\cong{\locEl{\locEl{R}{f}}{r}}$
  is the initial $\locEl{R}{f}$-algebra where $\nicefrac{r}{1}$ becomes invertible.\footnote{Note that $\mathcal{D}_{D(f)}$ is the support $d$ in the proof of \cref{lem:isoLocDownSet}.}
  For arbitrary elements of the Zariski lattice we can then set:
  \begin{align*}
    \Support{D(f_1,\dots,f_n)}{s}~:\equiv~\textstyle\bigvee_{i=1}^n\Support{D(f_i)}{\rest{s}{D(f_i)}}
  \end{align*}
  By the above remark, this gives us an invertibility map.\footnote{The well-definedness of this
    construction follows from the fact that
    the type of invertibility maps is a proposition.}
  To check the support conditions we can again restrict our attention
  to basic opens where everything follows the fact that $D$ is a support.
  Note that $\mathcal{D}_{D(1)}=D$ modulo the canonical $\mathcal{O}_R(D(1))\cong R$.
\end{example}

\subsection{Universal property} 

In classical algebraic geometry, the following proposition is sometimes called
the ``universal property of schemes'':
\begin{quote}
  \emph{The functor \textsf{Spec} is left-adjoint to the global sections functor $\Gamma$
  and the unit of this adjunction is an isomorphism.}
\end{quote}
This is of course already true when regarding \textsf{Spec}
as a functor from rings to locally ringed spaces \cite[Prop 1.6.3]{EGA1},
Schuster \cite{SchusterZariski} gives a constructive proof
of a point-free version of this statement using formal topology.\footnote{
  Curiously, previous proofs in the literature, such as the one given by Johnstone in
  ``Stone Spaces'' \cite[V.3.5]{StoneSpaces}, appeal to classical reasoning by
  using the points of \textsf{Spec}, even if the Zariski spectrum is defined in a point-free way.}
We want to show the corresponding statement for locally ringed lattices.

Let us start by addressing the issue of variance that arises
when the spectrum functor \textsf{Spec} is introduced
as a contra-variant functor from rings to locally ringed spaces.
This makes the direction of the adjunction somewhat arbitrary.
In the functorial setting of \cref{prop:OSpRelCoadj}
it is more natural to think of the spectrum functor as the right adjoint.
In order to match this we will thus work in opposite categories
$\mathsf{CommRing}^{op}$ and $\mathsf{LRDL}^{op}$.

This actually makes a lot of sense for morphisms locally ringed lattices.
As is explained in \cref{subsec:classicalChar}, readers familiar
with classical algebraic geometry should think of a locally ringed lattice
$(L_X,\mathcal{O}_X)$ as a so-called spectral space $X$ with a sheaf of rings
that is local on the stalks. The lattice $L_X$ is the lattice of
compact open subsets of $X$. A spectral morphism of suitable locally ringed
spectral spaces $f:X\to Y$ induces a lattice morphism $f^*:L_Y\to L_X$,
by taking the pre-image of compact opens of $Y$ (spectral morphism means
that pre-images of compact opens are compact open).

For an object in $X:\mathsf{LRDL}^{op}$
we denote its lattice by $L_X$ and its sheaf by $\mathcal{O}_X$.
A morphism $\pi:\catHom{\mathsf{LRDL}^{op}}{X}{Y}$ is given
by the lattice morphism $\pi^*:L_Y\to L_X$,
similar to how a morphism of locales is given by a frame homomorphism in
the opposite direction, and the natural transformation
$\pi^\sharp:\NatTrans{\mathcal{O}_Y}{\pi_*\mathcal{O}_X}$.

\begin{definition}\label{def:globalSecFun}
  The \emph{global sections functor}
  $\Gamma:\mathsf{LRDL}^{op}\to\mathsf{CommRing}^{op}$ is defined on
  objects by $\Gamma (L_X,\mathcal{O}_{X}):\equiv\mathcal{O}_{X}(1)$
  and on morphisms $\pi:\catHom{\mathsf{LRDL}^{op}}{X}{Y}$ by
  $\Gamma(\pi):\equiv\pi^\sharp_1$ modulo the identification
  $\mathcal{O}_X(\pi^*(1))=\mathcal{O}_X(1)$.
\end{definition}

\begin{definition}\label{def:SpecFun}
  The functor $\mathsf{Spec}:\mathsf{CommRing}^{op}\to\mathsf{LRDL}^{op}$ is defined
  on objects by $\mathsf{Spec}(R):\equiv(\mathcal{L}_R,\mathcal{O}_R)$
  as described in \cref{ex:affSchLRDL}.
  For a ring morphism $\varphi:\mathsf{Hom}(R,A)$, we set
  $\mathsf{Spec}(\varphi)^*:\equiv\varphi^{\mathcal{L}}:\ZarLat{R}\to\ZarLat{A}$.
  To define the natural transformation $\mathsf{Spec}(\varphi)^\sharp$, we only
  have to define it on basic opens. For $f:R$, we set modulo the canonical
  isomorphisms $\mathcal{O}_R(D(f))\cong\locEl{R}{f}$ and
  $\mathcal{O}_A(\varphi^{\mathcal{L}}(D(f)))=\mathcal{O}_A(D(\varphi(f)))\cong\locEl{A}{\varphi(f)}$
  \[
  \begin{tikzcd}
    & R \arrow[dl,"\nicefrac{\_}{1}"']\arrow[dr,"\nicefrac{\varphi(\_)}{1}"] & \\
     \locEl{R}{f}\arrow[rr,dashed, "\exists!~\mathsf{Spec}(\varphi)^\sharp"'] && \locEl{A}{\varphi(f)}
  \end{tikzcd}
  \]
  One easily checks that this defines a morphism of locally ringed lattices.
\end{definition}

\begin{theorem}\label{thm:GammaSpecAdj}
  \textsf{Spec} is right adjoint to $\Gamma$ and the counit of this adjunction is an isomorphism.
\end{theorem}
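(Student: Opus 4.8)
The plan is to establish $\Gamma\dashv\mathsf{Spec}$ by producing, for every ring $R$ and every locally ringed lattice $X$, a bijection
\[
  \catHom{\mathsf{LRDL}}{\mathsf{Spec}(R)}{X}\;\cong\;\catHom{\mathsf{CommRing}}{R}{\mathcal{O}_X(1)}
\]
natural in $R$ and $X$; the left-hand side is $\catHom{\mathsf{LRDL}^{op}}{X}{\mathsf{Spec}(R)}$ and the right-hand side is $\catHom{\mathsf{CommRing}^{op}}{\Gamma X}{R}$ written out. In the forward direction I send $\pi\equiv(\pi^*,\pi^\sharp)$ to $\pi^\sharp_1:\mathcal{O}_R(1)\to\mathcal{O}_X(1)$ precomposed with the canonical isomorphism $R\cong\mathcal{O}_R(1)$; this is just $\Gamma(\pi)$. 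For the inverse, given $\varphi:R\to\mathcal{O}_X(1)$, I note that by \cref{def:LRDL} the map $\mathcal{D}^X_1:\mathcal{O}_X(1)\to L_X$ is a support, so $\mathcal{D}^X_1\circ\varphi:R\to L_X$ is a support, and the universal property of the Zariski lattice gives a unique lattice homomorphism $\pi^*:\ZL\to L_X$ with $\pi^*(D(f))=\mathcal{D}^X_1(\varphi(f))$. By the defining property of the invertibility supremum (\cref{def:InvSup}), $\rest{\varphi(f)}{\pi^*(D(f))}$ is a unit in $\mathcal{O}_X(\pi^*(D(f)))$, so $R\xrightarrow{\varphi}\mathcal{O}_X(1)\to\mathcal{O}_X(\pi^*(D(f)))$ sends $f$ to a unit and the universal property of $\locEl{R}{f}$ yields a unique ring map $\pi^\sharp_{D(f)}:\locEl{R}{f}\to\mathcal{O}_X(\pi^*(D(f)))$, modulo $\mathcal{O}_R(D(f))\cong\locEl{R}{f}$. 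By the comparison lemma for distributive lattices invoked in \cref{ex:affSchLRDL}, it suffices to define the sheaf morphism on the basis $\mathcal{B}_R$, and compatibility with restrictions between basic opens is forced by the uniqueness clause of the localization universal property; this produces $\pi^\sharp:\NatTrans{\mathcal{O}_R}{\pi_*\mathcal{O}_X}$.

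Next I verify that $\pi:\equiv(\pi^*,\pi^\sharp)$ is a morphism of locally ringed lattices. The required identity $\pi^*\big(\Support{u}{s}\big)=\Support{\pi^*(u)}{\pi^\sharp(s)}$ is an equation in the set $L_X$, hence a proposition, so I may assume $u=\bigvee_i b_i$ with $b_i:\mathcal{B}_R$; applying $\Support{\bigvee_i u_i}{s}=\bigvee_i\Support{u_i}{\rest{s}{u_i}}$ on both sides, together with naturality of $\pi^\sharp$ and the fact that $\pi^*$ preserves joins, reduces the claim to $u=D(f)$. There $s=\nicefrac{r}{f^n}$, $\Support{D(f)}{\nicefrac{r}{f^n}}=D(fr)=D(f)\wedge D(r)$ by \cref{ex:affSchLRDL}, and $\pi^\sharp_{D(f)}(\nicefrac{r}{f^n})$ is a unit multiple of $\rest{\varphi(r)}{\pi^*(D(f))}$, which does not affect invertibility suprema; hence
\[
  \Support{\pi^*(D(f))}{\pi^\sharp(\nicefrac{r}{f^n})}=\Support{\pi^*(D(f))}{\rest{\varphi(r)}{\pi^*(D(f))}}=\pi^*(D(f))\wedge\mathcal{D}^X_1(\varphi(r))=\pi^*(D(f))\wedge\pi^*(D(r))=\pi^*(D(fr)),
\]
using $w\wedge\Support{u}{t}=\Support{w}{\rest{t}{w}}$. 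For the bijection: starting from $\varphi$ one has $\pi^*(1)=\mathcal{D}^X_1(\varphi(1))=1$ and, since $\locEl{R}{1}\cong R$, the localization universal property forces $\pi^\sharp_1=\varphi$; starting from $\pi$ and setting $\varphi=\pi^\sharp_1$, the morphism-of-\textsf{LRDL} condition together with $\mathcal{D}^R_1=D$ (\cref{ex:affSchLRDL}) gives $\pi^*(D(f))=\pi^*\big(\Support{1}{f}\big)=\Support{\pi^*(1)}{\pi^\sharp_1(f)}=\mathcal{D}^X_1(\varphi(f))$, so the reconstructed lattice map agrees with $\pi^*$ on the generators $D(f)$ hence everywhere, and naturality of $\pi^\sharp$ shows the reconstructed sheaf map agrees with $\pi^\sharp$ on $\mathcal{B}_R$ hence everywhere. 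Naturality of the bijection in $R$ and $X$ is a routine diagram chase through the two universal properties; naturality in $X$ is in fact immediate from functoriality of $\Gamma$.

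Finally, the counit $\epsilon_R:\Gamma(\mathsf{Spec}(R))\to R$ in $\mathsf{CommRing}^{op}$ is, by construction, the image of $\id{\mathsf{Spec}(R)}$ under the above bijection, which unwinds to the canonical isomorphism $R\cong\mathcal{O}_R(1)=\Gamma(\mathsf{Spec}(R))$; in particular it is an isomorphism.

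I expect the main obstacle to be the bookkeeping around the ``canonical isomorphism'' $\mathcal{O}_R(D(f))\cong\locEl{R}{f}$: turning the clean statements of the universal properties of $\ZL$ and of $\locEl{R}{f}$ into an honest natural transformation of sheaves (naturality of $\pi^\sharp$ on the basis and well-definedness of its lift) and checking the local-morphism condition all go through uneventfully once that bookkeeping is pinned down, but they are where the actual work lies. The conceptual content is simply the interplay of the two universal properties with the invertibility support.
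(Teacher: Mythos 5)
Your proposal is correct and follows essentially the same route as the paper's proof: the lattice map comes from the support $\mathcal{D}_1\circ\varphi$ via the universal property of $\ZL$, the sheaf map from the localization universal property on basic opens plus the comparison lemma, the locally-ringed condition is checked by reduction to basic opens, and the inverse bijection is exactly $\Gamma$, with the counit being the canonical $R\cong\mathcal{O}_R(1)$. The only differences are cosmetic (direction of the bijection, slightly more explicit handling of the unit factor $\nicefrac{1}{f^n}$), and your assessment of where the real bookkeeping lies matches the paper's own glossing of the canonical isomorphisms.
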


\begin{proof}
  Given a commutative ring $R$ and a locally ringed lattice $X:\equiv(L_X,\mathcal{O}_{X})$, there
  is a canonical function
  \begin{align*}
    \catHom{\mathsf{CommRing}^{op}}{\mathcal{O}_X(1)}{R}\to\catHom{\mathsf{LRDL}^{op}}{X}{\mathsf{Spec}(R)}
  \end{align*}
  Let $\varphi:\mathsf{Hom}(R,\mathcal{O}_L(1))$ be a ring homomorphism. Note that
  $\mathcal{D}_1\circ\,\varphi:R\to L_X$ is a support. This induces a lattice homomorphism
  \[
  \begin{tikzcd}
    & R \arrow[dl,"D"']\arrow[dr,"\mathcal{D}_1\circ\,\varphi"] & \\
    \mathcal{L}_R \arrow[rr,dashed, "\exists!~\pi^*"'] && L
  \end{tikzcd}
  \]
  Furthermore, for every $f:R$ we get a map
  \[
  \begin{tikzcd}
    & R \arrow[dl]\arrow[dr,"\rest{\varphi(\_)\,\,}{\Support{1}{\varphi(f)}}"] & \\
    \locEl{R}{f} \arrow[rr,dashed, "\exists!~\pi^\sharp"'] && \mathcal{O}_L\big(\underbrace{\Support{1}{\varphi(f)}}_{=~\pi^*(D(f))}\big)
  \end{tikzcd}
  \]
  This defines a natural transformation between sheaves on basic opens
  and thus on $\mathcal{L}_R$ by the comparison lemma for distributive
  lattices.  It remains to check that this defines a morphism of
  locally ringed lattices.  Note that on global sections we have by
  the definition of $\pi^*$ and $\pi^\sharp$ that
  \begin{align*}
    \pi^*\big(D(f)\big) ~=~ \Support{1}{\varphi(f)} ~=~ \Support{\pi^*(D(1))}{\pi^\sharp(f)}
  \end{align*}
  modulo identifying $\mathcal{O}_R(D(1))$ with $R$. Thus on arbitrary basic opens,
  we have
  \begin{align*}
    \pi^*\big(\Support{D(f)}{\nicefrac{r}{f^n}}\big)
    ~&=~ \Support{1}{\pi^\sharp(rf)} \\
    ~&=~ \Support{1}{\pi^\sharp(r)}\wedge\Support{1}{\pi^\sharp(f)}
    ~=~ \Support{\Support{1}{\pi^\sharp(f)}}{\pi^\sharp(\nicefrac{r}{f^n})}
  \end{align*}
  and finally on arbitrary elements
  \begin{align*}
    \pi\big(\Support{D(f_1)\vee\dots\vee D(f_n)}{s}\big)
    ~&=~\textstyle\bigvee_{i=1}^n\Support{\pi(D(f_i))}{\pi^\sharp(s)\restriction_{\pi(D(f_i))}} \\
    ~&=~\Support{\pi(D(f_1)\vee\dots\vee D(f_n))}{\pi^\sharp(s)}
  \end{align*}
  Note that the action of $\mathsf{Spec}$ on morphisms is essentially the same construction.
  We omit the computations for showing that this is natural in both $R$ and $X$.

  The inverse of this map is given by $\Gamma$. Indeed,
  it follows directly that for a ring morphism $\varphi:\mathsf{Hom}(R,\mathcal{O}_L(1))$
  one has $\varphi=\pi^\sharp$ on global sections.
  For the other direction fix a morphism
  $\pi:\catHom{\mathsf{LRDL}^{op}}{X}{\mathsf{Spec}(R)}$ and observe
  that the following diagram commutes for $\pi^\sharp$ at global sections.
  \[
  \begin{tikzcd}
    & R \arrow[dl,"D"']\arrow[dr,"\mathcal{D}_1\circ\,\pi^\sharp"] & \\
    \mathcal{L}_R \arrow[rr, "\pi^*"'] && L_X
  \end{tikzcd}
  \]
  as well as for $f:R$
  \[
  \begin{tikzcd}
    & R \arrow[dl]\arrow[dr,"\pi^\sharp(\_)\restriction_{\Support{1}{\pi^\sharp(f)}}"] & \\
    \locEl{R}{f} \arrow[rr,"\pi^\sharp"'] && \mathcal{O}_L\big(\pi(D(f))\big)
  \end{tikzcd}
  \]
  This shows that $\pi$ is just the morphism
  obtained from applying the adjunction map to $\pi^\sharp$ at global sections.
  From this we also get that the counit is an isomorphism.
  Indeed, for a ring $R$ it is the canonical isomorphism ${R}\cong\mathcal{O}_R(D(1))$.
\end{proof}

\begin{corollary}\label{cor:isFullyFaithfulSpec}
  \textsf{Spec} is fully faithful.
\end{corollary}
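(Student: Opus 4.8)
The plan is to deduce this purely formally from \cref{thm:GammaSpecAdj}, using the standard fact that a right adjoint whose counit is a natural isomorphism is fully faithful. Concretely, \cref{thm:GammaSpecAdj} supplies, naturally in $X:\mathsf{LRDL}^{op}$ and $S:\mathsf{CommRing}^{op}$, an equivalence of hom-types
\[
  \Psi_{X,S}:\catHom{\mathsf{LRDL}^{op}}{X}{\mathsf{Spec}(S)}\;\simeq\;\catHom{\mathsf{CommRing}^{op}}{\Gamma X}{S},
\]
and the counit component $\varepsilon_R:\catHom{\mathsf{CommRing}^{op}}{\Gamma\mathsf{Spec}(R)}{R}$ is recovered as $\varepsilon_R=\Psi_{\mathsf{Spec}(R),R}(\id{\mathsf{Spec}(R)})$; by the theorem it is an isomorphism, being on underlying rings the canonical iso $R\cong\mathcal{O}_R(D(1))$ (recall $\Gamma\mathsf{Spec}(R)=\mathcal{O}_R(D(1))$).

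Next I would fix $R,S:\mathsf{CommRing}^{op}$ and identify the action of $\mathsf{Spec}$ on hom-types. Instantiating naturality of $\Psi$ in the $S$-variable along a morphism $\varphi:\catHom{\mathsf{CommRing}^{op}}{R}{S}$, with $X:\equiv\mathsf{Spec}(R)$, and chasing $\id{\mathsf{Spec}(R)}$ around the resulting commuting square, yields
\[
  \Psi_{\mathsf{Spec}(R),S}\big(\mathsf{Spec}(\varphi)\big)\;=\;\varphi\circ\varepsilon_R .
\]
Hence the composite $\catHom{\mathsf{CommRing}^{op}}{R}{S}\xrightarrow{\;\mathsf{Spec}\;}\catHom{\mathsf{LRDL}^{op}}{\mathsf{Spec}(R)}{\mathsf{Spec}(S)}\xrightarrow{\;\Psi\;}\catHom{\mathsf{CommRing}^{op}}{\Gamma\mathsf{Spec}(R)}{S}$ is precomposition with the isomorphism $\varepsilon_R$, and so is an equivalence. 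Since $\Psi$ is itself an equivalence, the $2$-out-of-$3$ property of equivalences forces the action of $\mathsf{Spec}$ on hom-types to be an equivalence, i.e.\ $\mathsf{Spec}$ is fully faithful.

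I do not expect any genuine obstacle here: everything is formal once \cref{thm:GammaSpecAdj} is available, and the argument is exactly the HoTT/UF rendering of the general categorical lemma that a right adjoint is fully faithful precisely when its counit is a natural isomorphism. The only point requiring a little bookkeeping is the variance --- we work in the opposite categories, so ``precomposition with $\varepsilon_R$ in $\mathsf{CommRing}^{op}$'' is really postcomposition with the underlying ring isomorphism $R\xrightarrow{\sim}\mathcal{O}_R(D(1))$ --- but in either reading it is manifestly an equivalence, so nothing beyond careful transposition is needed.
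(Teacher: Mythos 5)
Your argument is correct and is precisely the route the paper intends: the corollary is stated without proof as an immediate consequence of \cref{thm:GammaSpecAdj}, via the standard fact that a right adjoint is fully faithful exactly when the counit is an isomorphism, which is what your hom-set chase with $\Psi$ and $\varepsilon_R$ makes explicit. Your variance bookkeeping in $\mathsf{CommRing}^{op}$ is also handled correctly, so nothing is missing.
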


\subsection{Spectral schemes}\label{subsec:SpectralSch} 

Let $X$ be a locally ringed lattice and $R:\equiv\mathcal{O}_X(1)$
be the ring of global sections. We saw that the counit of the adjunction
of \cref{thm:GammaSpecAdj} is an isomorphism. But what about its unit?
Following the proof of \cref{thm:GammaSpecAdj} for  the identity on $R$,
we see that the lattice homomorphism is induced by
\[
\begin{tikzcd}
  & R \arrow[dl,"D"']\arrow[dr,"\mathcal{D}_1"] & \\
  \mathcal{L}_R \arrow[rr,dashed, "\exists!~\eta^*_X"'] && L_X
\end{tikzcd}
\]
and the natural transformation $\eta^\sharp_X:\mathcal{O}_R\to (\eta_X)_*\mathcal{O}_X$
is given on a basic open $D(f)$, $f:R$, by
\[
\begin{tikzcd}
  & R \arrow[dl]\arrow[dr,"\_\restriction_{\Support{1}{f}}"] & \\
  \locEl{R}{f} \arrow[rr,dashed, "\exists!~\eta^\sharp_X"'] && \mathcal{O}_X\big(\underbrace{\Support{1}{f}}_{=~\eta^*_X(D(f))}\big)
\end{tikzcd}
\]

\begin{definition}\label{def:affSchUnit}
  $X$ is an  \emph{affine scheme}, if $\eta_X$ is an isomorphism
  of locally ringed lattices.
\end{definition}

\begin{lemma}\label{lem:isAfffineSpec}
  $\mathsf{Spec}(R)$ is affine.
\end{lemma}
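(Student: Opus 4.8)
The plan is \emph{not} to unwind the definition of $\eta_{\mathsf{Spec}(R)}$ from scratch, but to obtain the result formally from \cref{thm:GammaSpecAdj} via a standard fact about adjunctions: whenever $F\dashv G$ and the counit is an isomorphism, the unit component $\eta_{G(d)}$ is an isomorphism for every object $d$ in the codomain of $F$. Applied to $\Gamma\dashv\mathsf{Spec}$ this says precisely that $\eta_{\mathsf{Spec}(R)}$ is an isomorphism of locally ringed lattices, which by \cref{def:affSchUnit} is the assertion that $\mathsf{Spec}(R)$ is affine. So \cref{lem:isAfffineSpec} is essentially a corollary of the adjunction and the fact that its counit is an isomorphism.

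Spelling this out, I would invoke the triangle identity for $\Gamma\dashv\mathsf{Spec}$, which at the ring $R$ reads
\[
  \mathsf{Spec}(\varepsilon_R)\circ\eta_{\mathsf{Spec}(R)}~=~\id{\mathsf{Spec}(R)}
\]
where $\varepsilon_R\colon\Gamma(\mathsf{Spec}(R))\to R$ is the counit. By \cref{thm:GammaSpecAdj} the counit is an isomorphism --- concretely it is the canonical isomorphism $\mathcal{O}_R(D(1))\cong R$ --- and since any functor preserves isomorphisms, $\mathsf{Spec}(\varepsilon_R)$ is an isomorphism in $\mathsf{LRDL}^{op}$. The displayed identity then exhibits $\eta_{\mathsf{Spec}(R)}$ as the inverse of $\mathsf{Spec}(\varepsilon_R)$, hence as an isomorphism, and we are done.

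For a reader who prefers an explicit verification, the same conclusion drops out of a direct computation of the two components of $\eta_{\mathsf{Spec}(R)}$. By \cref{ex:affSchLRDL} the invertibility support on $(\ZL,\mathcal{O}_R)$ satisfies $\mathcal{D}_{D(1)}=D$ modulo $\mathcal{O}_R(D(1))\cong R$, so the lattice homomorphism $\eta^*_{\mathsf{Spec}(R)}\colon\ZL\to\ZL$ induced by $\mathcal{D}_1=D$ through the universal property of $\ZL$ is forced to be $\id{\ZL}$; similarly, on a basic open $D(f)$ the relevant component of $\eta^\sharp_{\mathsf{Spec}(R)}$ is the unique $R$-algebra homomorphism $\locEl{R}{f}\to\mathcal{O}_R(D(f))\cong\locEl{R}{f}$ extending the canonical map $\nicefrac{\_}{1}$, which the universal property of the localization forces to be $\id{\locEl{R}{f}}$, and the comparison lemma for distributive lattices promotes this to the identity natural transformation on all of $\ZL$. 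The only point needing care along this second route is keeping the canonical isomorphisms $\mathcal{O}_R(D(f))\cong\locEl{R}{f}$ straight --- exactly as already done in \cref{def:SpecFun} and \cref{ex:affSchLRDL} --- so it is not really an obstacle; since the abstract argument of the previous paragraph sidesteps even this bookkeeping, I would present that as the actual proof.
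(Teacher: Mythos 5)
Your proposal is correct. The paper actually states \cref{lem:isAfffineSpec} without any proof, so there is no official argument to compare against; either of your two routes fills the gap validly. The abstract route is sound: \cref{thm:GammaSpecAdj} gives $\Gamma\dashv\mathsf{Spec}$ with invertible counit, and the $\eta_X$ of \cref{def:affSchUnit} is by construction the unit (the adjunct of $\id{\mathcal{O}_X(1)}$), so the triangle identity $\mathsf{Spec}(\varepsilon_R)\circ\eta_{\mathsf{Spec}(R)}=\id{\mathsf{Spec}(R)}$ exhibits $\eta_{\mathsf{Spec}(R)}$ as $\mathsf{Spec}(\varepsilon_R)^{-1}$. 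The only caveat worth flagging is that the triangle identities presuppose the naturality of the hom-bijection, which \cref{thm:GammaSpecAdj} asserts but explicitly leaves unverified (``We omit the computations for showing that this is natural in both $R$ and $X$''), so the abstract argument inherits that omission; your explicit second route --- $\eta^*=\id{\ZL}$ by the universal property of $\ZL$ together with $\mathcal{D}_{D(1)}=D$, and $\eta^\sharp=\id{}$ on basic opens by the universal property of $\locEl{R}{f}$, extended via the comparison lemma --- avoids this dependence and is closest to what the paper's constructions in \cref{ex:affSchLRDL} and the discussion preceding \cref{def:affSchUnit} directly give.
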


\begin{lemma}\label{lem:affSchBig}
  $X$ is affine if and only if there exists a ring $A$ such that
  $X\cong\mathsf{Spec}(A)$ as locally ringed lattices.
\end{lemma}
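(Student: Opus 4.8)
The statement identifies the essential image of $\mathsf{Spec}$ with the class of affine schemes, and the plan is to derive it from \cref{lem:isAfffineSpec} together with naturality of the unit $\eta$ of the adjunction $\Gamma\dashv\mathsf{Spec}$ established in \cref{thm:GammaSpecAdj}. The ``only if'' direction requires no work: by \cref{def:affSchUnit}, $X$ being affine means precisely that $\eta_X:X\to\mathsf{Spec}(\mathcal{O}_X(1))$ is an isomorphism in $\mathsf{LRDL}^{op}$, hence an isomorphism of locally ringed lattices, so one takes $A:\equiv\mathcal{O}_X(1)$.

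For the ``if'' direction I would fix an isomorphism of locally ringed lattices between $X$ and $\mathsf{Spec}(A)$, say $\alpha:X\to\mathsf{Spec}(A)$, which is the same as an isomorphism in $\mathsf{LRDL}^{op}$. Since $\eta$ is a natural transformation $\mathrm{id}\Rightarrow\mathsf{Spec}\circ\Gamma$ on $\mathsf{LRDL}^{op}$, naturality at $\alpha$ gives $\eta_{\mathsf{Spec}(A)}\circ\alpha=\mathsf{Spec}(\Gamma(\alpha))\circ\eta_X$ (composition taken in $\mathsf{LRDL}^{op}$). Now $\Gamma(\alpha)$ is an isomorphism because functors preserve isomorphisms, hence so is $\mathsf{Spec}(\Gamma(\alpha))$; and $\eta_{\mathsf{Spec}(A)}$ is an isomorphism by \cref{lem:isAfffineSpec}. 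Therefore $\eta_X=\mathsf{Spec}(\Gamma(\alpha))^{-1}\circ\eta_{\mathsf{Spec}(A)}\circ\alpha$ is a composite of isomorphisms, i.e.\ $X$ is affine.

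I do not expect a genuine obstacle here; the only points to keep straight are the variance — the unit lives on $\mathsf{LRDL}^{op}$, so the naturality square is oriented accordingly — and the harmless identification of isomorphisms of locally ringed lattices with isomorphisms in $\mathsf{LRDL}^{op}$. More abstractly, by \cref{cor:isFullyFaithfulSpec} the functor $\mathsf{Spec}$ is fully faithful, and for a fully faithful right adjoint the objects on which the unit is invertible are exactly those in the essential image; the argument above is just the unwinding of this general categorical fact in the present setting.
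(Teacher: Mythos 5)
Your proof is correct and matches the paper's approach: the paper's entire proof is the one-liner ``\textsf{Spec} is fully faithful'' (\cref{cor:isFullyFaithfulSpec}), i.e.\ exactly the general fact that for a fully faithful right adjoint the unit is invertible precisely on the essential image, which you unwind explicitly via naturality of $\eta$ and \cref{lem:isAfffineSpec} and then acknowledge in your closing paragraph. The only cosmetic point is that the existence in the statement is a (mere) proposition, so eliminating the truncation in the ``if'' direction is harmless since ``$\eta_X$ is an isomorphism'' is itself a proposition.
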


\begin{proof}
  \textsf{Spec} is fully faithful.
\end{proof}

\begin{remark}
  The advantage of \cref{def:affSchUnit}, compared to characterizing
  affine schemes by the equivalent statement of \cref{lem:affSchBig},
  is that the latter is a ``big'' proposition living in the next higher
  universe as it quantifies over the type of all rings.
\end{remark}

For a locally ringed lattice $X$ and $u:L_X$, we get a locally ringed lattice
$\rest{X}{u}:\equiv(\downarrow\! u,~\rest{\mathcal{O}_X}{\downarrow u})$
of elements below $u$ with the sheaf $\mathcal{O}_X$ restricted to those elements.
The lattice morphism $\_\wedge u:L_X\to\,\downarrow\! u$ induces a
map $\catHom{\mathsf{LRDL}^{op}}{\rest{X}{u}}{X}$, which we think of as the ``inclusion''
of the compact open $u$ into $X$.

\begin{definition}\label{def:qcqsSch}
  $X$ is a  \emph{qcqs-scheme}, if there merely exist $u_1,\dots,u_n: L_X$,
  which cover $X$ in the sense that $1=u_1\vee\dots\vee u_n$ and such that
  $\rest{X}{u_i}$ is affine
  for all $i$ in $1,...,n$.
\end{definition}

\begin{example}
  Let us discuss the standard example of a qcqs-scheme that is not affine:
  The plane without the origin $\mathbb{A}^2\setminus\{0\}$ over some fixed
  field $k$.
  Classically, the points of $\mathbb{A}^2\setminus\{0\}$ correspond to
  $D(x,y)\subseteq\mathsf{Spec}\big(k[x,y]\big)$.
  The goal is thus to prove that
  $\rest{\mathsf{Spec}\big(k[x,y]\big)}{D(x,y)}$ is not affine, where $D(x,y):\ZarLat{k[x,y]}$.
  By the sheaf property of the structure sheaf, we get the following pullback square
  \begin{center}
    \begin{tikzcd}
      \mathcal{O}_{k[x,y]}(D(x,y))\ar[r]\ar[d]\pbsign{dr} &k[x,y,y^{-1}]\ar[d] \\
      k[x,y,x^{-1}]\ar[r] &k[x,y,x^{-1},y^{-1}]
    \end{tikzcd}
  \end{center}
  Since this is a pullback of integral domains
  we can identify $\mathcal{O}_{k[x,y]}\big(D(x,y)\big)$
  with the subring $k[x,y,x^{-1}]\cap k[x,y,y^{-1}]\subseteq k[x,y,x^{-1},y^{-1}]$.
  One can show that this already gives us a unique
  isomorphism of $k$-algebras $k[x,y]\cong\mathcal{O}_{k[x,y]}\big(D(x,y)\big)$.
  If $\rest{\mathsf{Spec}\big(k[x,y]\big)}{D(x,y)}$
  was affine we would get an isomorphism of lattices
  \[
  \begin{tikzcd}
    & k[x,y] \arrow[dl,"D"']\arrow[dr,"(D(x)\vee D(y))~\wedge~ D(\_)"] & \\
    \mathcal{L}_{k[x,y]} \arrow[rr,dashed, "\exists!"',"\simeq"] && \downarrow D(x,y)
  \end{tikzcd}
  \]
  This morphism has to be $D(x,y)\wedge\_$, which is not an isomorphism, giving the desired contradiction.
\end{example}

It was already shown in \cite{ConstrSchemes} that qcqs-schemes are
obtained by ``gluing'' together affine schemes. For a cover
$u_1,...,u_n$ of $X$ we can glue along the corresponding
``inclusions'' $\rest{X}{u_i}\to X$ to recover $X$.  This was shown
for ringed lattices in \cite[Lemma 2]{ConstrSchemes}, but the proof
extends directly to locally ringed lattices.  We thus give the
following proposition without proof.

\begin{proposition}\label{prop:gluingLatticeQcqsSchemes}
  If $X$ is a qcqs-scheme and $u_1,...,u_n:L_X$
  an (affine) cover of $X$, then
  \begin{align*}
    X~\cong~\mathsf{colim}\,\big\{\rest{X}{u_i}\leftarrow \rest{X}{u_i\wedge u_j}\rightarrow \rest{X}{u_j}\big\}
  \end{align*}
  in the opposite category of locally ringed lattices $\mathsf{LRDL}^{op}$.
  In particular, if we have $\rest{X}{u_i}\cong\mathsf{Spec}(A_i)$,
  and we are given affine covers
  $\big(v_{ijk}:~\downarrow\!(u_i\wedge u_j)\big)_k$
  of the subschemes $\rest{X}{u_i\wedge u_j}$
  with $\rest{X}{v_{ijk}}\cong\mathsf{Spec}(A_{ijk})$,
  we have
  \begin{align*}
    X~\cong~\mathsf{colim}\,\big\{\mathsf{Spec}(A_i)\leftarrow \mathsf{Spec}(A_{ijk})\rightarrow \mathsf{Spec}(A_j)\big\}
  \end{align*}
\end{proposition}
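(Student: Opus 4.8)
The plan is to verify the colimit's universal property in $\mathsf{LRDL}^{op}$ directly. For ringed lattices this gluing statement is exactly \cite[Lemma 2]{ConstrSchemes}, so the only genuinely new point is that the gluing construction is compatible with invertibility supports; the rest is transcription. Concretely, I would fix a test locally ringed lattice $Y:\mathsf{LRDL}^{op}$ and exhibit a bijection, natural in $Y$, between morphisms $X\to Y$ and families $\big(f_i:\rest{X}{u_i}\to Y\big)_i$ that agree after restriction to $\rest{X}{u_i\wedge u_j}$ for all pairs $i,j$; since a cocone on the diagram with apex $Y$ is precisely such a compatible family, and the family attached to $\mathrm{id}_X$ is the structural cocone $\big(\rest{X}{u_i}\to X\big)_i$, this identifies $X$ with $\mathsf{colim}\big\{\rest{X}{u_i}\leftarrow\rest{X}{u_i\wedge u_j}\rightarrow\rest{X}{u_j}\big\}$.

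Given a compatible family, the glued morphism $f=(f^*,f^\sharp)$ is built as follows. On lattices, set $f^*(v):\equiv\bigvee_i f_i^*(v)$; this makes sense since $f_i^*(v)\leq u_i$ and $1=\bigvee_i u_i$, and the overlap condition $f_i^*(v)\wedge u_j=f_j^*(v)\wedge u_i$ — which is exactly agreement on $\rest{X}{u_i\wedge u_j}$ read on lattices — is what makes $f^*$ a homomorphism satisfying $f^*(v)\wedge u_i=f_i^*(v)$; the same identity forces $f^*$ to be the unique such map. On sections, for $v:L_Y$ the elements $\big(f_i^\sharp\big)_v(s)\in\mathcal{O}_X(f_i^*(v))$ agree on the overlaps $f_i^*(v)\wedge f_j^*(v)$, so the sheaf condition for $\mathcal{O}_X$ (\cref{def:RDL}) applied to the cover $f^*(v)=\bigvee_i f_i^*(v)$ produces a unique section $\big(f^\sharp\big)_v(s)\in\mathcal{O}_X(f^*(v))$ restricting to each of them; that $f^\sharp$ is then a natural transformation of ring-valued sheaves follows because the relevant equalizer is a limit of rings and every $f_i^\sharp$ is one. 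The uniqueness half of the bijection, and more generally the fact that a morphism out of any $\rest{X}{u}$ is determined by its restrictions along a cover $u=\bigvee_k w_k$, uses the same two mechanisms: gluing of compact opens on the lattice and gluing of sections via the sheaf condition.

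It remains to check that $f$ is a morphism of \emph{locally} ringed lattices, i.e.\ $f^*\big(\Support{v}{s}\big)=\Support{f^*(v)}{f^\sharp(s)}$. Recalling from \cref{ex:affSchLRDL} that $w\wedge\Support{u}{t}=\Support{w}{\rest{t}{w}}$ for $w\leq u$, hence $\Support{\bigvee_i u_i}{t}=\bigvee_i\Support{u_i}{\rest{t}{u_i}}$, and using that each $f_i$ is an $\mathsf{LRDL}$-morphism (the invertibility support of $\rest{X}{u_i}$ being that of $X$ restricted to $\downarrow u_i$), one computes
\begin{align*}
  \Support{f^*(v)}{f^\sharp(s)}
  &~=~\textstyle\bigvee_i\Support{f_i^*(v)}{\rest{f^\sharp(s)}{f_i^*(v)}}
  ~=~\textstyle\bigvee_i\Support{f_i^*(v)}{(f_i^\sharp)_v(s)} \\
  &~=~\textstyle\bigvee_i f_i^*\big(\Support{v}{s}\big)
  ~=~f^*\big(\Support{v}{s}\big)
\end{align*}
which, together with the previous paragraph, proves the first isomorphism.

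For the refined statement, apply the first part to each $\rest{X}{u_i\wedge u_j}$ with its affine cover $(v_{ijk})_k$. A family $\big(f_i:\rest{X}{u_i}\to Y\big)_i$ agrees on $\rest{X}{u_i\wedge u_j}$ if and only if it agrees after restriction to each $\rest{X}{v_{ijk}}$, since $(v_{ijk})_k$ covers $u_i\wedge u_j$ and a morphism out of $\rest{X}{u_i\wedge u_j}$ is determined by its restrictions to such a cover. Hence $\mathsf{colim}\big\{\rest{X}{u_i}\leftarrow\rest{X}{v_{ijk}}\rightarrow\rest{X}{u_j}\big\}$ has the same universal property and is again isomorphic to $X$; substituting the given isomorphisms $\rest{X}{u_i}\cong\mathsf{Spec}(A_i)$ and $\rest{X}{v_{ijk}}\cong\mathsf{Spec}(A_{ijk})$, under which the structural inclusions become the displayed maps, yields the claim. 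The main obstacle throughout is organizational rather than conceptual: keeping the variances in $\mathsf{LRDL}^{op}$ straight while running the sheaf-gluing argument. Once \cite[Lemma 2]{ConstrSchemes} is taken for granted, the locally ringed refinement is exactly the short computation displayed above.
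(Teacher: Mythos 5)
Your proposal is correct and follows exactly the route the paper itself takes: the paper states this proposition without proof, deferring the ringed-lattice gluing to \cite[Lemma 2]{ConstrSchemes} and asserting that the argument "extends directly" to locally ringed lattices, which is precisely the invertibility-support computation you carry out via the identity $\Support{\bigvee_i u_i}{s}=\bigvee_i\Support{u_i}{\rest{s}{u_i}}$. Your treatment of the refined statement (cocones over the $v_{ijk}$-diagram coincide with cocones over the original diagram because morphisms out of $\rest{X}{u_i\wedge u_j}$ are determined by restrictions along a cover) is likewise the intended, and correct, argument.
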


\begin{corollary}
  Let $X$ be a qcqs-scheme,
  then $\mathcal{O}_X\big(\Support{u}{s}\big)\cong\locEl{\mathcal{O}_X(u)}{s}$,
  for any $u: L_X$ and $s:\mathcal{O}_X(u)$.\footnote{This is also known as the ``Qcqs-Lemma'' in ``The Rising Sea'' \cite{RisingSea}.}
\end{corollary}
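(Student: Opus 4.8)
The plan is to reduce the statement to the already-established case of affine schemes by using the gluing description from \cref{prop:gluingLatticeQcqsSchemes} together with the fact that $\Support{u}{s}$ and the structure sheaf behave well under restriction. First I would note that it suffices to treat the case where $u$ itself is an element of an affine cover, or more precisely to work locally. Fix a qcqs-scheme $X$, an element $u:L_X$ and a section $s:\mathcal{O}_X(u)$. By \cref{def:qcqsSch} there merely exist $u_1,\dots,u_n:L_X$ covering $X$ with each $\rest{X}{u_i}$ affine; since the statement we want to prove is a proposition (it asserts the existence of an isomorphism between two rings, and \dots well, more carefully, one should phrase it as: the canonical map $\locEl{\mathcal{O}_X(u)}{s}\to\mathcal{O}_X(\Support{u}{s})$ induced by the universal property of the localization is an isomorphism, which \emph{is} a proposition), we may assume we are given such a cover, not merely that one exists.

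The key computation is the identity $w\wedge\Support{u}{s}=\Support{w}{\rest{s}{w}}$ for $w\leq u$, already recorded in \cref{ex:affSchLRDL}. Intersecting the affine cover $(u_i)$ with $u$ gives a cover $(u\wedge u_i)$ of $\downarrow\! u$, i.e.\ of the locally ringed lattice $\rest{X}{u}$. For each $i$, set $v_i:\equiv u\wedge u_i\leq u$ and $s_i:\equiv\rest{s}{v_i}:\mathcal{O}_X(v_i)$. Each $\rest{X}{v_i}=\rest{(\rest{X}{u_i})}{v_i}$ is a compact open of the affine scheme $\rest{X}{u_i}\cong\mathsf{Spec}(A_i)$, hence is itself a qcqs-scheme by the results of \cref{sec:LRDL}; but more importantly, under the identification $\rest{X}{u_i}\cong\mathsf{Spec}(A_i)$ the element $v_i$ corresponds to some $D(g_i)\in\ZarLat{A_i}$ (refining if necessary so that each $v_i$ lies below a basic open — or arguing directly on $\mathsf{Spec}(A_i)$ where every element is a finite join of basic opens and using the sheaf property). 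On $\mathsf{Spec}(A_i)$ the claim is exactly the content of \cref{ex:affSchLRDL}: $\Support{D(g_i)}{\cdot}$ was defined so that $\mathcal{O}_{A_i}(\Support{D(g_i)}{\nicefrac{r}{g_i^m}})\cong\locEl{(A_i)}{g_ir}\cong\locEl{\locEl{(A_i)}{g_i}}{r}=\locEl{\mathcal{O}_{A_i}(D(g_i))}{\nicefrac{r}{g_i^m}}$, using \cref{lem:isoLocDownSet} and transitivity of localization. So the canonical map is an isomorphism after restricting to each $v_i$.

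To conclude I would glue. We have $\Support{u}{s}=\bigvee_i\Support{v_i}{s_i}$, and the $\Support{v_i}{s_i}$ cover $\downarrow\!\Support{u}{s}$. Applying the sheaf property of $\mathcal{O}_X$ over $u$ to the cover $(v_i)$ of $u$ expresses both $\mathcal{O}_X(u)$ and $\mathcal{O}_X(\Support{u}{s})$ as equalizers/limits over the index, and localization at $s$ commutes with this finite limit diagram because localization away from an element is exact (equivalently: $\locEl{(-)}{s}$ preserves the relevant finite limits of rings, which for a finitely-indexed equalizer of the gluing type follows from flatness of $\locEl{R}{s}$ over $R$, or can be checked by hand on the explicit description of sections). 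Matching the two equalizer presentations via the componentwise isomorphisms from the previous paragraph — which are compatible with the restriction maps by naturality of the universal property of localization — yields the desired isomorphism $\locEl{\mathcal{O}_X(u)}{s}\cong\mathcal{O}_X(\Support{u}{s})$.

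The main obstacle I anticipate is the gluing/exactness step: one must be careful that the componentwise localization isomorphisms are genuinely natural in the covering index so that they assemble into a map of equalizer diagrams, and that $\locEl{(-)}{s}$ commutes with the pairwise-intersection part of the diagram as well, not just the product part. Constructively this is the statement that localization away from an element is a flat (indeed a filtered colimit of free) module operation, which is standard but should be invoked explicitly; alternatively one can sidestep flatness by noting that the relevant diagram is the one appearing in \cref{def:RDL} and checking preservation directly on the fraction presentation of sections. A minor secondary point is reducing to basic opens $D(g_i)$ inside each affine chart, which requires one further application of the sheaf property and of the join-formula for $\mathcal{D}$ from \cref{ex:affSchLRDL}.
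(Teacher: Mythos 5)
Your overall strategy is the one the paper intends: the statement is placed as a corollary of \cref{prop:gluingLatticeQcqsSchemes}, and the expected argument is exactly yours --- restrict to $\downarrow\! u$, pass to a finite affine cover on whose pieces the claim holds by the construction of $\mathcal{D}$ in \cref{ex:affSchLRDL} (via \cref{lem:isoLocDownSet} and $\locEl{R}{fr}\cong\locEl{\locEl{R}{f}}{r}$), then compare the sheaf equalizer for $u$, localized at $s$ using exactness of localization, with the sheaf equalizer for $\Support{u}{s}$. Your reformulation of the goal as ``the canonical map $\locEl{\mathcal{O}_X(u)}{s}\to\mathcal{O}_X(\Support{u}{s})$ is an isomorphism'', so that the merely existing cover may be used, is also the right move.

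There is, however, a genuine gap at the pairwise-intersection terms. After your refinement the cover elements $v_i$ are affine, but the meets $v_i\wedge v_j$ are in general only elements of $L_X$ and need not be affine (compare the paper's own example $D(x,y):\ZarLat{k[x,y]}$, or intersections of charts in a non-separated scheme). Hence the componentwise identifications $\locEl{\mathcal{O}_X(v_i\wedge v_j)}{s}\cong\mathcal{O}_X\big(\Support{v_i\wedge v_j}{\rest{s}{v_i\wedge v_j}}\big)$ that your ``matching of equalizer presentations'' needs at the third spot are not instances of the affine case --- they are instances of the very corollary being proved, so as written the final step is circular. This is precisely where quasi-separatedness enters the classical qcqs-lemma, and it is why \cref{prop:gluingLatticeQcqsSchemes} carries the extra data of affine covers $w_{ijk}$ of the intersections $u_i\wedge u_j$. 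The repair is routine but is a real additional step: cover each $v_i\wedge v_j$ by finitely many affine $w_{ijk}$ (basic opens inside the chart containing $v_i$) and either (a) replace the third term of both equalizer diagrams by $\prod_{i,j,k}\mathcal{O}_X(w_{ijk})$, respectively its localized and $\mathcal{D}$-counterparts, using that post-composing the parallel pair with the monomorphism $\mathcal{O}_X(v_i\wedge v_j)\to\prod_k\mathcal{O}_X(w_{ijk})$ (separatedness of the sheaf) preserves the equalizer, so that all terms past the first are affine and the affine case applies; or (b) note that to conclude that the induced map on the first legs is an isomorphism you only need the comparison map at the third spot to be injective, and prove that injectivity separately by restricting to the $w_{ijk}$, using the affine case on each and separatedness of $\mathcal{O}_X$. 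With that addition your proof goes through and coincides with the paper's intended argument.
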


We conclude this section on our first definition of qcqs-schemes
by showing that the notion of morphism of spectral schemes
given in \cite[Def.\ 16]{ConstrSchemes} coincides with morphisms of locally ringed lattices.

\begin{definition}\label{def:locAffineMor}
  Let $X:\equiv(L_X,\mathcal{O}_X)$ and $Y:\equiv(L_Y,\mathcal{O}_Y)$ be two qcqs-schemes,
  a morphism of \emph{ringed} lattices $\pi:\catHom{\mathsf{RDL}^{op}}{X}{Y}$
  is called \emph{locally affine} if there are affine covers $u_1,\dots,u_n$ of $L_X$ and
  $w_1,\dots,w_m$ of $L_Y$ respectively, which are compatible in the following way:
  For every $u_i$ exists a $w_j$ with $u_i\leq\pi^*(w_j)$ such that the
  morphism of ringed lattices
  $\rest{\pi}{u_i}:\catHom{\mathsf{RDL}^{op}}{\rest{X}{u_i}}{\rest{Y}{w_j}\!\!}$,
  which is induced by the lattice morphism
  $\pi^*(\_)\wedge u_i :\,\downarrow\! w_j\to\, \downarrow\! u_i$,
  is just $\mathsf{Spec}$ applied to the ring morphism
  $\rest{\pi^\sharp}{u_i}:\mathcal{O}_Y(w_j)\to\mathcal{O}_X(u_i)$,
  i.e.\ the following diagram commutes:
  \[\begin{tikzcd}
          {\rest{X}{u_i}} && {\rest{Y}{w_j}} \\
          \\
          {\mathsf{Spec}\big(\mathcal{O}_X(u_i)\big)} && {\mathsf{Spec}\big(\mathcal{O}_Y(w_j)\big)}
          \arrow["{\rest{\pi\,}{u_i}}", from=1-1, to=1-3]
          \arrow["{\eta_{\rest{X}{u_i}}}"', "\vvcong", from=1-1, to=3-1]
          \arrow["{\eta_{\rest{Y}{w_j}}}","\vcong"', from=1-3, to=3-3]
          \arrow["{\mathsf{Spec}(\rest{\pi^\sharp\,}{u_i})}", from=3-1, to=3-3]
  \end{tikzcd}\]
\end{definition}

\begin{remark}\label{rem:locAffineMorRemark}
  Since $\mathsf{Spec}$ is fully-faithful, a morphism
  $\pi:\catHom{\mathsf{RDL}^{op}}{X}{Y}$
  is locally affine if and only if there are compatible affine covers
  $u_1,\dots,u_n$ of $L_X$ and $w_1,\dots,w_m$ of $L_Y$
  such that for  $u_i\leq\pi^*(w_j)$ the induced morphism
  $\rest{\pi}{u_i}:\catHom{\mathsf{RDL}^{op}}{\rest{X}{u_i}}{\rest{Y}{w_j}\!\!}$
  is a morphism of locally ringed lattices.
\end{remark}

\begin{lemma}\label{lem:locAffMorEquiv}
  Let $X$ and $Y$ be two qcqs-schemes,
  then any morphism of ringed lattices $\pi:\catHom{\mathsf{RDL}^{op}}{X}{Y}$
  is locally affine if and only if it is a morphism of locally ringed lattices.
\end{lemma}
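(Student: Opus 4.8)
The plan is to isolate a locality principle — ``being a morphism of locally ringed lattices can be checked on an affine cover of the domain'' — and then read both implications off from it. First I would record two elementary facts. (a) For $u:L_X$ the invertibility support of $\rest{X}{u}$ is just the restriction of $\mathcal{D}$ to $\downarrow\! u$: for $v\leq u$ and $s:\mathcal{O}_X(v)$, the element $\Support{v}{s}$ formed in $X$ lies below $v\leq u$ and satisfies verbatim the defining property of an invertibility supremum inside $\rest{X}{u}$, hence agrees with the one formed there by the uniqueness in \cref{rem:isPropInvSup}. (b) The identity $w\wedge\Support{u}{s}=\Support{w}{\rest{s}{w}}$ for $w\leq u$ from \cref{ex:affSchLRDL}, together with $\Support{u}{s}\leq u$.

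With these in hand I would prove: given a morphism of ringed lattices $\pi:\catHom{\mathsf{RDL}^{op}}{X}{Y}$, a cover $1=u_1\vee\dots\vee u_n$ of $L_X$, and elements $w_1,\dots,w_n:L_Y$ with $u_i\leq\pi^*(w_i)$ for each $i$, the morphism $\pi$ is a morphism of locally ringed lattices if and only if each induced $\rest{\pi}{u_i}:\catHom{\mathsf{RDL}^{op}}{\rest{X}{u_i}}{\rest{Y}{w_i}}$ is. By distributivity of $L_X$, the defining equation $\pi^*(\Support{u}{s})=\Support{\pi^*(u)}{\pi^\sharp(s)}$ holds for all $u:L_Y$, $s:\mathcal{O}_Y(u)$ exactly when all of its meets with the $u_i$ do. Meeting the left-hand side with $u_i$, using $u_i\leq\pi^*(w_i)$ and (b), rewrites it as $(\rest{\pi}{u_i})^*\big(\Support{w_i\wedge u}{\rest{s}{w_i\wedge u}}\big)$; meeting the right-hand side with $u_i$, using $\Support{\pi^*(u)}{\pi^\sharp(s)}\leq\pi^*(u)$, (b), and naturality of $\pi^\sharp$, rewrites it as $\Support{(\rest{\pi}{u_i})^*(w_i\wedge u)}{(\rest{\pi}{u_i})^\sharp(\rest{s}{w_i\wedge u})}$, where by (a) the invertibility supports occurring are those of $\rest{Y}{w_i}$ and $\rest{X}{u_i}$. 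So the $u_i$-meet of $\pi$'s equation at $(u,s)$ is precisely $\rest{\pi}{u_i}$'s defining equation at $(w_i\wedge u,\,\rest{s}{w_i\wedge u})$, and since every pair $(v,t)$ with $v\leq w_i$ occurs this way (take $u=v$, $s=t$), the equivalence follows.

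Given this principle, the lemma drops out. If $\pi$ is locally affine, \cref{rem:locAffineMorRemark} provides compatible affine covers $\{u_i\}$, $\{w_j\}$ with each $\rest{\pi}{u_i}:\rest{X}{u_i}\to\rest{Y}{w_{j(i)}}$ a morphism of locally ringed lattices, so the principle makes $\pi$ one as well (only the inequality $\geq$ is really at stake here, $\leq$ holding for any morphism of ringed lattices). Conversely, if $\pi$ is a morphism of locally ringed lattices, pick any affine cover $\{w_j\}$ of $L_Y$ and any affine cover $\{u_i\}$ of $L_X$; inside each $\rest{X}{u_i}\cong\mathsf{Spec}(A_i)$ one has $u_i=\bigvee_j(u_i\wedge\pi^*(w_j))$, and each $u_i\wedge\pi^*(w_j)$, seen in $\ZarLat{A_i}\cong\,\downarrow\! u_i$, is a finite join of basic opens $D(g)$ with $\rest{X}{D(g)}\cong\mathsf{Spec}(\locEl{A_i}{g})$ affine (\cref{lem:isoLocDownSet}) and $D(g)\leq\pi^*(w_j)$. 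These basic opens form a finite affine cover of $L_X$ compatible with $\{w_j\}$, on each piece of which $\rest{\pi}{\_}$ is a morphism of locally ringed lattices by the principle; hence $\pi$ is locally affine by \cref{rem:locAffineMorRemark}. I expect the main obstacle to be precisely this last refinement — arranging compatible affine covers while staying within finite affine covers — which relies on elements of a Zariski lattice being finite joins of basic opens and on basic-open localisations of affine schemes being affine; the rest is routine propagation of (a) and (b) through the definitions.
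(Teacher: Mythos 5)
Your proof is correct and takes essentially the same route as the paper: the meet computation behind your ``locality principle'' is precisely the paper's argument for the locally-affine-to-locally-ringed direction (using $w\wedge\Support{u}{s}=\Support{w}{\rest{s}{w}}$, naturality of $\pi^\sharp$, and $1=\bigvee u_i$), and your refinement by basic opens $D(g)$ of $\ZarLat{A_i}$ under the affineness isomorphism $\downarrow\! u_i\cong\ZarLat{A_i}$ is the paper's refinement by the invertibility supports $\Support{u_i}{s_{ijk}}$. The only differences are presentational: you factor both implications through one symmetric principle, and you cite \cref{lem:isoLocDownSet} where what is really needed (and what the paper also uses without proof) is the full locally ringed lattice isomorphism $\rest{X}{\Support{u_i}{g}}\cong\mathsf{Spec}(\locEl{\mathcal{O}_X(u_i)}{g})$, not just its lattice part.
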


\begin{proof}
  First, assume that $(\pi^*,\pi^\sharp)$ is a morphism of locally ringed lattices.
  take affine covers $u_1,\dots,u_n$ of $L_X$ and $w_1,\dots,w_m$ of $L_Y$.
  We can refine the $u_i$'s to a compatible cover in the following way.
  For each $i$ and $j$ take a cover $\pi^*(w_j)\wedge u_i=\bigvee_k\Support{u_i}{s_{ijk}}$.
  Such a cover exists because $u_i$ is affine and all $\Support{u_i}{s_{ijk}}$ are
  affine as well, i.e.\
  $\rest{X}{\Support{u_i}{s_{ijk}}}=\mathsf{Spec}(\locEl{\mathcal{O}_X(u_i)}{s_{ijk}})$.
  Then $1_{L_X}=\bigvee_{i,j,k}\Support{u_i}{s_{ijk}}$, with $\Support{u_i}{s_{ijk}}\leq\pi^*(w_j)$
  and the induced/restricted morphism $\rest{X}{\Support{u_i}{s_{ijk}}}\to \rest{Y}{w_j}$
  is one of locally ringed lattices, hence affine.

  Now, assume that we have compatible covers $u_1,\dots,u_n$ of $L_X$ and $w_1,\dots,w_m$ of $L_Y$,
  with $u_i\leq\pi^*(w_j)$ such that
  $\rest{\pi}{u_i}:\catHom{\mathsf{RDL}^{op}}{\rest{X}{u_i}}{\rest{Y}{w_j}\!\!}$
  is actually a morphism of locally ringed lattices.
  This means that for $v\leq w_j$ and $s:\mathcal{O}_Y(v)$ we have
  \begin{align*}
    u_i\wedge\pi^*\big(\Support{v}{s}\big)=\Support{u_i\wedge\pi^*(v)}{\rest{\pi^\sharp(s)}{u_i\wedge\pi^*(v)}}=u_i\wedge\Support{\pi^*(v)}{\pi^\sharp(s)}
  \end{align*}
  For $w: L_Y$ and $s:\mathcal{O}_Y(w)$ we have that
  $u_i\wedge\pi^*(w)=u_i\wedge\pi^*(w_j\wedge w)$, since $u_i\leq\pi^*(w_j)$,
  and hence
  \begin{align*}
        \Support{u_i\wedge\pi^*(w)}{\rest{\pi^\sharp(s)}{u_i\wedge\pi^*(w)}}
    ~&=~ u_i\wedge\Support{\pi^*(w_j\wedge w)}{\rest{\pi^\sharp(s)}{\pi^*(w_j\wedge w)}} \\
    ~&=~ u_i\wedge\pi^*\big(\Support{w_j\wedge w}{\rest{s}{w_j\wedge w}}\big) \\
    ~&\leq~ \pi^*\big(\Support{w_j\wedge w}{\rest{s}{w_j\wedge w}}\big)
  \end{align*}
  and hence, since for each $i$ there exists a compatible $j$,
  \begin{align*}
    \Support{\pi^*(w)}{\pi^\sharp(s)} ~&=~ \textstyle\bigvee_{i=1}^n\Support{u_i\wedge\pi^*(w)}{\rest{\pi^\sharp(s)}{u_i\wedge\pi^*(w)}} \\
    ~&\leq~ \textstyle\bigvee_{j=1}^m\pi^*\big(\Support{w_j\wedge w}{\rest{s}{w_j\wedge w}}\big) \\
    ~&=~ \pi^*\big(\Support{w}{s}\big) \qedhere
  \end{align*}
\end{proof}

\subsection{Classical characterization}\label{subsec:classicalChar}

Before moving on, we want to sketch why
the definition of qcqs-schemes as locally ringed lattices given above
is actually equivalent to the standard definition using locally ringed spaces.
This section is addressed to readers familiar with classical algebraic geometry.
For this section only, we leave the realm of constructive type theory
and adopt set-theoretic notation.

First, it is worth checking that invertibility supports are related to sheaves of local rings.
Observe that any ringed space $(X,\mathcal{O}_X)$ has arbitrary invertibility suprema.
For any open $U\subseteq X$ and $s\in\mathcal{O}_X(U)$, we can define
\begin{align*}
  \Support{U}{s}=\big\{x\in U~\vert~s_x\in\mathcal{O}_{X,x}^{\,\times}\big\}
\end{align*}
$\mathcal{D}$ is easily seen to be an invertibility map
(on opens of $X$ and $\mathcal{O}_X$).
$\Support{U}{s}$ is sometimes called the \emph{invertibility open} of s.\footnote{
See \cite[Def.\ 2.1]{Hakim1972}.}
Moreover, it contains all information about locality on the stalks,
as the following propositions show.

\begin{proposition}
  $(X,\mathcal{O}_X)$ is a locally ringed space if and only if $\mathcal{D}_U$ is a support
  for all opens $U\subseteq X$.
\end{proposition}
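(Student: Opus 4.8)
The plan is to reduce both implications to the standard characterization of local rings: a commutative ring $A$ is local precisely when $1 \neq 0$ in $A$ and whenever a sum $a + b$ is a unit, at least one of $a, b$ is a unit. (If $A$ is local the non-units form an ideal, so a sum of non-units is a non-unit; conversely, applying the condition to $a$ and $1 - a$ recovers the usual criterion that $a$ or $1 - a$ is invertible.) Before either direction, I note that two of the three support axioms for $\mathcal{D}_U$, and half of the first, hold for \emph{any} ringed space with no locality assumption: $\Support{U}{1} = U$ because $1$ is a unit in every ring, and $\Support{U}{st} = \Support{U}{s} \cap \Support{U}{t}$ because in a commutative ring a product is a unit iff both factors are. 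So all the content of the statement is concentrated in the two remaining requirements, $\Support{U}{0} = \emptyset$ and $\Support{U}{s+t} \subseteq \Support{U}{s} \cup \Support{U}{t}$, and I will show that each of these, quantified over all opens and sections, is equivalent to the stalks being local rings.

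For the forward direction, suppose $(X,\mathcal{O}_X)$ is a locally ringed space. Then every stalk $\mathcal{O}_{X,x}$ is a local ring, so $1 \neq 0$ there, i.e.\ $0_x \notin \mathcal{O}_{X,x}^{\,\times}$; hence $\Support{U}{0} = \emptyset$ for every $U$. And if $x \in \Support{U}{s+t}$, then $s_x + t_x = (s+t)_x$ is a unit in the local ring $\mathcal{O}_{X,x}$, so $s_x$ or $t_x$ is a unit, i.e.\ $x \in \Support{U}{s} \cup \Support{U}{t}$. Thus $\mathcal{D}_U$ is a support for every open $U$.

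For the converse, suppose $\mathcal{D}_U$ is a support for every open $U$, and fix $x \in X$; it suffices to prove $\mathcal{O}_{X,x}$ is local. Applying $\Support{U}{0} = \emptyset$ to any neighbourhood $U$ of $x$ gives $0_x \notin \mathcal{O}_{X,x}^{\,\times}$, so $1 \neq 0$ in the stalk. Given an arbitrary germ $g \in \mathcal{O}_{X,x}$, pick an open neighbourhood $V \ni x$ and a section $s \in \mathcal{O}_X(V)$ with $s_x = g$; then $1 - s \in \mathcal{O}_X(V)$ has germ $1 - g$ and $s + (1 - s) = 1$ in $\mathcal{O}_X(V)$. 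Since $1_x$ is a unit, $x \in \Support{V}{1} = \Support{V}{s + (1-s)} \subseteq \Support{V}{s} \cup \Support{V}{1-s}$ by the third support axiom; hence $g = s_x$ or $1 - g = (1-s)_x$ is a unit in $\mathcal{O}_{X,x}$. This is exactly the local-ring criterion, so every stalk is local and $(X,\mathcal{O}_X)$ is a locally ringed space.

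The one point requiring care — and the only real obstacle — is the interplay between sections and germs in the converse: every germ at $x$ arises from an honest section on some, possibly very small, open neighbourhood, and the argument is insensitive to shrinking that neighbourhood because the support axioms are assumed for \emph{all} opens. One should also keep in mind that $\Support{U}{s}$ is indeed an open subset of $X$ (in any ringed space the locus where a section is invertible is open), so that $\mathcal{D}_U$ genuinely takes values in a distributive lattice; this has already been recorded in the remark that $\mathcal{D}$ is an invertibility map.
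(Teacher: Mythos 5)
Your proof is correct and follows essentially the same route as the paper: observe that $\Support{U}{1}=U$ and $\Support{U}{st}=\Support{U}{s}\cap\Support{U}{t}$ hold in any ringed space, so the statement reduces to the remaining axioms $\Support{U}{0}=\emptyset$ and $\Support{U}{s+t}\subseteq\Support{U}{s}\cup\Support{U}{t}$, which are then matched against the stalk-wise characterization of local rings via units of sums. The only (harmless) difference is that in the converse you verify the ``$g$ or $1-g$ invertible'' criterion by applying the addition axiom to $s$ and $1-s$, whereas the paper applies the sum criterion directly to arbitrary germs; both are standard equivalent formulations, and your version just spells out the details the paper leaves implicit.
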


\begin{proof}
  Note that we always have
  \begin{align*}
    \Support{U}{1}=U\quad\&\quad\forall s,t:\;\Support{U}{st}=\Support{U}{s}\cap\Support{U}{t}
  \end{align*}
  So what we actually want to prove is that $(X,\mathcal{O}_X)$ is a locally ringed space
  if and only if for all open sets $U$ we have:
  \begin{align*}
    \Support{U}{0}=\emptyset\quad\&\quad\forall s,t:\;\Support{U}{s+t}\subseteq\Support{U}{s}\cup\Support{U}{t}
  \end{align*}
  Recall that a ring $A$ is local if and only if it is non-trivial and
  the non-units $A\setminus A^\times$ form an ideal, which is the case if and only if
  $A\setminus A^\times$ is closed under addition.
  Taking the contrapositive of the last statement, we arrive at:
  \emph{$A$ is local iff it is non-trivial and $s+t\in A^\times$
  implies $s\in A^\times$ or $t\in A^\times$}.\footnote{In fact, this is usually taken to be the constructive definition of a local ring.}
  Using this characterization for the stalks $\mathcal{O}_{X,x}$ the claim immediately follows.
\end{proof}

\begin{proposition}
  $(f,f^\sharp):(X,\mathcal{O}_X)\to (Y,\mathcal{O}_Y)$ is a morphism of locally ringed spaces
  if and only if for every open $U\subseteq Y$ and section $s\in\mathcal{O}_Y(U)$:\footnote{In
    \cite[Def.\ 2.9]{Hakim1972}, a morphism of ringed topoi satisfying the
    analoguous condition is called \emph{admissible}.}
  \begin{align*}
    f^{-1}\big(\Support{U}{s}\big)\;=\;\Support{f^{-1}(U)}{f^\sharp(s)}
  \end{align*}
\end{proposition}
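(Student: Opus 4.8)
The plan is to reduce the claimed equality of opens to a pointwise statement about stalks and then to recognise that statement as the assertion that the induced stalk maps are local. First I would recall the standard germ-compatibility of a morphism of sheaves of rings: for an open $U\subseteq Y$, a section $s\in\mathcal{O}_Y(U)$ and a point $x\in f^{-1}(U)$, one has $(f^\sharp(s))_x=f^\sharp_x(s_{f(x)})$, where $f^\sharp_x:\mathcal{O}_{Y,f(x)}\to\mathcal{O}_{X,x}$ is the map on stalks. Combining this with the defining property of the invertibility open, the two sides of the asserted equation unwind as follows: for $x\in f^{-1}(U)$ we have $x\in f^{-1}(\Support{U}{s})$ iff $s_{f(x)}\in\mathcal{O}_{Y,f(x)}^{\,\times}$, whereas $x\in\Support{f^{-1}(U)}{f^\sharp(s)}$ iff $f^\sharp_x(s_{f(x)})\in\mathcal{O}_{X,x}^{\,\times}$. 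Hence, ranging over all opens $U\ni f(x)$ and all sections $s$ — which exhausts the germs at $f(x)$ — the equation of opens is precisely the condition that for every $x$ and every germ $g\in\mathcal{O}_{Y,f(x)}$ one has $g\in\mathcal{O}_{Y,f(x)}^{\,\times}\iff f^\sharp_x(g)\in\mathcal{O}_{X,x}^{\,\times}$.

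Next I would observe that, as is the case for any morphism of ringed spaces, the inclusion $f^{-1}(\Support{U}{s})\subseteq\Support{f^{-1}(U)}{f^\sharp(s)}$ holds automatically, since a ring homomorphism sends units to units; this parallels the analogous remark for morphisms of ringed lattices earlier in the paper. So the content of the statement lies entirely in the reverse inclusion, which by the translation above says exactly $(f^\sharp_x)^{-1}(\mathcal{O}_{X,x}^{\,\times})\subseteq\mathcal{O}_{Y,f(x)}^{\,\times}$ for all $x$. I would then invoke the elementary characterisation — already used in the proof of the preceding proposition — that a homomorphism $\varphi:A\to B$ between local rings is local if and only if $\varphi^{-1}(B^\times)\subseteq A^\times$, the reverse inclusion $A^\times\subseteq\varphi^{-1}(B^\times)$ being trivial. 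Applying this with $\varphi=f^\sharp_x$ (the stalks being local rings because $(X,\mathcal{O}_X)$ and $(Y,\mathcal{O}_Y)$ are locally ringed spaces), the reverse inclusion for all $U$ and $s$ is seen to be equivalent to $f^\sharp_x$ being a local homomorphism for every $x$, i.e.\ to $(f,f^\sharp)$ being a morphism of locally ringed spaces, which finishes both directions at once.

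I do not expect a genuine obstacle: the argument is a bookkeeping exercise in unwinding definitions, entirely parallel to the point-free treatment of invertibility supports for locally ringed lattices. The only point that needs a little care is the germ-compatibility identity $(f^\sharp(s))_x=f^\sharp_x(s_{f(x)})$ together with the remark that quantifying over all opens and sections captures precisely all germs at $f(x)$; once these are in place, the equivalence falls out of the unit-characterisation of local homomorphisms.
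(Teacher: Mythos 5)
Your proposal is correct and follows essentially the same route as the paper, which simply cites the characterisation that a homomorphism of local rings $\varphi:A\to B$ is local iff $\varphi(a)\in B^\times$ implies $a\in A^\times$; you merely spell out the unwinding (germ compatibility $(f^\sharp(s))_x=f^\sharp_x(s_{f(x)})$, the automatic inclusion from units mapping to units, and the fact that opens and sections exhaust the germs) that the paper leaves implicit. No gaps.
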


\begin{proof}
  This follows from the fact that a homomorphism of local rings $\varphi:A\to B$
  is local if and only if $\varphi(a)\in B^\times$ implies $a\in A^\times$.
\end{proof}

The next step is to establish a connection between (ringed) lattices and (ringed) spaces.
The main result that makes this possible is
\emph{Stone's representation theorem for distributive lattices}~\cite{StoneRepresentation}.
It gives a (contravariant) equivalence of categories between distributive lattices
and a special class of topological spaces, called \emph{coherent} or \emph{spectral} spaces.
A topological space $X$ is coherent if it is \emph{quasi-compact}, \emph{sober}
(its non-empty irreducible closed subsets are the closure of a single point),
and its quasi-compact opens are closed under finite intersections
and form a basis of the topology of $X$.
A coherent map between coherent spaces
$X$ and $Y$ is a continuous map $f:X\to Y$ such that for any quasi-compact open $K\subseteq Y$,
its pre-image $f^{-1}(K)$ is quasi-compact.

The category $\mathsf{CohSp}$ consists of coherent spaces with coherent maps.
The equivalence $\mathsf{CohSp}\simeq\mathsf{DL}^{op}$ sends a coherent space $X$
to its lattice of quasi-compact opens $\mathbf{K}^o(X)$ and a coherent map
$f:X\to Y$ to the pre-image lattice homomorphism $f^{-1}:\mathbf{K}^o(Y)\to\mathbf{K}^o(X)$.
The inverse can also be defined explicitly, assigning to a distributive lattice $L$
the space of points of the locale of $L$-\emph{ideals} $\text{pt}\,\big(\mathsf{Idl}(L)\big)$.
See~\cite[Sec.\ II.3.3]{StoneSpaces} for details.

By extension of Stone's representation theorem,
ringed lattices are equivalent to ringed coherent spaces.
Sheaves on a topological space $X$ are in bijection with sheaves on
any basis of $X$ by the comparison lemma for topological spaces.
Hence, for any coherent space $X$ we get an equivalence of categories
\begin{align*}
  \mathsf{Sh}\big(\mathbf{K}^o(X)\big) ~\simeq~ \mathsf{Sh}(X)
\end{align*}
Since the quasi-compact opens are all quasi-compact, we only have to consider
finite covers for the sheaf property and we can thus identify
$\mathsf{Sh}\big(\mathbf{K}^o(X)\big)$ with the category of lattice sheaves
on $\mathbf{K}^o(X)$.

However, in the locally ringed case things get a bit more complicated.
In \cref{def:LRDL} we had to add the invertibility map as an extra
structure on the ringed lattice that could not be defined
automatically. This is for the subtle reason that we required
$\Support{u}{s}$ to be an element of the lattice. In other words,
locally ringed lattices correspond to
locally ringed spaces $(X,\mathcal{O}_X)$
where $X$ is a coherent space and \emph{for any quasi-compact open $U\subseteq X$
and section $s:\mathcal{O}_X(U)$, the invertibility open
$\Support{U}{s}$ is quasi-compact as well}.
This last condition need not always hold,
as the following counterexample due to Thierry Coquand shows:

\begin{example}
  The one-point compactification of the naturals (as a discrete space)
  $X=\mathbb{N}\cup\{\infty\}$ is coherent.\footnote{It is also Hausdorff and hence
    a Stone space, corresponding to the boolean algebra
    of finite and cofinite subsets of $\mathbb{N}$.}
  Let $\mathcal{O}_X(U)=\{f:U\to \mathbb{R} ~\text{continuous}~\}$
  and take the global section defined by $s(n)=\nicefrac{1}{n}$ and $s(\infty)=0$.
  Then the invertibility open
  $\Support{X}{s}=\{x\in\mathbb{N}\cup\{\infty\}~\vert~s(x)\neq 0\}=\mathbb{N}$ is not compact.
  Hence we can not describe this locally ringed coherent space as a locally ringed lattice.
\end{example}

In the lingo of Grothendieck's ``EGA 1''~\cite{EGA1},
coherent spaces are precisely the qcqs-spaces which
are sober. The technical notion of sobriety ensures that the points of a space can be recovered
from the locale of opens and is crucial to make the representation theorem work.
However, as schemes are always sober, qcqs-schemes are precisely the schemes with a coherent
topology. Now, for a qcqs-scheme $X$, any quasi-compact open $U\subseteq X$ has a finite affine cover
$U=\bigcup_{i=1}^n\mathsf{Spec}(R_i)$. It follows that for any $s\in\mathcal{O}_X(U)$,
its invertibility open can be computed as
$\Support{U}{s}=\bigcup_{i=1}^n D\big(\rest{s}{\mathsf{Spec}(R_i)}\big)$
and is thus quasi-compact.

Moreover, any morphism of schemes between two qcqs-schemes $X\to Y$ is
coherent (quasi-compact). The key result \cite[Lemma 15]{ConstrSchemes}
\begin{center}
  \emph{If $X$ is a qc- and $Y$ a qs-scheme, then any morphism $X\to Y$ is quasi-compact.}
\end{center}
is proved by combining several results from \cite[Ch. 6]{EGA1}.\footnote{In ``The Rising Sea'' this is proved using the so-called ``cancellation theorem'' \cite{RisingSea}.}
This shows that the category of locally ringed lattices contains qcqs-schemes as a full
subcategory and that \cref{def:qcqsSch} gives an equivalent description of this subcategory.

\section{The functor of points approach}\label{sec:fopApproach}
For algebraic geometers it is often convenient to identify a scheme
with its ``functor of points''.\footnote{see e.g.\ \cite[Ch.\ VI]{EisenbudHarris}
  or \cite[Ch.\ II \S 6]{MumfordRedBook}.}
Given a scheme $X$, we can look at the
presheaf $\mathsf{Sch}(\_,X):\mathsf{Sch}^{op}\to\mathsf{Set}$ and by
the Yoneda lemma, two schemes $X$ and $Y$ are isomorphic as schemes if
and only if the presheaves $\mathsf{Sch}(\_,X)$ and $\mathsf{Sch}(\_,Y)$ are
naturally isomorphic. Since affine schemes form a dense
subcategory of the category of schemes, $X$ and $Y$ are isomorphic
schemes if the restrictions
$\rest{\mathsf{Sch}(\_,X)}{\mathsf{Aff}^{op}}$ and
$\rest{\mathsf{Sch}(\_,Y)}{\mathsf{Aff}^{op}}$ are isomorphic as
presheaves. We can push this even further by using that the functor
$\mathsf{Spec}$ is an equivalence of categories between $\mathsf{Aff}$
and $\mathsf{CommRing}^{op}$.  For any scheme $X$ we can define its
functor of points
$h_X:\equiv\mathsf{Sch}(\Spec{(\_)},X):\mathsf{CommRing}\to\mathsf{Set}$
(a presheaf on $\mathsf{CommRing}^{op}$) and two schemes are isomorphic
if and only if their functors of points are naturally isomorphic.
We can do this for our definition of qcqs-schemes as locally ringed lattices
where a morphism of schemes is just a morphism of locally ringed lattices
or equivalently a locally affine morphism of ringed lattices.
We will study the functor of points for locally ringed lattices
in \cref{subsec:functorOfPoints}.

In this section we want  to answer the following question:
Is it possible to characterize the image of the functor
$h:\mathsf{qcqsSch}\hookrightarrow\mathsf{Psh}(\mathsf{CommRing}^{op})$
directly, using purely algebraic and categorical methods, while staying
constructive and predicative?
In the classical literature, this is the so-called
\emph{functor of points approach}, which allows one to
introduce schemes without having to introduce locally ringed spaces
first. Affine schemes are readily accounted for in this approach, as
they correspond to the \emph{representable} presheaves given by the
Yoneda embedding, which we will henceforth denote by
$\mathsf{Sp}:\mathsf{CommRing}^{op}\hookrightarrow\mathsf{Psh}(\mathsf{CommRing}^{op})$.
Indeed, one can immediately verify that $\mathsf{Sp}(A)\cong h_{\Spec{(A)}}$.
Functorial schemes turn out to be those
presheaves $X:\mathsf{CommRing}\to\mathsf{Set}$ that satisfy a certain
locality condition and have an \emph{open cover} by affine
sub-presheaves $\mathsf{Sp}(A)\hookrightarrow X$.

The standard reference developing the basics of algebraic geometry
starting from functors $\mathsf{CommRing}\to\mathsf{Set}$ is
``Introduction to Algebraic Geometry and Algebraic Groups''
by Demazure and Gabriel \cite{DemazureGabriel}.
The central notion here is that of an \emph{open subfunctor}.
These are in a certain sense classified by the
locale of ``Zariski opens''.\footnote{See e.g.\ \cite{MadoreDefScheme}. This fact is
  usually not stated explicitly in textbooks.}
Focusing instead on the notion of \emph{compact open subfunctors},
which are classified by the Zariski lattice,
we can obtain a constructive functorial approach
to qcqs-schemes. In this approach we have to be careful about size issues,
so we will annotate categories with the universe level at which the type
of objects live. Fixing a base level $\ell$,
$\mathsf{CommRing}_\ell$ will be the type or category of ``small'' commutative rings,
whose carrier type lives in $\mathsf{Type}_\ell$. Big commutative rings
will be denoted by $\mathsf{CommRing}_{\ell+1}$ and the same applies to
lattices, locally ringed lattices and the like.

Since it only requires standard tools from category theory and
algebra, this approach lends itself to
a concise definition of qcqs-schemes that is convenient to formalize.
In particular, many of the definitions and results given in this section
are formalized using the \texttt{Cubical Agda} proof assistant
and are already described in \cite{ZeunerHutzler24}, which adapts \cite{DemazureGabriel}
to HoTT/UF while staying constructive and predicative.
We repeat definitions and results from \cite{ZeunerHutzler24}
here for convenience and for the sake of self-containedness
but omit proofs.

\begin{definition}\label{def:ZFunctors}
  The category of $\Z$-functors, denoted $\ZFunctor_\ell$,
  is the category of functors from $\mathsf{CommRing}_\ell$ to $\mathsf{Set}_\ell$.
  We write $\mathsf{Sp}:\mathsf{CommRing}_\ell^{op}\to\ZFunctor_\ell$
  for the Yoneda embedding and $\Aone :\ZFunctor_\ell$ for the
  forgetful functor from commutative rings to sets.
  We say that $X:\ZFunctor_\ell$ is an \emph{affine scheme}
  if there exists a $R:\mathsf{CommRing}_\ell$ with a natural isomorphism
  $X\cong\mathsf{Sp}(R)$.\footnote{As the category of $\Z$-functors is univalent
    and $\mathsf{Sp}$ is fully faithful,
    it is not necessary to use mere existence for defining affine schemes,
    as the type of rings $R$ with $X\cong\mathsf{Sp}(R)$ is a proposition.}
\end{definition}

\begin{example}\label{ex:AoneGm}
  $\Aone$ is an affine scheme, as $\Aone\cong\mathsf{Sp}(\Z[x])$.
  The $\Z$-functor $\mathbb{G}_m$ that sends a ring to its set of units,
  i.e.\ $\mathbb{G}_m(R):\equiv R^\times$, is an affine scheme as
  $\mathbb{G}_m\cong\mathsf{Sp}(\locEl{\Z[x]}{x})$.
\end{example}

\begin{definition}\label{def:globalSectionsFun}
  Let $X:\ZFunctor_\ell$, the \emph{ring of functions}
  $\Ofun(X)$ is the type of natural transformations $\NatTrans{X}{\Aone}$
  equipped with the canonical point-wise operations,
  i.e.\ for $R:\mathsf{CommRing}_\ell$ and $x:X(R)$,
  zero, one, addition and multiplication
  are given by
  \begin{align*}
    &0_R(x) :\equiv 0, \quad  1_R(x):\equiv 1 \\
    &(\alpha+\beta)_R(x) ~:\equiv~ \alpha_R(x) + \beta_R(x) \\
    &(\alpha\cdot\beta)_R(x) ~:\equiv~ \alpha_R(x) \cdot \beta_R(x)
  \end{align*}
  This defines a functor $\Ofun:\ZFunctor_\ell\to\mathsf{CommRing}_{\ell+1}^{op}$,
  whose action on morphisms (natural transformations) is given by precomposition.
\end{definition}

Because the ring of functions of a $\Z$-functor is lives in the successor universe,
we cannot have an adjunction between $\mathsf{Sp}$ and $\mathcal{O}$.
However, the two functors form what is called a \emph{relative coadjunction}
with respect to the functor induced by universe lifting
$\mathsf{lift}:\mathsf{CommRing}^{op}_\ell\to\mathsf{CommRing}^{op}_{\ell+1}$.

\begin{definition}[\cite{nlab:relative_adjoint_functor}]\label{def:relCoadjunction}
  Let $\mathcal{B},\mathcal{C},\mathcal{D}$ be categories with functors
  $l:\mathcal{C}\to\mathcal{D}$, $F:\mathcal{B}\to\mathcal{D}$ and $G:\mathcal{C}\to\mathcal{B}$.
  We say that
  $F$ and $G$ are $l$-relative adjoint, written as $G \prescript{}{l}{\dashv}~F$, if there is a natural
  family of equivalences
  \begin{align*}
    (c:\mathcal{C})~(b:\mathcal{B}) \to \catHom{\mathcal{B}}{G(c)}{b}~\simeq~\catHom{\mathcal{D}}{l(c)}{F(b)}
  \end{align*}
  Dually, we say that
  $F$ and $G$ are $l$-relative coadjoint, written as $F\dashv_{\,l} G$, if there is a natural
  family of equivalences
  \begin{align*}
    (c:\mathcal{C})~(b:\mathcal{B}) \to \catHom{\mathcal{D}}{F(b)}{l(c)}~\simeq~\catHom{\mathcal{B}}{b}{G(c)}
  \end{align*}
\end{definition}

\begin{remark}
  In the setting of \cref{def:relCoadjunction}, a relative adjunction
  $G \prescript{}{l}{\dashv}~F$ induces a relative unit, i.e.\
  for $c:\mathcal{C}$ a map $\eta_c:\catHom{\mathcal{D}}{l(c)}{F(G(c))}$
  natural in $c$.
  Analogously, a relative
  coadjunction $F\dashv_{\,l} G$ induces a relative counit, i.e.\
  for $c:\mathcal{C}$ a map $\varepsilon_c:\catHom{\mathcal{D}}{F(G(c))}{l(c)}$
  natural in $c$. Note that we do not get the other direction in either case.
\end{remark}

\begin{proposition}\label{prop:OSpRelCoadj}
  We have a relative coadjunction
  $\mathcal{O}\dashv_{\mathsf{lift}}\mathsf{Sp}$. In particular,
  for $R:\mathsf{CommRing}_\ell$ and $X:\ZFunctor_\ell$ there is an equivalence
  of types
  \begin{align*}
    {\mathsf{Hom}\big(R,\Ofun(X)\big)}\simeq\big(\NatTrans{X}{\mathsf{Sp}(R)}\big)
  \end{align*}
  which is natural in both $R$ and $X$.
  For any $R:\mathsf{CommRing}_\ell$, the relative counit
  $\varepsilon_R:\mathsf{Hom}\big(R,\Ofun(\mathsf{Sp}(R))\big)$,
  which is obtained by applying the inverse of above isomorphism
  to the identity $\NatTrans{\mathsf{Sp}(R)}{\mathsf{Sp}(R)}$,
  is an isomorphisms of rings.
\end{proposition}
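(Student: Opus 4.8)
The plan is to recognise this as the covariant Yoneda lemma, dressed up to keep track of the ring structure and of universe levels. First I would write down the comparison map directly. Given a natural transformation $\alpha:\NatTrans{X}{\mathsf{Sp}(R)}$, each component $\alpha_A(x)$, for $A:\mathsf{CommRing}_\ell$ and $x:X(A)$, is by definition a ring homomorphism $R\to A$. Define $\Phi(\alpha):\mathsf{Hom}\big(R,\Ofun(X)\big)$ by sending $r:R$ to the natural transformation $\NatTrans{X}{\Aone}$ whose component at $A$ and $x:X(A)$ is $\alpha_A(x)(r):\Aone(A)$. Naturality of this transformation in $A$ is inherited from that of $\alpha$, and $\Phi(\alpha)$ is a ring homomorphism because each $\alpha_A(x)$ is one and the operations on $\Ofun(X)$ are computed pointwise. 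Conversely, given $\varphi:\mathsf{Hom}\big(R,\Ofun(X)\big)$, let $\Psi(\varphi):\NatTrans{X}{\mathsf{Sp}(R)}$ have component $\Psi(\varphi)_A(x):R\to A$ equal to $r\mapsto\varphi(r)_A(x)$; this is again a ring homomorphism by the pointwise argument, and naturality in $A$ follows from naturality of each $\varphi(r)$.

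Next I would check that $\Phi$ and $\Psi$ are mutually inverse, which is immediate, since both round trips leave the ``table of values'' $(r,A,x)\mapsto\alpha_A(x)(r)$ unchanged. Naturality of the resulting equivalence in $R$ and in $X$ is then a short diagram chase: under $\Psi$, precomposition by a ring map $R\to R'$ corresponds to postcomposition by the induced natural transformation $\mathsf{Sp}(R')\to\mathsf{Sp}(R)$, and precomposition by $\beta:\NatTrans{X}{Y}$ corresponds to postcomposition by $\Ofun(\beta)$, in both cases because everything in sight is defined by evaluation. The one piece of size bookkeeping is that $\catHom{\mathsf{CommRing}_{\ell+1}^{op}}{\Ofun(X)}{\mathsf{lift}(R)}$ is the same type as $\mathsf{Hom}\big(R,\Ofun(X)\big)$, because $\mathsf{lift}$ commutes with every type former and in particular does not change hom-types; this is precisely why we obtain a \emph{relative} coadjunction along $\mathsf{lift}$ instead of an honest adjunction.

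For the relative counit I would specialise $\Phi$ to $X:\equiv\mathsf{Sp}(R)$ and the identity transformation: $\varepsilon_R$ sends $r:R$ to the ``evaluation at $r$'' transformation, whose component at $A$ and $x:\mathsf{Hom}(R,A)$ is $x(r)$. It is a ring homomorphism by the first step. To see that it is an isomorphism, apply the covariant Yoneda lemma to the representable $\mathsf{Sp}(R)=\mathsf{Hom}(R,-)$ and the functor $\Aone$: the canonical evaluation map $\NatTrans{\mathsf{Sp}(R)}{\Aone}\to\Aone(R)=\lvert R\rvert$ is an equivalence carrying ``evaluation at $r$'' to $r$ itself, so $\varepsilon_R$ is, up to these identifications, the inverse of that equivalence, and a bijective ring homomorphism is a ring isomorphism.

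I do not expect a genuine obstacle: the real content is that $\Ofun(X)$ \emph{is} $\NatTrans{X}{\Aone}$ and $\Aone\cong\mathsf{Sp}(\Z[x])$, so the statement is the Yoneda lemma in disguise; one could alternatively prove it by presenting $R$ as a coequaliser of free rings $\Z[S_1]\rightrightarrows\Z[S_0]$ and using that $\mathsf{Sp}(-)$, $\NatTrans{X}{-}$ and $\mathsf{Hom}\big(-,\Ofun(X)\big)$ all send this colimit to the corresponding limit, thereby reducing to the case of free rings. The only points that demand care are checking that the two comparison maps are ring homomorphisms and natural, not merely set-level bijections, and keeping the two universes apart, since a literal adjunction fails for size reasons.
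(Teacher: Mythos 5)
Your argument is correct: the currying/evaluation bijection $\alpha\mapsto\big(r\mapsto(x\mapsto\alpha_A(x)(r))\big)$, its pointwise ring-homomorphism and naturality checks, the universe bookkeeping via $\mathsf{lift}$, and the Yoneda identification $\NatTrans{\mathsf{Sp}(R)}{\Aone}\simeq |R|$ for the counit are exactly the standard proof of this proposition. The paper itself states the result without proof, deferring to the \texttt{Cubical Agda} formalization in \cite{ZeunerHutzler24}, and your proof matches that intended argument.
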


\begin{remark}\label{rem:OSpRelCoadj}
  \cref{prop:OSpRelCoadj} type-checks because the type of ring homomorphisms
  is universe polymorphic, meaning that it can take rings living in different
  universes as arguments. The same holds for the type of isomorphisms/equivalences
  between two types. Since universe lifting commutes with type formers, we get
  \begin{align*}
    {\mathsf{Hom}\big(R,\Ofun(X)\big)}\simeq\catHom{\mathsf{CommRing}^{op}_{\ell+1}}{\Ofun(X)}{\mathsf{lift}(R)}
  \end{align*}
  which is natural in $R$ and $X$. It is modulo this equivalence that
  we actually get the relative coadjunction $\mathcal{O}\dashv_{\mathsf{lift}}\mathsf{Sp}$
  in \cref{prop:OSpRelCoadj}.
\end{remark}

\subsection{Zariski coverage} 

The category of $\Z$-functors, contains a lot more than the functorial
schemes we want to define. For example, for a qcqs-scheme $X$, we also have a functor
mapping a ring $A$ to the set $\catHom{\mathsf{RDL}^{op}}{\mathsf{Spec}(A)}{X}$
of morphisms of \emph{ringed lattices} from $X$ to $\mathsf{Spec}(A)$.
This functor is of course of little interest to algebraic geometers
and we need to set it apart from the functor of points
$\catHom{\mathsf{LRDL}^{op}}{\mathsf{Spec}(\_)}{X}$
by introducing a ``locality'' condition on $\Z$-functors.
This condition is induced by a certain coverage or Grothendieck topology
on $\mathsf{CommRing}_\ell^{op}$, called the Zariski coverage or topology.
Local $\Z$-functors turn out to be Zariski sheaves.

The notion of Grothendieck topology or coverage generalizes point-set topologies
to arbitrary categories.  It associates to each object in a category
$X:\mathcal{C}$ a family $\mathsf{Cov}(X)$ of covers, where a cover
$\big\{f_i:\catHom{\mathcal{C}}{U_i}{X}\big\}_{i:I}$ is a family of
maps into $X$, i.e.\ a family of objects in the slice category $\nicefrac{\mathcal{C}}{X}$.
These $\mathsf{Cov}(X)$ should satisfy certain closure
properties. If $\mathcal{C}$ has pullbacks,
$\mathsf{Cov}$ should be closed under pullbacks,
meaning that for $\big\{f_i:\catHom{\mathcal{C}}{U_i}{Y}\big\}_{i:I}$ in
$\mathsf{Cov}(Y)$ and $g:\catHom{\mathcal{C}}{X}{Y}$, the family
$\big\{g^*f_i:\catHom{\mathcal{C}}{X\times_Y U_i}{X}\big\}_{i:I}$
should be in $\mathsf{Cov}(X)$. A presheaf
$\mathcal{F}:\mathcal{C}^{op}\to\mathsf{Set}$ can then be defined to be a
sheaf if for any $\big\{f_i:\catHom{\mathcal{C}}{U_i}{X}\big\}_{i:I}$
in $\mathsf{Cov}(X)$ we get an equalizer diagram
\begin{align}\label{eq:sheafProp}
  \mathcal{F}(X) \to \prod_{i:I}\mathcal{F}(U_i)\rightrightarrows
                                              \prod_{i,j:I}\mathcal{F}(U_i \times_X U_j)
\end{align}
In the case where $\mathcal{C}$ is the poset of open subsets of a
topological space $X$, covers are defined the usual way: A family
$(U_i\subseteq U)_{i: I}$ is in $\mathsf{Cov}(U)$ if $\bigcup_i U_i = U$.
Pullbacks are just given by intersection and we recover the usual
topological notion of sheaf.

Note that, while coverages are generally defined as pullback-stable families of covers,
Grothendieck topologies require more closure properties
and their covering families are usually required to be so-called sieves.
Moreover, coverages can be defined
on categories without pullbacks with a slightly weaker condition of ``pullback stability''.
Accordingly, the definition of sheaf with respect to such a generalized coverage
is rephrased in a pullback-free way. For an overview of coverages with and without
pullbacks and Grothendieck topologies see \cite{nlab:coverage}.
For a coverage (with or without pullbacks) one can always find a unique
Grothendieck topology inducing the same notion of sheaf, by taking
``closures'' of covers in a certain sense.\footnote{See e.g.\ \cite[Prop.\ C2.1.9]{JohnstoneSketchesVolII}.}
Still, it is often convenient to work with concrete covers and not their generated sieves,
as they give a simpler description of sheaves.

The formalization of functorial qcqs-schemes described in
\cite{ZeunerHutzler24} uses coverages without pullbacks, even though
$\mathsf{CommRing}^{op}_\ell$ has pullbacks (given by the tensor
product of rings). This does seem to make certain things easier to
formalize, as it requires less categorical machinery.  For the sake of
conceptually simplicity, however, we want to think of the specific
coverage that we are interested in, the Zariski coverage, as pullback-stable
families of covers on $\mathsf{CommRing}_\ell^{op}$ with the
corresponding notion of Zariski sheaf given as in
(\ref{eq:sheafProp}). Consequently, results and proofs regarding the
Zariski coverage are perhaps not as straightforwardly formalizable and
may require careful rephrasing. Ultimately, this should always be possible
as both coverages (with or without pullbacks) generate the same Grothendieck topology
and thus give the same notion of Zariski sheaf.

\begin{definition}\label{def: ZariskiCoverage}
  For $R:\mathsf{CommRing}_\ell^{op}$, a \emph{Zariski cover}
  is indexed by finitely many elements $f_1,...,f_n:R$ such that
  $1\in\langle f_1,...,f_n\rangle$ and assigns to each $f_i$ the
  canonical morphism
  $\nicefrac{\_}{1}:\catHom{\mathsf{CommRing}_\ell^{op}}{\locEl{R}{f_i}}{R}$.
  This defines a coverage, called the \emph{Zariski coverage},
  as \cref{lem: pullbackStability} shows.
  A $\Z$-functor is called \emph{local} if it is a sheaf
  with respect to the Zariski coverage.
\end{definition}

\begin{lemma}\label{lem: pullbackStability}
  Zariski covers are stable under pullbacks.
\end{lemma}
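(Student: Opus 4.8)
I would show pullback-stability by an explicit computation. Fix a Zariski cover of $R:\mathsf{CommRing}_\ell^{op}$ given by $f_1,\dots,f_n:R$ with $1\in\langle f_1,\dots,f_n\rangle$, and let $\varphi:\catHom{\mathsf{CommRing}_\ell^{op}}{S}{R}$ be an arbitrary morphism, i.e.\ a ring homomorphism $\varphi:R\to S$. Recall that pullbacks in $\mathsf{CommRing}_\ell^{op}$ are pushouts of rings, and the pushout of $\nicefrac{\_}{1}:R\to\locEl{R}{f_i}$ along $\varphi:R\to S$ is the localization $\locEl{S}{\varphi(f_i)}$, with the canonical map $\nicefrac{\_}{1}:S\to\locEl{S}{\varphi(f_i)}$ as the induced leg. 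This is just the standard fact that localization commutes with base change, $\locEl{R}{f_i}\otimes_R S\cong\locEl{S}{\varphi(f_i)}$, which follows from the universal property of $\locEl{R}{f_i}$ as the initial $R$-algebra inverting $f_i$ (as recalled in the excerpt): an $S$-algebra receiving $\locEl{R}{f_i}\otimes_R S$ is the same as an $R$-algebra-under-$S$ inverting $f_i$, which is the same as an $S$-algebra inverting $\varphi(f_i)$.

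So the pulled-back family consists of the canonical morphisms $\nicefrac{\_}{1}:\catHom{\mathsf{CommRing}_\ell^{op}}{\locEl{S}{\varphi(f_i)}}{S}$ for $i=1,\dots,n$. To see that this is again a Zariski cover, it remains only to check the unimodularity condition $1\in\langle\varphi(f_1),\dots,\varphi(f_n)\rangle$ in $S$. But $1\in\langle f_1,\dots,f_n\rangle$ means $1=\sum_{i=1}^n r_if_i$ for some $r_i:R$, and applying the ring homomorphism $\varphi$ gives $1=\sum_{i=1}^n\varphi(r_i)\varphi(f_i)$, so indeed $1\in\langle\varphi(f_1),\dots,\varphi(f_n)\rangle$. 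Hence the pulled-back family is the Zariski cover of $S$ associated to $\varphi(f_1),\dots,\varphi(f_n)$, and the coverage is pullback-stable.

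**Main obstacle.** The only delicate point is the identification of the pushout $\locEl{R}{f_i}\otimes_R S$ with $\locEl{S}{\varphi(f_i)}$ \emph{on the nose with the correct structure maps}, so that the pulled-back leg is literally the canonical localization map $\nicefrac{\_}{1}:S\to\locEl{S}{\varphi(f_i)}$ rather than merely isomorphic to it. Constructively and in HoTT/UF this is a matter of spelling out the relevant universal properties and the naturality of the comparison isomorphism; it is routine but slightly fiddly, and if one instead works with coverages without pullbacks (as in \cite{ZeunerHutzler24}), one reformulates this as a compatibility statement between the two covers that avoids ever forming the pushout explicitly. Everything else is immediate from the algebra already recalled in the text.
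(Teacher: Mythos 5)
Your proof is correct and follows essentially the same route as the paper: both identify the pullback in $\mathsf{CommRing}_\ell^{op}$ with the canonical isomorphism $\locEl{S}{\varphi(f_i)}\cong S\otimes_R\locEl{R}{f_i}$ and note that $1\in\langle f_1,\dots,f_n\rangle$ is preserved by applying $\varphi$. The extra discussion of pinning down the comparison isomorphism via universal properties is a fair elaboration of what the paper leaves implicit, but it is the same argument.
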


\begin{proof}
  Let $R, A:\mathsf{CommRing}_\ell$ and  $\varphi:\mathsf{Hom}(R,A)$.
  For $f_1,...,f_n:R$ with $1\in\langle f_1,...,f_n\rangle$,
  we get that $1\in\langle \varphi(f_1),...,\varphi(f_n)\rangle$
  and this cover is indeed the pullback (in $\mathsf{CommRing}_\ell^{op}$)
  of the Zariski cover on $R$, as we have a canonical isomorphism
  $\locEl{A}{\varphi(f_i)}\cong A\otimes_R\locEl{R}{f_i}$.
\end{proof}

\begin{theorem}\label{thm:subcanonicity}
  The Zariski coverage is subcanonical, meaning that $\mathsf{Sp}(A)$ is local
  for $A:\mathsf{CommRing}_\ell$.
\end{theorem}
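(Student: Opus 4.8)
The plan is to unwind the sheaf condition (\ref{eq:sheafProp}) for $\mathsf{Sp}(A)$ and reduce it to a fact about the structure sheaf already recorded in \cref{ex:affSchLRDL}. Recall that $\mathsf{Sp}(A)$ is the functor $R\mapsto\mathsf{Hom}(A,R)$, acting on morphisms by post-composition. Fixing $R:\mathsf{CommRing}_\ell$ and a Zariski cover --- elements $f_1,\dots,f_n:R$ with $1\in\langle f_1,\dots,f_n\rangle$ and the associated family $\big\{\nicefrac{\_}{1}:\locEl{R}{f_i}\to R\big\}$ in $\mathsf{CommRing}_\ell^{op}$ --- I would first invoke \cref{lem: pullbackStability}, applied with one localization map in place of $\varphi$, to identify the pullbacks occurring in the sheaf condition as $\locEl{R}{f_i}\otimes_R\locEl{R}{f_j}\cong\locEl{R}{f_if_j}$. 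Thus what must be shown is that
\[ \mathsf{Hom}(A,R)\longrightarrow\prod_i\mathsf{Hom}\big(A,\locEl{R}{f_i}\big)\rightrightarrows\prod_{i,j}\mathsf{Hom}\big(A,\locEl{R}{f_if_j}\big) \]
is an equalizer of sets, the two parallel maps being post-composition with the two coprojections $\locEl{R}{f_i}\to\locEl{R}{f_if_j}$ and $\locEl{R}{f_j}\to\locEl{R}{f_if_j}$.

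Next, I would use that $\mathsf{Hom}(A,-):\mathsf{CommRing}_\ell\to\mathsf{Set}_\ell$ is corepresentable, hence preserves all limits. So it suffices to exhibit $R$, with legs $\nicefrac{\_}{1}$, as the equalizer of
\[ \prod_i\locEl{R}{f_i}\rightrightarrows\prod_{i,j}\locEl{R}{f_if_j} \]
in $\mathsf{CommRing}_\ell$; this is a finite limit among small rings, so no size issues arise, and applying $\mathsf{Hom}(A,-)$ returns exactly the diagram above.

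Finally, I would identify this last equalizer with the sheaf condition for the structure sheaf $\mathcal{O}_R$ on $\ZL$ from \cref{ex:affSchLRDL}, instantiated at the cover $D(1)=D(f_1)\vee\dots\vee D(f_n)$ of the top element. One checks that $\bigvee_iD(f_i)=D(f_1,\dots,f_n)$ equals $D(1)$ precisely when $1\in\sqrt{\langle f_1,\dots,f_n\rangle}$, i.e.\ when $1\in\langle f_1,\dots,f_n\rangle$, which is exactly the Zariski cover condition; then \cref{def:RDL} gives $\mathcal{O}_R(D(1))$ as the equalizer of $\prod_i\mathcal{O}_R(D(f_i))\rightrightarrows\prod_{i,j}\mathcal{O}_R(D(f_i)\wedge D(f_j))$, and transporting along the canonical isomorphisms $\mathcal{O}_R(D(1))\cong R$, $\mathcal{O}_R(D(f_i))\cong\locEl{R}{f_i}$ and $\mathcal{O}_R(D(f_i)\wedge D(f_j))=\mathcal{O}_R(D(f_if_j))\cong\locEl{R}{f_if_j}$ yields the claim. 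An alternative, more self-contained route avoids the structure sheaf altogether: separatedness follows from $\sqrt{\langle f_1,\dots,f_n\rangle}=\sqrt{\langle f_1^m,\dots,f_n^m\rangle}$ for all $m\ge1$, and gluing a compatible family $\big(\nicefrac{r_i}{f_i^{k}}\big)_i$ is the familiar common-denominator computation producing a unique $r:R$ with $\nicefrac{r}{1}=\nicefrac{r_i}{f_i^{k}}$ in every $\locEl{R}{f_i}$.

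The main obstacle is not conceptual but a matter of coherence bookkeeping: one has to verify that the two parallel arrows of the $\mathsf{Sp}(A)$-diagram --- which come from the coprojections of the pushout $\locEl{R}{f_i}\otimes_R\locEl{R}{f_j}$ --- correspond, under the isomorphism of \cref{lem: pullbackStability} and the canonical identifications of $\mathcal{O}_R$ on basic opens, to the two restriction maps of $\mathcal{O}_R$ down to $D(f_i)\wedge D(f_j)$. This is the one step where the (partly black-boxed) naturality of the isomorphisms $\mathcal{O}_R(D(f))\cong\locEl{R}{f}$ is used, and it is also why a fully formalized argument would likely prefer the direct commutative-algebra proof, whose only nontrivial ingredient is the routine gluing computation.
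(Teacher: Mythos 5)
Your proposal is correct and follows essentially the same route as the paper: both reduce the sheaf condition for $\mathsf{Sp}(A)$, via the identification $\locEl{R}{f_i}\otimes_R\locEl{R}{f_j}\cong\locEl{R}{f_if_j}$ and the fact that $\mathsf{Hom}(A,\_)$ preserves limits, to the equalizer $R\to\prod_i\locEl{R}{f_i}\rightrightarrows\prod_{i,j}\locEl{R}{f_if_j}$ in $\mathsf{CommRing}_\ell$. The only divergence is how that equalizer is justified: the paper simply cites it as the standard algebraic fact underlying the sheaf property of the structure sheaf, whereas you read it off the (black-boxed) sheaf property of $\mathcal{O}_R$ at the cover $D(1)=\bigvee_i D(f_i)$ --- inverting the paper's order of dependency, though not circularly in this setting --- or, in your self-contained alternative, prove it by exactly the separatedness-plus-gluing computation the paper is referencing.
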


\begin{proof}
  This is essentially the standard algebraic fact that is used to prove that the
  structure sheaf of an affine sheaf satisfies the sheaf property \cite[Prop.\ I-18]{EisenbudHarris}:
  For $f_1,...,f_n:R$ with $1\in\langle f_1,...,f_n\rangle$ we have an equalizer diagram
  \begin{align*}
    R\to\prod_{i=1}^n\locEl{R}{f_i}\rightrightarrows
                                              \prod_{i,j}\locEl{R}{f_if_j}
  \end{align*}
  Since $\locEl{R}{f_if_j}\cong \locEl{R}{f_i}\otimes_R\locEl{R}{f_j}$,
  i.e.\ $\locEl{R}{f_if_j}$ is the pullback in $\mathsf{CommRing}_\ell^{op}$,
  and $\mathsf{Hom}(A,\_)$ is left exact, the sheaf property (\ref{eq:sheafProp}) follows.
  For a formal proof for the Zariski coverage defined without using pullbacks,
  see \cite[Thm.\ 12]{ZeunerHutzler24}.
\end{proof}

\subsection{Compact opens} 

The main idea that allowed for a formalization of
functorial qcqs-schemes in \cite{ZeunerHutzler24}
was a streamlined definition of compact open subfunctors
of a $\Z$-functor.
The compact open subfunctors are investigated extensively in
the thesis of Blechschmidt \cite{BlechschmidtPhD}.
They play a prominent role in the development of synthetic algebraic geometry,
i.e.\ algebraic geometry done internally in the topos of local $\Z$-functors,
in the form of compact open propositions.
Openness is usually defined as a property of a subfunctor $U\hookrightarrow X$.
Recall that a subfunctor of a $\Z$-functor $X$ is a $\Z$-functor $U$
with a natural transformation $\NatTrans{U}{X}$ that is pointwise injective.
The property of being an open subfunctor
can directly be restricted to compact openness as in \cite[Def.\ 19.15]{BlechschmidtPhD}.

\begin{definition}\label{def:isAffineCompOpenSubfun}
  A subfunctor
  $U\hookrightarrow\mathsf{Sp}(A)$ is an \emph{affine open} subfunctor
  if there merely exists an ideal $I\subseteq A$
  such that for any ring $B$ we have
  \begin{align}\label{al:isOpenSubfun}
    U(B)\;\cong\;\big\{\varphi:\mathsf{Hom}(A,B)~\vert~\varphi^*I=B\big\}
  \end{align}
  naturally and commuting with the embeddings into $\mathsf{Sp}(A)$.
  Here $\varphi^*I$ is the ideal generated by the image of $I$ under $\varphi$.
  We say that $U\hookrightarrow\mathsf{Sp}(A)$ is an \emph{affine compact open} subfunctor
  if there merely exists a \emph{finitely generated} ideal $I\subseteq A$
  such that we have an isomorphism of subobjects as in (\ref{al:isOpenSubfun}).
\end{definition}

\begin{remark}\label{rem:affCompOpenZarLat}
  Note that the ideal $I$ is not uniquely determined for
  an affine (compact) open subfunctor $U\hookrightarrow\mathsf{Sp}(A)$.
  For ideals $I,J$ of $A$ we have that $\sqrt{I}=\sqrt{J}$ if and
  only if for all rings $B$ and $\varphi:\mathsf{Hom}(A,B)$ we have
  \begin{align*}
    \varphi^*I=B ~\Leftrightarrow~ \varphi^*J=B
  \end{align*}
  If $I=\langle a_1,...,a_n\rangle$ is finitely generated, $U$ is isomorphic (as subobjects) to the
  affine compact open subfunctor given by
  \begin{align*}
    B\;\mapsto\;\;\big\{\varphi:\mathsf{Hom}(A,B)~\vert~1\in\langle\varphi(a_1),\dots,\varphi(a_n)\rangle\,\big\}
  \end{align*}
  By the above argument it follows that this subfunctor is really determined
  by the  element $D(a_1,\dots,a_n):\mathcal{L}_A$, since for
  $J=\langle b_1,\dots,b_m\rangle$ we have
  \begin{align*}
    D(a_1,\dots,a_n)=D(b_1,\dots,b_m) \;\Leftrightarrow\;\sqrt{\langle a_1,\dots,a_n\rangle}=\sqrt{\langle b_1,\dots,b_m\rangle}
  \end{align*}
  We can thus write the corresponding subfunctor as
  \begin{align*}
    B\;\mapsto\;\;\big\{\varphi:\mathsf{Hom}(A,B)~\vert~D(1)=D(\varphi(a_1),...,\varphi(a_n))\,\big\}
  \end{align*}
  In other words the compact open subfunctors of  the affine scheme $\mathsf{Sp}(A)$ are in
  bijection with elements of $\ZarLat{A}$.
\end{remark}

\begin{definition}\label{def:isCompOpenSubfun}
  For $\Z$-functors $U\hookrightarrow X$, we say
  that $U$ is a \emph{(compact) open} subfunctor of $X$, if for any ring $A$ and $A$-valued
  point of $X$, i.e.\ natural transformation $\phi:\NatTrans{\mathsf{Sp}(A)}{X}$, the
  pullback $V:\equiv \mathsf{Sp}(A)\times_X U$ is an affine (compact) subfunctor:
  \begin{center}
    \begin{tikzcd}
      V\ar[r]\ar[d,hook']\pbsign{dr} &U\ar[d,hook'] \\
      \mathsf{Sp}(A)\ar[r,"\phi"] &X
    \end{tikzcd}
  \end{center}
\end{definition}

\begin{remark}
  One easily checks that on $\mathsf{Sp}(A)$ compact open subfunctors and affine compact open
  subfunctors coincide.
\end{remark}

One usually wants to identify subobjects up to natural isomorphism commuting with embeddings.
This complicates things substantially in a formal setting.
The key observation in \cite{ZeunerHutzler24}, allowing to circumvent this issue, is that
the compact open subfunctors are classified by the Zariski lattice.

\begin{definition}\label{def: ZarLatFun}
  Let $\mathcal{L}:\ZFunctor_\ell$ be the $\Z$-functor mapping
  $R:\mathsf{CommRing}_\ell$ to the underlying set of the Zariski lattice
  $\ZL$. The action on morphisms is induced by the universal property of the
  Zariski lattice, i.e.\ for $\varphi:\mathsf{Hom}(A,B)$ we take
  \[
  \begin{tikzcd}
    & A \arrow[dl,"D"']\arrow[dr,"D \circ\varphi"] & \\
    \mathcal{L}_A \arrow[rr,dashed, "\exists!~\varphi^{\mathcal{L}}"'] && \mathcal{L}_B
  \end{tikzcd}
  \]
  The transformation $D(1):\NatTrans{\mathbf{1}}{\mathcal{L}}$ from the terminal
  $\Z$-functor $\mathbf{1}(\_):\equiv\{\ast\}$ to $\mathcal{L}$ is the
  ``pointwise constant'' map $\ast\mapsto D(1)$.
\end{definition}

\begin{proposition}
  For each presheaf with compact open subfunctor $U\hookrightarrow X$ there
  exists a unique natural transformation $\chi:\NatTrans{X}{\mathcal{L}}$ such that the following
  is a pullback square
  \begin{center}
    \begin{tikzcd}
      U\ar[r]\ar[d,hook']\pbsign{dr} &\mathbf{1}\ar[d,"D(1)"] \\
      X\ar[r,"\exists!\chi"'] &\mathcal{L}
    \end{tikzcd}
  \end{center}
\end{proposition}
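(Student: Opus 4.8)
The plan is to prove uniqueness and existence of $\chi$ by bootstrapping from the case of representables, where everything is already encoded in \cref{rem:affCompOpenZarLat} together with the Yoneda lemma. The key preliminary observation is the \emph{affine case}: for $A:\mathsf{CommRing}_\ell$ and a compact open subfunctor $W\hookrightarrow\mathsf{Sp}(A)$, \cref{rem:affCompOpenZarLat} associates a unique element $u:\mathcal{L}_A$, and unwinding the description there exhibits $W$ as the subfunctor $B\mapsto\{\varphi:\mathsf{Hom}(A,B)\mid D(1)=\varphi^{\mathcal{L}}(u)\}$. Since by the Yoneda lemma $\big(\NatTrans{\mathsf{Sp}(A)}{\mathcal{L}}\big)\simeq\mathcal{L}_A$, with $u$ corresponding to the transformation $\tau^u$ given by $\tau^u_B(\varphi)=\varphi^{\mathcal{L}}(u)$, this says exactly that the square with legs $W\hookrightarrow\mathsf{Sp}(A)\xrightarrow{\tau^u}\mathcal{L}$ and $W\to\mathbf 1\xrightarrow{D(1)}\mathcal{L}$ is a pullback, and — again by Yoneda and the bijectivity in \cref{rem:affCompOpenZarLat} — that $\tau^u$ is the unique such transformation.

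For \emph{uniqueness} in general, suppose $\chi$ makes the square a pullback. Fix $R:\mathsf{CommRing}_\ell$ and $x:X(R)$, and let $\bar x:\NatTrans{\mathsf{Sp}(R)}{X}$ be the transformation corresponding to $x$ under Yoneda, so $\bar x_S(\varphi)=X(\varphi)(x)$ and $\bar x_R(\text{id}_R)=x$. By the pasting law for pullbacks, $V:\equiv\mathsf{Sp}(R)\times_X U$ is also the pullback of $D(1)$ along $\chi\circ\bar x$; and $V$ is an affine compact open subfunctor of $\mathsf{Sp}(R)$ by \cref{def:isCompOpenSubfun}. Hence $\chi\circ\bar x$ is a classifying map of $V$, so by the affine case $\chi\circ\bar x=\tau^{u_V}$ where $u_V:\mathcal{L}_R$ classifies $V$. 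Evaluating at $\text{id}_R$ forces $\chi_R(x)=u_V$, so $\chi$ is determined.

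For \emph{existence}, define $\chi_R(x):\equiv u_V$ with $V$ and $u_V$ as above; this is well defined as a function because, by \cref{rem:affCompOpenZarLat}, the classifying element of an affine compact open subfunctor of $\mathsf{Sp}(R)$ is a genuine function of the subobject into the set $\mathcal{L}_R$ (no choice of generating ideal needed). For naturality, given $\psi:\mathsf{Hom}(R,R')$ one has $\overline{X(\psi)(x)}=\bar x\circ\mathsf{Sp}(\psi)$, so pasting gives $\mathsf{Sp}(R')\times_X U=\mathsf{Sp}(R')\times_{\mathsf{Sp}(R)}V$; its classifying map is $\tau^{u_V}\circ\mathsf{Sp}(\psi)$, which evaluated at $\text{id}_{R'}$ equals $\psi^{\mathcal{L}}(u_V)$ by the explicit form of $\tau^{u_V}$, whence $\chi_{R'}\big(X(\psi)(x)\big)=\psi^{\mathcal{L}}(u_V)=\mathcal{L}(\psi)\big(\chi_R(x)\big)$. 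Finally, the square is a pullback: pullbacks of $\Z$-functors are computed pointwise, so $(X\times_{\mathcal{L}}\mathbf 1)(R)\cong\{x:X(R)\mid\chi_R(x)=D(1)\}$, and for $x:X(R)$ we have $x\in U(R)$ iff $\text{id}_R\in V(R)$ (since $V(R)=\mathsf{Hom}(R,R)\times_{X(R)}U(R)$, the map $\mathsf{Hom}(R,R)\to X(R)$ being $\bar x_R$ with $\text{id}_R\mapsto x$, and $U\hookrightarrow X$ is pointwise injective) iff $D(1)=\text{id}_R^{\mathcal{L}}(u_V)=\chi_R(x)$ by the explicit form of $V$ from the affine case. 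Thus the canonical map $U\to X\times_{\mathcal{L}}\mathbf 1$ is a pointwise bijection, hence an isomorphism, and it commutes with the embeddings into $X$ by construction.

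The argument is conceptually just a pullback-pasting reduction to representables, so the genuine content is pushed into \cref{rem:affCompOpenZarLat}. Beyond that, the only delicate point I anticipate is the naturality step, where one must verify that precomposing a classifying map with $\mathsf{Sp}(\psi)$ induces $\psi^{\mathcal{L}}$ on classifying elements — this is the point-free substitute for the classical statement that preimages of quasi-compact opens are quasi-compact — together with the usual care about manipulating subobjects ``on the nose'' versus up to isomorphism.
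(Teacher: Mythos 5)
Your proof is correct and takes essentially the same route as the paper's: define $\chi$ pointwise by pulling $U$ back along an $A$-valued point and taking the element of $\mathcal{L}_A$ classifying the resulting affine compact open subfunctor, with well-definedness guaranteed by \cref{rem:affCompOpenZarLat}. You simply spell out (via Yoneda and pullback pasting) the naturality, pullback, and uniqueness checks that the paper's proof leaves implicit.
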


\begin{proof}
  For a ring $A$ and an $A$-valued point of $X$,
  which modulo Yoneda we can write as  $\phi:\NatTrans{\mathsf{Sp}(A)}{X}$,
  we can pull back $U$ along $\phi$ to obtain an affine compact open subfunctor corresponding
  to a unique $u:\mathcal{L}_A$ and set $\chi(\phi):\equiv u$. This is well-defined
  by \cref{rem:affCompOpenZarLat} and easily seen to be natural in $A$.
  It also follows directly that this gives a pullback square.
\end{proof}

\begin{definition}\label{def: CompOpens}
  Let $X:\ZFunctor_\ell$, a \emph{compact open} of $X$ is a natural transformation
  $U:\NatTrans{X}{\mathcal{L}}$. The \emph{realization} $\coBrackets{U}:\ZFunctor_\ell$
  of a compact open $U$ of $X$, is given by
  \begin{normalfont}
  \begin{align*}
    \coBrackets{U}\,(R)~=~\tySigmaNoParen{x}{X(R)}{\tyPath{U(x)}{D(1)}}
  \end{align*}
  \end{normalfont}
  A compact open $U$ is called \emph{affine}, if its realization is affine, i.e.\
  if there merely exists $R:\mathsf{CommRing}_\ell$ such that
  $\coBrackets{U}\cong\mathsf{Sp}(R)$.
\end{definition}

\begin{remark}
  Indeed, one can check that
  \begin{center}
    \begin{tikzcd}
      \coBrackets{U}\ar[r]\ar[d,hook']\pbsign{dr} &\mathbf{1}\ar[d,"D(1)"] \\
      X\ar[r,"U"'] &\mathcal{L}
    \end{tikzcd}
  \end{center}
  Working with \cref{def: CompOpens} instead of \cref{def:isCompOpenSubfun},
  can be seen as an instance of a common trick in constructive mathematics and
  type theory in particular. In order to avoid setoid or transport hell,
  when working with objects up to some non-trivial equivalence relation, one can instead
  use a suitable index set which is in bijection with the equivalence classes,
  and where each ``index'' corresponds to a canonical member of the corresponding equivalence class.
\end{remark}

\begin{definition}\label{def: compOpenDistLat}
  Let $X:\ZFunctor_\ell$, the \emph{lattice of compact opens}
  $\mathsf{CompOpen}(X)$ is the type $\NatTrans{X}{\mathcal{L}}$
  equipped with the canonical point-wise operations,
  i.e.\ for $R:\mathsf{CommRing}_\ell$ and $x:X(R)$, top, bottom, join and meet are given by
  \begin{align*}
    &1_R(x) = D(1), \quad  0_R(x) = D(0) \\
    &(U\wedge V)_R(x) ~=~ U_R(x) \wedge V_R(x) \\
    &(U\vee V)_R(x) ~=~ U_R(x) \vee V_R(x)
  \end{align*}
  This defines a functor $\mathsf{CompOpen}:\ZFunctor_\ell\to\mathsf{DistLattice}_{\ell+1}^{op}$.
\end{definition}

\begin{definition}\label{def:compOpenCover}
    Let $X:\ZFunctor_\ell$ and $U_1,...,U_n:\NatTrans{X}{\mathcal{L}}$.
    We say that the  $U_i$ form a \emph{compact open cover} of $X$, if
    $\tyPath{1}{\bigvee_{i=1}^n U_i}$ in the lattice $\mathsf{CompOpen}(X)$.
\end{definition}

\begin{remark}
  Being able to define the notion of cover using the lattice structure
  on compact opens is one of the main advantages of using the internal
  Zariski lattice as a classifier.  In the literature, the classically
  valid fact that $U_1,...,U_n:\NatTrans{X}{\mathcal{L}}$ cover $X$ if
  and only if $\bigcup_{i=1}^n\coBrackets{U_i}(k)=X(k)$ for all fields
  $k$, is sometimes used as a definition for the notion of cover
  \cite[Thm.\ VI-14]{EisenbudHarris}.  As the notion of field is
  ambiguous from the constructive point of view, such a definition
  does not appear to be suitable for our purposes. The alternative of
  pulling back to affine compact opens and defining covers in terms of
  ideal addition does work, but \cref{def:compOpenCover} streamlines
  this by making the occurrence of ideal addition implicit in the
  definition of the join operation on the Zariski lattice.  This lets
  us define functorial qcqs-schemes.
\end{remark}

\begin{definition}\label{def: qcqsScheme}
  $X:\ZFunctor_\ell$ is a \emph{qcqs-scheme} if it is a local $\Z$-functor and merely has
  an affine compact open cover, i.e.\ a compact open cover
  $U_1,...,U_n:\NatTrans{X}{\mathcal{L}}$ such that each $U_i$ is affine.
\end{definition}

As an immediate sanity check we get that affine schemes are qcqs-schemes:

\begin{proposition}\label{prop: affineIsQcqsScheme}
  $\mathsf{Sp}(R)$ is a qcqs-scheme, for $R:\mathsf{CommRing}_\ell$.
\end{proposition}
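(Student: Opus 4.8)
The plan is to unwind \cref{def: qcqsScheme}: we must check that $\mathsf{Sp}(R)$ is a local $\Z$-functor and that it merely admits an affine compact open cover. The first condition is already in hand --- it is exactly the subcanonicity statement \cref{thm:subcanonicity}, which says that $\mathsf{Sp}(A)$ is a Zariski sheaf for every $A:\mathsf{CommRing}_\ell$, in particular for $A=R$. So the only real content is to produce an affine compact open cover, and the natural candidate is the trivial one-element cover given by the top element $1:\NatTrans{\mathsf{Sp}(R)}{\mathcal{L}}$ of the lattice $\mathsf{CompOpen}(\mathsf{Sp}(R))$, i.e.\ the transformation sending every point $x$ to $D(1)$.

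First I would observe that $\{1\}$ is a compact open cover in the sense of \cref{def:compOpenCover}, since $\bigvee_{i=1}^{1}U_i=U_1=1$ holds by reflexivity. Next I would verify that $1$ is an affine compact open, i.e.\ that its realization is an affine scheme. By \cref{def: CompOpens}, $\coBrackets{1}(A)=\tySigmaNoParen{x}{\mathsf{Sp}(R)(A)}{\tyPath{1_A(x)}{D(1)}}=\tySigmaNoParen{x}{\mathsf{Sp}(R)(A)}{\tyPath{D(1)}{D(1)}}$. Since the Zariski lattice $\ZL$ has an underlying set, the constraint $\tyPath{D(1)}{D(1)}$ is a proposition inhabited by reflexivity, hence contractible, so the first projection is a natural isomorphism $\coBrackets{1}\cong\mathsf{Sp}(R)$. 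As $\mathsf{Sp}(R)$ is by definition an affine scheme, so is $\coBrackets{1}$, and therefore $\{1\}$ is an affine compact open cover of $\mathsf{Sp}(R)$.

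Alternatively --- and perhaps more cleanly --- one can invoke the pullback square from the remark following \cref{def: CompOpens}: $\coBrackets{U}$ is the pullback of $D(1):\NatTrans{\mathbf{1}}{\mathcal{L}}$ along $U:\NatTrans{X}{\mathcal{L}}$. For $U=1$ the classifying map factors as $\mathsf{Sp}(R)\to\mathbf{1}\xrightarrow{D(1)}\mathcal{L}$, so the pullback is $\mathsf{Sp}(R)$ itself, giving $\coBrackets{1}\cong\mathsf{Sp}(R)$ without touching truncation levels directly. There is essentially no hard step here; the statement is a sanity check that the functorial definition is nonvacuous on affines. The only point needing a little care is the identification $\coBrackets{1}\cong\mathsf{Sp}(R)$, which uses that $\ZL$ is a set, and once that is in place combining it with \cref{thm:subcanonicity} finishes the proof.
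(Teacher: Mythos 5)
Your proof is correct and is essentially the argument the paper has in mind: the paper itself only cites \cite[Prop.\ 17]{ZeunerHutzler24} for this statement, and the content of that result is exactly what you spell out --- locality is \cref{thm:subcanonicity}, and the affine cover is the one-element cover by the top compact open $1:\NatTrans{\mathsf{Sp}(R)}{\mathcal{L}}$, whose realization is $\mathsf{Sp}(R)$ itself since $\ZL$ is a set, so the fibre condition $\tyPath{D(1)}{D(1)}$ is contractible. No gaps; your pullback variant is also fine since $D(1):\NatTrans{\mathbf{1}}{\mathcal{L}}$ is pointwise injective.
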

\begin{proof}
\cite[Prop.\ 17]{ZeunerHutzler24}
\end{proof}

For locally ringed lattices we saw that qcqs-schemes are
obtained by ``gluing'' together affine schemes in the sense
that they are a colimit of affine subschemes of a certain shape.
This holds for functorial qcqs-schemes as well, but only in the big
Zariski topos, i.e.\ the category of local $\Z$-functors.
The following lemma is taken from Nieper-Wi{\ss}kirchen's lecture notes
\cite{NWNotes} and adapted to our setting.

\begin{lemma}\label{lem:funMorphismGluing}
  Let $X$ be a $\Z$-functor and $U_1,...,U_n:\NatTrans{X}{\mathcal{L}}$
  be a compact open cover of $X$. Let $Y$ be a \emph{local} $\Z$-functor
  with morphisms $\alpha_i:\NatTrans{\coBrackets{U_i}}{Y}$ for $i$ in
  $1,...,n$ such that for every $i,j$ the following square commutes
  \begin{center}
    \begin{tikzcd}
      \coBrackets{U_i\wedge U_j}\ar[r,hook]\ar[d,hook'] &\coBrackets{U_i}\ar[d,"\alpha_i"] \\
      \coBrackets{U_j}\ar[r,"\alpha_j"'] &Y
    \end{tikzcd}
  \end{center}
  Then there exists a unique $\alpha:\NatTrans{X}{Y}$ such that
  for every $i$ in $1,...,n$ the following triangle commutes
  \[
  \begin{tikzcd}
    & X \arrow[dr,"\alpha"] & \\
    \coBrackets{U_i} \arrow[ur,hook]\arrow[rr, "\alpha_i"'] && Y
  \end{tikzcd}
  \]
\end{lemma}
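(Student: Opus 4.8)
The plan is to build $\alpha$ pointwise and then verify functoriality and the triangle identities, using throughout that the local $\Z$-functor $Y$ is a Zariski sheaf. Fix $R:\mathsf{CommRing}_\ell$ and a point $x:X(R)$; for a localization map $R\to\locEl{R}{f}$ write $\rest{x}{f}:\equiv X(\nicefrac{\_}{1})(x):X(\locEl{R}{f})$, and similarly for elements of $Y$. Since the $U_i$ cover $X$, we have $\bigvee_{i=1}^n U_i(x)=D(1)$ in $\ZL$, so — unfolding the quotient defining $\ZL$ — there merely exist $f_{i,j}:R$ with $U_i(x)=D(f_{i,1},\dots,f_{i,k_i})$ and $1\in\langle f_{i,j}\rangle_{i,j}$, i.e.\ a Zariski cover of $R$. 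Naturality of $U_i$ gives $U_i(\rest{x}{f_{i,j}})=(\nicefrac{\_}{1})^{\mathcal{L}}(U_i(x))\geq D(\nicefrac{f_{i,j}}{1})=D(1)$, so $\rest{x}{f_{i,j}}\in\coBrackets{U_i}(\locEl{R}{f_{i,j}})$ and we may set $y_{i,j}:\equiv(\alpha_i)_{\locEl{R}{f_{i,j}}}(\rest{x}{f_{i,j}})$.

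These $y_{i,j}$ form a matching family: on an overlap $\locEl{R}{f_{i,j}f_{i',j'}}$ (which is the pullback $\locEl{R}{f_{i,j}}\otimes_R\locEl{R}{f_{i',j'}}$) both $f_{i,j}$ and $f_{i',j'}$ become invertible, so as above $\rest{x}{f_{i,j}f_{i',j'}}$ lies in $\coBrackets{U_i\wedge U_{i'}}$, and by naturality of $\alpha_i$ and $\alpha_{i'}$ the required equality $\rest{(y_{i,j})}{f_{i,j}f_{i',j'}}=\rest{(y_{i',j'})}{f_{i,j}f_{i',j'}}$ reduces to the commuting square hypothesis for the pair $i,i'$. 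Since $Y$ is a Zariski sheaf, there is a unique $y:Y(R)$ with $\rest{y}{f_{i,j}}=y_{i,j}$ for all $i,j$.

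To obtain an honest function $\alpha_R:X(R)\to Y(R)$ without appealing to choice (the generators $f_{i,j}$ were only merely given), I would phrase the above via the proposition-valued relation $P(x,y)$: ``there merely exist a Zariski cover $(h_l)_l$ of $R$ and indices $i_l$ with $\rest{x}{h_l}\in\coBrackets{U_{i_l}}(\locEl{R}{h_l})$ and $\rest{y}{h_l}=(\alpha_{i_l})(\rest{x}{h_l})$''. The construction just given shows $\exists y.\,P(x,y)$; and $P(x,y)\wedge P(x,y')\Rightarrow y=y'$ by a common-refinement argument — pass to the common refinement $(h_lh'_{l'})_{l,l'}$ of two witnessing covers, note $\rest{x}{h_lh'_{l'}}\in\coBrackets{U_{i_l}\wedge U_{i'_{l'}}}$, apply the square hypothesis, and invoke separatedness of $Y$. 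Hence $\Sigma_{y:Y(R)}P(x,y)$ is contractible and its centre defines $\alpha_R(x)$.

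It remains to check three things, all of which follow formally from the characterization by $P$ and the uniqueness just established. Naturality of $\alpha$: for $\psi:\mathsf{Hom}(R,S)$, pushing a witnessing cover of $R$ forward along $\psi$ and using naturality of the $\alpha_i$ shows $P\big(X(\psi)(x),\,Y(\psi)(\alpha_R(x))\big)$, whence $Y(\psi)(\alpha_R(x))=\alpha_S(X(\psi)(x))$. The triangle $\alpha\circ(\coBrackets{U_i}\hookrightarrow X)=\alpha_i$: for $z\in\coBrackets{U_i}(R)$ the trivial cover of $R$ by $R$ itself, with index $i$, witnesses $P(z,(\alpha_i)_R(z))$, so $\alpha_R(z)=(\alpha_i)_R(z)$. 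Uniqueness of $\alpha$: any $\beta:\NatTrans{X}{Y}$ with $\beta\circ(\coBrackets{U_i}\hookrightarrow X)=\alpha_i$ satisfies $P(x,\beta_R(x))$ — by naturality of $\beta$ and the triangle for $\beta$ applied to the restrictions of $x$ along any witnessing cover — hence $\beta=\alpha$. The one genuinely delicate point is the well-definedness step: the relation $P$ has to be set up so that gluing is choice-free, and the overlap compatibilities must be funneled through the hypothesized squares; once $Y$'s sheaf property is in hand the rest is routine diagram-chasing.
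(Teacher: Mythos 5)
Your proof is correct and is essentially the same argument as the paper's: both evaluate at a point $x:X(R)$, extract a Zariski cover of $R$ from $\bigvee_{i=1}^n U_i(x)=D(1)$, push $x$ into the $\coBrackets{U_i}$ along the localizations, and glue via the locality (sheaf property) of $Y$. The paper merely packages this by first proving the affine case and invoking Yoneda, omitting the matching-family check, naturality, and uniqueness, whereas you carry these out element-wise and, via the proposition $P(x,y)$ and the contractibility (unique-choice) argument, make explicit the choice-free handling of the merely-given generators that the paper leaves implicit.
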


\begin{proof}
  First, we treat the case where $X=\mathsf{Sp}(R)$ is affine.
  Without loss of generality, we can assume that for every
  $i$ in $1,...,n$, $U_i=D(f_i)$ is a basic open.
  Otherwise, we may use that, by Yoneda, we have a cover by basic opens
  $U_i=\bigvee_k D(f_{ik})$ and refine our cover accordingly. Note that
  the assumption of the lemma still holds for the refined cover.
  However, in the case of a cover by basic opens,
  $\alpha_i$ corresponds to an element of $Y(\locEl{R}{f_i})$
  by Yoneda, and the lemma follows as $Y$ was assumed local.

  For the general case let $R:\mathsf{CommRing}_\ell$ and $x:X(R)$.
  By Yoneda we can regard the $U_i(x):\ZL$ as a compact open cover of $\mathsf{Sp}(R)$.
  The $\alpha_i$ induce natural transformations $\NatTrans{\coBrackets{U_i(x)}}{Y}$,
  given on $A$-valued points by the function
  \begin{align*}
    \coBrackets{U_i(x)}(A) = \{\,\varphi:\mathsf{Hom}(R,A)\,\vert\,U_i(\rest{x}{\varphi})=D(1)\,\}~\to~Y(A)
  \end{align*}
  mapping $\varphi$ to $\alpha_i(\rest{x}{\varphi})$, where
  $\rest{x}{\varphi}\,:\equiv X(\varphi)(x)$ is the restriction of  $x$ along $\varphi$.
  We can apply the above argument for the affine case to obtain a unique
  $\NatTrans{\mathsf{Sp}(R)}{Y}$, which gives us the desired $\alpha(x):Y(R)$.
  We omit the proof that this is natural and commutes with the $\alpha_i$.
\end{proof}

\begin{corollary}
  If $X:\ZFunctor_\ell$ is a qcqs-scheme and $U_1,...,U_n:\NatTrans{X}{\mathcal{L}}$
  an (affine) compact open cover of $X$, then
  \begin{align*}
    X~\cong~\mathsf{colim}\,\big\{\coBrackets{U_i}\leftarrow \coBrackets{U_i\wedge U_j}\rightarrow \coBrackets{U_j}\big\}
  \end{align*}
  in the category if local $\Z$-functors, i.e.\ Zariski sheaves.
  In particular, if we have $\coBrackets{U_i}\cong\mathsf{Sp}(A_i)$,
  and affine compact open covers
  $\big(V_{ijk}:\NatTrans{\coBrackets{U_i\wedge U_j}}{\mathcal{L}}\big)_k$
  with $\coBrackets{V_{ijk}}\cong\mathsf{Sp}(A_{ijk})$,
  we have
  \begin{align*}
    X~\cong~\mathsf{colim}\,\big\{\mathsf{Sp}(A_i)\leftarrow \mathsf{Sp}(A_{ijk})\rightarrow \mathsf{Sp}(A_j)\big\}
  \end{align*}
\end{corollary}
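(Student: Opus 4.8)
The plan is to derive both statements from \cref{lem:funMorphismGluing}. For the first part I would start by noting that, by \cref{def: CompOpens}, each realization $\coBrackets{U_i}$ comes with a canonical monomorphism $\coBrackets{U_i}\hookrightarrow X$, and that, since meets in $\mathsf{CompOpen}(X)$ are pullbacks of subobjects, $\coBrackets{U_i\wedge U_j}\cong\coBrackets{U_i}\times_X\coBrackets{U_j}$ with projections the canonical inclusions $\coBrackets{U_i\wedge U_j}\hookrightarrow\coBrackets{U_i}$ and $\coBrackets{U_i\wedge U_j}\hookrightarrow\coBrackets{U_j}$. These inclusions exhibit $X$ as a cocone over the span diagram $\{\coBrackets{U_i}\leftarrow\coBrackets{U_i\wedge U_j}\rightarrow\coBrackets{U_j}\}$, since both composites $\coBrackets{U_i\wedge U_j}\rightrightarrows X$ are the inclusion of $\coBrackets{U_i\wedge U_j}$ itself. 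As $X$ is local (being a qcqs-scheme), it then suffices to check that this cocone is initial among all cocones whose vertex is a local $\Z$-functor — and that is exactly \cref{lem:funMorphismGluing}: for a local $Y$, a cocone with vertex $Y$ is precisely a compatible family $\alpha_i:\NatTrans{\coBrackets{U_i}}{Y}$ as in the hypothesis of the lemma, the lemma produces a unique $\alpha:\NatTrans{X}{Y}$ restricting to the $\alpha_i$, and conversely every $\alpha$ restricts to such a family; uniqueness in the lemma makes these assignments mutually inverse. Hence $X$ with the inclusions $\coBrackets{U_i}\hookrightarrow X$ is the colimit of the span diagram in the category of local $\Z$-functors.

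For the ``in particular'' claim I would iterate the first part. Applying it to $X$ with the cover $(U_i)$ gives $X\cong\mathsf{colim}\{\coBrackets{U_i}\leftarrow\coBrackets{U_i\wedge U_j}\rightarrow\coBrackets{U_j}\}$. Each overlap $\coBrackets{U_i\wedge U_j}$ is a pullback $X\times_{\mathcal{L}}\mathbf{1}$ (the remark after \cref{def: CompOpens}), hence itself local, since $\mathcal{L}$ is a local $\Z$-functor and local $\Z$-functors are closed under pullbacks; together with the affine compact open cover $(V_{ijk})_k$ this makes $\coBrackets{U_i\wedge U_j}$ a qcqs-scheme, so I can apply the first part once more to exhibit it as the colimit of $\{\coBrackets{V_{ijk}}\leftarrow\cdots\rightarrow\coBrackets{V_{ijk'}}\}$; in particular the canonical maps $\coBrackets{V_{ijk}}\hookrightarrow\coBrackets{U_i\wedge U_j}$ become jointly epimorphic in local $\Z$-functors. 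I would then verify the universal property of the right-hand colimit directly: using $\coBrackets{U_i}\cong\mathsf{Sp}(A_i)$ and $\coBrackets{V_{ijk}}\cong\mathsf{Sp}(A_{ijk})$, a cocone over $\{\mathsf{Sp}(A_i)\leftarrow\mathsf{Sp}(A_{ijk})\rightarrow\mathsf{Sp}(A_j)\}$ with local vertex $Y$ is a family $\beta_i:\NatTrans{\coBrackets{U_i}}{Y}$ for which, for all $i,j,k$, the composites $\coBrackets{V_{ijk}}\hookrightarrow\coBrackets{U_i}\xrightarrow{\beta_i}Y$ and $\coBrackets{V_{ijk}}\hookrightarrow\coBrackets{U_j}\xrightarrow{\beta_j}Y$ coincide; joint epimorphy over $\coBrackets{U_i\wedge U_j}$ then forces $\beta_i$ and $\beta_j$ to agree on $\coBrackets{U_i\wedge U_j}$, so $(\beta_i)$ is a cocone over the first span diagram, and the first part yields a unique $\beta:\NatTrans{X}{Y}$ restricting to it, with the two assignments mutually inverse. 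Equivalently, this is just ``pasting'' of colimits: replacing each overlap in the first diagram by its own colimit presentation leaves the total colimit unchanged.

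All the real content sits in \cref{lem:funMorphismGluing}; the rest is bookkeeping with cocones. The points I would be most careful about are: (i) that the maps in both span diagrams are the canonical compact-open inclusions, resp.\ their composites with the chosen isomorphisms $\coBrackets{U_i}\cong\mathsf{Sp}(A_i)$ (so that, e.g., $\mathsf{Sp}(A_{ijk})\to\mathsf{Sp}(A_i)$ really is $\coBrackets{V_{ijk}}\hookrightarrow\coBrackets{U_i\wedge U_j}\hookrightarrow\coBrackets{U_i}\cong\mathsf{Sp}(A_i)$); (ii) that every colimit here is taken in the category of \emph{local} $\Z$-functors and not of presheaves — this is essential, and is exactly what makes \cref{lem:funMorphismGluing} applicable; and (iii) that the realization of a compact open of a qcqs-scheme is again a local $\Z$-functor. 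I expect (iii) to be the only step needing a genuine, if short, argument — it relies on $\mathcal{L}$ being a Zariski sheaf and on local $\Z$-functors being closed under limits — while everything else should go through formally.
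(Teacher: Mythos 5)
Your proof is correct and matches the paper's approach: the corollary is stated there without proof as a direct consequence of \cref{lem:funMorphismGluing}, and your argument---reading the gluing lemma as the universal property of a colimit taken in local $\Z$-functors, then passing to the refined diagram via joint epimorphy of the inclusions $\coBrackets{V_{ijk}}\hookrightarrow\coBrackets{U_i\wedge U_j}$---is exactly the intended fleshing-out. The auxiliary facts you flag, namely locality of the overlaps and of $\mathcal{L}$, are supplied by \cref{lem:isLocalCompOpenOfLocal} and \cref{lem:ZarLatLocal}.
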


\subsection{Open subschemes} 
Let us conclude this section by proving that compact opens of qcqs-schemes are qcqs-schemes.
The special case of compact opens of affine schemes is formalized in \cite{ZeunerHutzler24}.
First, we treat locality.

\begin{lemma}\label{lem:ZarLatLocal}
  $\mathcal{L}$ is local.
\end{lemma}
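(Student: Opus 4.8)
The plan is to show that $\mathcal{L}$ satisfies the sheaf property with respect to the Zariski coverage. Concretely, given $R:\mathsf{CommRing}_\ell$ and a Zariski cover $f_1,\dots,f_n:R$ with $1\in\langle f_1,\dots,f_n\rangle$, I need to verify that the diagram
\[
  \mathcal{L}_R \to \prod_{i=1}^n\mathcal{L}_{\locEl{R}{f_i}}\rightrightarrows\prod_{i,j}\mathcal{L}_{\locEl{R}{f_if_j}}
\]
is an equalizer, where the maps are induced by the localization homomorphisms via functoriality of $\mathcal{L}$ (i.e.\ the $\varphi^{\mathcal{L}}$ construction). Unwinding, an element of the equalizer is a tuple $(u_i)_i$ with $u_i:\mathcal{L}_{\locEl{R}{f_i}}$ that agree on the overlaps $\mathcal{L}_{\locEl{R}{f_if_j}}$, and I must show such a tuple comes from a unique $v:\mathcal{L}_R$ with $v\mapsto u_i$ under each localization map.

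\textbf{Key steps.} First I would reduce to a more concrete description using \cref{lem:isoLocDownSet}: each $\mathcal{L}_{\locEl{R}{f_i}}$ is isomorphic to the down-set $\downarrow\! D(f_i)$ in $\mathcal{L}_R$, and the localization map $\mathcal{L}_R\to\mathcal{L}_{\locEl{R}{f_i}}$ corresponds, under this isomorphism, to $\_\wedge D(f_i):\mathcal{L}_R\to\,\downarrow\! D(f_i)$ (this compatibility is essentially the commuting diagram established in the proof of \cref{lem:isoLocDownSet}). Similarly $\mathcal{L}_{\locEl{R}{f_if_j}}\cong\,\downarrow\!(D(f_i)\wedge D(f_j))$ and the two restriction maps become meeting with $D(f_j)$ resp.\ $D(f_i)$. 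So the claim becomes: the family of maps $\_\wedge D(f_i)$ exhibits $\mathcal{L}_R$ as the limit of the down-sets $\downarrow\! D(f_i)$ over their pairwise meets, given that $\bigvee_{i=1}^n D(f_i)=D(1)=1$ in $\mathcal{L}_R$ (which follows from $1\in\langle f_1,\dots,f_n\rangle$). Second, I would prove this purely lattice-theoretic fact: for any bounded distributive lattice $L$ with $a_1,\dots,a_n:L$ such that $\bigvee_i a_i = 1$, the canonical map $L\to\prod_i\downarrow\! a_i$ is the equalizer of the two maps to $\prod_{i,j}\downarrow\!(a_i\wedge a_j)$. For \emph{injectivity}: if $v\wedge a_i = v'\wedge a_i$ for all $i$, then $v = v\wedge 1 = v\wedge\bigvee_i a_i = \bigvee_i (v\wedge a_i) = \bigvee_i(v'\wedge a_i) = v'$. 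For \emph{surjectivity onto the equalizer}: given compatible $(u_i)_i$ with $u_i\leq a_i$ and $u_i\wedge a_j = u_j\wedge a_i$, set $v:\equiv\bigvee_i u_i$; then $v\wedge a_j = \bigvee_i(u_i\wedge a_j) = \bigvee_i(u_j\wedge a_i) = u_j\wedge\bigvee_i a_i = u_j\wedge 1 = u_j$, using $u_j\leq a_j$ in the last step, so $v$ restricts correctly on each piece. Finally, I would note that the equalizer of $\Z$-functors is computed pointwise (as for any presheaf category), so checking the equalizer condition at each $R$ suffices, and that the identifications via \cref{lem:isoLocDownSet} are natural enough that the abstract diagram matches the concrete one.

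\textbf{Main obstacle.} The genuinely mathematical content — the lattice computation above — is short and elementary; in the presence of \cref{lem:isoLocDownSet} it is essentially a few applications of the distributive law. The real work is the \emph{bookkeeping}: making precise that the localization-induced maps $\varphi^{\mathcal{L}}$ correspond under the isomorphisms $\psi_{f_i}$ of \cref{lem:isoLocDownSet} to the meet maps, and that this correspondence is compatible across the triple overlaps so that the two parallel arrows in the $\mathcal{L}$-diagram really become $\_\wedge D(f_j)$ and $\_\wedge D(f_i)$ after translation. This requires tracking how $\psi_{f_if_j}$, $\psi_{f_i}$ and the functoriality maps $(\nicefrac{\_}{1})^{\mathcal{L}}$ interact — a diagram chase that is conceptually clear but tedious to write out in full. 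I would handle it by invoking the universal property of the Zariski lattice at each stage (two lattice homomorphisms out of $\mathcal{L}_R$ agreeing after precomposition with $D$ are equal), exactly as in the proof of \cref{lem:isoLocDownSet}, rather than manipulating equivalence classes of generator lists directly.
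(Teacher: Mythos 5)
Your proposal is correct and follows essentially the same route as the paper: the paper also reduces via \cref{lem:isoLocDownSoot}\footnote{} — correction: via \cref{lem:isoLocDownSet} and the compatibility squares relating $\psi_{f_i}$, $\psi_{f_if_j}$ and the localization maps — to the equalizer $\ZL \to \prod_{i}\downarrow\! D(f_i)\rightrightarrows\prod_{i,j}\downarrow\! D(f_if_j)$. The only difference is that you prove the underlying lattice-theoretic gluing fact inline by distributivity (correctly), whereas the paper cites it as the ``gluing lemma for distributive lattices'' from the literature.
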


\begin{proof}
  For $f_1,...,f_n:R$ with $1\in\langle f_1,...,f_n\rangle$, we get $D(1)=D(f_1,...,f_n)$ in $\ZL$.
  By the ``gluing lemma for distributive lattices'' \cite[Lemma 5]{ProjSpec}
  we get an equalizer diagram
  \begin{align*}
    \ZL \to \prod_{i=1}^n\downarrow\! D(f_i)\rightrightarrows
                                              \prod_{i,j}\downarrow\! D(f_if_j)
  \end{align*}
  Now, for $i,j$ in $1,...,n$ let $\psi_i:\ZarLat{\locEl{R}{f_i}}\cong\,\downarrow\! D(f_i)$
  and $\psi_{ij}:\ZarLat{\locEl{R}{f_if_j}}\cong\,\downarrow\! D(f_if_j)$
  be the respective isomorphisms given by \cref{lem:isoLocDownSet}. The following diagram
  commutes
  \begin{center}
  \begin{tikzcd}
    \ZarLat{\locEl{R}{f_i}}\ar[r, "\simeq","\psi_i"']\ar[d] &\downarrow\! D(f_i)\ar[d, "\_\wedge D(f_j)"] \\
    \ZarLat{\locEl{R}{f_if_j}}\ar[r,"\simeq","\psi_{ij}"'] &\downarrow\! D(f_if_j)
  \end{tikzcd}
  \end{center}
  giving us an equalizer diagram
  \begin{align*}
    \ZL \to \prod_{i=1}^n\ZarLat{\locEl{R}{f_i}}\rightrightarrows
                                              \prod_{i,j}\ZarLat{\locEl{R}{f_if_j}}
  \end{align*}
  which finishes the proof.
\end{proof}

\begin{lemma}\label{lem:isLocalCompOpenOfLocal}
  The realization $\coBrackets{U}$ of a compact open $U:\NatTrans{X}{\mathcal{L}}$
  of a local $\Z$-functor $X$ is local.
\end{lemma}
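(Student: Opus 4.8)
The plan is to verify the sheaf condition for $\coBrackets{U}$ directly with respect to the Zariski coverage, exploiting that $\coBrackets{U}$ is a subfunctor of the local $\Z$-functor $X$. Fix a ring $R$ and $f_1,\dots,f_n:R$ with $1\in\langle f_1,\dots,f_n\rangle$; I must show that
\[
  \coBrackets{U}(R)\to\prod_{i=1}^n\coBrackets{U}(\locEl{R}{f_i})\rightrightarrows\prod_{i,j}\coBrackets{U}(\locEl{R}{f_if_j})
\]
is an equalizer. Since $\coBrackets{U}(A)=\{\,x:X(A)\,\vert\,U(x)=D(1)\,\}$ is a sub\emph{set} of $X(A)$ for every $A$, the restriction maps of $\coBrackets{U}$ are exactly those of $X$, so a compatible family for $\coBrackets{U}$ is in particular a compatible family for $X$. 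As $X$ is local, such a family has a unique amalgamation $x:X(R)$, and uniqueness of the amalgamation inside $\coBrackets{U}(R)$ is then inherited from $X(R)$ because $\coBrackets{U}(R)\hookrightarrow X(R)$. The only real point is therefore to check that $x$ actually lies in $\coBrackets{U}(R)$, i.e.\ that $U(x)=D(1)$ in $\ZL$.

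To see this I would restrict along the canonical map $\nicefrac{\_}{1}:R\to\locEl{R}{f_i}$. Naturality of $U:\NatTrans{X}{\mathcal{L}}$ gives $(\nicefrac{\_}{1})^{\mathcal{L}}\big(U(x)\big)=U(x_i)=D(1)$, the top element of $\ZarLat{\locEl{R}{f_i}}$. Transporting along the lattice isomorphism $\psi_{f_i}:\ZarLat{\locEl{R}{f_i}}\cong\,\downarrow\! D(f_i)$ of \cref{lem:isoLocDownSet}, and using the identity $\psi_{f_i}\circ(\nicefrac{\_}{1})^{\mathcal{L}}=\_\wedge D(f_i)$ established in that proof, this reads $U(x)\wedge D(f_i)=D(f_i)$, i.e.\ $D(f_i)\leq U(x)$ in $\ZL$. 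Since $1\in\langle f_1,\dots,f_n\rangle$ means $D(1)=D(f_1,\dots,f_n)=\bigvee_i D(f_i)$, joining over $i$ yields $D(1)\leq U(x)$, hence $U(x)=D(1)$, as required.

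A more conceptual route, which I would at least mention, is to use the pullback square from the remark after \cref{def: CompOpens} exhibiting $\coBrackets{U}$ as $X\times_{\mathcal{L}}\mathbf{1}$, together with \cref{lem:ZarLatLocal} (so $\mathcal{L}$ is local), the triviality that the terminal $\Z$-functor $\mathbf{1}$ is local, and the standard fact that Zariski sheaves form a full subcategory of $\ZFunctor_\ell$ closed under limits. I do not expect a genuine obstacle in either version: the only mild subtlety is keeping track of the direction of $\psi_{f_i}$ and quoting the correct commuting triangle from the proof of \cref{lem:isoLocDownSet}. If closure of sheaves under limits has not been recorded explicitly, the direct argument above is fully self-contained and is the one I would present.
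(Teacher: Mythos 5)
Your argument is correct. Note, though, that the paper itself does not spell out a proof here: it simply cites the formalization \cite[Lemma 21]{ZeunerHutzler24} and remarks that the proof there rests on $\mathcal{L}$ being \emph{separated} for the Zariski coverage (a consequence of \cref{lem:ZarLatLocal}). Your self-contained version is essentially an inlined instance of that same fact: after obtaining the unique amalgamation $x:X(R)$ from locality of $X$, the remaining point is that $U(x)$ and $D(1)$ agree after restriction to every $\locEl{R}{f_i}$ and hence are equal, and you carry this out concretely via the isomorphisms $\psi_{f_i}$ of \cref{lem:isoLocDownSet} and the identity $\psi_{f_i}\circ(\nicefrac{\_}{1})^{\mathcal{L}}=\_\wedge D(f_i)$ from that proof -- exactly the ingredients used to prove \cref{lem:ZarLatLocal}, so the underlying mechanism coincides with the cited one. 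Your computation $D(f_i)\leq U(x)$ for all $i$, hence $D(1)=\bigvee_i D(f_i)\leq U(x)$, is sound, and uniqueness in $\coBrackets{U}(R)$ does follow from injectivity of $\coBrackets{U}(R)\hookrightarrow X(R)$. The alternative route you sketch (exhibiting $\coBrackets{U}$ as the pullback $X\times_{\mathcal{L}}\mathbf{1}$ and invoking closure of Zariski sheaves under limits, with $\mathcal{L}$ local by \cref{lem:ZarLatLocal}) is also viable, but since the paper never records closure of sheaves under limits, your direct argument is the preferable one to present.
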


\begin{proof}
  \cite[Lemma 21]{ZeunerHutzler24}. Note that this proof uses the fact
  that $\mathcal{L}$ is \emph{separated} wrt.\ the Zariski coverage.
  This follows from \cref{lem:ZarLatLocal}, as sheaves are always separated.
  Alternatively, see \cite[Lemma 20]{ZeunerHutzler24} for a direct proof.
\end{proof}

It remains to prove that compact opens of qcqs-schemes merely have an affine cover.
Compact opens of affine schemes can be covered by ``basic opens'',
as was shown in \cite{ZeunerHutzler24}.

\begin{definition}\label{def:standardOpen}
  Let $R:\mathsf{CommRing}_\ell$ and $f:R$, the standard open
  $D(f):\NatTrans{\mathsf{Sp}(R)}{\mathcal{L}}$ is given by applying the Yoneda lemma
  to the basic open $D(f):\ZL$.
\end{definition}

\begin{lemma}\label{prop:isAffineStandardOpen}
  For $R:\mathsf{CommRing}_\ell$ and $f:R$, the standard open $D(f)$ is affine.
  In particular one has a natural isomorphism
  ${\coBrackets{D(f)}}\cong{\mathsf{Sp}\big(\locEl{R}{f}\big)}$.
\end{lemma}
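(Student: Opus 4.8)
The plan is to unwind the realization $\coBrackets{D(f)}$ directly from the definitions and recognize it as the representable $\mathsf{Sp}(\locEl{R}{f})$. Recall that $D(f):\NatTrans{\mathsf{Sp}(R)}{\mathcal{L}}$ is, by \cref{def:standardOpen}, the natural transformation obtained from the element $D(f):\ZL$ via Yoneda; explicitly, for a ring $B$ and a point $\varphi:\mathsf{Sp}(R)(B)=\mathsf{Hom}(R,B)$, we have $D(f)_B(\varphi)=\varphi^{\mathcal{L}}(D(f))=D(\varphi(f)):\ZarLat{B}$. Therefore, by \cref{def: CompOpens},
\begin{align*}
  \coBrackets{D(f)}(B)~=~\tySigmaNoParen{\varphi}{\mathsf{Hom}(R,B)}{\tyPath{D(\varphi(f))}{D(1)}}~=~\big\{\varphi:\mathsf{Hom}(R,B)~\vert~\varphi(f)\in B^\times\big\}.
\end{align*}
For the second equality I would use the Nullstellensatz-style characterization of the order on $\ZL$ from \cref{sec:Background}: $D(\varphi(f))=D(1)$ holds iff $1\in\sqrt{\langle\varphi(f)\rangle}$ iff $\varphi(f)^n\in\langle\varphi(f)\rangle$ is a unit for some $n$, which is equivalent to $\varphi(f)$ itself being a unit in $B$.

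Next I would invoke the universal property of the localization $\locEl{R}{f}$ from its definition in \cref{sec:Background}: giving a homomorphism $\varphi:\mathsf{Hom}(R,B)$ with $\varphi(f)\in B^\times$ is the same as giving a homomorphism $\psi:\mathsf{Hom}(\locEl{R}{f},B)$, and this correspondence is a bijection, realized by precomposition with the canonical map $\nicefrac{\_}{1}:R\to\locEl{R}{f}$. Thus $\coBrackets{D(f)}(B)\cong\mathsf{Hom}(\locEl{R}{f},B)=\mathsf{Sp}(\locEl{R}{f})(B)$, naturally in $B$, which gives the claimed isomorphism $\coBrackets{D(f)}\cong\mathsf{Sp}(\locEl{R}{f})$. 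Since $\mathsf{Sp}(\locEl{R}{f})$ is representable, $D(f)$ is affine by \cref{def: CompOpens}.

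The two routine points to check are naturality of the isomorphism in $B$ (immediate, since both the Nullstellensatz reformulation and the localization bijection are natural, being given by composition) and the minor bookkeeping that the $\Sigma$-type over a proposition $\tyPath{D(\varphi(f))}{D(1)}$ is genuinely in bijection with the subset cut out by $\varphi(f)\in B^\times$ — this uses \cref{rem:affCompOpenZarLat}, where it is already observed that such subfunctors are determined by the corresponding element of the Zariski lattice. I do not expect a serious obstacle here; the only subtlety worth a remark is that the identity type $\tyPath{D(\varphi(f))}{D(1)}$ should be replaced by the equivalent small proposition "$\varphi(f)$ is a unit" before the argument becomes formalizable, exactly as flagged in the footnote to the definition of $\ZL$.
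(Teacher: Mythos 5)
Your proof is correct and is essentially the intended argument: the paper itself only cites the formalization (\cite[Prop.~23]{ZeunerHutzler24}), and the proof behind that result is exactly your unwinding --- pointwise, $\coBrackets{D(f)}(B)$ consists of the $\varphi:\mathsf{Hom}(R,B)$ with $D(\varphi(f))=D(1)$, which by the order characterization on $\ZarLat{B}$ means $\varphi(f)\in B^\times$, and the universal property of $\locEl{R}{f}$ then gives the natural bijection with $\mathsf{Hom}(\locEl{R}{f},B)$. The only cosmetic slip is the phrase ``$\varphi(f)^n\in\langle\varphi(f)\rangle$ is a unit for some $n$''; the clean statement is that $1\in\sqrt{\langle\varphi(f)\rangle}$ iff $1\in\langle\varphi(f)\rangle$ iff $\varphi(f)$ is invertible, which is what your argument uses.
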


\begin{proof}
\cite[Prop.\ 23]{ZeunerHutzler24}
\end{proof}

\begin{lemma}\label{lem:isQcqsSchCompOpenAffine}
  For any ring $R:\mathsf{CommRing}_\ell$, the realization $\coBrackets{U}$ of a compact open
  $U:\NatTrans{\mathsf{Sp}(R)}{\mathcal{L}}$ is a qcqs-scheme.
\end{lemma}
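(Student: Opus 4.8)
The plan is to pass to the functor-of-points picture via Yoneda. First I would note that a compact open $U:\NatTrans{\mathsf{Sp}(R)}{\mathcal{L}}$ is the same datum as the element $u:\equiv U_R(\id{R})$ of $\mathcal{L}(R)=\ZL$, Yoneda identifying the lattice $\mathsf{CompOpen}(\mathsf{Sp}(R))$ with $\ZL$. Since $\ZL$ is a set-quotient, there merely exist $f_1,\dots,f_n:R$ with $u=D(f_1,\dots,f_n)=\bigvee_{i=1}^n D(f_i)$, and as ``being a qcqs-scheme'' is a proposition I may fix such $f_i$. Locality of $\coBrackets{U}$ is then immediate: $\mathsf{Sp}(R)$ is local by \cref{thm:subcanonicity}, so $\coBrackets{U}$, being the realization of a compact open of a local $\Z$-functor, is local by \cref{lem:isLocalCompOpenOfLocal}.

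It then remains to produce an affine compact open cover of $\coBrackets{U}$. Let $\iota:\coBrackets{U}\hookrightarrow\mathsf{Sp}(R)$ be the canonical inclusion and set $V_i:\equiv D(f_i)\circ\iota$, a natural transformation $\NatTrans{\coBrackets{U}}{\mathcal{L}}$, i.e.\ the restriction along $\iota$ of the standard open $D(f_i)$ of \cref{def:standardOpen}. Restriction of compact opens along $\iota$ is the lattice homomorphism $\mathsf{CompOpen}(\iota)$, so $\bigvee_i V_i=\big(\bigvee_i D(f_i)\big)\circ\iota=U\circ\iota$; and by \cref{def: CompOpens} every point of $\coBrackets{U}$ is sent by $U$ to $D(1)$, whence $U\circ\iota$ is the top element of $\mathsf{CompOpen}(\coBrackets{U})$ and the $V_i$ form a compact open cover of $\coBrackets{U}$. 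For affineness, unfolding \cref{def: CompOpens} identifies $\coBrackets{V_i}$, as a subfunctor of $\mathsf{Sp}(R)$, with $\coBrackets{U\wedge D(f_i)}$; since $D(f_i)\leq u$ in $\ZL$ we have $U\wedge D(f_i)=D(f_i)$ as compact opens, so $\coBrackets{V_i}\cong\coBrackets{D(f_i)}\cong\mathsf{Sp}\big(\locEl{R}{f_i}\big)$ by \cref{prop:isAffineStandardOpen}. This exhibits $\coBrackets{U}$ as a local $\Z$-functor with an affine compact open cover, i.e.\ a qcqs-scheme.

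I expect no genuine difficulty here once the cited lemmas are in hand; the only real work is bookkeeping. One must keep straight the Yoneda dictionary between a compact open $\NatTrans{X}{\mathcal{L}}$ and an element of $\ZL$, record that restricting compact opens along a morphism is a lattice map commuting with realization in the sense that $\coBrackets{V\circ\iota}\cong\coBrackets{U\wedge V}$ for $V:\NatTrans{X}{\mathcal{L}}$ and an inclusion $\iota:\coBrackets{U}\hookrightarrow X$, and thread the merely-existing generators $f_i$ through a propositional goal. The one place a reader might pause is the identity $U\wedge D(f_i)=D(f_i)$, which is just the lattice inequality $D(f_i)\leq\bigvee_k D(f_k)=u$ transported through Yoneda.
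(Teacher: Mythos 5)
Your proof is correct. The paper gives no argument for this lemma beyond citing \cite[Thm.\ 24]{ZeunerHutzler24}, and your route --- untruncating $u=D(f_1,\dots,f_n)$ (legitimate, since being a qcqs-scheme is a proposition), obtaining locality from \cref{thm:subcanonicity} together with \cref{lem:isLocalCompOpenOfLocal}, and covering $\coBrackets{U}$ by the restrictions $D(f_i)\circ\iota$, which cover because $\bigvee_i D(f_i)\circ\iota=U\circ\iota=1$ and are affine via $\coBrackets{D(f_i)\circ\iota}\cong\coBrackets{U\wedge D(f_i)}=\coBrackets{D(f_i)}\cong\mathsf{Sp}\big(\locEl{R}{f_i}\big)$ by \cref{prop:isAffineStandardOpen} --- is exactly the standard argument that citation encapsulates, consistent with the mechanisms the paper itself uses in \cref{lem:compOpenDownIso}.
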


\begin{proof}
\cite[Thm.\ 24]{ZeunerHutzler24}
\end{proof}

Now we want to generalize this to the case of a compact open of an arbitrary qcqs-scheme.
The key result is the following lemma
that will play an important role later on as well.

\begin{lemma}\label{lem:compOpenDownIso}
  For any $\Z$-functor $X$ and $U:\NatTrans{X}{\mathcal{L}}$,
  we have an isomorphism of lattices
  \begin{align*}
    \psi_U: \mathsf{CompOpen}\big(\coBrackets{U}\big)~\cong~\downarrow U
  \end{align*}
  that preserves realizations, i.e.\ $\coBrackets{V}\cong\coBrackets{\psi_U(V)}$
  naturally for $V:\NatTrans{\coBrackets{U}}{\mathcal{L}}$.
\end{lemma}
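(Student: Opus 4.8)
The plan is to exhibit $\psi_U$ as the inverse of the evident \emph{restriction} map. Let $\iota\colon\coBrackets{U}\hookrightarrow X$ be the canonical inclusion and let $\rho\colon\mathsf{CompOpen}(X)\to\mathsf{CompOpen}(\coBrackets{U})$ be precomposition with $\iota$. Since the lattice operations on compact opens are computed pointwise and $U\circ\iota$ is the constant transformation $D(1)$ (the top of $\mathsf{CompOpen}(\coBrackets U)$), the map $\rho$ is a lattice homomorphism, and it sends the down-set $\downarrow U\subseteq\mathsf{CompOpen}(X)$ — itself a distributive lattice with top $U$ — onto a sublattice of $\mathsf{CompOpen}(\coBrackets U)$ with $\rho(U)=D(1)$. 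I claim $\rho$ restricts to a lattice isomorphism $\downarrow U\xrightarrow{\cong}\mathsf{CompOpen}(\coBrackets U)$, and then define $\psi_U$ to be its inverse. Throughout I use repeatedly that, by naturality of $U$, if $x\colon X(R)$ with $U_R(x)=D(f_1,\dots,f_n)$ and $\varphi_i\colon R\to\locEl R{f_i}$ is the localization map, then $U_{\locEl R{f_i}}(\rest x{f_i})=\varphi_i^{\mathcal L}(D(f_1,\dots,f_n))=D(\varphi_i(f_1),\dots,\varphi_i(f_n))=D(1)$ (because $\varphi_i(f_i)$ is a unit), so that $\rest x{f_i}\in\coBrackets U(\locEl R{f_i})$; note also that any element of $\ZL$ is merely of the form $D(f_1,\dots,f_n)$, which suffices since the statements we prove about fixed terms of $\ZL$ are propositions.

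For \textbf{injectivity} on $\downarrow U$, suppose $V,V'\leq U$ with $V\circ\iota=V'\circ\iota$, and fix $x\colon X(R)$ with $U_R(x)=D(f_1,\dots,f_n)$. For each $i$ we have $\rest x{f_i}\in\coBrackets U(\locEl R{f_i})$, hence $V_{\locEl R{f_i}}(\rest x{f_i})=V'_{\locEl R{f_i}}(\rest x{f_i})$; applying the isomorphism $\psi_{f_i}\colon\ZarLat{\locEl R{f_i}}\cong\,\downarrow\! D(f_i)$ of \cref{lem:isoLocDownSet} together with its defining property $\psi_{f_i}\circ(\nicefrac{\_}{1})^{\mathcal L}=\_\wedge D(f_i)$ (established in the proof of that lemma) and naturality of $V,V'$, we get $V_R(x)\wedge D(f_i)=V'_R(x)\wedge D(f_i)$ for all $i$. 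Joining over $i$ and using distributivity together with $V_R(x),V'_R(x)\leq U_R(x)=\bigvee_i D(f_i)$ yields $V_R(x)=V'_R(x)$, so $V=V'$.

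For \textbf{surjectivity}, given $W\colon\NatTrans{\coBrackets U}{\mathcal L}$ we must build $V\leq U$ with $V\circ\iota=W$. For $x\colon X(R)$ with $U_R(x)=D(f_1,\dots,f_n)$ set $w_i^x:\equiv\psi_{f_i}\big(W_{\locEl R{f_i}}(\rest x{f_i})\big)\leq D(f_i)$. Using naturality of $W$, the canonical identification $\locEl R{f_if_j}\cong\locEl{\locEl R{f_i}}{f_j}$, and the commuting square relating $\psi_{f_i}$, $\psi_{f_if_j}$ and $\_\wedge D(f_j)$ from the proof of \cref{lem:ZarLatLocal}, one checks $w_i^x\wedge D(f_j)=\psi_{f_if_j}\big(W_{\locEl R{f_if_j}}(\rest x{f_if_j})\big)=w_j^x\wedge D(f_i)$, so the $w_i^x$ form a compatible family; the gluing lemma for distributive lattices \cite[Lemma 5]{ProjSpec}, applied in the sublattice $\downarrow\! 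U_R(x)$, then produces a \emph{unique} $V_R(x)\leq U_R(x)$ with $V_R(x)\wedge D(f_i)=w_i^x$. Comparing two presentations of $U_R(x)$ through their common refinement by the $D(f_ig_j)$ (exactly as in the compatibility computation) shows $V_R(x)$ is independent of the chosen generators, and a similar bookkeeping argument shows $x\mapsto V_R(x)$ is natural in $R$; hence it defines a compact open $V\leq U$ of $X$. Finally, applying the construction with the cover $\{1\}$ of $D(1)=U_R(z)$ for $z\in\coBrackets U(R)$ gives $V_R(z)=W_R(z)$, i.e.\ $V\circ\iota=W$, so $\rho|_{\downarrow U}$ is onto. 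Being a bijective lattice homomorphism it is a lattice isomorphism, and so is $\psi_U$.

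It remains to check that $\psi_U$ \textbf{preserves realizations}. Given a compact open $W\colon\NatTrans{\coBrackets U}{\mathcal L}$, the element $\psi_U(W)\in\downarrow U$ satisfies $\psi_U(W)\circ\iota=W$, so for $x\colon X(R)$ we have $\psi_U(W)_R(x)=D(1)$ if and only if — using $\psi_U(W)\leq U$, which forces $U_R(x)=D(1)$ hence $x\in\coBrackets U(R)$, and then $\psi_U(W)_R(x)=W_R(x)$ — both $U_R(x)=D(1)$ and $W_R(x)=D(1)$ hold, which is precisely the condition defining $\coBrackets W(R)$. Thus $\coBrackets{\psi_U(W)}=\coBrackets W$ as subfunctors of $X$, and the identity is the required natural isomorphism. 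The main obstacle is the surjectivity step: one has to reassemble $W$ from its pullbacks along the affine ``charts'' $\locEl R{f_i}$ and, above all, verify that the resulting $V_R(x)$ does not depend on the presentation of $U_R(x)$ — this is where the gluing lemma for distributive lattices and the localization isomorphism of \cref{lem:isoLocDownSet} carry the weight of the argument.
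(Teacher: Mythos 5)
Your proof is correct, and it reorganizes the argument rather than changing its substance: the computational core — restricting a point $x:X(R)$ along the localizations $R\to\locEl{R}{f_i}$, transporting through the isomorphisms $\psi_{f_i}$ of \cref{lem:isoLocDownSet} via the identity $\psi_{f_i}\circ(\nicefrac{\_}{1})^{\mathcal{L}}=\_\wedge D(f_i)$, and comparing two presentations of $U_R(x)$ through the common refinement by the $D(f_ig_j)$ — is exactly what the paper's proof does. The difference is in the packaging. The paper constructs both maps explicitly, giving $\psi_U(V)(x):\equiv\bigvee_i\psi_{f_i}(V(x_i))$ and taking $\psi_U^{-1}$ to be precomposition with the inclusion, and then omits the verification that the two are mutually inverse; you instead prove that the restriction map $\rho|_{\downarrow U}$ is injective (your distributivity argument) and surjective (gluing the compatible family $w_i^x$ inside $\downarrow U_R(x)$ via the gluing lemma for distributive lattices, whose uniqueness clause replaces the omitted inverse check), and define $\psi_U$ as the inverse. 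Since the glued element is forced to be $\bigvee_i w_i^x$, your $\psi_U$ coincides with the paper's explicit formula, which the paper reuses later (in \cref{def:realizationFun} and \cref{prop:relAdjSch}), so nothing is lost; what you gain is that the isomorphism property comes for free from bijectivity. Two small remarks: your treatment of the mere existence of generators is adequate precisely because the gluing lemma's uniqueness together with independence of presentation makes the type of admissible values of $V_R(x)$ a proposition — it is worth saying this explicitly, since defining $V_R(x)$ is not itself proving a proposition; and the naturality of the glued $V$ in $R$, which you defer to ``similar bookkeeping,'' is the one step the paper writes out in full (via the square relating $\psi_{f_i}$ and $\psi_{\varphi(f_i)}$ through $(\nicefrac{\varphi}{1})^{\mathcal{L}}$ and $\varphi^{\mathcal{L}}$), so it deserves at least that square in a complete write-up. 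Your verification of realization-preservation is fine and in fact addresses the direction stated in the lemma, whereas the paper checks the (equivalent) statement for $\psi_U^{-1}$.
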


\begin{proof}
  Constructing the inverse map
  $\psi_U^{-1}:~\downarrow\! U\to \big(\NatTrans{\coBrackets{U}}{\mathcal{L}}\big)$
  is actually the straightforward part. We have a map
  \begin{align*}
    \big(\NatTrans{X}{\mathcal{L}}\big)~\to~\big(\NatTrans{\coBrackets{U}}{\mathcal{L}}\big)
  \end{align*}
  which is given as follows: For $V:\NatTrans{X}{\mathcal{L}}$
  we compose with the canonical inclusion $i_U:\NatTrans{\coBrackets{U}}{X}$,
  i.e.\ take $V\circ i_U$.
  Note that we have $V\circ i_U=(U\wedge V)\circ i_U$.
  We can then take $\psi_U^{-1}$ to be the restriction of this map to
  $\downarrow\! U$, which clearly is a lattice morphism.
  We also get that $\coBrackets{V}\cong\coBrackets{\psi_U^{-1}(V)}$ for $V\leq U$,
  as for any ring $R$ and point $x:X(R)$, $V(x)=D(1)$ implies $U(x)=D(1)$.

  To construct $\psi_U$ let $V:\NatTrans{\coBrackets{U}}{\mathcal{L}}$,
  $R$ a ring and $x:X(R)$. Assume that we have $f_1,...,f_n:R$
  such that $U(x)= D(f_1,...,f_n)$. Let
  $x_i:\equiv X(\nicefrac{\_}{1})(x):X\big(\locEl{R}{f_i}\big)$
  be the restriction of $x$ along the canonical $\nicefrac{\_}{1}:R\to\locEl{R}{f_i}$,
  where $i$ in $1,...,n$. Now, $U(x_i)=D(1)$ and we may apply $V$ to $x_i$, giving us
  $V(x_i):\ZarLat{\locEl{R}{f_i}}$. Using the isomorphisms
  $\psi_{f_i}:\ZarLat{\locEl{R}{f_i}}\cong\,\downarrow\! D(f_i)$
  from \cref{lem:isoLocDownSet}, we get $\psi_{f_i}(V(x_i)):~\downarrow\! D(f_i)$
  for each $i$ in $1,...,n$. We want to set
  \begin{align*}
    \psi_U(V)(x)~:\equiv~\textstyle \bigvee_{i=1}^n\psi_{f_i}(V(x_i)) ~\leq~U(x)
  \end{align*}
  but we need to check that this is well-defined, i.e.\ independent of the choice
  of the $f_i$. So let $g_1,...,g_m:R$ be such that $D(f_1,...,f_n)=D(g_1,...,g_m)=U(x)$.
  For $i$ in $1,...,n$ and $j$ in $1,...,m$, let
  $x_{ij}:X\big(\locEl{R}{f_ig_j}\big)$ be the restriction of $x$ along
  $\nicefrac{\_}{1}:R\to\locEl{R}{f_ig_j}$ and
  $\psi_{f_ig_j}:\ZarLat{\locEl{R}{f_ig_j}}\cong\,\downarrow\! D(f_ig_j)$
  be the corresponding isomorphism given by \cref{lem:isoLocDownSet}.
  For the canonical map $\gamma_{ij}:\locEl{R}{f_i}\to\locEl{R}{f_ig_j}$
  we get that the following diagram commutes
  \begin{center}
  \begin{tikzcd}
    \ZarLat{\locEl{R}{f_i}}\ar[r, "\simeq","\psi_{f_i}"']\ar[d,"\gamma_{ij}^{\mathcal{L}}"'] &\downarrow\! D(f_i)\ar[d, "\_\wedge D(g_j)"] \\
    \ZarLat{\locEl{R}{f_ig_j}}\ar[r,"\simeq","\psi_{f_ig_j}"'] &\downarrow\! D(f_ig_j)
  \end{tikzcd}
  \end{center}
  Moreover, we get $\gamma_{ij}^{\mathcal{L}}(V(x_i))=V(x_{ij})$ by the naturality of $X$ and $V$.
  This gives us
  \begin{align*}
    \psi_{f_i}(V(x_i)) ~=~ \textstyle\bigvee_{j=1}^m \big(\psi_{f_i}(V(x_i))\wedge D(g_j)\big) ~=~\textstyle\bigvee_{j=1}^m \psi_{f_ig_j}(V(x_{ij}))
  \end{align*}
  Repeating the above argument with $i$ and $j$ swapped shows well-definedness:
  \begin{align*}
    \textstyle \bigvee_{i=1}^n\psi_{f_i}(V(x_i))~=~\textstyle \bigvee_{i,j}\psi_{f_ig_j}(V(x_{ij}))~=~\textstyle \bigvee_{j=1}^m\psi_{g_j}(V(x_j))
  \end{align*}
  To show the naturality of this construction, take $\varphi:\mathsf{Hom}(R,A)$ and $x:X(R)$
  with its restriction along $\varphi$ denoted $\rest{x}{\varphi}\,:\equiv X(\varphi)(x)$.
  Since we have to prove a proposition, we can assume that we have $f_1,...,f_n:R$
  such that $U(x)= D(f_1,...,f_n)$ and thus
  $\psi_U(V)(x)~=~\textstyle \bigvee_{i=1}^n\psi_{f_i}(V(x_i))$, with $x_i$ defined
  as above for $i$ in $1,...,n$. It follows that
  \begin{align*}
    \varphi^{\mathcal{L}}\big(\psi_U(V)(x)) ~&=~  \textstyle\bigvee_{i=1}^n\varphi^{\mathcal{L}}(\psi_{f_i}(V(x_i)))\\
    ~&=~  \textstyle\bigvee_{i=1}^n\psi_{f_i}\big((\nicefrac{\varphi}{1})^{\mathcal{L}}(V(x_i))\big) \\
    ~&=~  \textstyle\bigvee_{i=1}^n\psi_{\varphi(f_i)}\big(V(\rest{x}{\nicefrac{\varphi}{1}})\big) \\
    ~&=~\psi_U(V)(\rest{x}{\varphi})
  \end{align*}
  Here $\rest{x}{\nicefrac{\varphi}{1}}$ is the restriction along the canonical
  $\nicefrac{\varphi}{1}:R\to\locEl{A}{\varphi(f_i)}$ for each respective $i$ in $1,...,n$.
  The second equality above holds because the following diagram commutes:
  \begin{center}
  \begin{tikzcd}
    \ZarLat{\locEl{R}{f_i}}\ar[r, "\simeq","\psi_{f_i}"']\ar[d,"(\nicefrac{\varphi}{1})^{\mathcal{L}}"'] &\downarrow\! D(f_i)\ar[d, "\varphi^{\mathcal{L}}"] \\
    \ZarLat{\locEl{A}{\varphi(f_i)}}\ar[r,"\simeq","\psi_{\varphi(f_i)}"'] &\downarrow\! D(\varphi(f_i))
  \end{tikzcd}
  \end{center}
  The last equality holds since $U(\rest{x}{\varphi})=D(\varphi(f_1),..., \varphi(f_n))$.
  This finishes the proof that $\psi_U(V)$ is a natural transformation.
  We omit the proof that $\psi_U$ is inverse to $\psi^{-1}_U$.
\end{proof}

\begin{lemma}\label{lem:affCoverJoin}
  Let $X$ be $\Z$-functor and $U,V:\NatTrans{X}{\mathcal{L}}$ be compact opens.
  If both $\coBrackets{U}$ and $\coBrackets{V}$ merely have an affine cover, then
  $\coBrackets{U\vee V}$ merely has an affine cover.
\end{lemma}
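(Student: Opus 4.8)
The plan is to carry out the whole argument inside the $\Z$-functor $\coBrackets{U\vee V}$, using \cref{lem:compOpenDownIso} to identify its compact opens with the compact opens of $X$ lying below $U\vee V$. Since ``$\coBrackets{U\vee V}$ merely has an affine cover'' is a proposition, I may assume given honest affine covers $A_1,\dots,A_k:\NatTrans{\coBrackets{U}}{\mathcal{L}}$ of $\coBrackets{U}$ and $B_1,\dots,B_l:\NatTrans{\coBrackets{V}}{\mathcal{L}}$ of $\coBrackets{V}$.

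Write $W:\equiv U\vee V$. Applying \cref{lem:compOpenDownIso} to $W$ gives a lattice isomorphism $\psi_W:\mathsf{CompOpen}(\coBrackets{W})\cong\,\downarrow\! W$ that preserves realizations. Since $U,V\leq W$, set $\widetilde U:\equiv\psi_W^{-1}(U)$ and $\widetilde V:\equiv\psi_W^{-1}(V)$; these are compact opens of $\coBrackets{W}$ with $\coBrackets{\widetilde U}\cong\coBrackets{U}$ and $\coBrackets{\widetilde V}\cong\coBrackets{V}$ naturally, and, as $\psi_W^{-1}$ is a morphism of bounded distributive lattices sending the top element $W$ of $\downarrow\! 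W$ to the top element, we get $\widetilde U\vee\widetilde V=\psi_W^{-1}(W)=1$ in $\mathsf{CompOpen}(\coBrackets{W})$. So $\widetilde U,\widetilde V$ form a compact open cover of $\coBrackets{W}$. Since $\coBrackets{\widetilde U}\cong\coBrackets{U}$ naturally and having an affine cover is an isomorphism-invariant property of $\Z$-functors, $\coBrackets{\widetilde U}$ also has an affine cover $A_1',\dots,A_k':\NatTrans{\coBrackets{\widetilde U}}{\mathcal{L}}$; similarly $\coBrackets{\widetilde V}$ has an affine cover $B_1',\dots,B_l':\NatTrans{\coBrackets{\widetilde V}}{\mathcal{L}}$.

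Now apply \cref{lem:compOpenDownIso} a second time, to $\widetilde U$ (regarded as a compact open of $\coBrackets{W}$), obtaining a realization-preserving lattice isomorphism $\psi_{\widetilde U}:\mathsf{CompOpen}(\coBrackets{\widetilde U})\cong\,\downarrow\!\widetilde U\subseteq\mathsf{CompOpen}(\coBrackets{W})$, and likewise $\psi_{\widetilde V}$. Then $\psi_{\widetilde U}(A_i')$ and $\psi_{\widetilde V}(B_j')$ are compact opens of $\coBrackets{W}$ whose realizations are isomorphic to the affine $\coBrackets{A_i'}$, resp.\ $\coBrackets{B_j'}$, hence are themselves affine; and since $\psi_{\widetilde U}$ and $\psi_{\widetilde V}$ preserve finite joins and send top elements to $\widetilde U$, resp.\ $\widetilde V$, we have $\bigvee_{i}\psi_{\widetilde U}(A_i')=\widetilde U$ and $\bigvee_{j}\psi_{\widetilde V}(B_j')=\widetilde V$. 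Therefore the finite family $\{\psi_{\widetilde U}(A_i')\}_{i}\cup\{\psi_{\widetilde V}(B_j')\}_{j}$ of affine compact opens of $\coBrackets{W}=\coBrackets{U\vee V}$ has join $\widetilde U\vee\widetilde V=1$, so it is an affine compact open cover, which is what we wanted.

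No step here is genuinely hard; the content is pure bookkeeping. The point needing the most care is that affineness of compact opens is transported correctly: when passing from $\coBrackets{U}$ to $\coBrackets{\widetilde U}$ this is just isomorphism-invariance of $\Z$-functors, and when passing from $\coBrackets{\widetilde U}$ into $\coBrackets{W}$ it is exactly the realization-preserving clause of \cref{lem:compOpenDownIso}. The only other thing to watch is that all the isomorphisms in play are isomorphisms of \emph{bounded} distributive lattices, so that they preserve finite joins and top elements, which is what makes the transported families actually cover.
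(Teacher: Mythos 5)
Your proof is correct, and it uses the same essential ingredient as the paper, namely \cref{lem:compOpenDownIso} plus bookkeeping with bounded lattice isomorphisms; the only difference is the routing. The paper applies the lemma to $U$, $V$ and $U\vee V$ all viewed as compact opens of $X$: it pushes the given affine covers of $\coBrackets{U}$ and $\coBrackets{V}$ up into $\mathsf{CompOpen}(X)$ via $\psi_U$ and $\psi_V$ (their joins being $U$ and $V$, hence jointly $U\vee V$), and then transports the whole family down into $\mathsf{CompOpen}(\coBrackets{U\vee V})$ with $\psi_{U\vee V}^{-1}$, affineness being preserved because realizations are preserved at each step. You instead work inside $\coBrackets{U\vee V}$ from the start: pull $U,V$ down to $\widetilde U,\widetilde V$ via $\psi_{U\vee V}^{-1}$, transport the property ``merely has an affine cover'' along the natural isomorphisms $\coBrackets{U}\cong\coBrackets{\widetilde U}$ and $\coBrackets{V}\cong\coBrackets{\widetilde V}$, and then push the covers in via two further applications of the lemma, now internal to $\coBrackets{U\vee V}$. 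This is perfectly fine (the transport step is justified exactly as in the paper's proof of \cref{thm:isQcqsSchCompOpen}, by univalence of the functor category, or more elementarily by composing with the isomorphism), but it costs you one extra invocation of the lemma and the explicit isomorphism-invariance argument, both of which the paper's ordering avoids; in exchange your version makes the final cover manifestly live on $\coBrackets{U\vee V}$ without the last pull-back step. Your attention to the fact that all isomorphisms are of \emph{bounded} lattices, so that tops and finite joins are preserved, is exactly the point that makes either version go through.
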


\begin{proof}
  Let $U_1,..,U_n:\NatTrans{\coBrackets{U}}{\mathcal{L}}$
  and $V_1,..,V_m:\NatTrans{\coBrackets{V}}{\mathcal{L}}$
  be affine covers of $\coBrackets{U}$ and $\coBrackets{V}$
  respectively. Using the isomorphisms $\psi_U$ and $\psi_V$
  given by applying \cref{lem:compOpenDownIso} to $U$ and $V$,
  we get compact opens $\psi_U(U_i)$ and $\psi_V(V_j)$ of $X$ that
  are affine and cover $U\vee V$, i.e.\
  \begin{align*}
    \big(\textstyle\bigvee_{i=1}^n\psi_U(U_i)\big) \,\vee\,\big(\bigvee_{j=1}^m\psi_V(V_j)\big)~=~ U\vee V
  \end{align*}
  Using \cref{lem:compOpenDownIso} for $U\vee V$, we get that the
  $\psi_{U\vee V}^{-1}(\psi_U(U_i))$ and
  $\psi_{U\vee V}^{-1}(\psi_V(V_j))$
  together give an affine cover of $\coBrackets{U\vee V}$.
\end{proof}

\begin{theorem}\label{thm:isQcqsSchCompOpen}
  The realization $\coBrackets{U}$ of a compact open
  $U:\NatTrans{X}{\mathcal{L}}$ of a qcqs-scheme $X$ is a qcqs-scheme.
\end{theorem}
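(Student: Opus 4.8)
The plan is to check the two conditions of \cref{def: qcqsScheme} for $\coBrackets{U}$ separately: that it is a local $\Z$-functor, and that it merely admits an affine compact open cover. The first is immediate: since $X$ is a qcqs-scheme it is in particular local, so \cref{lem:isLocalCompOpenOfLocal} applies directly and $\coBrackets{U}$ is local. All the work is in producing an affine cover.

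For the affine cover I would start from an affine compact open cover $W_1,\dots,W_k:\NatTrans{X}{\mathcal{L}}$ of $X$, which merely exists by assumption; since ``merely having an affine cover'' is a proposition, we may assume such a cover is given, together with natural isomorphisms identifying each $\coBrackets{W_i}$ with an affine scheme. By distributivity in the lattice $\mathsf{CompOpen}(X)$ and $1=\bigvee_{i=1}^k W_i$ we have $U=\bigvee_{i=1}^k(U\wedge W_i)$, and hence $\coBrackets{U}=\coBrackets{\bigvee_{i=1}^k(U\wedge W_i)}$.

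Now fix $i$. The compact open $U\wedge W_i$ of $X$ lies below $W_i$, so applying \cref{lem:compOpenDownIso} to $W_i$ yields a compact open $\psi_{W_i}^{-1}(U\wedge W_i):\NatTrans{\coBrackets{W_i}}{\mathcal{L}}$ whose realization is naturally isomorphic to $\coBrackets{U\wedge W_i}$. Transporting along the natural isomorphism between $\coBrackets{W_i}$ and an affine scheme $\mathsf{Sp}(A_i)$ --- using that $\mathsf{CompOpen}$ is functorial and that realizations, being pullbacks, are preserved up to isomorphism --- this corresponds to a compact open of $\mathsf{Sp}(A_i)$, whose realization is a qcqs-scheme by \cref{lem:isQcqsSchCompOpenAffine}. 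Hence $\coBrackets{U\wedge W_i}$ merely has an affine cover for each $i$. Finally, a straightforward induction on $k$ using \cref{lem:affCoverJoin} shows that $\coBrackets{(U\wedge W_1)\vee\dots\vee(U\wedge W_k)}=\coBrackets{U}$ merely has an affine cover (the degenerate case $k=0$ forces $U=0$, so $\coBrackets{U}$ is the empty $\Z$-functor, covered by the empty family).

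The main obstacle I anticipate is the bookkeeping around transporting compact opens and their realizations along the isomorphisms $\psi_{W_i}$ and $\coBrackets{W_i}\cong\mathsf{Sp}(A_i)$: one must check that the composite identification sends the realization of $U\wedge W_i$, computed inside $X$, to the realization of the corresponding compact open of $\mathsf{Sp}(A_i)$. This is precisely where the realization-preserving clause of \cref{lem:compOpenDownIso} does the real work, and univalence of $\ZFunctor_\ell$ is what lets us treat these natural isomorphisms as identities and keep the argument clean.
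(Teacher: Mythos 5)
Your proof is correct and follows essentially the same route as the paper: locality via \cref{lem:isLocalCompOpenOfLocal}, then an affine cover obtained by intersecting $U$ with an affine cover of $X$, transferring each $U\wedge W_i$ through $\psi_{W_i}$ of \cref{lem:compOpenDownIso} to a compact open of $\mathsf{Sp}(A_i)$ handled by \cref{lem:isQcqsSchCompOpenAffine}, and finishing by iterating \cref{lem:affCoverJoin}, with univalence of the functor category doing the transport. The only (cosmetic) difference is that you apply \cref{lem:affCoverJoin} in the ambient $X$ to the opens $U\wedge W_i$ whose join is $U$, while the paper first pulls them back along $\psi_U^{-1}$ to compact opens of $\coBrackets{U}$; also note that in your degenerate case $\coBrackets{0}$ is not literally empty (it has points over trivial rings), though it is still covered by the empty family.
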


\begin{proof}
  $\coBrackets{U}$ is local by \cref{lem:isLocalCompOpenOfLocal}.
  To show that it has an affine cover,
  let $U_1,...,U_n:\NatTrans{X}{\mathcal{L}}$ be an affine cover of $X$.
  For $i$ in $1,...,n$, the compact open $\psi_{U_i}^{-1}(U\wedge U_i)$
  of $\coBrackets{U_i}$ has an affine cover by
  \cref{lem:isQcqsSchCompOpenAffine}.\footnote{For $\coBrackets{U_i}\cong\mathsf{Sp}(A_i)$,
  one may think of $\psi_{U_i}^{-1}(U\wedge U_i)$ as representing the compact open subscheme
  $\coBrackets{U\wedge U_i}\hookrightarrow\mathsf{Sp}(A_i)$.}
  As explained in the proof of \cref{lem:compOpenDownIso}, we get
  natural isomorphisms of $\Z$-functors
  \begin{align*}
    \coBrackets{\psi_{U_i}^{-1}(U\wedge U_i)}~\cong~\coBrackets{U\wedge U_i}~\cong~\coBrackets{\psi_{U}^{-1}(U\wedge U_i)}
  \end{align*}
  which can be made into equalities of $\Z$-functors, as categories of functors
  into a univalent category are always
  univalent. We can transport the property of having an affine cover along these
  equalities, giving us that each $\psi_{U}^{-1}(U\wedge U_i)$
  (a compact open of $\coBrackets{U}$) has an affine cover.
  We can then iterate \cref{lem:affCoverJoin} to obtain an affine cover of $\coBrackets{U}$.
\end{proof}

\section{Equivalence of approaches}\label{sec:ComparisonThm}

Having given two constructive definitions of qcqs-scheme, we want show
that these two notions coincide.  The proof strategy is mostly
analogous to that of the classical \emph{comparison theorem} given by
Demazure and Gabriel \cite[Ch.\ I, \S 1, no 4.4]{DemazureGabriel}.  We
construct an adjunction between $\Z$-functors and the opposite
category of locally ringed lattices.  For proving properties of the
left adjoint that ``realizes'' a $\Z$-functor as a locally ringed
lattice (in particular \cref{prop:isFullyFaithfulGeoRel} and
\cref{prop:relAdjSch}), we take inspiration from the lecture notes of
Nieper-Wi{\ss}kirchen \cite{NWNotes}. The notes often include concrete
computations that can be constructivized even when being explicit
about universe levels.  Technically, the purported adjunction will be
a coadjunction relative to universe lifting, as in
\cref{prop:OSpRelCoadj}. We then show that when restricted to the
respective full subcategories of qcqs-schemes, this becomes an
equivalence of categories.  In order to dispose of relativity with
respect to universe lifting, we use univalence.

\subsection{Functor of points of a locally ringed lattice}\label{subsec:functorOfPoints}

Developing scheme theory starting from $\Z$-functors
is an exception rather than norm in the introductory literature on algebraic
geometry. Most often schemes are defined as locally ringed spaces and in a second step
they are assigned a $\Z$-functor, called the functor of points of a scheme or
locally ringed space.\footnote{See e.g.\ \cite[Ch.\ 6]{EisenbudHarris} for an in-depth discussion in a classical source.}
We can do the same for locally ringed lattices.
In this section we want to define the functor of points for locally ringed lattices
and prove some basic properties for the functor of points of a qcqs-scheme.
In order to avoid confusion, we
will henceforth use qcqs-scheme only for lattice-theoretic qcqs-schemes and speak
of functorial qcqs-schemes otherwise.

\begin{definition}\label{def:functorOfPoints}
  The \emph{functor of points} associated to a locally ringed lattice
  $X:\equiv (L_X,\mathcal{O}_X)$ is the $\Z$-functor given by
  \begin{align*}
    h_X ~:\equiv~\catHom{\mathsf{LRDL}_\ell^{op}}{\mathsf{Spec}(\_)}{X}
  \end{align*}
  This defines a functor
  $\mathsf{LRDL}_\ell^{op}\to \ZFunctor_\ell$
  where the action on moprhisms is given point-wise by post-composition.
\end{definition}

\begin{lemma}\label{lem:functorOfPointsLocal}
  The functor of points $h_X$ of a locally ringed lattice $X$ is local.
\end{lemma}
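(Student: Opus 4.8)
The plan is to reduce the sheaf condition for $h_X$ to the gluing statement for locally ringed lattices (\cref{prop:gluingLatticeQcqsSchemes}) applied to the affine scheme $\mathsf{Spec}(R)$, using that a contravariant hom-functor sends colimits to limits. Unfolding \cref{def: ZariskiCoverage}, what must be shown is that for every $f_1,\dots,f_n:R$ with $1\in\langle f_1,\dots,f_n\rangle$ the canonical diagram
\[
  h_X(R)\;\to\;\prod_{i=1}^n h_X\big(\locEl{R}{f_i}\big)\;\rightrightarrows\;\prod_{i,j}h_X\big(\locEl{R}{f_if_j}\big)
\]
is an equalizer. First I would note that $1\in\langle f_1,\dots,f_n\rangle$ says exactly that $D(f_1)\vee\dots\vee D(f_n)=D(1)=1$ in $\ZL$, so $D(f_1),\dots,D(f_n)$ is a cover of $\mathsf{Spec}(R)$, which is a qcqs-scheme by \cref{lem:isAfffineSpec} and \cref{def:qcqsSch}. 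Moreover \cref{lem:isoLocDownSet}, extended to the structure sheaf and the invertibility support exactly as in \cref{ex:affSchLRDL}, gives isomorphisms of locally ringed lattices $\rest{\mathsf{Spec}(R)}{D(f_i)}\cong\mathsf{Spec}(\locEl{R}{f_i})$ and, since $D(f_i)\wedge D(f_j)=D(f_if_j)$, also $\rest{\mathsf{Spec}(R)}{D(f_i)\wedge D(f_j)}\cong\mathsf{Spec}(\locEl{R}{f_if_j})$; in particular $D(f_1),\dots,D(f_n)$ is an \emph{affine} cover, so \cref{prop:gluingLatticeQcqsSchemes} applies and yields
\[
  \mathsf{Spec}(R)\;\cong\;\mathsf{colim}\,\big\{\rest{\mathsf{Spec}(R)}{D(f_i)}\leftarrow\rest{\mathsf{Spec}(R)}{D(f_if_j)}\rightarrow\rest{\mathsf{Spec}(R)}{D(f_j)}\big\}
\]
in $\mathsf{LRDL}^{op}$.

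Next I would apply $\catHom{\mathsf{LRDL}^{op}}{-}{X}$ to this colimit. Being contravariant in its first argument, it sends the colimit to the corresponding limit, and for a diagram of this shape (one node per $i$, one per pair $(i,j)$, two arrows $(i,j)\to i$, $(i,j)\to j$) the limit is the equalizer of $\prod_i\catHom{\mathsf{LRDL}^{op}}{\rest{\mathsf{Spec}(R)}{D(f_i)}}{X}\rightrightarrows\prod_{i,j}\catHom{\mathsf{LRDL}^{op}}{\rest{\mathsf{Spec}(R)}{D(f_if_j)}}{X}$. Rewriting the terms via the isomorphisms above and via $h_X(A)=\catHom{\mathsf{LRDL}^{op}}{\mathsf{Spec}(A)}{X}$ turns this into the desired equalizer
\[
  h_X(R)\;\cong\;\mathrm{eq}\Big(\,\prod_{i=1}^n h_X\big(\locEl{R}{f_i}\big)\rightrightarrows\prod_{i,j}h_X\big(\locEl{R}{f_if_j}\big)\Big).
\]
It then remains to check that the two parallel maps obtained this way are exactly the maps $h_X(\nicefrac{\_}{1})$ induced by the canonical localizations $\locEl{R}{f_i}\to\locEl{R}{f_if_j}$ that appear in the Zariski sheaf condition. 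This follows from the functoriality of $\mathsf{Spec}$ together with the computation in the proof of \cref{lem:isoLocDownSet} showing $\psi_f\circ(\nicefrac{\_}{1})^{\mathcal{L}}=\_\wedge D(f)$: hence $\mathsf{Spec}$ sends the localization $R\to\locEl{R}{f}$ to the ``inclusion'' $\rest{\mathsf{Spec}(R)}{D(f)}\to\mathsf{Spec}(R)$, and likewise on overlaps, so the equalizer maps are precisely the legs of the colimit cocone.

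The main obstacle I anticipate is bookkeeping rather than conceptual: making the identification $\rest{\mathsf{Spec}(R)}{D(f)}\cong\mathsf{Spec}(\locEl{R}{f})$ precise as an isomorphism of \emph{locally ringed} lattices (matching not just the lattices via \cref{lem:isoLocDownSet}, but also structure sheaves and invertibility supports as set up in \cref{ex:affSchLRDL}), and tracking these isomorphisms through the colimit so that the resulting equalizer diagram is literally the Zariski sheaf diagram for $h_X$. If one prefers a route that avoids \cref{prop:gluingLatticeQcqsSchemes}, the same equalizer can be built by hand: a cocone over the diagram into $X$ is, in particular, a compatible family of lattice morphisms $L_X\to\,\downarrow\!D(f_i)$, which glue to a unique $\phi^*:L_X\to\ZL$ by the equalizer description of $\ZL$ used in the proof of \cref{lem:ZarLatLocal}, together with compatible natural transformations that glue to $\phi^\sharp$ by the sheaf property of $\mathcal{O}_X$ (resp.\ $\mathcal{O}_R$) applied to each $\phi^*(u)=\bigvee_i(\phi^*(u)\wedge D(f_i))$; the locality condition on $\phi$ then follows from those on the components, since $\mathcal{D}$ commutes with restriction and joins.
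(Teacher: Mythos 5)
Your proposal is correct and follows essentially the same route as the paper: identify $\rest{\mathsf{Spec}(R)}{D(f_i)}\cong\mathsf{Spec}(\locEl{R}{f_i})$ as locally ringed lattices, invoke \cref{prop:gluingLatticeQcqsSchemes} to write $\mathsf{Spec}(R)$ as the colimit over the affine pieces in $\mathsf{LRDL}^{op}$, and apply the contravariant hom $\catHom{\mathsf{LRDL}^{op}}{\mathsf{Spec}(\_)}{X}$ to turn it into the required equalizer. Your extra bookkeeping (matching the parallel maps with the canonical localizations, and the hand-built alternative) only elaborates details the paper leaves implicit.
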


\begin{proof}
  Let $f_1,...,f_n:R$ be such that $1\in\langle f_1,...,f_n\rangle$.
  Since $\rest{\mathsf{Spec}(R)}{D(f_i)}\cong\mathsf{Spec}(\locEl{R}{f_i})$ as
  locally ringed lattices \cite[Ex.\ 1]{ConstrSchemes}, we get
  by \cref{prop:gluingLatticeQcqsSchemes} that
  in $\mathsf{LRDL}^{op}$
  \begin{align*}
    \mathsf{Spec}(R)~\cong~\mathsf{colim}\,\big\{\mathsf{Spec}(\locEl{R}{f_i})\leftarrow \mathsf{Spec}(\locEl{R}{f_if_j})\rightarrow \mathsf{Spec}(\locEl{R}{f_j})\big\}
  \end{align*}
  Thus the following is an equalizer diagram
  \begin{align*}
    h_X(R)\to\prod_{i=1}^n h_X(\locEl{R}{f_i})\rightrightarrows
                                              \prod_{i,j}h_X(\locEl{R}{f_if_j})
  \end{align*}
  which finishes the proof.
\end{proof}

\begin{lemma}\label{lem:natIsohSpecSp}
  $h_{\mathsf{Spec}(R)}$ is naturally isomorphic to $\mathsf{Sp}(R)$.
\end{lemma}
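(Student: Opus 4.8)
The plan is to prove the natural isomorphism by chaining two things established earlier: the fact that $\mathsf{Spec}$ is fully faithful (\cref{cor:isFullyFaithfulSpec}) and that it is right adjoint to $\Gamma$ (\cref{thm:GammaSpecAdj}). Unwinding the definition, $h_{\mathsf{Spec}(R)}(A) = \catHom{\mathsf{LRDL}_\ell^{op}}{\mathsf{Spec}(A)}{\mathsf{Spec}(R)}$, and full faithfulness of $\mathsf{Spec}$ gives an equivalence $\catHom{\mathsf{LRDL}_\ell^{op}}{\mathsf{Spec}(A)}{\mathsf{Spec}(R)} \simeq \catHom{\mathsf{CommRing}_\ell^{op}}{A}{R} = \mathsf{Hom}(R,A) = \mathsf{Sp}(R)(A)$. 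So the candidate isomorphism is essentially just $\mathsf{Spec}$ acting on hom-sets. First I would spell out that this map is a bijection of sets for each $A$ (immediate from full faithfulness), and then check naturality in $A$: given $\varphi:\mathsf{Hom}(A,B)$, precomposition with $\mathsf{Sp}(\varphi)$ on the $\mathsf{Sp}(R)$ side corresponds, under the equivalence, to precomposition with $\mathsf{Spec}(\varphi)$ on the $h_{\mathsf{Spec}(R)}$ side, which is exactly how $h_{\mathsf{Spec}(R)}$ acts on morphisms — this is just functoriality of $\mathsf{Spec}$ and associativity of composition.

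Alternatively — and perhaps more cleanly — one can read this off directly from \cref{thm:GammaSpecAdj}. The adjunction $\Gamma \dashv \mathsf{Spec}$ (in the opposite categories) gives a natural equivalence
\begin{align*}
  \catHom{\mathsf{LRDL}_\ell^{op}}{\mathsf{Spec}(A)}{\mathsf{Spec}(R)} ~\simeq~ \catHom{\mathsf{CommRing}_\ell^{op}}{\Gamma(\mathsf{Spec}(A))}{R}
\end{align*}
natural in both $A$ and $R$. Since the counit of this adjunction is an isomorphism (again \cref{thm:GammaSpecAdj}), we have $\Gamma(\mathsf{Spec}(A)) = \mathcal{O}_A(1) \cong A$ naturally in $A$, so the right-hand side is naturally isomorphic to $\catHom{\mathsf{CommRing}_\ell^{op}}{A}{R} = \mathsf{Sp}(R)(A)$. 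Composing these two natural isomorphisms yields $h_{\mathsf{Spec}(R)} \cong \mathsf{Sp}(R)$, and naturality in $R$ comes for free since the adjunction equivalence is natural in $R$ as well.

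I do not expect a genuine obstacle here; the statement is essentially a formal consequence of the adjunction and the counit being an isomorphism. The only point requiring minor care is bookkeeping around the identification $\mathcal{O}_A(1) \cong A$ — one should make sure it is used as a genuinely natural isomorphism (which it is, being the counit of the adjunction) rather than an ad hoc one, so that the resulting isomorphism $h_{\mathsf{Spec}(R)} \cong \mathsf{Sp}(R)$ is natural in $R$. Since the eventual goal is to feed this into the comparison theorem, it is worth stating explicitly that the isomorphism is natural in $R$, i.e.\ that it assembles into a natural isomorphism of functors $h \circ \mathsf{Spec} \cong \mathsf{Sp}$ from $\mathsf{CommRing}_\ell^{op}$ to $\ZFunctor_\ell$; this too follows directly from naturality of the adjunction equivalence.
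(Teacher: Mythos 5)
Your proposal is correct and matches the paper's proof, which is literally the one-liner ``$\mathsf{Spec}$ is fully faithful'' (\cref{cor:isFullyFaithfulSpec}); your first paragraph just spells out the naturality bookkeeping that the paper leaves implicit. The alternative route via \cref{thm:GammaSpecAdj} is the same argument in disguise, since full faithfulness of $\mathsf{Spec}$ is itself deduced from that adjunction and its invertible counit.
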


\begin{proof}
  $\mathsf{Spec}$ is fully faithful.
\end{proof}

\begin{lemma}\label{lem:hLatToCompOpen}
  For a locally ringed lattice $X:\equiv(L_X,\mathcal{O}_X)$,
  we have a lattice homomorphism $\varepsilon^*:L_X\to\mathsf{CompOpen}(h_X)$
  such that for each $u:L_X$ we get
  a natural isomorphism $\coBrackets{\varepsilon^*(u)}\cong h_{\rest{X\,}{u}}$.\footnote{This
    $\varepsilon^*$ turns out to be the lattice morphism corresponding to the relative
    counit induced by \cref{prop:relAdjSch}, hence the name. We will not use this fact
    explicitly, however.}
\end{lemma}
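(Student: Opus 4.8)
The plan is to define $\varepsilon^*$ via the action of locally ringed lattice morphisms on lattices, and to deduce the realization isomorphism from a universal property of the ``inclusion'' $\rest{X}{u}\to X$.

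For $R:\mathsf{CommRing}_\ell$ and a point $\pi:h_X(R)$, i.e.\ a morphism $\pi:\catHom{\mathsf{LRDL}_\ell^{op}}{\mathsf{Spec}(R)}{X}$ with lattice component $\pi^*:L_X\to\ZL$, I would simply set $\varepsilon^*(u)_R(\pi):\equiv\pi^*(u)$. Naturality of $\varepsilon^*(u):\NatTrans{h_X}{\mathcal{L}}$ in $R$ is immediate, since for $\varphi:\mathsf{Hom}(R,A)$ the $\Z$-functor $h_X$ acts by precomposition with $\mathsf{Spec}(\varphi)$, giving $(\pi\circ\mathsf{Spec}(\varphi))^*=\varphi^{\mathcal{L}}\circ\pi^*$, while $\mathcal{L}(\varphi)=\varphi^{\mathcal{L}}$ by \cref{def: ZarLatFun}. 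As the lattice operations on $\mathsf{CompOpen}(h_X)$ are pointwise (\cref{def: compOpenDistLat}) and every $\pi^*$ is a homomorphism of bounded distributive lattices, $\varepsilon^*$ preserves $1$, $0$, $\vee$ and $\wedge$ pointwise; hence $\varepsilon^*:L_X\to\mathsf{CompOpen}(h_X)$ is a lattice homomorphism.

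For the realization, I would first establish the following universal property of $\rest{X}{u}=(\downarrow\! u,\rest{\mathcal{O}_X}{\downarrow u})$: writing $j_u:\catHom{\mathsf{LRDL}^{op}}{\rest{X}{u}}{X}$ for the inclusion (with $j_u^*=\_\wedge u$ and $j_u^\sharp$ given by the restriction maps of $\mathcal{O}_X$), composition with $j_u$ is, for every locally ringed lattice $Z$, a bijection between $\catHom{\mathsf{LRDL}^{op}}{Z}{\rest{X}{u}}$ and $\{\rho:\catHom{\mathsf{LRDL}^{op}}{Z}{X}~\vert~\rho^*(u)=1_{L_Z}\}$. Composition lands on the right-hand side because $(j_u\circ\rho)^*(u)=\rho^*(u\wedge u)=\rho^*(u)$, which is the image of the top element $u$ of $\downarrow\! u$ under the lattice homomorphism $\rho^*$, hence $=1_{L_Z}$. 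Conversely, given $\rho$ with $\rho^*(u)=1_{L_Z}$, I would restrict both components to $\downarrow\! u$: then $\rho^*\!\restriction_{\downarrow u}:\downarrow\! u\to L_Z$ is a bounded-lattice homomorphism (it sends the top $u$ to $\rho^*(u)=1_{L_Z}$, which is where the hypothesis is used), $\rho^\sharp\!\restriction_{\downarrow u}$ is a morphism of sheaves since the structure-sheaf pushforward along the restricted lattice map is just the pushforward along $\rho$ restricted to $\downarrow\! u$, and the result is a morphism of \emph{locally} ringed lattices because for $v\leq u$ the invertibility supremum $\Support{v}{s}$ computed in $\rest{X}{u}$, resp.\ in $Z$ after restriction, coincides with the one computed in $X$, resp.\ $Z$ — invertibility suprema are unique by \cref{rem:isPropInvSup} and $\Support{v}{s}\leq v\leq u$ — so the defining compatibility reduces to the one already satisfied by $\rho$. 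Finally $j_u$ composed with this restricted morphism recovers $\rho$ (on lattices $v\mapsto\rho^*(v\wedge u)=\rho^*(v)$ using $\rho^*(u)=1_{L_Z}$; on sheaves by naturality of $\rho^\sharp$), and the map is injective because $\_\wedge u:L_X\to\,\downarrow\! u$ is surjective.

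Specializing to $Z=\mathsf{Spec}(R)$ and using $\coBrackets{\varepsilon^*(u)}(R)=\{\pi:h_X(R)~\vert~\pi^*(u)=D(1)\}$, the universal property yields a bijection $h_{\rest{X}{u}}(R)\cong\coBrackets{\varepsilon^*(u)}(R)$ for each $R$, induced by composition with the fixed morphism $j_u$; naturality in $R$ is then automatic, as $\coBrackets{\varepsilon^*(u)}$ is a subfunctor of $h_X$ and composition with $j_u$ commutes with restriction along $\mathsf{Spec}(\varphi)$. The only step that genuinely uses the invertibility-support structure — and hence the main point to check — is that the restricted morphism is again a morphism of locally ringed lattices; as indicated, this is bookkeeping once one observes that invertibility suprema are absolute along the inclusion $\rest{X}{u}\to X$. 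Everything else is unwinding definitions.
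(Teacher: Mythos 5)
Your proposal is correct and follows essentially the same route as the paper: define $\varepsilon^*(u)_R(\pi):\equiv\pi^*(u)$ and obtain $\coBrackets{\varepsilon^*(u)}(R)\simeq\catHom{\mathsf{LRDL}_\ell^{op}}{\mathsf{Spec}(R)}{\rest{X}{u}}$ via composition with the inclusion $(\_\wedge u):\catHom{\mathsf{LRDL}_\ell^{op}}{\rest{X}{u}}{X}$. The only difference is that you spell out, as a universal property of the inclusion for an arbitrary locally ringed lattice $Z$, the bijection the paper merely asserts (including the check that the restricted morphism is again one of \emph{locally} ringed lattices), which is a welcome elaboration rather than a different argument.
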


\begin{proof}
  Let $u:L_X$. For a ring $R$ and
  $\pi:\catHom{\mathsf{LRDL}_\ell^{op}}{\mathsf{Spec}(R)}{X}$
  we can define a compact open by setting $\varepsilon^*(u)_R(\pi):\equiv\pi^*(u)$.
  This clearly defines a morphism of lattices.
  Moreover, this gives us
  \begin{align*}
    \coBrackets{\varepsilon^*(u)}(R) ~&\simeq~ \tySigmaNoParen{\pi}{\catHom{\mathsf{LRDL}_\ell^{op}}{\mathsf{Spec}(R)}{X}}{\pi^*(u)=D(1)}  \\[.5em] &\simeq~ \catHom{\mathsf{LRDL}_\ell^{op}}{\mathsf{Spec}(R)}{\rest{X}{u}}
  \end{align*}
  where the second equivalence is given composition with the ``inclusion''
  $(\_\wedge u):\catHom{\mathsf{LRDL}_\ell^{op}}{\rest{X}{u}}{X}$.
  We omit the proof that this is natural in $R$.
\end{proof}

\begin{lemma}\label{lem:isFunSchhSch}
  If $X:\equiv (L_X,\mathcal{O}_X)$ is a qcqs-scheme, then $h_X$ is
  a functorial qcqs-scheme.
\end{lemma}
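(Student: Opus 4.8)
The plan is to check the two requirements of \cref{def: qcqsScheme}: that $h_X$ is a local $\Z$-functor, and that it merely admits an affine compact open cover. The first is exactly \cref{lem:functorOfPointsLocal}, so all of the work lies in exhibiting the cover, which will be assembled from \cref{lem:hLatToCompOpen} and \cref{lem:natIsohSpecSp}.

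Since $X$ is a qcqs-scheme in the sense of \cref{def:qcqsSch}, there merely exist $u_1,\dots,u_n:L_X$ with $1=u_1\vee\dots\vee u_n$ such that each $\rest{X}{u_i}$ is affine. As being a functorial qcqs-scheme is a proposition, I may assume such a tuple is given. First I would push it through the lattice homomorphism $\varepsilon^*:L_X\to\mathsf{CompOpen}(h_X)$ of \cref{lem:hLatToCompOpen}. Because $\varepsilon^*$ preserves the top element and finite joins, the compact opens $\varepsilon^*(u_1),\dots,\varepsilon^*(u_n)$ satisfy $1=\bigvee_{i=1}^n\varepsilon^*(u_i)$ in $\mathsf{CompOpen}(h_X)$, i.e.\ they form a compact open cover of $h_X$ in the sense of \cref{def:compOpenCover}.

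It then remains to see that each $\varepsilon^*(u_i)$ is affine, meaning that its realization is an affine $\Z$-functor (\cref{def: CompOpens}). By \cref{lem:hLatToCompOpen} there is a natural isomorphism $\coBrackets{\varepsilon^*(u_i)}\cong h_{\rest{X}{u_i}}$. Since $\rest{X}{u_i}$ is affine, \cref{lem:affSchBig} provides (merely) a ring $A_i$ with $\rest{X}{u_i}\cong\mathsf{Spec}(A_i)$ in $\mathsf{LRDL}^{op}$; applying the functor of points and then \cref{lem:natIsohSpecSp} gives $h_{\rest{X}{u_i}}\cong h_{\mathsf{Spec}(A_i)}\cong\mathsf{Sp}(A_i)$. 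Composing the three isomorphisms yields $\coBrackets{\varepsilon^*(u_i)}\cong\mathsf{Sp}(A_i)$, so $\varepsilon^*(u_i)$ is affine. This produces the required affine compact open cover and completes the proof.

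There is no serious obstacle here: the lemma is essentially the bundling together of \cref{lem:functorOfPointsLocal}, \cref{lem:hLatToCompOpen} and \cref{lem:natIsohSpecSp}. The only thing to watch is the handling of propositional truncations — the affine cover of $X$ exists only merely, and each $A_i$ is obtained only merely — but since both ``is a functorial qcqs-scheme'' and ``is an affine $\Z$-functor'' are propositions, these mere existentials can be eliminated and reintroduced freely.
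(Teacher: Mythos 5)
Your proof is correct and follows essentially the same route as the paper's: locality via \cref{lem:functorOfPointsLocal}, pushing the affine cover through $\varepsilon^*$, and the chain $\coBrackets{\varepsilon^*(u_i)}\cong h_{\rest{X}{u_i}}\cong h_{\mathsf{Spec}(A_i)}\cong\mathsf{Sp}(A_i)$ from \cref{lem:hLatToCompOpen} and \cref{lem:natIsohSpecSp}. Your extra remarks on $\varepsilon^*$ preserving top and joins and on eliminating the propositional truncations only make explicit what the paper leaves implicit.
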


\begin{proof}
  By \cref{lem:functorOfPointsLocal}, $X$ is local. Assume that
  $u_1,...,u_n:L_X$ is an affine cover with $\rest{X}{u_i}\cong\mathsf{Spec}(R_i)$
  as locally ringed lattices. Then the $\varepsilon^*(u_i)$ cover $h_X$ and are affine as
  \begin{align*}
    \coBrackets{\varepsilon^*(u)}\cong h_{\rest{X\,}{u_i}}\cong h_{\mathsf{Spec}(R_i)}\cong\mathsf{Sp}(R_i)
  \end{align*}
  By \cref{lem:hLatToCompOpen} and \cref{lem:natIsohSpecSp}
\end{proof}

This allows us to regard $h_{(\_)}$ as a functor
$\mathsf{QcQsSch}_\ell\to\mathsf{FunQcQsSch}_\ell$.
This restricted functor of points functor is fully faithful.

\begin{proposition}\label{prop:isFullyFaithfulh}
  The functor $h_{(\_)}:\mathsf{QcQsSch}_\ell\to\mathsf{FunQcQsSch}_\ell$
  is fully faithful.
\end{proposition}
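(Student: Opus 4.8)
The plan is to unwind the definitions: for two qcqs-schemes $X,Y$ (as locally ringed lattices) I must show that $\pi\mapsto h_\pi$ is a bijection $\catHom{\mathsf{LRDL}_\ell^{op}}{X}{Y}\to\big(\NatTrans{h_X}{h_Y}\big)$. The only inputs are the gluing presentation of qcqs-schemes from \cref{prop:gluingLatticeQcqsSchemes}, full faithfulness of $\mathsf{Spec}$ (\cref{cor:isFullyFaithfulSpec}), the identification $h_{\mathsf{Spec}(R)}\cong\mathsf{Sp}(R)$ (\cref{lem:natIsohSpecSp}), and \cref{lem:hLatToCompOpen}. So first I would fix an affine cover $u_1,\dots,u_n:L_X$, isomorphisms $a_i:\mathsf{Spec}(R_i)\cong\rest{X}{u_i}$, write $\iota_i:\catHom{\mathsf{LRDL}_\ell^{op}}{\rest{X}{u_i}}{X}$ for the ``inclusion'' induced by $\_\wedge u_i$, and set $\xi_i:\equiv\iota_i\circ a_i$, regarded as an element of $h_X(R_i)$.

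For faithfulness: if $h_\pi=h_{\pi'}$ then $\pi\circ\xi_i=\pi'\circ\xi_i$, hence $\pi\circ\iota_i=\pi'\circ\iota_i$ for all $i$ (cancel $a_i$); since by \cref{prop:gluingLatticeQcqsSchemes} the $\iota_i$ are the legs of a colimit cocone exhibiting $X$ in $\mathsf{LRDL}_\ell^{op}$, they are jointly epic, so $\pi=\pi'$.

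For fullness, given $\Phi:\NatTrans{h_X}{h_Y}$ I would set $\pi_i:\equiv\Phi_{R_i}(\xi_i)\circ a_i^{-1}:\catHom{\mathsf{LRDL}_\ell^{op}}{\rest{X}{u_i}}{Y}$, using $\Phi_{R_i}(\xi_i)\in h_Y(R_i)=\catHom{\mathsf{LRDL}_\ell^{op}}{\mathsf{Spec}(R_i)}{Y}$. The crux is that the $\pi_i$ agree over the overlaps $\rest{X}{u_i\wedge u_j}$, whence \cref{prop:gluingLatticeQcqsSchemes} yields a unique $\pi:X\to Y$ with $\pi\circ\iota_i=\pi_i$. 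To check this I would refine each overlap by an affine cover $\rest{X}{v_{ijk}}\cong\mathsf{Spec}(S_{ijk})$ with $v_{ijk}\leq u_i\wedge u_j$; the inclusion $\rest{X}{v_{ijk}}\hookrightarrow\rest{X}{u_i}$ comes, via $a_i$ and \cref{cor:isFullyFaithfulSpec}, from a ring map $R_i\to S_{ijk}$, and naturality of $\Phi$ along it identifies $\Phi_{S_{ijk}}$ of the canonical point $\rest{X}{v_{ijk}}\hookrightarrow X$ with the restriction of $\pi_i$ to $\rest{X}{v_{ijk}}$ --- and, symmetrically, with that of $\pi_j$, because that canonical point does not depend on whether one factors through $u_i$ or $u_j$ (the ``inclusion'' lattice morphisms compose). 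As the $\rest{X}{v_{ijk}}$ jointly cover $\rest{X}{u_i\wedge u_j}$ this gives the desired agreement. Finally, to see $h_\pi=\Phi$ I would use that by \cref{lem:hLatToCompOpen} and the gluing corollary following \cref{lem:funMorphismGluing} the inclusions $\coBrackets{\varepsilon^*(u_i)}\hookrightarrow h_X$, with $\coBrackets{\varepsilon^*(u_i)}\cong\mathsf{Sp}(R_i)$, form a colimit cocone for $h_X$ in the category of local $\Z$-functors, whose $i$-th leg corresponds under the Yoneda lemma to $\xi_i$; since $h_Y$ is local, equality of $h_\pi$ and $\Phi$ may be checked leg-by-leg, i.e.\ it reduces to $h_\pi(\xi_i)=\Phi_{R_i}(\xi_i)$, which holds since $\pi\circ\xi_i=\pi\circ\iota_i\circ a_i=\pi_i\circ a_i=\Phi_{R_i}(\xi_i)$.

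I expect the main obstacle to be the well-definedness of this gluing, namely the verification that the locally defined $\pi_i$ agree over the overlaps $\rest{X}{u_i\wedge u_j}$: these overlaps need not be affine, so naturality of $\Phi$ cannot be invoked on them directly and one has to descend to a further affine cover while keeping careful track of the various ``inclusion'' morphisms of locally ringed lattices and of the ring maps inducing restriction of points. A secondary, bookkeeping-style point is confirming that applying $h_{(\_)}$ to the colimit presentation of $X$ from \cref{prop:gluingLatticeQcqsSchemes} produces, with matching cocone legs, the colimit presentation of $h_X$ in local $\Z$-functors used in the last step.
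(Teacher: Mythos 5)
Your proposal is correct and follows essentially the same route as the paper: reduce along an affine cover via \cref{lem:funMorphismGluing} and Yoneda to morphisms $\rest{X}{u_i}\to Y$, glue these with \cref{prop:gluingLatticeQcqsSchemes}, and check compatibility. The only difference is that you spell out the overlap-agreement and $h_\pi=\Phi$ verifications that the paper dismisses as ``cumbersome computations,'' which is a faithful elaboration rather than a different argument.
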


\begin{proof}
  Let $X,Y$ be qcqs-schemes and let $u_1,...,u_n:L_X$ be an affine cover
  with $\rest{X}{u_i}\cong\mathsf{Spec}(R_i)$.
  By \cref{lem:funMorphismGluing}, any $\alpha:\NatTrans{h_X}{h_Y}$
  is uniquely determined by the induced restrictions
  $\alpha_i:\big(\NatTrans{\coBrackets{\varepsilon^*(u_i)}}{h_Y}\big)$.
  Combining the proof of \cref{lem:isFunSchhSch} with Yoneda and our assumptions,
  we get a chain of equivalences
  \begin{align*}
    \big(\NatTrans{\coBrackets{\varepsilon^*(u_i)}}{h_Y}\big)~&\simeq~\big(\NatTrans{\mathsf{Sp}(R_i)}{h_Y}\big) \\
    &\simeq~\catHom{\mathsf{LRDL}_\ell^{op}}{\mathsf{Spec}(R_i)}{Y}~\simeq~\catHom{\mathsf{LRDL}_\ell^{op}}{\rest{X}{u_i}}{Y}
  \end{align*}
  Now, let $\pi_i:\catHom{\mathsf{LRDL}^{op}}{\rest{X}{u_i}}{Y}$ be the morphism
  that we obtain by applying the chain of equivalences to $\alpha_i$.
  By \cref{prop:gluingLatticeQcqsSchemes}, the $\pi_i$
  determine a $\pi:\catHom{\mathsf{LRDL}^{op}}{{X}}{Y}$.
  Cumbersome computations reveal that modulo precomposition with
  the natural isomorphism $\coBrackets{\varepsilon^*(u_i)}\cong h_{\rest{X\,}{u_i}}$,
  given by \cref{lem:hLatToCompOpen}, $h_{\pi_i}$ is just $\alpha_i$.
  Hence, $\pi$ is unique such that $h_\pi=\alpha$.
\end{proof}

\subsection{Geometric realization of a $\Z$-functor} 
We have seen that to a $\Z$-functor $X$ we can assign a (big) distributive lattice.
We can actually make this into a locally ringed lattice that on representables
$\mathsf{Sp}(R)$ is just $\mathsf{Spec}(R)$, modulo universe lifting.

\begin{definition}\label{def:realizationFun}
  For a presheaf $X$, the locally ringed lattice $\abs{X}:\mathsf{LRDL}_{\ell+1}^{op}$
  is the lattice $L_{\abs{X}}:\equiv\mathsf{CompOpen}(X)$ with a sheaf $\mathcal{O}_{\abs{X}}$
  assigning to each $U:\NatTrans{X}{\mathcal{L}}$
  the ring of functions $\NatTrans{\coBrackets{U}}{\mathbb{A}^1}$ on $U$'s realization,
  i.e.\
  \begin{align*}
    \mathcal{O}_{\abs{X}}(U)~:\equiv~\mathcal{O}\big(\coBrackets{U}\big)
  \end{align*}
  The invertibility support on $(\mathsf{CompOpen}(X),\mathcal{O}_{\abs{X}})$
  is given as follows: For a compact open $U:\NatTrans{X}{\mathcal{L}}$ and
  function $s:\NatTrans{\coBrackets{U}}{\mathbb{A}^1}$ we get a natural transformation
  $U_s:\NatTrans{\coBrackets{U}}{\mathcal{L}}$, which for $x:X(R)$ with
  $U(x)=D(1)$ is given by $U_s(x):\equiv D(s(x))$. By \cref{lem:compOpenDownIso}
  this corresponds to a compact open in $\downarrow\! U$, i.e. we can define
  \begin{align*}
    \Support{U}{s}~:\equiv~\psi_U(U_s)
  \end{align*}
  For a natural transformation $\alpha:\NatTrans{X}{Y}$ we get a morphism
  of locally ringed lattices $\abs{\alpha}:\catHom{\mathsf{LRDL}_{\ell+1}^{op}}{\abs{X}}{\abs{Y}}$
  as follows: The lattice morphism $\abs{\alpha}^*$ acts on
  compact opens by precomposition with $\alpha$.
  For $U:\NatTrans{Y}{\mathcal{L}}$, we get a canonical transformation
  $\rest{\alpha}{U}:\NatTrans{\coBrackets{U\circ\alpha}}{\coBrackets{U}}$
  and we can define
  \begin{align*}
    \abs{\alpha}^\sharp_U:\equiv (\_\circ\rest{\alpha}{U}) ~:~\catHom{\mathsf{Hom}}{\mathcal{O}_{\abs{Y}}(U)}{\mathcal{O}_{\abs{X}}(\underbrace{U\circ\alpha}_{\equiv\,\abs{\alpha}^*(U)})}
  \end{align*}
  This defines the desired natural transformation
  $\abs{\alpha}^\sharp:\NatTrans{\mathcal{O}_{\abs{Y}}}{\abs{\alpha}_*\mathcal{O}_{\abs{X}}}$.
  The functor $\abs{\_}:\ZFunctor_\ell\to\mathsf{LRDL}_{\ell+1}^{op}$ is
  called the \emph{geometric realization functor}.
\end{definition}

There are several things to check in order to show that the above is a sensible
definition. Apart from various functoriality, naturality and homomorphism conditions,
we have to check the following four lemmas.

\begin{lemma}
  For any $\Z$-functor $X$, $\mathcal{O}_{\abs{X}}$ is a sheaf on the lattice
  $\mathsf{CompOpen}(X)$.
\end{lemma}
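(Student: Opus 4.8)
The statement to prove is that $\mathcal{O}_{\abs{X}}$, as defined via $\mathcal{O}_{\abs{X}}(U) :\equiv \mathcal{O}(\coBrackets{U})$, is a sheaf on the lattice $\mathsf{CompOpen}(X)$. So I need to take a join $U = U_1 \vee \dots \vee U_n$ in $\mathsf{CompOpen}(X)$ and show that the diagram
\[
  \mathcal{O}(\coBrackets{U}) \to \prod_{i=1}^n \mathcal{O}(\coBrackets{U_i}) \rightrightarrows \prod_{i,j} \mathcal{O}(\coBrackets{U_i \wedge U_j})
\]
is an equalizer in $\mathsf{CommRing}$. My plan is to reduce this to facts already established about compact opens and their realizations. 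The key observation is \cref{lem:compOpenDownIso}: for any compact open $V : \NatTrans{X}{\mathcal{L}}$ we have $\mathsf{CompOpen}(\coBrackets{V}) \cong\, \downarrow\! V$, and this isomorphism preserves realizations. Applying this with $V = U$, the opens $U_i$ (which lie below $U$) and the meets $U_i \wedge U_j$ correspond to compact opens $W_i :\equiv \psi_U^{-1}(U_i)$ and $W_i \wedge W_j$ of $\coBrackets{U}$, forming a compact open cover $1 = \bigvee_i W_i$ of $\coBrackets{U}$, with $\coBrackets{W_i} \cong \coBrackets{U_i}$ and $\coBrackets{W_i \wedge W_j} \cong \coBrackets{U_i \wedge U_j}$ naturally.

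Having made this reduction, it suffices to prove the following: \textbf{for any $\Z$-functor $Y$ (here $Y = \coBrackets{U}$) with a compact open cover $W_1,\dots,W_n : \NatTrans{Y}{\mathcal{L}}$, the diagram $\mathcal{O}(Y) \to \prod_i \mathcal{O}(\coBrackets{W_i}) \rightrightarrows \prod_{i,j}\mathcal{O}(\coBrackets{W_i \wedge W_j})$ is an equalizer}. This is where the real content lives. First I would unfold $\mathcal{O}(Y) = (\NatTrans{Y}{\Aone})$ and recall that the realizations $\coBrackets{W_i}$ come with canonical inclusions $i_{W_i} : \NatTrans{\coBrackets{W_i}}{Y}$ (and similarly $\coBrackets{W_i \wedge W_j}$ sits over both $\coBrackets{W_i}$ and $\coBrackets{W_j}$); restriction of a function $\alpha : \NatTrans{Y}{\Aone}$ along these inclusions gives the two legs of the diagram. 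So the sheaf condition amounts to: a compatible family $(\alpha_i)$ of functions on the $\coBrackets{W_i}$ glues uniquely to a function on $Y$. To prove this I would argue pointwise: fix a ring $R$ and $x : Y(R)$; by \cref{lem:compOpenDownIso} applied in the other direction, or more directly by Yoneda, the $W_i(x) : \ZL$ form a compact open cover of $\mathsf{Sp}(R)$, i.e.\ $1 \in \langle f_{i} \rangle$ for suitable generators, so that $\coBrackets{W_i(x)} \cong \mathsf{Sp}(\locEl{R}{f_i})$ by \cref{prop:isAffineStandardOpen} after refining to basic opens. The value $\alpha(x) \in \Aone(R) = R$ is then forced by the $\alpha_i$ on the localizations $\locEl{R}{f_i}$, and \emph{existence} and \emph{uniqueness} of such an $\alpha(x)$ follow because $R \to \prod_i \locEl{R}{f_i} \rightrightarrows \prod_{i,j}\locEl{R}{f_if_j}$ is an equalizer — this is precisely the subcanonicity fact from \cref{thm:subcanonicity} (the equalizer diagram appearing in its proof). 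Naturality of the glued $\alpha$ in $R$ follows from naturality of the $\alpha_i$ and functoriality of localization; this I would state but not belabour.

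\textbf{Main obstacle.} The conceptually delicate point is not any single computation but the bookkeeping around \cref{lem:compOpenDownIso}: one must check that the identification $\mathsf{CompOpen}(\coBrackets{U}) \cong\, \downarrow\! U$ is an isomorphism \emph{of lattices} (so that meets $U_i \wedge U_j$ in $\downarrow\! U$ match meets $W_i \wedge W_j$ in $\mathsf{CompOpen}(\coBrackets{U})$) and that the realization-preservation $\coBrackets{W_i} \cong \coBrackets{U_i}$ is natural and compatible with all the inclusion maps, so that the equalizer diagram for the cover $(W_i)$ of $\coBrackets{U}$ is literally isomorphic (as a diagram of rings) to the one we want for $(U_i)$. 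Since categories of functors into a univalent category are univalent, these natural isomorphisms can be promoted to equalities and the sheaf property transported — but one should be careful that the transport is along equalities of the whole cocone, not just the objects. Modulo this, the heart of the argument is just the affine-local sheaf property of $\Aone$, i.e.\ \cref{thm:subcanonicity}, reorganised pointwise.
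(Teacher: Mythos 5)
Your proposal is correct and takes essentially the same route as the paper: reduce via \cref{lem:compOpenDownIso} to the compact open cover $\psi_U^{-1}(U_i)$ of $\coBrackets{U}$, and then glue functions into $\Aone$ using the affine-local equalizer for localizations. The only difference is presentational: the paper simply invokes \cref{lem:funMorphismGluing} with $Y=\Aone$ (which is local, being affine), whereas you re-derive that gluing statement pointwise from the equalizer $R\to\prod_i\locEl{R}{f_i}\rightrightarrows\prod_{i,j}\locEl{R}{f_if_j}$ — which is precisely the argument proving the cited lemma.
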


\begin{proof}
  For $U=\bigvee_{i=1}^n U_i$, we get a cover of $\mathsf{CompOpen}\big(\coBrackets{U}\big)$
  by the compact opens $\psi_U^{-1}(U_i)$. Since $\Aone$ is affine and hence local,
  we can use \cref{lem:funMorphismGluing} to see that any function
  $\mathcal{O}\big(\coBrackets{U}\big)$ is uniquely determined by its restriction to the
  $\mathcal{O}\big(\coBrackets{\psi_U^{-1}(U_i)}\big)\cong\mathcal{O}\big(\coBrackets{U_i}\big)$.
  This means that we have an equalizer diagram
  \begin{align*}
    \mathcal{O}\big(\coBrackets{U}\big)\to\prod_{i=1}^n \mathcal{O}\big(\coBrackets{U_i}\big)\rightrightarrows\prod_{i,j}\mathcal{O}\big(\coBrackets{U_i\wedge U_j}\big)
  \end{align*}
  which finishes the proof.
\end{proof}

\begin{lemma}
  For any $\Z$-functor $X$, compact open $U:\NatTrans{X}{\mathcal{L}}$
  and function $s:\NatTrans{\coBrackets{U}}{\mathbb{A}^1}$,
  $\Support{U}{s}$ is an invertibility supremum in $\downarrow\! U$.
\end{lemma}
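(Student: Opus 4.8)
The plan is to unfold the definition $\Support{U}{s}=\psi_U(U_s)$ so that both sides of the invertibility-supremum condition become elementary pointwise statements, after which the only nontrivial content is a standard fact about localizations. First, note that $\Support{U}{s}$ already lies in $\downarrow\! U$ by construction, since $\psi_U$ lands there. I would then fix $W:\mathsf{CompOpen}(X)$ with $W\leq U$ and show $W\leq\Support{U}{s}$ iff $\rest{s}{W}\in\mathcal{O}_{\abs{X}}(W)^\times$. Since $\psi_U:\mathsf{CompOpen}(\coBrackets{U})\cong\,\downarrow\! U$ is an order isomorphism and, by the construction of $\psi_U^{-1}$ in the proof of \cref{lem:compOpenDownIso}, $\psi_U^{-1}(W)=W\circ i_U$ with $i_U:\NatTrans{\coBrackets{U}}{X}$ the canonical inclusion, the left-hand condition is equivalent to $W\circ i_U\leq U_s$ in $\mathsf{CompOpen}(\coBrackets{U})$; spelling out the pointwise lattice order, this says: for every ring $R$ and every $x:X(R)$ with $U(x)=D(1)$ one has $W(x)\leq D(s(x))$ in $\ZL$. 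For the right-hand condition, recall $\mathcal{O}_{\abs{X}}(W)=\NatTrans{\coBrackets{W}}{\Aone}$, and observe that a natural transformation into $\Aone$ is a unit of this ring exactly when it is pointwise a unit (the product with the inverse being $1$ forces pointwise invertibility, and conversely pointwise inversion is natural because ring homomorphisms preserve inverses of units). Since $\coBrackets{W}(R)=\{x:X(R)\mid W(x)=D(1)\}$ and $\rest{s}{W}$ sends such an $x$ to $s(x)$, the right-hand condition says: for every $R$ and $x:X(R)$ with $W(x)=D(1)$ one has $s(x)\in R^\times$, i.e. $D(s(x))=D(1)$.

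It then remains to check that these two conditions are equivalent. One direction is immediate: if $W(x)=D(1)$ then $U(x)=D(1)$ as well (since $W\leq U$), and $D(1)=W(x)\leq D(s(x))$ forces $D(s(x))=D(1)$. For the converse I would argue as follows. Take $x:X(R)$ with $U(x)=D(1)$; since $W(x)\leq D(s(x))$ is a proposition we may pick $g_1,\dots,g_m:R$ with $W(x)=D(g_1,\dots,g_m)$, and it suffices to prove $g_j\in\sqrt{\langle s(x)\rangle}$ for each $j$. Restrict $x$ along the canonical $\iota_j:R\to\locEl{R}{g_j}$ to $x_j:\equiv X(\iota_j)(x)$. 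By naturality of $W$, $W(x_j)=\iota_j^{\mathcal{L}}(D(g_1,\dots,g_m))=D(1)$, because $\nicefrac{g_j}{1}$ is invertible in $\locEl{R}{g_j}$; the hypothesis then gives $D(s(x_j))=D(1)$, while naturality of $s$ identifies $s(x_j)$ with $\nicefrac{s(x)}{1}$. Hence $\nicefrac{s(x)}{1}\in\locEl{R}{g_j}^\times$, and clearing denominators (equivalently, \cref{ex:affSchLRDL} applied to $D(g_j)$) yields $g_j\in\sqrt{\langle s(x)\rangle}$, as wanted.

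The only genuine obstacle is this last implication: passing from pointwise invertibility of $s$ on $\coBrackets{W}$ to the inequality $W(x)\leq D(s(x))$ over the larger domain $\coBrackets{U}$. The localization step above is precisely the functorial counterpart of the classical computation of an invertibility open on an affine cover; everything else is bookkeeping with the definitions of $\psi_U$, $U_s$, and the lattice and ring structures involved, together with the elementary remark that units of $\NatTrans{\coBrackets{W}}{\Aone}$ are the pointwise units.
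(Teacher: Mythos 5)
Your proof is correct and takes essentially the same route as the paper: reduce via the isomorphism $\psi_U$ of \cref{lem:compOpenDownIso} to the claim that $U_s$ is the largest compact open of $\coBrackets{U}$ on which $s$ becomes invertible, then argue pointwise using $D(s(x))=D(1)\Leftrightarrow s(x)\in R^\times$. The only difference is that you spell out the direction the paper dismisses as immediate --- passing from pointwise invertibility of $s$ on $\coBrackets{W}$ to the inequality $W(x)\leq D(s(x))$ at all points of $\coBrackets{U}$ by restricting along $R\to\locEl{R}{g_j}$ --- which is a welcome extra level of detail rather than a different approach.
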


\begin{proof}
  By \cref{lem:compOpenDownIso}, it suffices to check that $U_s$ is the largest element of
  $\mathsf{CompOpen}\big(\coBrackets{U}\big)$ where (the restriction of) $s$
  becomes invertible. This follows immediately from the fact that,
  for $x:X(R)$ with $U(x)=D(1)$, we have $U_s(x)\equiv D(s(x)) = D(1)$ if and only if
  $s(x)\in R^\times$.
\end{proof}

\begin{lemma}
  For any $\Z$-functor $X$ and compact open $U:\NatTrans{X}{\mathcal{L}}$,
  $\mathcal{D}_U$ is a support.
\end{lemma}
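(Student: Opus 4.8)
The plan is to factor $\mathcal{D}_U$ through the lattice isomorphism $\psi_U$ of \cref{lem:compOpenDownIso} and thereby reduce the support conditions to a statement about the assignment $s\mapsto U_s$. By definition $\mathcal{D}_U(s)\equiv\psi_U(U_s)$, where $s\mapsto U_s$ is a map $\mathcal{O}_{\abs{X}}(U)=\mathcal{O}\big(\coBrackets{U}\big)\to\mathsf{CompOpen}\big(\coBrackets{U}\big)$ and $\psi_U:\mathsf{CompOpen}\big(\coBrackets{U}\big)\cong\,\downarrow\! U$ is an isomorphism of (bounded distributive) lattices. Since a lattice homomorphism preserves $0$, $1$, binary meets and binary joins, $\mathcal{D}_U$ will satisfy the three support conditions as soon as $s\mapsto U_s$ does. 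So the first step is just to record this reduction.

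The second step is to verify that $s\mapsto U_s$ is a support, and here everything is pointwise. The ring structure on $\mathcal{O}\big(\coBrackets{U}\big)$ (\cref{def:globalSectionsFun}) and the lattice structure on $\mathsf{CompOpen}\big(\coBrackets{U}\big)$ (\cref{def: compOpenDistLat}) are defined by applying the operations of $R$, resp.\ of $\ZL$, at each ring $R$ and each point $x:X(R)$; and the order on $\mathsf{CompOpen}\big(\coBrackets{U}\big)$ is the pointwise one, being determined by the (pointwise) meet. Hence it suffices to check the conditions at a fixed $R$ and $x:X(R)$ with $U(x)=D(1)$, where by definition $U_s(x)\equiv D(s(x))$. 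At such a point one has $U_0(x)=D(0)=0$ and $U_1(x)=D(1)=1$; $U_{st}(x)=D(s(x)t(x))=D(s(x))\wedge D(t(x))=(U_s\wedge U_t)(x)$; and $U_{s+t}(x)=D(s(x)+t(x))\leq D(s(x))\vee D(t(x))=(U_s\vee U_t)(x)$. All three use that $D:R\to\ZL$ is a support.

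I do not expect a real obstacle here; the lemma is essentially bookkeeping. The only points requiring a little care are that equality and inequality of the natural transformations involved may be checked pointwise — for equality this is function extensionality at each $R$, and for the order it follows from meet being computed pointwise — and that $\psi_U$, being an isomorphism of \emph{bounded} distributive lattices, preserves $0$ and $1$ as well as $\wedge$ and $\vee$, so that the support conditions genuinely transport along it. Granting these, assembling the pointwise computations above into the three required identities for $\mathcal{D}_U$ finishes the proof.
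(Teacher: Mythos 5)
Your proposal is correct and follows the same route as the paper: reduce via the isomorphism $\psi_U$ of \cref{lem:compOpenDownIso} to showing that $s\mapsto U_s$ is a support on $\mathsf{CompOpen}\big(\coBrackets{U}\big)$, then verify the support conditions pointwise using that $D:R\to\ZL$ is a support. You simply spell out the bookkeeping (preservation of bounds by $\psi_U$, pointwise checking of equality and order) that the paper leaves implicit.
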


\begin{proof}
  Again by \cref{lem:compOpenDownIso}, it suffices to verify that
  \begin{align*}
    U_{(\_)}:\big(\NatTrans{\coBrackets{U}}{\Aone}\big)\to\big(\NatTrans{\coBrackets{U}}{\mathcal{L}}\big)
  \end{align*}
  is a support. This can be checked point-wise. For example,
  $U_{s+t}\leq U_s\vee U_t$ follows from the fact
  that $D(s(x)+t(x))\leq D(s(x))\vee D(t(x))$ for $x:X(R)$ with $U(x)=D(1)$.
\end{proof}

\begin{lemma}
  For a natural transformation $\alpha:\NatTrans{X}{Y}$, the realization
  $\abs{\alpha}$ is a morphism of locally ringed lattices, i.e.\
  for a compact open $U:\NatTrans{Y}{\mathcal{L}}$
  and function $s:\NatTrans{\coBrackets{U}}{\mathbb{A}^1}$
  we have
  \begin{align*}
    \abs{\alpha}^*\big(\Support{U}{s}\big) ~=~\Support{\abs{\alpha}^*(U)}{\abs{\alpha}^\sharp(s)}
  \end{align*}
\end{lemma}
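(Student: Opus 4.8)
The plan is to unfold both sides of the equation to elements of $\mathsf{CompOpen}(X)=L_{\abs{X}}$ and compare them through the lattice isomorphism $\psi_{U\circ\alpha}$ of \cref{lem:compOpenDownIso}. Write $U':\equiv U\circ\alpha$ and $s':\equiv s\circ\rest{\alpha}{U}$, so that by \cref{def:realizationFun} we have $\abs{\alpha}^*(U)=U'$ and $\abs{\alpha}^\sharp(s)=s'$, and hence $\Support{\abs{\alpha}^*(U)}{\abs{\alpha}^\sharp(s)}=\Support{U'}{s'}=\psi_{U'}\big((U')_{s'}\big)$. On the other hand $\Support{U}{s}=\psi_U(U_s)$, and $\abs{\alpha}^*$ acts on compact opens by precomposition with $\alpha$, so $\abs{\alpha}^*(\Support{U}{s})=\psi_U(U_s)\circ\alpha$. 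Since $\psi_{U'}$ is an isomorphism, it suffices to apply $\psi_{U'}^{-1}$ to both sides and check that they agree as natural transformations $\NatTrans{\coBrackets{U'}}{\mathcal{L}}$.

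On the right this yields $(U')_{s'}$ directly. On the left, recall from the proof of \cref{lem:compOpenDownIso} that $\psi_{U'}^{-1}$ is precomposition with the canonical inclusion $i_{U'}:\NatTrans{\coBrackets{U'}}{X}$, and likewise $\psi_U^{-1}$ is precomposition with $i_U:\NatTrans{\coBrackets{U}}{X}$. The canonical transformation $\rest{\alpha}{U}:\NatTrans{\coBrackets{U'}}{\coBrackets{U}}$ of \cref{def:realizationFun} satisfies $\alpha\circ i_{U'}=i_U\circ\rest{\alpha}{U}$, since both send $x\in\coBrackets{U'}(R)$ to $\alpha(x)\in Y(R)$. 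Therefore
\[
\psi_{U'}^{-1}\big(\psi_U(U_s)\circ\alpha\big)~=~\psi_U(U_s)\circ\alpha\circ i_{U'}~=~\psi_U(U_s)\circ i_U\circ\rest{\alpha}{U}~=~U_s\circ\rest{\alpha}{U},
\]
where the last step uses that $\psi_U^{-1}$ inverts $\psi_U$. So the whole lemma reduces to the identity $U_s\circ\rest{\alpha}{U}=(U')_{s'}$ in $\NatTrans{\coBrackets{U'}}{\mathcal{L}}$.

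This last identity is checked pointwise: for a ring $R$ and $x\in\coBrackets{U'}(R)$ one has $\rest{\alpha}{U}(x)=\alpha(x)\in\coBrackets{U}(R)$, and by the definition of the invertibility support in \cref{def:realizationFun} both sides evaluate to $D\big(s(\alpha(x))\big):\ZL$ — for the left side by the definition of $U_s$ applied at $\alpha(x)$, for the right side since $s'(x)=s(\alpha(x))$. I do not expect a real obstacle here; the only points that need care are bookkeeping ones: that one should argue through the ``easy'' direction $\psi^{-1}$ rather than $\psi$ itself, the commuting square $\alpha\circ i_{U'}=i_U\circ\rest{\alpha}{U}$, and the implicit canonical identification $\mathcal{O}_{\abs{X}}(U')=\mathcal{O}(\coBrackets{U'})$ under which $\abs{\alpha}^\sharp(s)$ is read as $s'$. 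Together with the sheaf, invertibility-supremum and support lemmas already listed, this completes the verification that $\abs{\_}$ is a well-defined functor into $\mathsf{LRDL}_{\ell+1}^{op}$.
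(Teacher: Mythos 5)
Your proof is correct, but it takes a different route through \cref{lem:compOpenDownIso} than the paper does. The paper's proof unfolds the \emph{forward} direction of the isomorphism: it evaluates both $\psi_U(U_s)\circ\alpha$ and $\psi_{U\circ\alpha}\big((U\circ\alpha)_{s\circ\rest{\alpha}{U}}\big)$ at a point $x:X(R)$, chooses generators with $U(\alpha(x))=D(f_1,\dots,f_n)$, and checks that both sides compute to $\bigvee_{i=1}^n\psi_{f_i}\big(D(s(\alpha(x_i)))\big)$ using the localization isomorphisms of \cref{lem:isoLocDownSet}. You instead exploit that the \emph{inverse} direction is simply precomposition with the canonical inclusion: applying $\psi_{U\circ\alpha}^{-1}$ to both sides and using the commuting square $\alpha\circ i_{U\circ\alpha}=i_U\circ\rest{\alpha}{U}$ reduces everything to the pointwise-trivial identity $U_s\circ\rest{\alpha}{U}=(U\circ\alpha)_{s\circ\rest{\alpha}{U}}$, both sides being $D(s(\alpha(x)))$. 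This buys you an argument that never touches a choice of generators $f_i$ or the maps $\psi_{f_i}$, so it is arguably cleaner and closer to being directly formalizable; the paper's computation is shorter to state only because the explicit formula for $\psi$ was already set up in \cref{lem:compOpenDownIso}. One bookkeeping point you should make explicit: to conclude the original equality from equality of the $\psi_{U\circ\alpha}^{-1}$-images, both sides must lie in $\downarrow(U\circ\alpha)$, since precomposition with $i_{U\circ\alpha}$ is only injective there; for the left-hand side this follows from $\Support{U}{s}\leq U$ and monotonicity of $\abs{\alpha}^*$, a one-line remark. With that noted, your verification is complete and matches the content of the paper's lemma.
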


\begin{proof}
  Unfolding the definition of $\abs{\alpha}$, we have to show that
  $\psi_U(U_s)\circ\alpha$ is equal to
  $\psi_{U\circ\alpha}\big((U\circ\alpha)_{s\circ\rest{\alpha~}{U}}\big)$
  as a natural transformation $\NatTrans{X}{\mathcal{L}}$.
  Applying either natural transformation to $x:X(R)$ with $U(\alpha(x))=D(f_1,...,f_n)$
  computes to the same value in $\ZL$, namely
  $\bigvee_{i=1}^n\psi_{f_i}\big(D(s(\alpha(x_i)))\big)$, where
  $x_i:X(\locEl{R}{f_i})$ is the canonical restriction and $\psi_{f_i}$
  is given by \cref{lem:isoLocDownSet}. This proves the desired equality.
\end{proof}

Now, let
$\mathsf{lift}:\mathsf{LRDL}_{\ell}^{op}\to\mathsf{LRDL}_{\ell+1}^{op}$
be the functor that is induced by universe lifting. Note that this
overload of notation, with $\mathsf{lift}$ being the operation on
universes as well as the induced functor for rings and locally ringed
lattices, is justified by the fact that lifting commutes with all the type
formers. This is used in the following lemma, which is needed to prove that the
realization of a functorial qcqs-scheme is a (big) qcqs-scheme.

\begin{lemma}\label{lem:isoGeoRelSpSpec}
  We have
  $\abs{\mathsf{Sp}(R)}\cong \mathsf{lift}(\mathsf{Spec}(R))\cong \mathsf{Spec}(\mathsf{lift}(R))$
  in $\mathsf{LRDL}_{\ell+1}^{op}$, for any $R:\mathsf{CommRing}_\ell$.
\end{lemma}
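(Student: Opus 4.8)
The plan is to establish the two isomorphisms separately, the second being essentially trivial. The isomorphism $\mathsf{lift}(\mathsf{Spec}(R))\cong\mathsf{Spec}(\mathsf{lift}(R))$ should follow immediately from the fact that universe lifting commutes with all type formers and with the constructions involved: $\mathcal{L}_R$ is built from lists of elements of $R$ modulo an equivalence relation, $\mathcal{O}_R$ is built from localizations $\locEl{R}{f}$, and the invertibility support is defined via $D(fr)$ — all of these commute with $\mathsf{lift}$ on the nose (or up to the canonical isomorphisms). So the real content is the first isomorphism $\abs{\mathsf{Sp}(R)}\cong\mathsf{lift}(\mathsf{Spec}(R))$.

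First I would unfold $\abs{\mathsf{Sp}(R)}$ according to \cref{def:realizationFun}. Its lattice is $\mathsf{CompOpen}(\mathsf{Sp}(R))=\big(\NatTrans{\mathsf{Sp}(R)}{\mathcal{L}}\big)$, which by the Yoneda lemma is naturally isomorphic (as a lattice, since the lattice operations are pointwise and Yoneda is compatible with them) to $\mathcal{L}(R)=\ZL$; this is the observation already recorded in \cref{rem:affCompOpenZarLat}. Under this identification a compact open $U$ corresponds to $U(\id{R}):\ZL$, and by \cref{prop:isAffineStandardOpen} the standard open $D(f)$ has realization $\coBrackets{D(f)}\cong\mathsf{Sp}(\locEl{R}{f})$. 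Hence the sheaf $\mathcal{O}_{\abs{\mathsf{Sp}(R)}}$, evaluated at the compact open corresponding to $D(f):\ZL$, is $\mathcal{O}(\coBrackets{D(f)})\cong\mathcal{O}(\mathsf{Sp}(\locEl{R}{f}))$, and by the relative counit of \cref{prop:OSpRelCoadj} (which is a ring isomorphism) this is canonically $\mathsf{lift}(\locEl{R}{f})$. Since both $\mathcal{O}_{\abs{\mathsf{Sp}(R)}}$ and $\mathsf{lift}(\mathcal{O}_R)$ are sheaves on $\ZL$ and they agree (up to canonical isomorphism, compatibly with restriction maps) on the basis $\BO$ of basic opens, the comparison lemma for distributive lattices gives a natural isomorphism of sheaves of rings over the lattice isomorphism. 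Finally, for the invertibility support, I would check that both structures send the basic open $D(f)$ and the section corresponding to $\nicefrac{r}{f^n}$ to $D(fr)$: on the realization side this is the computation $\Support{D(f)}{s}=\psi_{D(f)}((D(f))_s)$ with $(D(f))_s(\varphi)=D(s(\varphi))$, which at the generic point $\id{R}$ unwinds to $D(r)\wedge D(f)=D(fr)$ exactly as in the footnote of \cref{ex:affSchLRDL} identifying $\mathcal{D}_{D(f)}$ with the support $d$ of \cref{lem:isoLocDownSet}; on the $\mathsf{Spec}$ side it is the definition of $\mathcal{D}$ in \cref{ex:affSchLRDL}. Since invertibility supports are unique when they exist (\cref{rem:isPropInvSup}), agreement on the basis forces agreement everywhere, and one does not even need to track the support data as extra structure once the underlying ringed lattices are identified.

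Assembling these pieces, the lattice isomorphism (Yoneda), the sheaf isomorphism (comparison lemma plus relative counit), and the automatic compatibility of invertibility supports give the morphism $\abs{\mathsf{Sp}(R)}\cong\mathsf{lift}(\mathsf{Spec}(R))$ in $\mathsf{LRDL}_{\ell+1}^{op}$; composing with the trivial $\mathsf{lift}(\mathsf{Spec}(R))\cong\mathsf{Spec}(\mathsf{lift}(R))$ finishes the proof. The main obstacle is not any single hard step but rather the bookkeeping: one must check that the Yoneda lattice isomorphism, the basiswise ring isomorphisms $\mathcal{O}(\coBrackets{D(f)})\cong\locEl{R}{f}$, and the restriction maps of the two sheaves all fit together into a single natural isomorphism respecting the comparison lemma's right Kan extension — i.e.\ that all the "canonical" identifications are genuinely coherent. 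This is exactly the kind of coherence-of-canonical-isomorphisms verification that the paper elsewhere flags as needing care before formalization, and it is where I would spend the bulk of the effort; the actual algebra (localizations, supports) is routine given the results already established, in particular \cref{lem:isoLocDownSet}, \cref{prop:isAffineStandardOpen}, and \cref{prop:OSpRelCoadj}.
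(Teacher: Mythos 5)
The paper states this lemma without giving a proof, and your proposed argument is correct and uses exactly the machinery the surrounding text sets up: the Yoneda identification of $\mathsf{CompOpen}(\mathsf{Sp}(R))$ with the (lifted) Zariski lattice $\ZL$, the identifications $\coBrackets{D(f)}\cong\mathsf{Sp}(\locEl{R}{f})$ and $\mathcal{O}(\mathsf{Sp}(\locEl{R}{f}))\cong\mathsf{lift}(\locEl{R}{f})$ from \cref{prop:isAffineStandardOpen} and \cref{prop:OSpRelCoadj} together with the comparison lemma to match the sheaves, and the uniqueness of invertibility supports (\cref{rem:isPropInvSup}, equivalently the corollary to \cref{prop:isUnivalentLRDL} that the forgetful functor reflects isomorphisms) to upgrade the ringed-lattice isomorphism to one of locally ringed lattices. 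Your explicit check that both supports send $(D(f),\nicefrac{r}{f^n})$ to $D(fr)$, unwound via \cref{lem:compOpenDownIso} and \cref{lem:isoLocDownSet}, is accurate, and your identification of the remaining work as coherence bookkeeping for the canonical isomorphisms matches the paper's own caveats.
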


\begin{lemma}\label{lem:geoRelRestToCompOpenIso}
  For $U:\NatTrans{X}{\mathcal{L}}$, we have
  $\rest{\abs{X}}{U}\cong\abs{\coBrackets{U}}$ in $\mathsf{LRDL}_{\ell+1}^{op}$.
\end{lemma}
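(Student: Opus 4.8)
The plan is to read the desired isomorphism off \cref{lem:compOpenDownIso}. A morphism $\rest{\abs{X}}{U}\to\abs{\coBrackets{U}}$ in $\mathsf{LRDL}_{\ell+1}^{op}$ consists of a lattice homomorphism $\pi^*\colon\mathsf{CompOpen}(\coBrackets{U})\to\,\downarrow\! U$ together with a natural transformation $\pi^\sharp\colon\mathcal{O}_{\abs{\coBrackets{U}}}\Rightarrow(\pi^*)_*\big(\rest{\mathcal{O}_{\abs{X}}}{\downarrow U}\big)$. For $\pi^*$ I take the lattice isomorphism $\psi_U$ of \cref{lem:compOpenDownIso}. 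For $\pi^\sharp$, note that by \cref{def:realizationFun} we have $\mathcal{O}_{\abs{\coBrackets{U}}}(V)\equiv\Ofun(\coBrackets{V})$ for $V:\NatTrans{\coBrackets{U}}{\mathcal{L}}$ and $\big(\rest{\mathcal{O}_{\abs{X}}}{\downarrow U}\big)(W)\equiv\Ofun(\coBrackets{W})$ for $W\leq U$, while $\psi_U$ preserves realizations, i.e.\ $\coBrackets{V}\cong\coBrackets{\psi_U(V)}$ naturally in $V$. Applying the functor $\Ofun\colon\ZFunctor_\ell\to\mathsf{CommRing}_{\ell+1}^{op}$ to these isomorphisms yields the components $\pi^\sharp_V\colon\Ofun(\coBrackets{V})\cong\Ofun(\coBrackets{\psi_U(V)})$ of a natural isomorphism of sheaves. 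Compatibility with the restriction maps of the two presheaves is automatic, as both are $\Ofun$ post-composed with the realization functor and $\psi_U$ commutes with realization functorially.

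This gives an isomorphism of ringed lattices $\rest{\abs{X}}{U}\cong\abs{\coBrackets{U}}$ in $\mathsf{RDL}_{\ell+1}^{op}$. To upgrade it to an isomorphism of \emph{locally} ringed lattices, I observe that both sides are genuine objects of $\mathsf{LRDL}_{\ell+1}^{op}$ by construction — the geometric realization functor lands in $\mathsf{LRDL}_{\ell+1}^{op}$, and the restriction of a locally ringed lattice along $\_\wedge U$ is again one — and that the forgetful functor $\mathsf{LRDL}\to\mathsf{RDL}$ reflects isomorphisms, as recorded in the corollary following \cref{prop:isUnivalentLRDL}. Hence the ringed-lattice isomorphism above is already an isomorphism in $\mathsf{LRDL}_{\ell+1}^{op}$, and no separate check that $\psi_U$ intertwines the two invertibility supports is needed. (Such a check is in any case immediate: on a point $x:X(R)$ with $\psi_U(V)(x)=D(1)$ both invertibility supports compute to $D(s(x))$.)

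The real content is entirely in \cref{lem:compOpenDownIso}, so I do not expect a genuine obstacle here. The only point needing mild care is organizing the realization-preserving isomorphisms of \cref{lem:compOpenDownIso} into an honest natural transformation of sheaves on $\mathsf{CompOpen}(\coBrackets{U})$ rather than a mere pointwise family; this is where naturality in $V$ is used, and if convenient one may first turn those isomorphisms into equalities using the univalence of $\ZFunctor_\ell$, exactly as in the proof of \cref{thm:isQcqsSchCompOpen}, before applying $\Ofun$. In contrast to \cref{ex:affSchLRDL}, there is no Kan-extension or comparison-lemma bookkeeping to handle, since $\mathcal{O}_{\abs{X}}$ is defined directly on $\mathsf{CompOpen}(X)$ via $\Ofun(\coBrackets{\_})$ and not on a basis.
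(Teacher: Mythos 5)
Your proposal is correct and follows essentially the same route as the paper, whose entire proof of this lemma is ``Follows directly from \cref{lem:compOpenDownIso}''; you simply spell out that derivation, taking $\psi_U$ as the lattice part and $\Ofun$ applied to the realization-preserving isomorphisms as the sheaf part. Your appeal to the corollary after \cref{prop:isUnivalentLRDL} to avoid checking compatibility of invertibility supports is justified by that corollary's proof (the canonical map $X=Y\to\mathcal{U}(X)\cong_{\mathsf{RDL}}\mathcal{U}(Y)$ is an equivalence, so an $\mathsf{RDL}$-isomorphism between objects carrying supports is automatically one of locally ringed lattices), which is a slightly stronger reading than the literal statement ``reflects isomorphisms'' but fully supported by the paper.
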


\begin{proof}
  Follows directly from \cref{lem:compOpenDownIso}.
\end{proof}

\begin{corollary}\label{cor:isSchFunSch}
  If $X$ is a functorial qcqs-scheme, then $\abs{X}$ is a qcqs-scheme in $\mathsf{LRDL}_{\ell+1}^{op}$.
\end{corollary}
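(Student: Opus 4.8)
The plan is simply to assemble the lemmas just established. Being a qcqs-scheme in the sense of \cref{def:qcqsSch} is a proposition, as it asserts only the \emph{mere} existence of a finite affine cover, so I may reason under the assumption of concrete data: unfolding the hypothesis that $X$ is a functorial qcqs-scheme (\cref{def: qcqsScheme}) and discharging the truncation, I obtain an honest affine compact open cover $U_1,\dots,U_n:\NatTrans{X}{\mathcal{L}}$ with $1=\bigvee_{i=1}^n U_i$ in $\mathsf{CompOpen}(X)$ together with isomorphisms $\coBrackets{U_i}\cong\mathsf{Sp}(R_i)$ for suitable $R_i:\mathsf{CommRing}_\ell$.

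First I would observe that $\abs{X}$ is a genuine object of $\mathsf{LRDL}_{\ell+1}^{op}$ — this is exactly the content of the four lemmas following \cref{def:realizationFun}, which verify that $\mathcal{O}_{\abs{X}}$ is a sheaf on $\mathsf{CompOpen}(X)$ and that $\mathcal{D}$ is an invertibility support. Its underlying lattice is $L_{\abs{X}}\equiv\mathsf{CompOpen}(X)$ by definition, so the cover $1=\bigvee_{i=1}^n U_i$ of $X$ is, verbatim, a cover of $\abs{X}$ in the lattice sense demanded by \cref{def:qcqsSch}. It then remains only to check that each restriction $\rest{\abs{X}}{U_i}$ is affine.

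For this I would chain isomorphisms in $\mathsf{LRDL}_{\ell+1}^{op}$: $\rest{\abs{X}}{U_i}\cong\abs{\coBrackets{U_i}}$ by \cref{lem:geoRelRestToCompOpenIso}; then $\abs{\coBrackets{U_i}}\cong\abs{\mathsf{Sp}(R_i)}$ by applying the geometric realization functor to the isomorphism $\coBrackets{U_i}\cong\mathsf{Sp}(R_i)$; and finally $\abs{\mathsf{Sp}(R_i)}\cong\mathsf{Spec}(\mathsf{lift}(R_i))$ by \cref{lem:isoGeoRelSpSpec}. Hence $\rest{\abs{X}}{U_i}\cong\mathsf{Spec}(A)$ for $A\equiv\mathsf{lift}(R_i):\mathsf{CommRing}_{\ell+1}$, which is affine by \cref{lem:affSchBig}. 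Assembling the pieces, $\abs{X}$ is a locally ringed lattice admitting a finite affine cover, i.e.\ a qcqs-scheme in $\mathsf{LRDL}_{\ell+1}^{op}$.

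No step here is genuinely hard; all the real work sits in the preceding lemmas, above all in \cref{lem:geoRelRestToCompOpenIso} (which itself rests on \cref{lem:compOpenDownIso}) and \cref{lem:isoGeoRelSpSpec}. The one point to stay mindful of is the universe bookkeeping: the realization lives one level higher, so the affine pieces are $\mathsf{Spec}$ of the lifted rings $\mathsf{lift}(R_i)$ rather than of the $R_i$ themselves. Univalence of $\mathsf{LRDL}$ (\cref{prop:isUnivalentLRDL}) is not strictly needed, since ``qcqs-scheme'' is phrased via mere existence so that isomorphisms already suffice, but it would let one upgrade the isomorphisms above to equalities should that ever be convenient.
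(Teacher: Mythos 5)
Your proof is correct and is exactly the argument the paper intends: the corollary is stated right after \cref{lem:isoGeoRelSpSpec} and \cref{lem:geoRelRestToCompOpenIso} precisely so that one chains $\rest{\abs{X}}{U_i}\cong\abs{\coBrackets{U_i}}\cong\abs{\mathsf{Sp}(R_i)}\cong\mathsf{Spec}(\mathsf{lift}(R_i))$ over an affine cover, noting $L_{\abs{X}}\equiv\mathsf{CompOpen}(X)$. Your handling of the truncation and of the universe lift (which is what makes $\abs{X}$ merely a ``big'' qcqs-scheme, refined to essential smallness in \cref{cor:isEssSmallFunSch}) also matches the paper.
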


Realizations of functorial qcqs-schemes are not just any kind of big qcqs-scheme,
they admit a cover by small affine schemes.

\begin{definition}\label{def:essSmallQcqsSch}
  We say that $X:\mathsf{LRDL}_{\ell+1}$ is an \emph{essentially small}
  qcqs-scheme, if there merely exists a cover $u_1,\dots,u_n:L_X$
  together with small rings $R_1,...,R_n:\mathsf{CommRing}_\ell$
  and isomorphisms $\rest{X}{u_i}\cong\mathsf{Spec}(\mathsf{lift}(R_i))$.
\end{definition}

\begin{corollary}\label{cor:isEssSmallFunSch}
  If $X$ is a functorial qcqs-scheme, then $\abs{X}$ is an essentially small qcqs-scheme.
\end{corollary}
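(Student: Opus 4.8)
The plan is to simply chain the lemmas of this subsection, feeding in the affine cover that witnesses $X$ being a functorial qcqs-scheme. Since being an essentially small qcqs-scheme is a proposition (\cref{def:essSmallQcqsSch} is phrased with ``merely exists''), I may destruct the truncation in \cref{def: qcqsScheme} and assume given an affine compact open cover $U_1,\dots,U_n:\NatTrans{X}{\mathcal{L}}$ of $X$ together with small rings $R_i:\mathsf{CommRing}_\ell$ and natural isomorphisms $\coBrackets{U_i}\cong\mathsf{Sp}(R_i)$.

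First I would note that the $U_i$ are, by construction, exactly elements of the lattice $L_{\abs{X}}\equiv\mathsf{CompOpen}(X)$ underlying $\abs{X}$, and that the covering condition $1=\bigvee_{i=1}^n U_i$ in $\mathsf{CompOpen}(X)$ is literally the statement that the $u_i:\equiv U_i$ form a cover of $\abs{X}$ in $\mathsf{LRDL}_{\ell+1}^{op}$. So it only remains to identify the restrictions $\rest{\abs{X}}{u_i}$ up to isomorphism. By \cref{lem:geoRelRestToCompOpenIso} we have $\rest{\abs{X}}{u_i}\cong\abs{\coBrackets{U_i}}$. Since $\abs{\_}:\ZFunctor_\ell\to\mathsf{LRDL}_{\ell+1}^{op}$ is a functor it preserves isomorphisms, so from $\coBrackets{U_i}\cong\mathsf{Sp}(R_i)$ we get $\abs{\coBrackets{U_i}}\cong\abs{\mathsf{Sp}(R_i)}$, and finally \cref{lem:isoGeoRelSpSpec} gives $\abs{\mathsf{Sp}(R_i)}\cong\mathsf{Spec}(\mathsf{lift}(R_i))$. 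Composing these three isomorphisms yields $\rest{\abs{X}}{u_i}\cong\mathsf{Spec}(\mathsf{lift}(R_i))$ with each $R_i$ small, which is precisely what \cref{def:essSmallQcqsSch} demands. (In particular, as each $\mathsf{Spec}(\mathsf{lift}(R_i))$ is affine, this subsumes \cref{cor:isSchFunSch}.)

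I do not expect any genuine obstacle here: the argument is pure bookkeeping on top of the earlier results. The only two points requiring a sliver of care are (i) the passage between the functorial notion of ``compact open cover'' (an equation in $\mathsf{CompOpen}(X)$) and the lattice-theoretic notion of ``cover'' (an equation in $L_{\abs{X}}$), which is immediate since $L_{\abs{X}}$ is \emph{defined} to be $\mathsf{CompOpen}(X)$; and (ii) the propagation of mere existence from the hypothesis to the conclusion, which is harmless because the goal is a proposition and may therefore be proved after eliminating the truncation.
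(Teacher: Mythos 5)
Your proof is correct and is exactly the argument the paper intends (it leaves the corollary without an explicit proof, but the intended route is precisely your chaining of \cref{lem:geoRelRestToCompOpenIso}, functoriality of $\abs{\_}$, and \cref{lem:isoGeoRelSpSpec}, after eliminating the truncations since the goal is a proposition). Nothing to add.
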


\begin{lemma}\label{lem:makeSmallSch}
  For $X:\mathsf{LRDL}_{\ell+1}^{op}$ an essentially small qcqs-scheme
  there exists a small scheme $Y:\mathsf{QcQsSch}_\ell$
  with $X\cong\mathsf{lift}(Y)$.
\end{lemma}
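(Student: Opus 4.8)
The plan is to reconstruct $X$ by gluing small affine schemes, transporting the gluing data down to universe level $\ell$ via full faithfulness of universe lifting, and then checking that the gluing construction commutes with lifting. First note that the conclusion is a proposition: $\mathsf{lift}:\mathsf{LRDL}_\ell^{op}\to\mathsf{LRDL}_{\ell+1}^{op}$ is fully faithful, since a morphism of locally ringed lattices is assembled from type formers with which universe lifting commutes and $\mathsf{lift}$ on types is an equivalence onto its image; $\mathsf{QcQsSch}_\ell$ is univalent, being cut out of the univalent $\mathsf{LRDL}_\ell^{op}$ by a proposition; and the pair $(Y,e:X\cong\mathsf{lift}(Y))$ is therefore essentially unique. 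We may thus unfold the mere existence in \cref{def:essSmallQcqsSch} and fix a cover $u_1,\dots,u_n:L_X$ with $1=\bigvee_i u_i$, small rings $R_1,\dots,R_n:\mathsf{CommRing}_\ell$, and, using \cref{lem:isoGeoRelSpSpec} to identify $\mathsf{Spec}(\mathsf{lift}(R_i))$ with $\mathsf{lift}(\mathsf{Spec}(R_i))$, isomorphisms $\phi_i:\rest{X}{u_i}\cong\mathsf{lift}(\mathsf{Spec}(R_i))$ in $\mathsf{LRDL}_{\ell+1}^{op}$.

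Next I extract gluing data at level $\ell$. The same identification gives $\mathcal{L}_{\mathsf{lift}(R_i)}\cong\mathsf{lift}(\mathcal{L}_{R_i})$, so the element $(\phi_i^*)^{-1}(u_i\wedge u_j)\in\mathcal{L}_{\mathsf{lift}(R_i)}$ is the lift of a unique $w_{ij}:\mathcal{L}_{R_i}$ (taking, say, $i<j$). Set $S_{ij}:\equiv\rest{\mathsf{Spec}(R_i)}{w_{ij}}$: writing $w_{ij}=\bigvee_k D(f_{ijk})$ with each $\rest{S_{ij}}{D(f_{ijk})}\cong\mathsf{Spec}(\locEl{R_i}{f_{ijk}})$ affine shows $S_{ij}:\mathsf{QcQsSch}_\ell$, and $\_\wedge w_{ij}$ induces the canonical compact-open immersion $\iota_{ij}:S_{ij}\to\mathsf{Spec}(R_i)$ in $\mathsf{LRDL}_\ell^{op}$. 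By construction $\mathsf{lift}(S_{ij})\cong\rest{X}{u_i\wedge u_j}$ compatibly with $\mathsf{lift}(\iota_{ij})$ and the inclusion into $\rest{X}{u_i}$; composing with $\rest{X}{u_i\wedge u_j}\hookrightarrow\rest{X}{u_j}\xrightarrow{\phi_j}\mathsf{lift}(\mathsf{Spec}(R_j))$ yields $\mathsf{lift}(S_{ij})\to\mathsf{lift}(\mathsf{Spec}(R_j))$, which by full faithfulness descends to $\iota'_{ji}:S_{ij}\to\mathsf{Spec}(R_j)$ in $\mathsf{LRDL}_\ell^{op}$; faithfulness likewise transfers down the equational cocycle identities for the transition maps from their lifts, which hold because the $\rest{X}{u_i\wedge u_j}$ genuinely glue the $\rest{X}{u_i}$ into $X$.

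Now glue: by the gluing construction behind \cref{prop:gluingLatticeQcqsSchemes} (the extension to $\mathsf{LRDL}$ of \cite[Lemma 2]{ConstrSchemes}), the finite span diagram $\mathcal{G}:\equiv\{\mathsf{Spec}(R_i)\xleftarrow{\iota_{ij}}S_{ij}\xrightarrow{\iota'_{ji}}\mathsf{Spec}(R_j)\}$ in $\mathsf{LRDL}_\ell^{op}$ has a colimit $Y:\mathsf{QcQsSch}_\ell$. Since this construction builds the lattice, sheaf of rings and invertibility support of $Y$ entirely out of type formers (finite limits inside products, set-quotients, pointwise operations), it commutes with $\mathsf{lift}$, so $\mathsf{lift}(Y)$ is a colimit of $\mathsf{lift}(\mathcal{G})$ in $\mathsf{LRDL}_{\ell+1}^{op}$. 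Via the $\phi_i$ and the isomorphisms $\mathsf{lift}(S_{ij})\cong\rest{X}{u_i\wedge u_j}$, the diagram $\mathsf{lift}(\mathcal{G})$ is isomorphic to $\{\rest{X}{u_i}\leftarrow\rest{X}{u_i\wedge u_j}\rightarrow\rest{X}{u_j}\}$, whose colimit is $X$ by \cref{prop:gluingLatticeQcqsSchemes}; isomorphic diagrams have isomorphic colimits, whence $X\cong\mathsf{lift}(Y)$ in $\mathsf{LRDL}_{\ell+1}^{op}$.

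The step I expect to be the main obstacle is the claim that the gluing construction commutes with universe lifting (equivalently, that $\mathsf{lift}$ sends $Y$ to a colimit of $\mathsf{lift}(\mathcal{G})$): each ingredient is individually lift-stable, but this must be verified coherently through the entire assembly of the locally ringed lattice — lattice, sheaf, naturality data and invertibility support — which is precisely the sort of bookkeeping that a formalization would have to discharge in full. A secondary delicate point is the descent of the transition maps $\iota'_{ji}$ and of their cocycle conditions to level $\ell$, which relies on $\mathsf{lift}:\mathsf{LRDL}_\ell^{op}\to\mathsf{LRDL}_{\ell+1}^{op}$ being fully faithful.
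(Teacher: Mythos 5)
Your proposal is correct and follows essentially the same route as the paper, which proves the lemma in one line by combining \cref{prop:gluingLatticeQcqsSchemes} with the fact that $\mathsf{lift}$ preserves colimits; your extra steps (untruncating the mere existence of the cover because the goal is a proposition, descending the gluing data and cocycle conditions via full faithfulness of $\mathsf{lift}$) are exactly the bookkeeping the paper leaves implicit and defers to \cref{rem:choiceSmallSch}.
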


\begin{proof}
  This follows from \cref{prop:gluingLatticeQcqsSchemes} and the fact that
  the functor $\mathsf{lift}$ preserves colimits.
\end{proof}

\begin{remark}\label{rem:choiceSmallSch}
  Note that we did not require \emph{mere} existence of an
  essentially small scheme in \cref{lem:makeSmallSch}.  This is
  because the $\Sigma$-type of $Y:\mathsf{QcQsSch}_\ell$ with an
  isomorphism $X\cong\mathsf{lift}(Y)$ is a proposition, since locally
  ringed lattices form a univalent category by
  \cref{prop:isUnivalentLRDL} and $\mathsf{lift}$ is fully faithful.
  In other words, \cref{lem:makeSmallSch} allows us to \emph{choose}
  an isomorphic small scheme for each essentially small scheme. This choice can
  even be made functorial by invoking the chosen isomorphisms,
  giving us a \emph{functor} from essentially small to small schemes.\footnote{This is
  basically the trick used in the proof of \cite[Lemma 9.4.5]{HoTTBook}.}
  This functor appears implicitly in the proof of \cref{thm:comparisonThm}.
\end{remark}

\begin{proposition}\label{prop:isFullyFaithfulGeoRel}
  The functor $\abs{\_}:\mathsf{FunnQcQsSch}_\ell\to\mathsf{QcQsSch}_{\ell+1}$
  is fully faithful.
\end{proposition}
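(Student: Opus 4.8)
The plan is to reduce to the case in which the source $\Z$-functor is affine, and there to use that both $\Spec$ (\cref{cor:isFullyFaithfulSpec}) and universe lifting are fully faithful. By \cref{cor:isSchFunSch} the functor $\abs{\_}$ restricts to functorial qcqs-schemes, and for such $X,Y$ I must show that its action $\big(\NatTrans{X}{Y}\big)\to\catHom{\mathsf{LRDL}_{\ell+1}^{op}}{\abs{X}}{\abs{Y}}$ is an equivalence. First I would fix an affine compact open cover $U_1,\dots,U_n$ of $X$ and, using \cref{thm:isQcqsSchCompOpen}, affine compact open covers of the intersections $\coBrackets{U_i\wedge U_j}$. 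Iterating \cref{lem:funMorphismGluing} presents $X\cong\mathsf{colim}\{\mathsf{Sp}(A_i)\leftarrow\mathsf{Sp}(A_{ijk})\to\mathsf{Sp}(A_j)\}$ in the category of local $\Z$-functors, while applying \cref{prop:gluingLatticeQcqsSchemes} to the lattice $L_{\abs{X}}=\mathsf{CompOpen}(X)$ with the same cover, together with $\rest{\abs{X}}{U}\cong\abs{\coBrackets{U}}$ (\cref{lem:geoRelRestToCompOpenIso}) and $\abs{\mathsf{Sp}(A)}\cong\Spec(\mathsf{lift}(A))$ (\cref{lem:isoGeoRelSpSpec}), presents $\abs{X}\cong\mathsf{colim}\{\Spec(\mathsf{lift}(A_i))\leftarrow\Spec(\mathsf{lift}(A_{ijk}))\to\Spec(\mathsf{lift}(A_j))\}$ in $\mathsf{LRDL}_{\ell+1}^{op}$. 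Since $Y$ is local and $\abs{X}$ is a colimit, both $\big(\NatTrans{X}{Y}\big)$ and $\catHom{\mathsf{LRDL}_{\ell+1}^{op}}{\abs{X}}{\abs{Y}}$ become limits over the same diagram, with terms $\big(\NatTrans{\mathsf{Sp}(A)}{Y}\big)\simeq Y(A)$ and $\catHom{\mathsf{LRDL}_{\ell+1}^{op}}{\Spec(\mathsf{lift}(A))}{\abs{Y}}$ respectively, and this identification is compatible with $\abs{\_}$ because the inclusions of the cover pieces realize to the corresponding inclusions of locally ringed lattices. Hence it suffices to prove the affine case.

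For the affine case I claim that, via $\abs{\mathsf{Sp}(R)}\cong\Spec(\mathsf{lift}(R))$, the functor $\abs{\_}$ induces an equivalence $Y(R)\simeq\catHom{\mathsf{LRDL}_{\ell+1}^{op}}{\Spec(\mathsf{lift}(R))}{\abs{Y}}$ for every $R:\mathsf{CommRing}_\ell$. Choose an affine compact open cover $W_1,\dots,W_m$ of $Y$ with $\coBrackets{W_j}\cong\mathsf{Sp}(B_j)$, so that $\rest{\abs{Y}}{W_j}\cong\Spec(\mathsf{lift}(B_j))$ by \cref{lem:geoRelRestToCompOpenIso} and \cref{lem:isoGeoRelSpSpec}. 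Given a morphism $\pi\colon\Spec(\mathsf{lift}(R))\to\abs{Y}$, the images $\pi^*(W_j)$ join to the top of $\ZarLat{\mathsf{lift}(R)}$; since lifting commutes with the Zariski lattice, I can refine and pick $g_1,\dots,g_N:R$ with $\langle g_1,\dots,g_N\rangle=R$ and indices $j(k)$ with $D(\mathsf{lift}(g_k))\le\pi^*(W_{j(k)})$. Restricting $\pi$ to $\rest{\Spec(\mathsf{lift}(R))}{D(\mathsf{lift}(g_k))}\cong\Spec(\mathsf{lift}(\locEl{R}{g_k}))$ gives a morphism into $\rest{\abs{Y}}{W_{j(k)}}\cong\Spec(\mathsf{lift}(B_{j(k)}))$, which by full faithfulness of $\Spec$ and of $\mathsf{lift}$ is of the form $\Spec(\mathsf{lift}(\varphi_k))$ for a ring map $\varphi_k\colon B_{j(k)}\to\locEl{R}{g_k}$, i.e.\ for an element of $\coBrackets{W_{j(k)}}(\locEl{R}{g_k})\subseteq Y(\locEl{R}{g_k})$. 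These elements are pairwise compatible on overlaps, so they glue (as $Y$ is local) to a unique $y:Y(R)$, and $\pi\mapsto y$ is inverse to $\abs{\_}$: a morphism out of the affine scheme $\Spec(\mathsf{lift}(R))$ is determined by its restrictions to a cover (\cref{prop:gluingLatticeQcqsSchemes}) and an element of $Y(R)$ by its restrictions to a cover, and by construction the two match up. Naturality in $R$ and $Y$ is routine.

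The genuinely nontrivial step is this affine case: understanding morphisms $\Spec(\mathsf{lift}(R))\to\abs{Y}$ out of an affine locally ringed lattice into the (in general non-affine) realization $\abs{Y}$, which forces one to pull back the affine cover of $\abs{Y}$ and re-glue, and where full faithfulness of $\Spec$ and of $\mathsf{lift}$ and the compatibility of lifting with the Zariski lattice and with $\Spec$ are essential. An alternative packaging avoids the gluing computation of the first paragraph: the affine case says precisely that $\abs{Y}$, which is essentially small by \cref{cor:isEssSmallFunSch}, is $\mathsf{lift}$ of a small qcqs-scheme whose functor of points is $Y$; combined with \cref{lem:makeSmallSch} and the full faithfulness of $h_{(\_)}$ (\cref{prop:isFullyFaithfulh}) this yields the proposition directly. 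Either way, the remaining tasks — checking that the resulting zig-zag of equivalences is the map induced by $\abs{\_}$, and tracking universe levels so that the limits above are formed in the intended categories — are bookkeeping.
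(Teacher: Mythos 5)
Your strategy is sound and, at the level of key ingredients, matches the paper's proof: both reduce a morphism $\pi$ between realizations to ring maps via full faithfulness of $\Spec$ and of $\mathsf{lift}$ on small affine pieces, and then glue on the functorial side using locality of $Y$ (\cref{lem:funMorphismGluing}). The organization differs. The paper works with a general source $X$ directly: it refines compatible affine covers of $X$ and $Y$ simultaneously (in effect the locally affine characterization of \cref{lem:locAffMorEquiv} applied to $\pi$), turns the resulting ring maps $\varphi_{ij}$ into natural transformations $\NatTrans{\coBrackets{U_i}}{Y}$, glues them to $\alpha$ and checks $\abs{\alpha}=\pi$. You instead first reduce to an affine source by presenting $X$ and $\abs{X}$ as matching colimits of (lifted) affine pieces and comparing the induced limit decompositions of the two hom-sets, and only then run the cover-refinement argument for a morphism $\Spec(\mathsf{lift}(R))\to\abs{Y}$. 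Your reduction isolates a cleaner intermediate statement (the ``affine case'', i.e.\ that morphisms out of small affine spectra into $\abs{Y}$ compute $Y$), and your closing alternative via \cref{cor:isEssSmallFunSch}, \cref{lem:makeSmallSch} and \cref{prop:isFullyFaithfulh} is legitimate and non-circular, since \cref{prop:isFullyFaithfulh} is proved independently of the realization functor. The price is the compatibility bookkeeping you acknowledge: that the realized subfunctor inclusions match the lattice-side inclusions under \cref{lem:geoRelRestToCompOpenIso} and \cref{lem:isoGeoRelSpSpec}, and that your final zig-zag of bijections really is the map induced by $\abs{\_}$ --- the same kind of verification the paper hides behind its ``cumbersome computations''.

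One step deserves more than an assertion: the compatibility of the elements $y_k$ on overlaps. With only the cover by the $D(\mathsf{lift}(g_k))$, the restrictions of $y_k$ and $y_{k'}$ to $Y(\locEl{R}{g_kg_{k'}})$ factor through $\coBrackets{W_{j(k)}}$ and $\coBrackets{W_{j(k')}}$ respectively, while $\rest{\pi}{D(\mathsf{lift}(g_kg_{k'}))}$ lands only in $\rest{\abs{Y}}{W_{j(k)}\wedge W_{j(k')}}$, which need not be affine; so you cannot invoke full faithfulness of $\Spec$ at the overlap directly, and injectivity of $Y(A)\to\catHom{\mathsf{LRDL}_{\ell+1}^{op}}{\Spec(\mathsf{lift}(A))}{\abs{Y}}$ is part of what is being proved, hence not available. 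The fix is the same refinement trick once more: cover $W_{j(k)}\wedge W_{j(k')}$ by affine compact opens (\cref{thm:isQcqsSchCompOpen}), pull back along $\pi^*$ and refine $D(\mathsf{lift}(g_kg_{k'}))$ by basic opens each sent below one of these affine pieces; on such a piece both restricted elements correspond, under $\Spec$-full-faithfulness and the canonical identifications, to the same restriction of $\pi$, hence agree, and separatedness of the sheaf $Y$ then yields equality on the overlap. With that supplement (and the deferred naturality checks) your proof goes through.
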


\begin{proof}
  Let $X,Y$ be functorial qcqs-schemes. Any
  $\pi:\catHom{\mathsf{LRDL}_{\ell+1}^{op}}{\abs{X}}{\abs{Y}}$
  is locally a morphism of small affine schemes in the following sense:
  There merely exist compatible affine compact open covers
  $U_1,...,U_n:\NatTrans{X}{\mathcal{L}}$ with $\coBrackets{U_i}\cong\mathsf{Sp}(A_i)$
  and $V_1,...,V_m:\NatTrans{Y}{\mathcal{L}}$ with $\coBrackets{V_j}\cong\mathsf{Sp}(B_j)$,
  where the $A_i$ and $B_j$ are small rings.
  Compatibility means that for each $U_i$ exists a $V_j$ such that $U_i\leq\pi^*(V_j)$
  and the ``restriction''
  $\rest{\pi}{U_i}:\catHom{\mathsf{LRDL}_{\ell+1}^{op}}{\rest{\abs{X}}{U_i}}{\rest{\abs{Y}}{V_j}\!\!}$
  is induced by a ring morphism $\varphi_{ij}:\mathsf{Hom}(B_j,A_i)$.
  This means that $\mathsf{lift}(\varphi_{ij})$ agrees with
  $\rest{\pi^\sharp}{U_i}:\mathsf{Hom}\big(\mathcal{O}(\coBrackets{V_j}),\mathcal{O}(\coBrackets{U_i})\big)$,
  modulo the canonical $\mathcal{O}(\coBrackets{V_j})\cong\mathsf{lift}(B_j)$
  and $\mathcal{O}(\coBrackets{U_i})\cong\mathsf{lift}(A_i)$ given by \cref{prop:OSpRelCoadj},
  and that by applying
  $\mathsf{Spec}$ to this morphism we recover $\rest{\pi}{U_i}$.
  Note that such $\varphi_{ij}$ exist since $\mathsf{lift}$ is fully faithful.

  The morphism $\varphi_{ij}$ induces a natural transformation
  $\alpha_i:\NatTrans{\coBrackets{U_i}}{\NatTrans{\coBrackets{V_j}}{Y}}$,
  which we can glue to an $\alpha:\NatTrans{X}{Y}$ by \cref{lem:funMorphismGluing}.
  Cumbersome computations show that, modulo the
  equivalence $\rest{\abs{X}}{U_i}\cong\abs{\coBrackets{U_i}}$
  of \cref{lem:geoRelRestToCompOpenIso}, $\abs{\alpha_i}$ is just
  $(\_\wedge V_j)\circ\rest{\pi}{U_i}:\catHom{\mathsf{LRDL}_{\ell+1}^{op}}{\rest{\abs{X}}{U_i}}{\abs{Y}}$,
  as both maps are induced by $\varphi_{ij}$.
  In other words, the following diagram commutes
  \[\begin{tikzcd}
          {\rest{\abs{X}}{U_i}} && {\rest{\abs{Y}}{V_j}} & {\abs{Y}} \\
          {\mathsf{Spec}\big(\mathsf{lift}(A_i)\big)} && {\mathsf{Spec}\big(\mathsf{lift}(B_j)\big)} \\
          {\abs{\coBrackets{U_i}}} && {\abs{\coBrackets{V_j}}} & {\abs{Y}}
          \arrow["{\rest{\pi~}{U_i}}", from=1-1, to=1-3]
          \arrow["\vcong"', no head, from=1-1, to=2-1]
          \arrow["\_\wedge V_j", from=1-3, to=1-4]
          \arrow["\vvcong", no head, from=1-3, to=2-3]
          \arrow[Rightarrow, no head, from=1-4, to=3-4]
          \arrow["{\mathsf{Spec}(\mathsf{lift}(\varphi_{ij}))}", from=2-1, to=2-3]
          \arrow["\vcong"', no head, from=2-1, to=3-1]
          \arrow["\vvcong", no head, from=2-3, to=3-3]
          \arrow[from=3-1, to=3-3]
          \arrow[from=3-3, to=3-4]
          \arrow["{\abs{\alpha_i}}"', shift right, curve={height=12pt}, from=3-1, to=3-4]
  \end{tikzcd}\]
  This shows that $\alpha$ is the
  unique natural transformation such that $\abs{\alpha}=\pi$.
\end{proof}

\subsection{Comparison theorem} 

Morally, the functor of points and the geometric realization functor
give us an equivalence of the two notions of qcqs-scheme.
However, in our setting they only form a relative coadjunction.

\begin{proposition}\label{prop:relAdjSch}
  $\abs{\_}\dashv_{\mathsf{lift}}h$.
\end{proposition}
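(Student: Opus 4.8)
The plan is to construct, for every $\Z$-functor $X:\ZFunctor_\ell$ and every locally ringed lattice $Y:\mathsf{LRDL}_\ell^{op}$, a natural equivalence
\[
  \catHom{\mathsf{LRDL}_{\ell+1}^{op}}{\abs{X}}{\mathsf{lift}(Y)}~\simeq~\big(\NatTrans{X}{h_Y}\big)
\]
witnessing $\abs{\_}\dashv_{\mathsf{lift}}h$ in the sense of \cref{def:relCoadjunction} (with $l=\mathsf{lift}$, $F=\abs{\_}$, $G=h$, $\mathcal{C}=\ZFunctor_\ell$, $\mathcal{B}=\mathsf{LRDL}_\ell^{op}$, $\mathcal{D}=\mathsf{LRDL}_{\ell+1}^{op}$). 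First I would build the map from left to right. Given $\pi:\catHom{\mathsf{LRDL}_{\ell+1}^{op}}{\abs{X}}{\mathsf{lift}(Y)}$, I want a natural transformation whose component at a ring $R:\mathsf{CommRing}_\ell$ sends $x:X(R)$ to an element of $h_Y(R)=\catHom{\mathsf{LRDL}_\ell^{op}}{\mathsf{Spec}(R)}{Y}$. By Yoneda, $x$ corresponds to a natural transformation $\mathsf{Sp}(R)\to X$, and applying $\abs{\_}$ together with the isomorphism $\abs{\mathsf{Sp}(R)}\cong\mathsf{lift}(\mathsf{Spec}(R))$ of \cref{lem:isoGeoRelSpSpec} gives a morphism $\mathsf{lift}(\mathsf{Spec}(R))\to\abs{X}$; composing with $\pi$ lands in $\catHom{\mathsf{LRDL}_{\ell+1}^{op}}{\mathsf{lift}(\mathsf{Spec}(R))}{\mathsf{lift}(Y)}$, and since $\mathsf{lift}$ is fully faithful this is the lift of a unique morphism $\mathsf{Spec}(R)\to Y$, i.e.\ an element of $h_Y(R)$. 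Naturality in $R$ and in $X$ follows from functoriality of $\abs{\_}$, of $\mathsf{lift}$, and of Yoneda.

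For the inverse direction, given $\alpha:\NatTrans{X}{h_Y}$ I would apply the realization functor to get $\abs{\alpha}:\abs{X}\to\abs{h_Y}$ and then precompose with a canonical morphism $\abs{h_Y}\to\mathsf{lift}(Y)$. The latter is essentially the relative counit: for $Y:\mathsf{LRDL}_\ell^{op}$ the lattice map $\varepsilon^*:L_Y\to\mathsf{CompOpen}(h_Y)$ of \cref{lem:hLatToCompOpen} is the lattice part, and $\varepsilon^\sharp$ is assembled from the fact that $\mathcal{O}_{\abs{h_Y}}(\varepsilon^*(u))=\mathcal{O}(\coBrackets{\varepsilon^*(u)})\cong\mathcal{O}(h_{\rest{Y}{u}})$, together with the relative coadjunction $\mathcal{O}\dashv_{\mathsf{lift}}\mathsf{Sp}$ of \cref{prop:OSpRelCoadj} which supplies a map $\mathcal{O}_Y(u)\to\mathcal{O}(h_{\rest{Y}{u}})$ after lifting; one checks it respects invertibility supports using that $\Support{U}{s}$ in $\abs{h_Y}$ is computed pointwise by $D$ on stalks as in the last lemma of \cref{subsec:functorOfPoints}. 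I expect this construction of $\varepsilon:\abs{h_Y}\to\mathsf{lift}(Y)$, and verifying it is a morphism of locally ringed lattices, to be the main obstacle — it is where the invertibility-support condition of \cref{def:LRDL} has to be checked and where the size bookkeeping with $\mathsf{lift}$ is most delicate.

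Finally I would verify that the two constructions are mutually inverse and natural. Both composites are shown to be the identity by reducing to representables: a $\Z$-functor is covered by its points $\mathsf{Sp}(R)\to X$, the realization functor and $h_{(\_)}$ both behave well under the relevant gluing (\cref{lem:funMorphismGluing}, \cref{prop:gluingLatticeQcqsSchemes}), and on representables everything collapses via $\abs{\mathsf{Sp}(R)}\cong\mathsf{lift}(\mathsf{Spec}(R))$, $h_{\mathsf{Spec}(R)}\cong\mathsf{Sp}(R)$ (\cref{lem:natIsohSpecSp}), and the already-established relative coadjunction $\mathcal{O}\dashv_{\mathsf{lift}}\mathsf{Sp}$; so the round-trip on representables is exactly the identity encoded in \cref{prop:OSpRelCoadj}. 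Naturality in $X$ is immediate from the representable case plus the fact that natural transformations out of $X$ are determined by their action on points; naturality in $Y$ follows from naturality of $\varepsilon$ in $Y$, which in turn follows from naturality of $\varepsilon^*$ and of the relative counit of \cref{prop:OSpRelCoadj}. The routine verifications (functoriality of $\abs{\_}$, compatibility of all the canonical isomorphisms) I would only sketch, since the substance is in constructing $\varepsilon$ and identifying the round-trips on representables.
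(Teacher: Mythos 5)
Your proposal is correct and, once unfolded, amounts to the same constructions as the paper's proof, but with the inverse packaged differently. The forward map is identical: realize an $R$-point of $X$, compose with $\pi$, and transfer along $\abs{\mathsf{Sp}(R)}\cong\mathsf{Spec}(\mathsf{lift}(R))$ (\cref{lem:isoGeoRelSpSpec}) plus full faithfulness of $\mathsf{lift}$. For the inverse, the paper defines $\pi$ directly from $\alpha$, setting $\pi^*(u):\equiv\theta_u\circ\alpha$ where $\theta_u(\rho):\equiv\rho^*(u)$, and $\pi^\sharp(s)(x):\equiv\alpha(x)^\sharp(s)$, then checks the support condition by the $\psi_{f_i}$-computation of \cref{lem:isoLocDownSet}; you instead first build the relative counit $\varepsilon_Y:\catHom{\mathsf{LRDL}_{\ell+1}^{op}}{\abs{h_Y}}{\mathsf{lift}(Y)}$ from the $\varepsilon^*$ of \cref{lem:hLatToCompOpen} and send $\alpha$ to the composite of $\abs{\alpha}$ with $\varepsilon_Y$. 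These agree: evaluated at a point $\rho$, $\varepsilon^*(u)$ is exactly $\rho^*(u)$, i.e.\ the paper's $\theta_u$, and the footnote to \cref{lem:hLatToCompOpen} acknowledges this counit reading while deliberately not using it. The cost of your route is that the locally-ringed-lattice condition must be verified for $\varepsilon_Y$ itself, which is the same computation the paper performs for the composite (using that each $\rho$ is a morphism of locally ringed lattices and that in a realization $\Support{U}{s}=\psi_U(U_s)$ with $U_s$ given pointwise by $D$); the benefit is that mutual inverseness and naturality in $Y$ reduce to a triangle-type identity on representables, a verification the paper simply omits. Two minor slips, neither fatal: the pointwise description of invertibility supports in $\abs{h_Y}$ comes from the lemmas following \cref{def:realizationFun}, not from \cref{subsec:functorOfPoints}; and the ring map $\mathcal{O}_Y(u)\to\mathcal{O}\big(\coBrackets{\varepsilon^*(u)}\big)$ is obtained most directly by sending $s$ to the function $\rho\mapsto\rho^\sharp(s)$ at the top element (as the paper does), with \cref{prop:OSpRelCoadj} only needed if you insist on factoring through $\mathsf{Sp}\big(\mathcal{O}_Y(u)\big)$.
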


\begin{proof}
  Let $X:\ZFunctor_\ell$ and $Y:\mathsf{LRDL}_{\ell}^{op}$.
  We give a bijection
  \begin{align*}
    \_^\flat:\catHom{\mathsf{LRDL}_{\ell+1}^{op}}{\abs{X}}{\mathsf{lift}(Y)}~\cong~\big(\NatTrans{X}{h_Y}\big)
  \end{align*}
  For the left to right direction, let
  $\pi:\catHom{\mathsf{LRDL}_{\ell+1}^{op}}{\abs{X}}{\mathsf{lift}(Y)}$ be
  a morphism of (big) locally ringed lattices.
  For a $R$-valued point $\alpha:\NatTrans{\mathsf{Sp}(R)}{X}$
  we take its geometric realization and compose with $\pi$ to get
  $\vert\alpha\vert\circ\pi:\catHom{\mathsf{LRDL}_{\ell+1}^{op}}{\abs{\mathsf{Sp}(R)}}{\mathsf{lift}(Y)}$.
  By \cref{lem:isoGeoRelSpSpec} and the properties of $\mathsf{lift}$, we get a chain of equivalences
  \begin{align*}
    \catHom{\mathsf{LRDL}_{\ell+1}^{op}}{\abs{\mathsf{Sp}(R)}}{\mathsf{lift}(Y)}
    ~&\cong~
    \catHom{\mathsf{LRDL}_{\ell+1}^{op}}{\mathsf{Spec}(\mathsf{lift}(R))}{\mathsf{lift}(Y)} \\
    ~&\cong~ \catHom{\mathsf{LRDL}_{\ell}^{op}}{\mathsf{Spec}(R)}{Y}
  \end{align*}
  Applying these equivalences to $\vert\alpha\vert\circ\pi$
  gives the desired morphism $\pi^\flat(\alpha):h_Y(R)$.
  This is immediately seen to be natural in $R$, i.e.\ define the
  required natural transformation.

  Now, let $\alpha:\NatTrans{X}{h_Y}$ be a natural transformation.
  We need to define a morphism of locally ringed lattices
  $\pi:\equiv(\pi^*,\pi^\sharp):\catHom{\mathsf{LRDL}_{\ell+1}^{op}}{\abs{X}}{\mathsf{lift}(Y)}$.
  Let $u:L_Y$, this defines a natural transformation
  \begin{align*}
    &\theta_u:\NatTrans{h_Y}{\mathcal{L}} \\
    &\theta_u(\rho^*,\rho^\sharp) ~:\equiv~\rho^*(u)
  \end{align*}
  We can then set $\pi^*(u):\equiv\theta_u\circ\alpha:\NatTrans{X}{\mathcal{L}}$.
  We now need to provide a morphism of rings
  $\pi^\sharp:\mathcal{O}_Y(u)\to\big(\NatTrans{\coBrackets{\pi^*(u)}}{\mathbb{A}^1}\big)$.
  So let us fix $s:\mathcal{O}_Y(u)$ and $x:X(A)$ such that
  $\theta_u(\alpha(x))\equiv\alpha(x)^*(u)=D(1)$.
  This means that we have a morphism
  \begin{align*}
    \alpha(x)^\sharp:\mathcal{O}_Y(u)\to \mathcal{O}_A(D(1)) \quad(=A)
  \end{align*}
  which, modulo the above identification, lets us define
  $\pi^\sharp(s)(x):\equiv\alpha(x)^\sharp(s)$.
  We omit the proof that this is a ring homomorphism natural in $A$.
  In order to check that $\pi$ is a morphism of locally ringed lattices,
  let $u:L_Y$ and $s:\mathcal{O}_Y(u)$. We need give an equality
  of natural transformations $\NatTrans{X}{\mathcal{L}}$, so let $x:X(R)$
  be given such that $\alpha(x)^*(u)=D(f_1,...,f_n)$.
  We get
  \begin{align*}
    \pi^*\big(\Support{u}{s}\big)(x) ~&=~ \alpha(x)^*\big(\Support{u}{s}\big) \\ &=~ \Support{\alpha(x)^*(u)}{\alpha(x)^\sharp(s)} \\ &=~ {\textstyle \bigvee_{i=1}^n} \Support{D(f_i)}{\rest{\alpha(x)^\sharp(s)}{D(f_i)}} \\ &=~ {\textstyle \bigvee_{i=1}^n} \psi_{f_i}\big(D(\alpha(x_i)^\sharp(s))\big) \\&=~ \Support{\pi^*(u)}{\pi^\sharp(s)}(x)
  \end{align*}
  where $x_i:X(\locEl{R}{f_i})$ is the canonical restriction and
  $\psi_{f_i}$ is given by \cref{lem:isoLocDownSet}.  We omit the
  proof that this is inverse to $\_^\flat$.
\end{proof}

\begin{corollary}\label{cor:flatPresIso}
  For a functorial qcqs-scheme $X$ and $Y:\mathsf{LRDL}_{\ell}^{op}$, the map
  \begin{align*}
    \_^\flat:\catHom{\mathsf{LRDL}_{\ell+1}^{op}}{\abs{X}}{\mathsf{lift}(Y)}\to\big(\NatTrans{X}{h_Y}\big)
  \end{align*}
  given in the proof of \cref{prop:relAdjSch},
  maps isomorphisms of ringed lattices to natural isomorphisms.
\end{corollary}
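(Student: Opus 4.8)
The plan is to show that $\pi^\flat$ is a natural isomorphism by checking that each component $\pi^\flat_R:X(R)\to h_Y(R)$ is a bijection, invoking the standard fact that a natural transformation between $\mathsf{Set}$-valued functors is invertible as soon as it is pointwise invertible (the pointwise inverses automatically assemble into a natural transformation). First I would record that, by the corollary following \cref{prop:isUnivalentLRDL}, the forgetful functor from locally ringed lattices to ringed lattices reflects isomorphisms; hence a morphism $\pi:\catHom{\mathsf{LRDL}_{\ell+1}^{op}}{\abs{X}}{\mathsf{lift}(Y)}$ that is an isomorphism of \emph{ringed} lattices is already an isomorphism in $\mathsf{LRDL}_{\ell+1}^{op}$. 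Consequently composition with $\pi$ is a bijection $\catHom{\mathsf{LRDL}_{\ell+1}^{op}}{\abs{\mathsf{Sp}(R)}}{\abs{X}}\to\catHom{\mathsf{LRDL}_{\ell+1}^{op}}{\abs{\mathsf{Sp}(R)}}{\mathsf{lift}(Y)}$, with inverse given by composition with $\pi^{-1}$.

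Next I would unwind the recipe for $\_^\flat$ from the proof of \cref{prop:relAdjSch} and exhibit $\pi^\flat_R$ as a composite of bijections. Tracing that construction, for $x:X(R)$ the value $\pi^\flat_R(x)$ is computed by: (i) the Yoneda bijection $X(R)\cong(\NatTrans{\mathsf{Sp}(R)}{X})$; (ii) geometric realization $\alpha\mapsto\abs{\alpha}$ into $\catHom{\mathsf{LRDL}_{\ell+1}^{op}}{\abs{\mathsf{Sp}(R)}}{\abs{X}}$; (iii) composition with $\pi$; and (iv) the chain of equivalences $\catHom{\mathsf{LRDL}_{\ell+1}^{op}}{\abs{\mathsf{Sp}(R)}}{\mathsf{lift}(Y)}\cong\catHom{\mathsf{LRDL}_{\ell+1}^{op}}{\mathsf{Spec}(\mathsf{lift}(R))}{\mathsf{lift}(Y)}\cong\catHom{\mathsf{LRDL}_{\ell}^{op}}{\mathsf{Spec}(R)}{Y}=h_Y(R)$ furnished by \cref{lem:isoGeoRelSpSpec} together with full faithfulness of $\mathsf{lift}$. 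Steps (i) and (iv) are bijections unconditionally; step (iii) is a bijection by the previous paragraph; and step (ii) is a bijection because $\mathsf{Sp}(R)$ (by \cref{prop: affineIsQcqsScheme}) and $X$ are both functorial qcqs-schemes, with $\abs{\mathsf{Sp}(R)}$ and $\abs{X}$ landing in $\mathsf{QcQsSch}_{\ell+1}$ by \cref{cor:isSchFunSch}, so that \cref{prop:isFullyFaithfulGeoRel} applies and $\abs{\_}$ is fully faithful between them. Composing (i)--(iv), $\pi^\flat_R$ is a bijection for every $R:\mathsf{CommRing}_\ell$, hence $\pi^\flat$ is a natural isomorphism.

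The one step that actually carries weight is (ii): this is the only place where the hypothesis that $X$ be a functorial qcqs-scheme is used, since \cref{prop:isFullyFaithfulGeoRel} only provides full faithfulness of $\abs{\_}$ on the full subcategory of functorial qcqs-schemes, not on all of $\ZFunctor_\ell$. The main obstacle, such as it is, is purely bookkeeping: verifying that the map $\_^\flat$ as defined inside the proof of \cref{prop:relAdjSch} really does factor as the list (i)--(iv) above, with no additional twisting hidden in the identifications; once that factorization is confirmed, the conclusion is immediate.
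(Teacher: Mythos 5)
Your proposal is correct and follows essentially the same route as the paper: the paper's (very terse) proof likewise observes that $\pi^\flat_R$ factors through $\abs{\_}$ followed by composition with $\pi$ and the identifications of \cref{lem:isoGeoRelSpSpec}, and that the realization step is a bijection by \cref{prop:isFullyFaithfulGeoRel} since $\mathsf{Sp}(R)$ and $X$ are functorial qcqs-schemes. Your explicit use of the corollary that the forgetful functor reflects isomorphisms, to pass from an isomorphism of ringed lattices to one of locally ringed lattices, only spells out a step the paper leaves implicit.
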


\begin{proof}
  Let $\pi:\catHom{\mathsf{LRDL}_{\ell+1}^{op}}{\abs{X}}{\mathsf{lift}(Y)}$ be
  an isomorphism of locally ringed lattices and $R:\mathsf{CommRing}_\ell$.
  Since $\abs{\_}$ is fully faithful on functorial qcqs-schemes
  by \cref{prop:isFullyFaithfulGeoRel}, the map
  \begin{align*}
    \abs{\_}\circ\pi :\big(\NatTrans{\mathsf{Sp}(R)}{X}\big) \to \catHom{\mathsf{LRDL}_{\ell+1}^{op}}{\abs{\mathsf{Sp}(R)}}{\mathsf{lift}(Y)}
  \end{align*}
  is a bijection.
\end{proof}

\begin{theorem}\label{thm:comparisonThm}
  We have an adjoint equivalence of categories between functorial and geometrical qcqs-schemes.
\end{theorem}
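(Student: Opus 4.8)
The plan is to promote the functor of points functor $h_{(\_)}\colon\mathsf{QcQsSch}_\ell\to\mathsf{FunQcQsSch}_\ell$ to an adjoint equivalence. By \cref{prop:isFullyFaithfulh} this functor is already fully faithful, and both its source and target are univalent categories: $\mathsf{QcQsSch}_\ell$ is a full subcategory of $\mathsf{LRDL}_\ell^{op}$, which is univalent by \cref{prop:isUnivalentLRDL}, and $\mathsf{FunQcQsSch}_\ell$ is a full subcategory of $\ZFunctor_\ell$, which is univalent as a functor category into $\mathsf{Set}$; in both cases the defining predicate ``is a qcqs-scheme'' is a proposition, so the full subcategories remain univalent. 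Hence, by the characterisation of equivalences between univalent categories \cite[Lemma 9.4.5 \& 9.4.7]{HoTTBook}, it suffices to show that $h_{(\_)}$ is essentially surjective; the lemma then hands us an adjoint inverse, and with it the adjoint equivalence.

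For essential surjectivity I would start from a functorial qcqs-scheme $X$ and take its geometric realization $\abs{X}\colon\mathsf{LRDL}_{\ell+1}^{op}$. By \cref{cor:isEssSmallFunSch}, $\abs{X}$ is an essentially small qcqs-scheme, so \cref{lem:makeSmallSch} produces a genuinely small qcqs-scheme $Y\colon\mathsf{QcQsSch}_\ell$ together with an isomorphism $\abs{X}\cong\mathsf{lift}(Y)$ in $\mathsf{LRDL}_{\ell+1}^{op}$. Applying the bijection $\_^\flat$ of \cref{prop:relAdjSch} to this isomorphism and invoking \cref{cor:flatPresIso}, which states that $\_^\flat$ sends isomorphisms of (big) locally ringed lattices to natural isomorphisms, yields a natural isomorphism $X\cong h_Y$. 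This exhibits every functorial qcqs-scheme as isomorphic to one in the image of $h_{(\_)}$, so the functor is essentially surjective.

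The step I expect to be the crux is the descent across universe levels hidden in \cref{lem:makeSmallSch}: the realization functor $\abs{\_}$ necessarily lands one universe up, in $\mathsf{QcQsSch}_{\ell+1}$, and recovering a small scheme is only possible because we are working in univalent foundations. Concretely, \cref{lem:makeSmallSch} together with \cref{rem:choiceSmallSch} lets us \emph{choose} a small representative $Y$ for each essentially small $\abs{X}$, since the type of such $Y$ with $\abs{X}\cong\mathsf{lift}(Y)$ is a proposition (using that $\mathsf{LRDL}$ is univalent and that $\mathsf{lift}$ is fully faithful). Without univalence one is stuck with the relative coadjunction of \cref{prop:relAdjSch} and \cref{prop:OSpRelCoadj} and never obtains an honest equivalence; it is precisely here that the univalent setting strengthens the classical comparison theorem rather than merely reproducing it.
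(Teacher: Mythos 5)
Your proposal matches the paper's proof: it reduces the statement, via univalence of both categories and \cite[Lemma 9.4.7]{HoTTBook}, to full faithfulness (\cref{prop:isFullyFaithfulh}) plus essential surjectivity, and establishes the latter exactly as the paper does, by realizing a functorial qcqs-scheme, passing to a small model via \cref{cor:isEssSmallFunSch} and \cref{lem:makeSmallSch}, and transporting the isomorphism back with $\_^\flat$ and \cref{cor:flatPresIso}. Your added remarks on why the full subcategories stay univalent and where univalence is genuinely needed are consistent with \cref{rem:choiceSmallSch} and \cref{rem:comparisonDG}.
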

\begin{proof}
  Since locally ringed lattices and $\Z$-functors form univalent
  categories (\cref{prop:isUnivalentLRDL} and \cite[Thm.\ 9.2.5]{HoTTBook}),
  it suffices  by \cite[Lemma 9.4.7]{HoTTBook} to show that
  $h_{(\_)}:\mathsf{QcQsSch}_\ell\to\mathsf{FunQcQsSch}_\ell$
  is fully faithful and essentially surjective. Fully faithfulness was already proved in
  \cref{prop:isFullyFaithfulh}. Let $X$ be a functorial qcqs-scheme.
  Looking at the proof of \cref{cor:isSchFunSch},
  we can see that $\abs{X}$ is an essentially small scheme.
  By \cref{lem:makeSmallSch}, we can thus assume the existence of a $Y:\mathsf{QcQsSch}_\ell$
  with an isomorphism $\abs{X}\cong\mathsf{lift}(Y)$.
  Using \cref{cor:flatPresIso} this gives us the desired natural isomorphism $X\cong h_Y$.
\end{proof}

\begin{remark}\label{rem:comparisonDG}
  We want to conclude this section with a comparison of our approach with
  the presentation in \cite{DemazureGabriel}.
  Demazure and Gabriel assume two Grothendieck universes
  and define $\Z$-functors to map small rings to big sets.
  To model this situation in our setting, assume cumulative universes
  $\mathsf{Type}_\ell:\mathsf{Type}_{\ell+1}$ and define $\Z$-functors
  to be functors $\mathsf{CommRing}_\ell\to\mathsf{Set}_{\ell+1}$.
  The spectrum $\mathsf{Sp}(A):\equiv\mathsf{Hom}(A,\_)$ can then even
  be defined for big $A:\mathsf{CommRing}_{\ell+1}$. Note however,
  that this $\mathsf{Sp}$ is only fully faithful when restricted to
  $\mathsf{CommRing}_\ell^{op}$. This caveat and the fact that
  universes are simply not cumulative in \texttt{Cubical Agda},
  led the formalization \cite{ZeunerHutzler24} to deviate from \cite{DemazureGabriel}.
  We decided to follow the formalization here, as one of the main objectives
  was to develop a univalent and constructive theory of schemes that
  can be formalized in e.g.\ \texttt{Cubical Agda}.

  The advantage of the approach of \cite{DemazureGabriel} is that it allows one to avoid
  relative coadjunctions. The ``big spectrum functor'' $\mathsf{Sp}$ is right adjoint to
  $\mathcal{O}$. Similarly, we can define define the functor of points
  \begin{align*}
    h_X:\equiv \catHom{\mathsf{LRDL}^{op}_{\ell+1}}{\mathsf{Spec}(\_)}{X}~:~\mathsf{CommRing}_\ell\to\mathsf{Set}_{\ell+1}
  \end{align*}
  for any big $X:\mathsf{LRDL}_{\ell+1}$ and we get an adjunction
  $\abs{\_}\dashv h$.  We can then show that this becomes an adjoint
  equivalence when restricted to functorial qcqs-schemes and
  essentially small qcqs-schemes.  In this setting, one can show the
  existence of the geometric realization functor $\abs{\_}$ using
  purely categorical methods, which further simplifies the proof.
  Note that the category of functors
  $\mathsf{CommRing}_\ell\to\mathsf{Set}_{\ell+1}$ and
  $\mathsf{LRDL}^{op}_{\ell+1}$ have colimits
  over diagrams in $\mathsf{Type}_{\ell+1}$.\footnote{Showing the completeness of $\mathsf{LRDL}$
    is straightforward, perhaps even more so than the cocompleteness of
    locally ringed spaces. Everything, including the support, is computed point-wise.}
  We can define $\abs{\_}$ to be the \emph{left Kan-extension} of the
  restricted
  $\mathsf{Spec}:\mathsf{CommRing}_\ell^{op}\to\mathsf{LRDL}^{op}_{\ell+1}$
  along the restricted $\mathsf{Sp}$, i.e.\ the Yoneda
  embedding. By a standard result, it is the unique functor (up to
  unique natural isomorphism) extending the restricted
  $\mathsf{Spec}$ that is cocontinuous and left-adjoint to
  $h$.\footnote{See e.g.\ section 1.3 of \cite{Vezzani}.}

  In our predicative, constructive setting we do not expect to be able
  to show the existence of the Kan extension by general methods, as
  $\ZFunctor_\ell$ does not have colimits in $\mathsf{Type}_{\ell+1}$.
  Nevertheless, it is still possible to prove the comparison theorem
  using the explicit point-free description of $\abs{\_}$ as shown in
  this paper. Univalence allows us two view $\abs{\_}$ as a sort of
  adjoint inverse to $h$ on (functorial) qcqs-schemes even with a
  mismatch of universe levels present. For a functorial qcqs-scheme $X$,
  $\abs{X}$ has a cover by small affine schemes, and as sketched in \cref{rem:choiceSmallSch},
  we can  choose a small scheme for $\abs{X}$ in a functorial way.
  In other words the adjoint inverse to $h$
  is $\abs{\_}$ composed with the functor choosing a small scheme
  for an essentially small scheme.
\end{remark}

\section{Schemes of finite presentation}\label{sec:SchemesOfFP}

For the statement and proof of the comparison theorem we required
three universes
$\mathsf{Type}_\ell\hookrightarrow\mathsf{Type}_{\ell+1}\hookrightarrow\mathsf{Type}_{\ell+2}$
(the last one being the universe in which $\mathsf{LRDL}_{\ell+1}$ lives).
The comparison theorem presented in the previous section is predicative in that it does
not require resizing assumptions. From a truly predicative point of view, however,
having to work in $\mathsf{Type}_{\ell+1}$ for a considerable part of the proof
in order to prove the equivalence for schemes in $\mathsf{Type}_\ell$
feels somewhat unsatisfactorily ad hoc.\footnote{Perhaps worse still,
  in a certain sense the types of algebraic structures like groups and
  rings depend on the universe.  This can be shown already in a rather
  minimal type theory. The special case of groups is discussed in
  \cite{MoreGroups}.}
We thus want to outline a more ``parsimonious'' approach that requires
only one universe, in which a specified but arbitrary ring $R$ is
assumed and an ambient universe where e.g.\ the category of rings
(that $R$ belongs to) lives. The caveat of this approach is that we
can only describe schemes of finite presentation over $R$, a subclass
of qcqs-schemes.  However, these schemes still include many motivating
examples in algebraic geometry that come from studying roots of
polynomials. Blechschmidt considers the ``parsimonious site'' of
affine schemes that are locally of finite presentation over some base
scheme $S$, which can be constructed without size issues
\cite[Sec.\ 15]{BlechschmidtPhD}. In the special case where
$S=\mathsf{Spec}(R)$ is the affine scheme corresponding to our fixed
ring $R$, one can define schemes of finite presentation over $R$ in a
purely functorial way \cite[Sec.\ 16.5]{BlechschmidtPhD}.\footnote{These are the
  external counterpart for Blechschmidt's notion of
  ``finitely presented synthetic scheme'' \cite[Def.\ 19.49]{BlechschmidtPhD}.}
The key algebraic notion is that of a finitely presented algebra.

\begin{definition}\label{def:fpAlgebra}
  We say that an $R$-algebra $A$ is \emph{finitely presented} if
  there merely exist polynomials $p_1,...,p_m:R[x_1,...,x_n]$ together with an isomorphism
  $A\cong\nicefrac{R[x_1,...,x_n]}{\langle p_1,...,p_m\rangle}$ of $R$-algebras.
  A ring morphism $\mathsf{Hom}(R,A)$ is \emph{of finite presentation} if it makes $A$ into a
  finitely presented $R$-algebra. For a ring $R$, the category $\fpAlg{R}$
  has objects pairs of positive integers $n,m:\mathbb{N}$
  together with a list of polynomials $p_1,...,p_m$ with $p_i:R[x_1,...,x_n]$.
  Such an object $(p_1,...,p_m)_{n,m}$ represents the finitely presented algebra
  $\nicefrac{R[x_1,...,x_n]}{\langle p_1,...,p_m\rangle}$.
  Accordingly, arrows are $R$-algebra morphisms, i.e.\
  \begin{align*}
    \catHom{\fpAlg{R}&}{(p_1,...,p_m)_{n,m}}{(q_1,...,q_l)_{k,m}}  \\&:\equiv \catHom{\mathsf{Hom}_R}{\nicefrac{R[x_1,...,x_n]}{\langle p_1,...,p_m\rangle}}{\nicefrac{R[x_1,...,x_k]}{\langle q_1,...,q_l\rangle}}
  \end{align*}
  This way, we can present the category of finitely presented $R$-algebras as a small category
  living in the same universe as $R$.\footnote{Note that this category is not univalent.
    However, this does not affect any of the following results.
    One could of course consider the Rezk completion \cite[Sec.\ 9.8]{HoTTBook},
    but since we only consider functors from $\fpAlg{R}$ to univalent categories,
    these will always factor through the Rezk completion.}
\end{definition}

\begin{example}\label{ex:locIsFP}
    For a ring $R$ and $f:R$ the localization $\locEl{R}{f}$ is a finitely
    presented $R$-algebra, as $\locEl{R}{f}\cong\nicefrac{R[x]}{\langle fx-1\rangle}$.
\end{example}

\begin{lemma}\label{lem:tensorProdIsFP}
  If $A$ and $B$ are two finitely presented $R$-algebras, the
  tensor product $A\otimes_R B$ is finitely presented.
\end{lemma}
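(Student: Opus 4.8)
The plan is to combine the two given presentations into a single one, by taking the disjoint union of the generating variables and the union of the defining relations. Suppose we are given presentations
\[
  A \cong \nicefrac{R[x_1,\dots,x_n]}{\langle p_1,\dots,p_m\rangle},
  \qquad
  B \cong \nicefrac{R[y_1,\dots,y_k]}{\langle q_1,\dots,q_l\rangle}
\]
of $R$-algebras. Viewing each $p_i$ and each $q_j$ as a polynomial in $R[x_1,\dots,x_n,y_1,\dots,y_k]$ via the evident inclusions, I claim that
\[
  A\otimes_R B \;\cong\; \nicefrac{R[x_1,\dots,x_n,y_1,\dots,y_k]}{\langle p_1,\dots,p_m,q_1,\dots,q_l\rangle}
\]
as $R$-algebras, which by \cref{def:fpAlgebra} exhibits $A\otimes_R B$ as finitely presented, with the pair of positive integers $(n+k,\,m+l)$.

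Since being finitely presented is a propositionally truncated existence statement, hence a proposition, and the goal---finite presentation of $A\otimes_R B$---is likewise a proposition, we may assume the above presentations of $A$ and $B$ are given on the nose rather than merely. To establish the displayed isomorphism I would check that the right-hand side satisfies the universal property characterising $A\otimes_R B$, namely that of the pushout of $A\leftarrow R\rightarrow B$ in $\mathsf{CommRing}$, i.e.\ the coproduct of $A$ and $B$ in the category of $R$-algebras (this is the description of the tensor product already used in the proof of \cref{lem: pullbackStability}). An $R$-algebra homomorphism out of $\nicefrac{R[x_1,\dots,x_n,y_1,\dots,y_k]}{\langle p_1,\dots,p_m,q_1,\dots,q_l\rangle}$ into some $R$-algebra $C$ is precisely a choice of images in $C$ for the $x_i$ annihilating every $p_i$ together with a choice of images for the $y_j$ annihilating every $q_j$; by the universal properties of polynomial rings and of quotients this is exactly a pair consisting of an $R$-algebra map $A\to C$ and an $R$-algebra map $B\to C$. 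Thus the right-hand side corepresents the same functor as $A\otimes_R B$, giving the desired isomorphism, which is automatically compatible with the structure maps from $A$ and $B$.

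Alternatively, one can argue purely algebraically, using $R[x_1,\dots,x_n]\otimes_R R[y_1,\dots,y_k]\cong R[x_1,\dots,x_n,y_1,\dots,y_k]$ and right exactness of the tensor product: quotienting the first factor by $\langle p_1,\dots,p_m\rangle$ and the second by $\langle q_1,\dots,q_l\rangle$ yields the quotient of $R[x_1,\dots,x_n,y_1,\dots,y_k]$ by the ideal generated by the images of the $p_i$ and $q_j$, which is $\langle p_1,\dots,p_m,q_1,\dots,q_l\rangle$. Either route is routine; I do not expect a genuine obstacle. The only points needing a modicum of care are the bookkeeping with the propositional truncations noted above and, on the universal-property route, recording that the tensor product of $R$-algebras is the relevant pushout. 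Essentially the lemma is just the remark that ``adjoining finitely many generators subject to finitely many relations'' is closed under performing two such operations simultaneously.
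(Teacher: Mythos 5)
Your proof is correct. The paper states this lemma without proof, and your argument --- combining the two presentations into $\nicefrac{R[x_1,\dots,x_n,y_1,\dots,y_k]}{\langle p_1,\dots,p_m,q_1,\dots,q_l\rangle}$ and checking the universal property of the tensor product as the coproduct in $R$-algebras (or, equivalently, the right-exactness computation) --- is exactly the standard argument being relied on, with the truncation bookkeeping handled correctly since finite presentability is a proposition.
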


\begin{lemma}\label{lem:homOfFP}
  Let $A,B,C$ be rings and $\varphi:\mathsf{Hom}(A,B)$ and $\psi:\mathsf{Hom}(B,C)$
  be morphisms of rings. Then the following hold:
  \begin{enumerate}
  \item If $\varphi$ and $\psi$ are of finite presentation,
    then $\psi\circ\varphi$ is of finite presentation.
  \item If $\varphi$ and $\psi\circ\varphi$ are of finite presentation,
    then $\psi$ is of finite presentation.
  \end{enumerate}
\end{lemma}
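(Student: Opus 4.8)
The plan is to reduce each part to an explicit manipulation of algebra presentations. Both statements are propositions, since \emph{finite presentation} is defined by a truncated existence, so in each case I may assume the hypothesised presentations are given and need only exhibit -- up to truncation -- a presentation of the conclusion; in particular the polynomial representatives chosen below never have to be selected coherently. Throughout I fix an $A$-algebra presentation $B\cong A[x_1,\dots,x_n]/\langle p_1,\dots,p_m\rangle$ witnessing the hypothesis on $\varphi$, and write $b_i\in B$ for the image of $x_i$.

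For part (1), I would additionally fix a $B$-algebra presentation $C\cong B[y_1,\dots,y_k]/\langle q_1,\dots,q_l\rangle$, lift each $q_j$ to a polynomial $\tilde q_j\in A[x_1,\dots,x_n,y_1,\dots,y_k]$ representing it, and observe that modding $A[x_1,\dots,x_n,y_1,\dots,y_k]$ out first by $\langle p_1,\dots,p_m\rangle$ gives $B[y_1,\dots,y_k]$ and then by $\langle\tilde q_1,\dots,\tilde q_l\rangle$ gives $C$. Hence $C\cong A[x_1,\dots,x_n,y_1,\dots,y_k]/\langle p_1,\dots,p_m,\tilde q_1,\dots,\tilde q_l\rangle$, and a short check shows the induced $A$-algebra structure is the one given by $\psi\circ\varphi$; so $\psi\circ\varphi$ is of finite presentation. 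I expect this part to be routine.

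Part (2) carries the real content. Fix an $A$-algebra presentation $C\cong A[z_1,\dots,z_r]/\langle g_1,\dots,g_s\rangle$ with structure map $\psi\circ\varphi$. First, $C$ is \emph{finitely generated} as a $B$-algebra, generated by the images of $z_1,\dots,z_r$; the task is to bound the relations. The idea is to base change $C$ along $\varphi$: with $B$ viewed as an $A$-algebra via $\varphi$ and $C$ via $\psi\circ\varphi$, and writing $\varphi_\ast$ for carrying $A$-coefficients into $B$, one has a $B$-algebra isomorphism $C\otimes_A B\cong B[z_1,\dots,z_r]/\langle\varphi_\ast g_1,\dots,\varphi_\ast g_s\rangle$, so $C\otimes_A B$ is finitely presented over $B$. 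There is a surjective $B$-algebra map $\mu:C\otimes_A B\to C$, $c\otimes b\mapsto c\cdot\psi(b)$, and I claim its kernel is generated by the $n$ elements $1\otimes b_i-\psi(b_i)\otimes 1$. To see this, present $C\otimes_A B$ over $C$ as $C[x_1,\dots,x_n]/\langle\bar p_1,\dots,\bar p_m\rangle$ with $x_i\mapsto 1\otimes b_i$, where $\bar p_l$ is $p_l$ with its $A$-coefficients carried into $C$; modding out by those $n$ elements forces $x_i=\psi(b_i)$, after which each remaining relation reads $\bar p_l(\psi(b_1),\dots,\psi(b_n))=\psi\big(p_l(b_1,\dots,b_n)\big)=0$, so the quotient is exactly $C$. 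Thus $C$ is the quotient of the finitely presented $B$-algebra $C\otimes_A B$ by a finitely generated ideal, hence itself finitely presented over $B$; that is, $\psi$ is of finite presentation.

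The hard part is precisely this last claim about $\ker\mu$: finite generation of $C$ over $B$ is immediate, and the whole difficulty is to show the defining relations of $C$ as a $B$-algebra can be taken finite in number -- which works because changing base along $\varphi$ costs only the finitely many relations $\varphi_\ast g_j$ together with the finitely many relations identifying each $b_i$ with $\psi(b_i)$. Everything here is constructively unproblematic: the only choices are of polynomial representatives, supplied by the quotient construction, and (the goal being a proposition) they need never be made coherently. If one prefers to avoid tensor products, the same bound can be obtained by hand: choose $h_i\in A[z_1,\dots,z_r]$ representing $\psi(b_i)\in C$ and verify directly that $C\cong B[z_1,\dots,z_r]/\langle\varphi_\ast g_1,\dots,\varphi_\ast g_s,\ b_1-\varphi_\ast h_1,\dots,b_n-\varphi_\ast h_n\rangle$.
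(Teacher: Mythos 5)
Your proof is correct. Note that the paper does not give an argument of its own here: it simply cites the Stacks Project (Tag 00F4), whose proof of part (2) is exactly the presentation you write down at the very end, namely $C\cong B[z_1,\dots,z_r]/\langle \varphi_\ast g_1,\dots,\varphi_\ast g_s,\ b_1-\varphi_\ast h_1,\dots,b_n-\varphi_\ast h_n\rangle$ with $h_i$ a polynomial representative of $\psi(b_i)$. Your main route -- base changing to $C\otimes_A B\cong B[z]/\langle\varphi_\ast g\rangle$ and showing that the kernel of the multiplication map $\mu\colon C\otimes_A B\to C$ is generated by the $n$ elements $1\otimes b_i-\psi(b_i)\otimes 1$ -- is an equivalent but more structured packaging of the same computation: it isolates a reusable general fact (the kernel of $\mu$ is generated by those differences whenever $B$ is generated over $A$ by the $b_i$, which your presentation argument correctly establishes) and then invokes that a quotient of a finitely presented algebra by a finitely generated ideal is finitely presented, whereas the Stacks-style direct presentation is shorter and avoids tensor products altogether. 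Your handling of part (1) by concatenating presentations, and your observation that propositionality of the goal licenses choosing representatives from the truncated hypotheses, are both exactly what is needed in the paper's constructive, univalent setting; note also that for part (2) you only ever use that $\varphi$ is of finite type (generators $b_1,\dots,b_n$), so your argument in fact proves the slightly stronger Stacks statement.
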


\begin{proof}
  \cite[\href{https://stacks.math.columbia.edu/tag/00F4}{Tag 00F4}]{stacks-project}
\end{proof}

Defining schemes of finite presentation using
locally ringed lattices can be done analogously to the standard classical setting.
By a qcqs-scheme over $R$
we mean a $X:\mathsf{QcQsSch}$ together with a morphism
$\pi_X:\catHom{\mathsf{LRDL}^{op}}{X}{\mathsf{Spec}(R)}$,
and we will simply call them $R$-schemes. A morphism of $R$-schemes is a morphism
in the slice category $\nicefrac{\mathsf{LRDL}^{op}}{\mathsf{Spec}(R)}$.
For a $R$-scheme $X$ we may regard the structure sheaf as taking values in $R$-algebras,
i.e.\ $\mathcal{O}_X:L_X^{op}\to R\text{-}\mathsf{Alg}$.

\begin{definition}\label{def:schemeoffp}
  A morphism between two qcqs-schemes $\pi:X\to Y$
  is said to be \emph{of finite presentation}
  if there are compatible affine covers $u_1,\dots,u_n$ of $L_X$ and
  $w_1,\dots,w_m$ of $L_Y$ such that
  for every $u_i$ and $w_j$ with $u_i\leq\pi^*(w_j)$
  the ring morphism $\rest{\pi^\sharp}{u_i}:\mathcal{O}_Y(w_j)\to\mathcal{O}_X(u_i)$
  is of finite presentation.
  A qcqs-scheme $X$ over $R$, is said to be of \emph{finite presentation} if
  its morphism $X\to\mathsf{Spec}(R)$ is of finite presentation.
\end{definition}

Using \cref{lem:homOfFP} and some standard computations for morphisms
of schemes, one can prove the following two lemmas.

\begin{lemma}\label{lem:schemeoffp}
  A $R$-scheme $X$ is of finite presentation if and only if
  there is a an affine cover $u_1,..., u_n:L_X$, such that
  $\mathcal{O}_X(u_i)$ is a finitely presented  $R$-algebra for all $i$ in $1,...,n$.
\end{lemma}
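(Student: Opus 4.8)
The plan is to prove the two implications separately. The ``if'' direction is immediate: given an affine cover $u_1,\dots,u_n$ of $L_X$ with each $\mathcal{O}_X(u_i)$ a finitely presented $R$-algebra, take this cover together with the trivial affine cover $\{1\}$ of $\ZL$ (affine since $\rest{\Spec(R)}{1}=\Spec(R)$). The two covers are compatible because $u_i\leq 1=\pi_X^*(1)$, and for the pair $(u_i,1)$ the morphism $\rest{\pi_X^\sharp}{u_i}:\mathcal{O}_{\Spec(R)}(1)\to\mathcal{O}_X(u_i)$ is, modulo $\mathcal{O}_{\Spec(R)}(1)=\mathcal{O}_R(D(1))\cong R$, exactly the map that equips $\mathcal{O}_X(u_i)$ with its $R$-algebra structure --- this is how the structure sheaf of an $R$-scheme is viewed as $R$-algebra valued. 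Hence $\rest{\pi_X^\sharp}{u_i}$ is of finite presentation by assumption, and $\pi_X$ is of finite presentation by \cref{def:schemeoffp}.

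For the ``only if'' direction, suppose $\pi_X:X\to\Spec(R)$ is of finite presentation, witnessed by compatible affine covers $u_1,\dots,u_n$ of $L_X$ and $w_1,\dots,w_m$ of $\ZL$. Since every element of $\ZL$ is a finite join of basic opens, write $w_j=\bigvee_k D(g_{jk})$ with $g_{jk}:R$. For each $i$ pick, by compatibility, an index $j(i)$ with $u_i\leq\pi_X^*(w_{j(i)})$, so that $\rest{\pi_X^\sharp}{u_i}:\mathcal{O}_R(w_{j(i)})\to\mathcal{O}_X(u_i)$ is of finite presentation. Set $v_{ik}:\equiv u_i\wedge\pi_X^*(D(g_{j(i)k}))$. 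Using $\pi_X^*(D(g))=\Support{1}{\pi_X^\sharp(g)}$ (from the proof of \cref{thm:GammaSpecAdj}) together with $w\wedge\Support{u}{s}=\Support{w}{\rest{s}{w}}$, one computes $v_{ik}=\Support{u_i}{b_{ik}}$ where $b_{ik}:\equiv\rest{\pi_X^\sharp(g_{j(i)k})}{u_i}:\mathcal{O}_X(u_i)$. The $v_{ik}$ cover $L_X$, since $\bigvee_{i,k}v_{ik}=\bigvee_i\bigl(u_i\wedge\pi_X^*(w_{j(i)})\bigr)=\bigvee_i u_i=1$, and each $\rest{X}{v_{ik}}$ is affine with $\mathcal{O}_X(v_{ik})\cong\locEl{\mathcal{O}_X(u_i)}{b_{ik}}$ by the corollary to \cref{prop:gluingLatticeQcqsSchemes}. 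So $\{v_{ik}\}$ is an affine cover of the desired form provided each $\mathcal{O}_X(v_{ik})$ is finitely presented over $R$.

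To establish that, I would factor the $R$-algebra structure map of $\mathcal{O}_X(v_{ik})$ through the localization $\locEl{R}{g_{j(i)k}}=\mathcal{O}_R(D(g_{j(i)k}))$. The first leg $R\to\locEl{R}{g_{j(i)k}}$ is of finite presentation by \cref{ex:locIsFP}. For the second leg $\psi:\locEl{R}{g_{j(i)k}}\to\mathcal{O}_X(v_{ik})$ --- which is $\rest{\pi_X^\sharp}{v_{ik}}$ --- naturality of $\pi_X^\sharp$ for the inclusions $D(g_{j(i)k})\leq w_{j(i)}$ and $v_{ik}\leq u_i$ shows that the composite $\mathcal{O}_R(w_{j(i)})\xrightarrow{\varphi}\locEl{R}{g_{j(i)k}}\xrightarrow{\psi}\mathcal{O}_X(v_{ik})$ coincides with $\mathcal{O}_R(w_{j(i)})\xrightarrow{\rest{\pi_X^\sharp}{u_i}}\mathcal{O}_X(u_i)\to\mathcal{O}_X(v_{ik})$, where the last map is the localization $\mathcal{O}_X(u_i)\to\locEl{\mathcal{O}_X(u_i)}{b_{ik}}$; by \cref{ex:locIsFP} and \cref{lem:homOfFP}(1) this composite is of finite presentation. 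Moreover $\varphi$ itself is of finite presentation: identifying $\rest{\Spec(R)}{w_{j(i)}}$ with $\Spec(\mathcal{O}_R(w_{j(i)}))$, the open $D(g_{j(i)k})$ becomes $\Support{w_{j(i)}}{\rest{g_{j(i)k}}{w_{j(i)}}}$, so $\locEl{R}{g_{j(i)k}}\cong\locEl{\mathcal{O}_R(w_{j(i)})}{\rest{g_{j(i)k}}{w_{j(i)}}}$ (again by the corollary to \cref{prop:gluingLatticeQcqsSchemes}) and $\varphi$ is the corresponding localization map. Then \cref{lem:homOfFP}(2) gives that $\psi$ is of finite presentation, and composing with $R\to\locEl{R}{g_{j(i)k}}$ via \cref{lem:homOfFP}(1) shows $\mathcal{O}_X(v_{ik})$ is a finitely presented $R$-algebra, which completes the proof.

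I expect the crux to be exactly this last step. Nothing proved so far directly controls the $R$-algebra $\mathcal{O}_R(w_{j(i)})$ attached to a general affine element $w_{j(i)}:\ZL$ --- only the localizations $\locEl{R}{g}$ are visibly finitely presented over $R$ --- so one cannot simply compose along $R\to\mathcal{O}_R(w_{j(i)})\to\mathcal{O}_X(u_i)$. Passing to basic opens of $\Spec(R)$ and propagating finite presentation through the restriction maps, using the two-out-of-three property of \cref{lem:homOfFP}, is what circumvents this; the delicate part is the bookkeeping --- keeping track of which morphisms arise by restriction, of the compatibility of all the $R$-algebra structures involved, and of the precise order in which parts (1) and (2) of \cref{lem:homOfFP} are applied. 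Everything else --- that the $v_{ik}$ are affine, that they cover $L_X$, and the identification of $\mathcal{O}_X(v_{ik})$ --- is routine given the results already available. One could also isolate the refinement as a separate lemma: a morphism of qcqs-schemes of finite presentation is already witnessed by compatible affine covers in which the cover of the target consists of basic opens --- which is exactly what the $v_{ik}$-construction achieves.
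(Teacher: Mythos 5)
The paper itself gives no proof of this lemma---it is stated with the remark that it follows from \cref{lem:homOfFP} ``and some standard computations for morphisms of schemes''---so there is no written argument to compare yours against; what you have done is fill in exactly that gap, and your argument is correct. The ``if'' direction via the single-element cover $\{1\}$ of $\ZarLat{R}$ is right, since $\rest{\pi_X^\sharp}{u_i}:\mathcal{O}_R(1)\to\mathcal{O}_X(u_i)$ is precisely the $R$-algebra structure map. For ``only if'', you correctly identify the real obstacle (that $\mathcal{O}_R(w_j)$ for a general affine $w_j:\ZarLat{R}$ is not known to be finitely presented over $R$), and your refinement $v_{ik}=u_i\wedge\pi_X^*(D(g_{j(i)k}))=\Support{u_i}{b_{ik}}$, together with the factorization $\mathcal{O}_R(w_{j(i)})\to\locEl{R}{g_{j(i)k}}\to\mathcal{O}_X(v_{ik})$ and the two-out-of-three property \cref{lem:homOfFP}(2), is exactly the intended use of the cited ingredients (\cref{ex:locIsFP}, the Qcqs-lemma corollary of \cref{prop:gluingLatticeQcqsSchemes}, the identity $\pi^*(D(g))=\Support{1}{\pi^\sharp(g)}$ from \cref{thm:GammaSpecAdj}). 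Two small points you leave implicit, both routine and in the paper's ``modulo canonical isomorphism'' style: the restriction maps $\mathcal{O}_X(u_i)\to\mathcal{O}_X(v_{ik})$ and $\mathcal{O}_R(w_{j(i)})\to\mathcal{O}_R(D(g_{j(i)k}))$ really are the localization maps under the Qcqs-lemma isomorphisms, and the composite $R\to\locEl{R}{g_{j(i)k}}\xrightarrow{\psi}\mathcal{O}_X(v_{ik})$ agrees (by naturality of $\pi_X^\sharp$) with the $R$-algebra structure map of $\mathcal{O}_X(v_{ik})$---without the latter check the final application of \cref{lem:homOfFP}(1) would establish finite presentation for the wrong structure map. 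With those sentences added, the proof is complete.
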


\begin{lemma}\label{lem:morphismoffp}
  Let $X$ and $Y$ be two $R$-schemes of finite presentation, then any morphism
  of $R$-schemes $X\to Y$ is of finite presentation.
\end{lemma}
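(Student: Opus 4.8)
The plan is to reduce the statement to the algebraic cancellation lemma \cref{lem:homOfFP}(2). Write $\pi\colon X\to Y$ for the given morphism of $R$-schemes; since it lives in the slice over $\mathsf{Spec}(R)$, every restricted map $\rest{\pi^\sharp}{u}\colon\mathcal{O}_Y(w)\to\mathcal{O}_X(u)$ (for $u\le\pi^*(w)$) is a morphism of $R$-algebras, i.e.\ commutes with the structure maps from $R$. By \cref{lem:schemeoffp} we may fix an affine cover $u_1,\dots,u_n$ of $L_X$ with each $\mathcal{O}_X(u_i)$ a finitely presented $R$-algebra, and an affine cover $w_1,\dots,w_m$ of $L_Y$ with each $\mathcal{O}_Y(w_j)$ a finitely presented $R$-algebra.

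First I would refine the $u_i$ into a cover compatible with the $w_j$, exactly as in the proof of \cref{lem:locAffMorEquiv}: for each $i,j$ choose a basic-open cover $\pi^*(w_j)\wedge u_i=\bigvee_k\Support{u_i}{s_{ijk}}$ inside $\downarrow\! u_i$, which exists since $u_i$ is affine. Then $1_{L_X}=\bigvee_{i,j,k}\Support{u_i}{s_{ijk}}$, each $\Support{u_i}{s_{ijk}}$ lies below $\pi^*(w_j)$, and each is affine with $\rest{X}{\Support{u_i}{s_{ijk}}}\cong\mathsf{Spec}\big(\locEl{\mathcal{O}_X(u_i)}{s_{ijk}}\big)$ by the Qcqs-Lemma. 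Moreover $\locEl{\mathcal{O}_X(u_i)}{s_{ijk}}$ is again a finitely presented $R$-algebra: the structure map $R\to\mathcal{O}_X(u_i)$ is of finite presentation by assumption, the localization map $\mathcal{O}_X(u_i)\to\locEl{\mathcal{O}_X(u_i)}{s_{ijk}}$ is of finite presentation by \cref{ex:locIsFP}, and \cref{lem:homOfFP}(1) shows their composite is. So $\big(\Support{u_i}{s_{ijk}}\big)_{i,j,k}$ and $(w_j)_j$ are compatible affine covers in the sense of \cref{def:schemeoffp}, with the structure sheaf of $X$ taking finitely presented $R$-algebra values on the refined cover.

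It remains to check that for $v:\equiv\Support{u_i}{s_{ijk}}\le\pi^*(w_j)$ the $R$-algebra morphism $\rest{\pi^\sharp}{v}\colon\mathcal{O}_Y(w_j)\to\mathcal{O}_X(v)$ is of finite presentation. Now $R\to\mathcal{O}_Y(w_j)$ is of finite presentation by assumption, and the composite $R\to\mathcal{O}_Y(w_j)\to\mathcal{O}_X(v)$ — whose second map is $\rest{\pi^\sharp}{v}$, using that $\pi$ is a morphism over $\mathsf{Spec}(R)$ — is the structure map of $\mathcal{O}_X(v)=\locEl{\mathcal{O}_X(u_i)}{s_{ijk}}$, which we saw is of finite presentation. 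Hence \cref{lem:homOfFP}(2) applies and $\rest{\pi^\sharp}{v}$ is of finite presentation, so $\pi$ is of finite presentation by \cref{def:schemeoffp}. I expect the only real friction to be bookkeeping: matching the refined cover against \cref{def:schemeoffp} and, in particular, the ``standard computations for morphisms of schemes'' needed to see that the triangle of $R$-algebra maps above genuinely commutes and that $\rest{\pi^\sharp}{v}$ is the expected restriction of $\pi^\sharp$; there is no substantive mathematical obstacle once \cref{lem:homOfFP} and \cref{lem:schemeoffp} are available.
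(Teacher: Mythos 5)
Your argument is correct and is exactly the one the paper intends: the paper states \cref{lem:morphismoffp} without proof, remarking only that it follows from \cref{lem:homOfFP} and standard computations, and your write-up supplies precisely that — the cover refinement via $\Support{u_i}{s_{ijk}}$ as in \cref{lem:locAffMorEquiv}, the identification $\mathcal{O}_X(\Support{u_i}{s_{ijk}})\cong\locEl{\mathcal{O}_X(u_i)}{s_{ijk}}$, \cref{ex:locIsFP} with \cref{lem:homOfFP}(1) to keep the refined sections finitely presented over $R$, and the cancellation \cref{lem:homOfFP}(2) using that $\pi$ lives over $\mathsf{Spec}(R)$. No gaps beyond the bookkeeping you already flag.
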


Note that even if $X$ is an $R$-scheme of finite presentation,
the global sections $\mathcal{O}_X(1)$ need not be a finitely presented $R$-algebra.\footnote{See
  e.g.\ \cite{VakilExample} for a concrete counterexample.}
We still get the following:

\begin{proposition}\label{prop:universalPropFP}
  Let $\Gamma:\nicefrac{\mathsf{LRDL}^{op}}{\mathsf{Spec}(R)}\to R\text{-}\mathsf{Alg}^{op}$
  be the global sections functor,
  $\mathsf{Spec}:\fpAlg{R}^{op}\to\nicefrac{\mathsf{LRDL}^{op}}{\mathsf{Spec}(R)}$
  be the restriction of the usual spectrum functor to finitely presented algebras
  and $\mathcal{U}:\fpAlg{R}\to R\text{-}\mathsf{Alg}$ be the forgetful functor. Then
  we have a relative coadjunction $\Gamma\dashv_{\,\mathcal{U}} \mathsf{Spec}$
  whose relative counit is an isomorphism.
\end{proposition}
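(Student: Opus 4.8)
The plan is to obtain the proposition as a formal consequence of the affine adjunction \cref{thm:GammaSpecAdj}, so that essentially no new work is required. The first step is to pass from the adjunction $\Gamma\dashv\mathsf{Spec}$ between $\mathsf{LRDL}^{op}$ and $\mathsf{CommRing}^{op}$ to a sliced version. A left adjoint restricts to slices over objects in the image of its right adjoint: since $\mathsf{Spec}$ is right adjoint to $\Gamma$ and $\mathsf{Spec}(R):\mathsf{LRDL}^{op}$, one gets an adjunction between $\nicefrac{\mathsf{LRDL}^{op}}{\mathsf{Spec}(R)}$ and $\nicefrac{\mathsf{CommRing}^{op}}{R}\simeq R\text{-}\mathsf{Alg}^{op}$, where the right adjoint sends an $R$-algebra $B$ to $\mathsf{Spec}(B)$ together with its structure morphism $\mathsf{Spec}(B)\to\mathsf{Spec}(R)$, and the left adjoint sends $\pi_X:X\to\mathsf{Spec}(R)$ to $\mathcal{O}_X(1)$ with the $R$-algebra structure induced by $\pi_X^\sharp$ — this is precisely the global sections functor of the proposition, the structure map $R\to\mathcal{O}_X(1)$ being $\Gamma(\pi_X)$ followed by the counit isomorphism $\mathcal{O}_{\mathsf{Spec}(R)}(1)\cong R$ of \cref{thm:GammaSpecAdj}. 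Since the counit of the sliced adjunction is inherited from the affine one, at every $R$-algebra $B$ it is the canonical isomorphism $\mathcal{O}_{\mathsf{Spec}(B)}(1)\cong B$ of \cref{ex:affSchLRDL}, now an isomorphism of $R$-algebras; in particular the sliced adjunction again has invertible counit (equivalently, $\mathsf{Spec}:R\text{-}\mathsf{Alg}^{op}\to\nicefrac{\mathsf{LRDL}^{op}}{\mathsf{Spec}(R)}$ is fully faithful, in the spirit of \cref{cor:isFullyFaithfulSpec}).

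The second step is to precompose the right adjoint with the forgetful functor $\mathcal{U}^{op}:\fpAlg{R}^{op}\to R\text{-}\mathsf{Alg}^{op}$. Here one only needs to observe that every object $(p_1,\dots,p_m)_{n,m}$ of $\fpAlg{R}$ carries a canonical $R$-algebra structure and every arrow is by definition an $R$-algebra morphism, so that $\mathsf{Spec}$ of such an object canonically carries a morphism to $\mathsf{Spec}(R)$ and $\mathsf{Spec}:\fpAlg{R}^{op}\to\nicefrac{\mathsf{LRDL}^{op}}{\mathsf{Spec}(R)}$ is a well-defined functor. The hom-equivalence of the sliced adjunction,
\begin{align*}
\catHom{\nicefrac{\mathsf{LRDL}^{op}}{\mathsf{Spec}(R)}}{X}{\mathsf{Spec}(B)}~\simeq~\catHom{R\text{-}\mathsf{Alg}^{op}}{\Gamma(X)}{B},
\end{align*}
read off at $B:\equiv\mathcal{U}(A)$ for $A:\fpAlg{R}$, is then by \cref{def:relCoadjunction} exactly the equivalence exhibiting $\Gamma\dashv_{\,\mathcal{U}}\mathsf{Spec}$, and naturality in $X$ and in $A$ is inherited from naturality in $X$ and in $B$. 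The relative counit at $A$ is obtained by sending the identity on $\mathsf{Spec}(A)$ through this equivalence; by construction this is the counit of the sliced adjunction at $\mathcal{U}(A)$, namely the canonical isomorphism $\mathcal{U}(A)\cong\mathcal{O}_{\mathsf{Spec}(A)}(1)=\Gamma(\mathsf{Spec}(A))$, hence an isomorphism as claimed.

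I expect no genuine obstacle; the work is bookkeeping, and the two points deserving some care are the identification $\nicefrac{\mathsf{CommRing}^{op}}{R}\simeq R\text{-}\mathsf{Alg}^{op}$ under which the sliced $\Gamma$ becomes the stated global sections functor, and checking the compatibility of structure morphisms that makes $\mathsf{Spec}$ a functor on $\fpAlg{R}^{op}$ landing in the slice. It is worth remarking that finite presentation is never used: the same argument yields a relative coadjunction $\Gamma\dashv_{\,\mathcal{U}}\mathsf{Spec}$ for $\mathcal{U}$ any (say fully faithful) functor into $R\text{-}\mathsf{Alg}$, and $\fpAlg{R}$ is singled out only because, via \cref{lem:tensorProdIsFP} and \cref{lem:homOfFP}, it is the category on which the comparison theorem of \cref{sec:ComparisonThm} can be rerun, as \cref{sec:SchemesOfFP} indicates, without size issues. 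One could instead reprove the statement from scratch by adapting the proof of \cref{thm:GammaSpecAdj}, building both directions of the equivalence using the universal properties of $\mathcal{L}_R$ and of localizations, but the reduction above makes this unnecessary.
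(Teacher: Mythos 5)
Your derivation is correct. The paper itself supplies no proof of this proposition -- \cref{sec:SchemesOfFP} is explicitly an outline, and the statement is meant to be the sliced/restricted analogue of \cref{thm:GammaSpecAdj} -- so there is no paper argument to diverge from; your reduction is exactly the natural way to discharge it. The two ingredients you use are both sound: the standard slice adjunction (for $\Gamma\dashv\mathsf{Spec}$ and the object $R$, the transpose of $g\circ h$ is $\mathsf{Spec}(g)\circ h'$ and the transpose of $\varepsilon_R\circ\Gamma(f)$ is $f$ by the triangle identity, which is precisely the hom-bijection between $\nicefrac{\mathsf{LRDL}^{op}}{\mathsf{Spec}(R)}$ and $\nicefrac{\mathsf{CommRing}^{op}}{R}\simeq R\text{-}\mathsf{Alg}^{op}$), and precomposition of the right adjoint with $\mathcal{U}$, which by \cref{def:relCoadjunction} literally is the claimed relative coadjunction. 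Your identification of the sliced left adjoint with the paper's global sections functor is consistent with how the paper equips $\mathcal{O}_X$ of an $R$-scheme with its $R$-algebra structure (via $\pi_X^\sharp$ and the counit isomorphism $R\cong\mathcal{O}_R(D(1))$ from \cref{thm:GammaSpecAdj}), and extracting the relative counit by applying the equivalence to the identity on $\mathsf{Spec}(A)$ matches the convention of \cref{prop:OSpRelCoadj}; two trivial checks worth recording in a write-up are that the sliced counit is a morphism over $R$ (naturality of $\varepsilon$) and that isomorphisms in a slice are detected on underlying morphisms, so invertibility of the counit in \cref{thm:GammaSpecAdj} transfers. Your closing remark that finite presentation is nowhere used is also accurate: $\fpAlg{R}$ matters only for making the ambient site small and for the later comparison theorem, not for this proposition. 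The alternative you mention -- redoing the proof of \cref{thm:GammaSpecAdj} with supports and the universal property of $\ZL$ in the relative setting -- would work but buys nothing here.
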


Now for the functorial definition of schemes of finite presentation.
These will be \emph{finitely presented $R$-functors}
$\fpAlg{R}\to\mathsf{Set}$, i.e.\ presheaves on $\fpAlg{R}^{op}$. The
main difference to the situation of general qcqs-schemes is that this
presheaf category is locally small.
Important presheaves, like $\Aone$ and $\mathcal{L}$ are
defined in the obvious way (by precomposition with the forgetful
functor to rings).  However, functions $\NatTrans{X}{\Aone}$ and
compact opens $\NatTrans{X}{\mathcal{L}}$ of
$X:\fpAlg{R}\to\mathsf{Set}$ are now small types. Note that
$\NatTrans{X}{\Aone}$ carries an $R$-algebra structure but does not
have to be finitely presented.  Similar to the locally ringed lattice
case we get a relative coadjunction with respect to the forgetful
functor $\mathcal{U}:\fpAlg{R}\to R\text{-}\mathsf{Alg}$.

\begin{proposition}\label{prop:universalPropFunFP}
  Let $\mathsf{Sp}:\fpAlg{R}^{op}\hookrightarrow\catHom{\mathsf{Fun}}{\fpAlg{R}}{\mathsf{Set}}$
  denote the Yoneda embedding and
  $\mathcal{O}:\catHom{\mathsf{Fun}}{\fpAlg{R}}{\mathsf{Set}}\to R\text{-}\mathsf{Alg}^{op}$
  the global functions functor. Then we have a relative coadjunction
  $\mathcal{O}\dashv_{\,\mathcal{U}} \mathsf{Sp}$
  whose relative counit is an isomorphism.
\end{proposition}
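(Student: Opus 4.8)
The plan is to mimic the proof of \cref{prop:OSpRelCoadj}, of which this proposition is the finitely presented counterpart. Unwinding \cref{def:relCoadjunction}, what has to be produced is, for $A:\fpAlg{R}$ and $X:\catHom{\mathsf{Fun}}{\fpAlg{R}}{\mathsf{Set}}$, an equivalence
\[
  \mathsf{Hom}_R\big(A,\Ofun(X)\big)~\simeq~\big(\NatTrans{X}{\mathsf{Sp}(A)}\big)
\]
natural in both arguments. I would first dispose of the case where $A$ is a polynomial algebra. Since $R[x_1,\dots,x_n]$ is a free, hence finitely presented, $R$-algebra (see \cref{ex:locIsFP}), the affine line is already representable over $\fpAlg{R}$: one has $\Aone\cong\mathsf{Sp}(R[x])$ in $\catHom{\mathsf{Fun}}{\fpAlg{R}}{\mathsf{Set}}$ because $\mathsf{Sp}(R[x])(B)=\mathsf{Hom}_R(R[x],B)\cong B$, and more generally $\mathsf{Sp}(R[x_1,\dots,x_n])\cong(\Aone)^n$. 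As $\NatTrans{X}{-}$ preserves products, this gives
\[
  \big(\NatTrans{X}{\mathsf{Sp}(R[x_1,\dots,x_n])}\big)~\simeq~\Ofun(X)^n~\simeq~\mathsf{Hom}_R\big(R[x_1,\dots,x_n],\Ofun(X)\big),
\]
the last equivalence being the universal property of the polynomial algebra; one checks it is natural in $X$ and in the polynomial algebra in the generators.

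For a general finitely presented $A\cong\nicefrac{R[x_1,\dots,x_n]}{\langle p_1,\dots,p_m\rangle}$, the idea is to present $A$ as a coequalizer in $\fpAlg{R}$ of the two $R$-algebra maps $R[y_1,\dots,y_m]\rightrightarrows R[x_1,\dots,x_n]$ determined by $y_k\mapsto p_k$ and $y_k\mapsto 0$; this colimit exists in $\fpAlg{R}$ since $A$ is again finitely presented, so it also serves as a colimit in $R\text{-}\mathsf{Alg}$ and is dually a limit in $\fpAlg{R}^{op}$. The Yoneda embedding $\mathsf{Sp}$ preserves that limit, so $\mathsf{Sp}(A)$ is the equalizer of $\mathsf{Sp}(R[x_1,\dots,x_n])\rightrightarrows\mathsf{Sp}(R[y_1,\dots,y_m])$; applying $\NatTrans{X}{-}$, which preserves equalizers, together with the previous paragraph identifies $\NatTrans{X}{\mathsf{Sp}(A)}$ with the equalizer of $\Ofun(X)^n\rightrightarrows\Ofun(X)^m$. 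On the other side, $\mathsf{Hom}_R(-,\Ofun(X))$ carries the coequalizer presenting $A$ to an equalizer, producing the very same diagram $\Ofun(X)^n\rightrightarrows\Ofun(X)^m$ (in both cases the parallel maps are ``plug the $p_k$ in'' and ``constant $0$''). Comparing the two equalizers yields the desired equivalence, with naturality in $A$ and $X$ coming from the functoriality of these (co)limit constructions together with the naturality established for polynomial algebras. The main obstacle I expect here is precisely this bookkeeping: checking that all the intermediate identifications are simultaneously natural, and making sure the coequalizer presenting $A$ is taken inside $\fpAlg{R}^{op}$ --- not in the larger $R\text{-}\mathsf{Alg}^{op}$ --- so that the Yoneda embedding of $\fpAlg{R}^{op}$ is entitled to preserve it.

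Finally, for the relative counit: $\varepsilon_A:\mathsf{Hom}_R\big(A,\Ofun(\mathsf{Sp}(A))\big)$ is the image of $\id{\mathsf{Sp}(A)}$ under the inverse of the equivalence just constructed, and to see it is an isomorphism I would compute
\[
  \Ofun(\mathsf{Sp}(A))~=~\big(\NatTrans{\mathsf{Sp}(A)}{\Aone}\big)~\cong~\big(\NatTrans{\mathsf{Sp}(A)}{\mathsf{Sp}(R[x])}\big)~\cong~\mathsf{Hom}_R(R[x],A)~\cong~A,
\]
using full faithfulness of the Yoneda embedding $\mathsf{Sp}$ and the universal property of $R[x]$, and then check that this chain of identifications is a morphism of $R$-algebras and agrees with $\varepsilon_A$ --- again a routine verification, entirely parallel to the counit part of \cref{prop:OSpRelCoadj}.
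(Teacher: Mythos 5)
Your proposal is correct, but it follows a genuinely different route from the one the paper intends. The paper states this proposition without proof (it sits in the outline \cref{sec:SchemesOfFP}), as the finitely presented counterpart of \cref{prop:OSpRelCoadj}, and the intended argument is the same direct transposition that works for \emph{arbitrary} algebras: given $\varphi:\mathsf{Hom}_R\big(A,\Ofun(X)\big)$ set $\alpha_B(x)(a):\equiv\varphi(a)_B(x)$, and conversely $\varphi(a)_B(x):\equiv\alpha_B(x)(a)$; because the $R$-algebra structure on $\Ofun(X)$ is pointwise, these are mutually inverse and naturality in $A$ and $X$ is immediate, while the counit being an isomorphism is just the Yoneda lemma, $\Ofun(\mathsf{Sp}(A))=\big(\NatTrans{\mathsf{Sp}(A)}{\Aone}\big)\cong\Aone(A)=A$. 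Your argument instead makes essential use of finite presentation: reduce to polynomial algebras via $\mathsf{Sp}(R[x_1,\dots,x_n])\cong(\Aone)^n$, then present $A$ as the coequalizer of $y_k\mapsto p_k$ and $y_k\mapsto 0$ and compare the equalizers obtained by applying $\NatTrans{X}{\mathsf{Sp}(-)}$ and $\mathsf{Hom}_R(-,\Ofun(X))$. This is sound --- the coequalizer in $R\text{-}\mathsf{Alg}$ of that pair is indeed $\nicefrac{R[x_1,\dots,x_n]}{\langle p_1,\dots,p_m\rangle}$, the two parallel pairs agree under your identifications, and the counit computation goes through --- but note two points. First, you need the diagram to be a coequalizer \emph{both} in $\fpAlg{R}$ (so that the Yoneda embedding of $\fpAlg{R}^{op}$ preserves the dual equalizer) \emph{and} in $R\text{-}\mathsf{Alg}$ (because $\Ofun(X)$ is in general not finitely presented, so the universal property must be tested against all $R$-algebras); the correct direction of inference is to compute the coequalizer in $R\text{-}\mathsf{Alg}$, observe that the result is the finitely presented object you started from, and conclude by full faithfulness that it is also a coequalizer in $\fpAlg{R}$ --- not the other way around, as your wording suggests. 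Second, naturality in $A$ is the genuinely costly part of your route, since a morphism in $\fpAlg{R}$ need not respect the chosen presentations; the direct argument avoids this bookkeeping entirely, which is what it buys you (your route, in exchange, is a nice illustration of how finite presentation reduces everything to the affine line). A small slip: \cref{ex:locIsFP} concerns localizations; polynomial algebras are finitely presented simply because they admit a presentation with a trivial list of relations.
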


The Zariski coverage on $\fpAlg{R}^{op}$ is defined just as for
$\mathsf{CommRing}^{op}$.  Note that this crucially relies on the fact
that by \cref{ex:locIsFP} localizations $\locEl{R}{f}$ and by
\cref{lem:tensorProdIsFP} tensor products are finitely presented. The
proof of subcanonicity carries over to finitely presented
algebras. Compact opens are defined just as for $\Z$-functors, leading
to a functorial definition of $R$-schemes of finite presentation.

\begin{definition}\label{def:funschemeoffp}
  We say that $X:\fpAlg{R}\to\mathsf{Set}$ is a
  \emph{functorial $R$-scheme of finite presentation} if it is local
  and merely has a cover $U_1,...,U_n:\NatTrans{X}{\mathcal{L}}$
  with $\coBrackets{U_i}\cong\mathsf{Sp}(A_i)$ for some
  finitely presented $R$-algebra $A_i$.\footnote{Note that when working with
    a non-univalent definition of $\fpAlg{R}$, a truncation is
    needed to define the property of ``being an affine $R$-scheme of finite presentation'',
    i.e.\ the property of being a representable, see \cite[Thm.\ 9.5.9]{HoTTBook},}
\end{definition}

To show that the two definitions of $R$-schemes of finite presentation are equivalent,
we proceed as for qcqs-schemes, but we do not have to be as careful about universe levels.
The functor of points
\begin{align*}
  h_{(\_)}:\equiv\catHom{\nicefrac{\mathsf{LRDL}^{op}}{\mathsf{Spec}(R)}}{\mathsf{Spec}(\_)}{X}~:~\nicefrac{\mathsf{LRDL}^{op}}{\mathsf{Spec}(R)}\to\catHom{\mathsf{Fun}}{\fpAlg{R}}{\mathsf{Set}}
\end{align*}
is defined just as for $\Z$-functors. The realization functor $\abs{\_}$ can be defined
directly following \cref{def:realizationFun} and using the fact that an object
of $\nicefrac{\mathsf{LRDL}^{op}}{\mathsf{Spec}(R)}$ is the same as a locally ringed lattice
whose structure sheaf is a sheaf of $R$-algebras. Alternatively, one can take
$\abs{\_}$ to be the left Kan extension of $\mathsf{Spec}$ along $\mathsf{Sp}$.
Using the small definition of $\fpAlg{R}$, this Kan extension can be shown to exist
predicatively, as it can be computed pointwise \cite[Thm.\ 6.3.7]{RiehlCatTheory}.

Moreover, we get a regular adjunction $\abs{\_}\dashv h$.
We can show that $h$ sends $R$-schemes of finite presentation to
functorial $R$-schemes of finite presentation, using \cref{lem:schemeoffp}.
Similarly, $\abs{\_}$ sends functorial $R$-schemes of finite presentation to
$R$-schemes of finite presentation and we do not have to consider
``essentially small schemes'' or the like. When restricted to (functorial) $R$-schemes
of finite presentation, the two adjoints become fully faithful, inducing
an equivalence of categories as illustrated in the diagram below:
\[\begin{tikzcd}
	{R\text{-}\mathsf{FunSch}_{fp}} &&&& {R\text{-}\mathsf{Sch}_{fp}} \\
	&&&& \\
	{\catHom{\mathsf{Fun}}{\fpAlg{R}}{\mathsf{Set}}} &&&& {\nicefrac{\mathsf{LRDL}^{op}}{\mathsf{Spec}(R)}} \\
	\\
	&& {\fpAlg{R}^{op}}
	\arrow["{\mathsf{Spec}}"', hook, from=5-3, to=3-5]
	\arrow["{\mathsf{Sp}}", hook', from=5-3, to=3-1]
	\arrow[""{name=0, anchor=center, inner sep=0}, "{\abs{\_}}"', curve={height=12pt}, from=3-1, to=3-5]
	\arrow[""{name=1, anchor=center, inner sep=0}, "{h_{(\_)}}"', curve={height=12pt}, from=3-5, to=3-1]
	\arrow[hook', from=1-1, to=3-1]
	\arrow["\Large\simeq"{description}, no head, from=1-1, to=1-5]
	\arrow[hook, from=1-5, to=3-5]
	\arrow["\dashv"{anchor=center, rotate=90}, draw=none, from=0, to=1]
\end{tikzcd}\]

\section{Conclusion}

In this paper we gave two point-free definitions of qcqs-schemes,
namely as locally ringed lattices and as $\Z$-functors, and proved the
two notions equivalent.  We worked constructively in HoTT/UF using
univalence and higher inductive types. Due to size issues inherent in
the functor of points approach, we had to work across several
universes, but we did stay predicative in the sense that we did not
require any form of propositional resizing.  In the last section, we
described how one can then define schemes of finite presentation over
an arbitrary base ring and prove a similar equivalence result, but
with less universes.

Even though univalence did play a crucial role in the proof of the
main comparison theorem, many of the results in this paper should be
of interest for the program of constructive algebraic geometry in
general.  With the constructive definition (or rather the definitions)
of scheme pinned down, there are plenty of directions in which
constructive algebraic geometry could be developed. We do not want to
engage in speculation about which parts of this vast field can be
constructivized. We want to stress, however, that because of the
computational content of constructive proofs, it would be particularly
interesting to see whether a constructive development can be leveraged
for computational problems in algebraic geometry.  The relation to
synthetic algebraic geometry needs also to be explored.  Especially in
view of the recent use of HoTT/UF for this internal
approach by Cherubini, Coquand and Hutzler \cite{SAG}, one might hope
that a comparison with our external approach can lead to
fruitful insights.\footnote{For an overview of the many (sub-) topics
  currently investigated in this fascinating young research program see
  \url{https://github.com/felixwellen/synthetic-zariski}}
Finally, it is worth noting that the fundamental
notions of constructive scheme theory, as presented in this paper,
allow for a concise and pleasant presentation. Hopefully,
this will serve an inspiration to further the development
of constructive algebraic geometry, with or without univalence.

The other main objective was of course to give an account of basic
scheme theory that can be formalized in a proof assistant like
\texttt{Cubical Agda}. While the paper can be used as a general
blueprint for a formalization of the comparison theorem, the same cannot
be said for some of the individual proofs and definitions given in
the paper.  Extending the formalization of functorial qcqs-schemes
in \cite{ZeunerHutzler24} to include all results of
\cref{sec:fopApproach} should be more or less directly feasible.  A
formalization of qcqs-schemes as locally ringed lattices, however,
should be regarded as a substantial formalization project, even with
the structure sheaf of an affine scheme already formalized in
\cite{ZeunerMortberg23}.  This is because we did not work with an exact
definition of the structure sheaf and instead relied heavily on
canonical isomorphisms.  The case that this common practice generally
creates an obstacle towards formalization was put forward rather
convincingly by Buzzard \cite{BuzzardEquality}.  In view of the
results of \cite{ZeunerMortberg23}, one might hope that in this
particular instance univalence could be of help, but as indicated in
the paper, careful and clever rephrasing of many of the results and
constructions might be required.  Ultimately, it appears necessary to
prove a lot of results on locally ringed lattices in terms of sheaves
obtained by Kan extensions, before even being able to describe affine
schemes as locally ringed lattices. Formalizing the
proof of the univalent constructive comparison theorem will
certainly hold many surprises.  All in all, such a formalization would
figure as a major milestone in formal univalent set-level mathematics.

\bibliographystyle{alphaurl}
\bibliography{references}
\end{document}